\documentclass[12pt,reqno]{amsart}
\usepackage{fullpage}
\usepackage{times}
\usepackage{amssymb,amsmath}
\usepackage{color}
\usepackage{bbm}
\usepackage{dsfont}
\usepackage{multirow}
\usepackage[colorlinks=true, pdfstartview=FitH, linkcolor=blue, citecolor=blue, urlcolor=blue]{hyperref}
\usepackage{tikz-cd}

\newtheorem{theorem}{Theorem}[section]
\newtheorem{lemma}[theorem]{Lemma}
\newtheorem{prop}[theorem]{Proposition}
\newtheorem{cor}[theorem]{Corollary}
\theoremstyle{definition}
\newtheorem{remark}[theorem]{Remark}
\newtheorem{definition}[theorem]{Definition}
\newtheorem{notation}[theorem]{Notation}

\renewcommand{\mod}[1]{{\ifmmode\text{\rm\ (mod~$#1$)}\else\discretionary{}{}{\hbox{ }}\rm(mod~$#1$)\fi}}
\newcommand{\ep}{\varepsilon}

\newcommand{\spn}{\mathop{\rm span}}
\newcommand{\li}{\mathop{\rm li}}
\newcommand{\ord}{\mathop{\rm ord}}

\newcommand{\C}{{\mathbb C}}
\newcommand{\E}{{\mathbb E}}

\newcommand{\Q}{{\mathbb Q}}
\newcommand{\R}{{\mathbb R}}
\newcommand{\T}{{\mathbb T}}
\newcommand{\Z}{{\mathbb Z}}

\newcommand{\bA}{{\mathbf A}}

\newcommand{\bE}{{\mathbf E}}

\newcommand{\bM}{{\mathbf M}}

\newcommand{\bU}{{\mathbf U}}

\newcommand{\ba}{{\mathbf a}}
\newcommand{\bb}{{\mathbf b}}


\newcommand{\bn}{{\mathbf n}}

\newcommand{\bt}{{\mathbf t}}
\newcommand{\bu}{{\mathbf u}}
\newcommand{\bv}{{\mathbf v}}
\newcommand{\bw}{{\mathbf w}}
\newcommand{\bx}{{\mathbf x}}
\newcommand{\by}{{\mathbf y}}
\newcommand{\bz}{{\mathbf z}}
\newcommand{\bzero}{{\mathbf 0}}

\newcommand{\cA}{{\mathcal A}}
\newcommand{\cB}{{\mathcal B}}
\newcommand{\cC}{{\mathcal C}}

\newcommand{\cG}{{\mathcal G}}
\newcommand{\cK}{{\mathcal K}}

\newcommand{\cN}{{\mathcal N}}
\newcommand{\cP}{{\mathcal P}}
\newcommand{\cQ}{{\mathcal Q}}

\newcommand{\cS}{{\mathcal S}}

\newcommand{\cU}{{\mathcal U}}
\newcommand{\cV}{{\mathcal V}}
\newcommand{\cW}{{\mathcal W}}
\newcommand{\cX}{{\mathcal X}}
\newcommand{\cY}{{\mathcal Y}}
\newcommand{\cZ}{{\mathcal Z}}
\newcommand{\one}{{\mathds{1}}}

\vfuzz=2pt

\begin{document}

\title{Inclusive prime number races}
\author[Greg Martin and Nathan Ng]{Greg Martin and Nathan Ng}
\address{Department of Mathematics \\ University of British Columbia \\ Room 121, 1984 Mathematics Road \\ Vancouver, BC \ V6T 1Z2 \\ Canada}
\email{gerg@math.ubc.ca}
\address{University of Lethbridge \\ Department of Mathematics and Computer Science \\ 4401 University Drive \\ Lethbridge, AB \ T1K 3M4 \\ Canada}
\email{nathan.ng@uleth.ca}
\subjclass[2010]{11N13, 11M26, 11K99, 60F05, 11J71}
\date{\today}
\begin{abstract}
Let $\pi(x;q,a)$ denote the number of primes up to~$x$ that are congruent to~$a$ modulo~$q$.
A {\it prime number race}, for fixed modulus $q$ and residue classes $a_1, \ldots, a_r$, investigates
the system of inequalities $\pi(x;q,a_1) > \pi(x;q,a_2) > \cdots > \pi(x;q,a_r)$.
The study of prime number races was initiated by Chebyshev and further studied by many others, including Littlewood, Shanks--R\'{e}nyi, Knapowski--Turan,
and Kaczorowski. 
We expect that this system of inequalities should have arbitrarily large solutions $x$, and moreover we expect the same to be true no matter how we permute the residue classes~$a_j$; 
if this is the case, and if the logarithmic density of the set of such $x$ exists and is positive, the prime number race is called {\it inclusive}. 
In breakthrough research, 
Rubinstein and Sarnak~\cite{RS} proved conditionally that every prime number race is inclusive; they assumed not only the generalized Riemann hypothesis but also a strong statement about the linear independence of the zeros of Dirichlet L-functions. We show that the same conclusion can be reached assuming the generalized Riemann hypothesis and a substantially weaker linear independence hypothesis. In fact,  we can assume that almost all of the zeros may be involved in $\mathbb{Q}$-linear relations; and we can also conclude more strongly that the associated limiting distribution has mass everywhere.  This work makes use of a number of ideas from probability, the explicit formula from number theory, 
and the Kronecker--Weyl equidistribution theorem.
\end{abstract}

\maketitle

\section{Introduction}

A prime number race is the study of inequalities among the counting functions of primes in arithmetic progressions. Letting $\pi(x;q,a)$ denote as usual the number of primes up to $x$ that are congruent to $a\mod q$, we wish to understand for a given set $\{a_1,\dots,a_r\}$ how often (or indeed whether) the inequalities $\pi(x;q, a_1) > \pi(x;q, a_2) > \dots > \pi(x;q, a_r)$ are simultaneously satisfied. The survey article~\cite{GM} of Granville and the first author is a good starting reference for this subject in comparative prime number theory. We are interested in conditions under which we can establish that all permutations of the above string of inequalities occur with reasonable frequency; this goal motivates the following definitions.

\begin{definition} \label{inclusive def}
Let $a_1,\dots,a_r$ be distinct reduced residues\mod q.
\begin{enumerate}
\item We say that the prime number race among $a_1,\dots,a_r\mod q$ is {\em exhaustive} if, for every permutation $(\sigma_1, \dots, \sigma_r)$ of $(a_1,\dots,a_r)$, there are arbitrarily large real numbers $x$ for which
\begin{equation}
\pi(x;q, \sigma_1) > \dots > \pi(x;q, \sigma_r).
\label{chain of inequalities}
\end{equation}
\item We say that this prime number race is {\em weakly inclusive} if the logarithmic density of the set of real numbers $x$ satisfying the chain of inequalities~\eqref{chain of inequalities} exists for every permutation $(\sigma_1, \dots, \sigma_r)$ of $(a_1,\dots,a_r)$. (Recall that the logarithmic density $\delta(\cP)$ of a set $\cP$ of positive real numbers is
\[
\delta(\cP) :=
\lim_{x\to\infty} \bigg( \frac1{\log x} \int\limits_{\substack{1\le t\le x \\ t\in \cP}} \frac{dt}t \bigg) =
\lim_{y\to\infty} \bigg( \frac1{y} \int\limits_{\substack{0\le t\le y \\ e^t\in \cP}} dt \bigg)
\]
when the limit exists.)
\item We say that the prime number race is {\em inclusive} if these logarithmic densities exist and are all positive. Note that it is conceivable that a prime number race could be weakly inclusive yet not exhaustive (if one or more of the logarithmic densities equaled $0$); however, an inclusive prime number race is automatically exhaustive (and, of course, also weakly inclusive).
\item Finally, define the normalized error term
\begin{equation}
  \label{Exqa}
E(x;q,a) = \frac{\log x}{\sqrt x} \big( \phi(q) \pi(x;q,a) - \pi(x) \big)
\end{equation}
and the $\R^r$-valued function
\begin{equation}
  \label{Ey}
  E(x) = \big( E(x;q,a_1), \ldots, E(x;q,a_r) \big).
\end{equation}
We say that the prime number race among $a_1,\dots,a_r\mod q$ is {\em strongly inclusive} if, for every open ball $\cB_\rho(\bx) \subset \R^r$ of arbitrarily small radius~$\rho>0$ centered at an arbitrary point~$\bx\in\R^r$, the logarithmic density of the set of positive real numbers $x$ such that $E(x)\in\cB_\rho(\bx)$ exists and is positive.

Note that the sum of all $\phi(q)$ normalized error terms of the form~\eqref{Exqa} is exactly equal to $-\#\{p\mid q\}(\log x)/\sqrt x$ when $x>q$, which tends to~$0$ with~$x$. Consequently, when $r=\phi(q)$, we must modify this definition of strongly inclusive to refer only to open balls in $\R^{\phi(q)}$ that intersect the hyperplane $x_1+\cdots+x_{\phi(q)}=0$, or equivalently to open balls centered at points in this hyperplane. (We remark that if we replaced $\pi(x)$ with the logarithmic integral $\li(x)$ in equation~\eqref{Exqa}, then we would not need this stipulation in the case $r=\phi(q)$; however, using $\pi(x)$ is standard in this context.)

Note that the wedge
\begin{equation} \label{wedge definition}
\cS = \{(x_1,\dots,x_r)\in \R^r\colon x_1 > \cdots > x_r\}
\end{equation}
is an open subset of $\R^r$ (and intersects the relevant hyperplane), as are all similar wedges obtained by permuting coordinates; therefore any strongly inclusive race is also inclusive (barring some pathological situation where the logarithmic densities corresponding to every ball existed yet the logarithmic density corresponding to~$\cS$ did not exist), since $E(x)$ lying in one of these wedges is equivalent to the inequalities~\eqref{chain of inequalities}.
\end{enumerate}
In order to show that the prime number race among $a_1, \ldots, a_r \mod q$ is exhaustive, we must show that there are arbitrarily large values of $x$ for which $E(x)$ lies in the wedge~$\cS$.
(Indeed, we must show this for any permutation of $\{a_1,\dots,a_r\}$, but our arguments will never depend upon the identities of the $a_j$.) To show that this race is weakly inclusive, we must prove that the logarithmic density of the set $\{x\ge1\colon E(x)\in\cS\}$ exists; to show the race is inclusive, we must show that logarithmic density to be positive.
\end{definition}

There are relatively few known results establishing that a given race is exhaustive. 
In a tour de force, Littlewood established that the two-way races modulo~$3$ and modulo~$4$ are exhaustive.
Stark~\cite{St} showed for the first time that all two-way races modulo~$5$ are exhaustive.
Grosswald \cite[Theorem~5(iv)]{Gr}, building on work of Diamond~\cite{Di}, provided a condition on the zeros of Dirichlet $L$-functions modulo $q$ which implies that two-way races modulo $q$ are exhaustive.
Sneed~\cite{Sn}, using extensive calculation with the zeros of Dirichlet $L$-functions, showed that all two-way races modulo $q$ with $q \le 100$ are exhaustive.
K\'{a}tai~\cite{Kat} proved that if Dirichlet $L$-functions\mod q have no real zeros inside the critical strip, then any race between two squares or two nonsquares\mod q is exhaustive.
Under the same assumption, Knapowski and Tur\'an proved that all two-way races involving the residue class $1\mod q$ are exhaustive; Sneed~\cite{Sn} reports that Vorhauer proved the analogous statement for $-1\mod q$ (for $q$ sufficiently large) in unpublished work.
Kaczorowski~\cite[Corollary 5]{KaIV} establishes exhaustive two-way races under more complicated assumptions on the zeros of the $L(s,\chi)$.

While being exhaustive is probably the most natural property {\em a priori} to probe about prime number races, current methods in comparative prime number theory tend to establish the stronger properties in Definition~\ref{inclusive def}.
Indeed, in their seminal paper~\cite{RS}, Rubinstein and Sarnak proved, conditionally, that all prime number races are strongly inclusive; their results assumed not only the generalized Riemann hypothesis for Dirichlet $L$-functions (which we abbreviate as GRH), but also the following linear independence hypothesis, denoted LI:

\smallskip

\noindent {\bf Linear Independence Conjecture (LI)}. 
The non-negative ordinates (imaginary parts) of all zeros of Dirichlet $L$-functions with conductor $q$ are linearly independent over the rational numbers. 

\smallskip\noindent
This conjecture seems to have first been mentioned by Wintner in \cite{Wi2a}.  The conjecture became better known 
after Ingham~\cite{In2} used it to show that the summatory function of the M\"{o}bius function is not bounded by $C \sqrt{x}$ for any positive~$C$.  
For a more comprehensive history of LI, see the introduction in \cite{MaNg}. 

In this paper we also assume the generalized Riemann hypothesis (GRH) throughout, for all Dirichlet $L$-functions corresponding to characters (primitive and imprimitive) modulo~$q$, although most of our results do not require RH for $\zeta(s)$ itself. Our aim is to substantially weaken the linear independence hypothesis. In order to state our results, we need some notation for the ordinates of these zeros of $L$-functions, as well as some terminology to describe when a particular ordinate of a zero of a Dirichlet $L$-function is involved in a linear relation with other ordinates of zeros.

\begin{notation}
We use the following notation for multisets of ordinates of zeros of Dirichlet $L$-functions:
\[
\Gamma(\chi) = \{ \gamma >0: L(\tfrac12+i\gamma,\chi)=0\} \qquad\text{and}\qquad \Gamma(q) = \bigcup_{\substack{\chi\mod q \\ \chi\ne\chi_0}} \Gamma(\chi).
\]
(Since we will be assuming GRH, restricting the real part of the zeros to $\frac12$ is natural.)
Note that if $L(\frac12-i\gamma,\chi)=0$, then $L(\frac12+i\gamma,\overline\chi)=0$ by the functional equation; for this reason, we will only need to consider positive ordinates $\gamma$ in these multisets. It is conceivable that some $L(\frac12,\chi)$ could vanish, but we do not include $0$ in these multisets---any hypothetical zeros at $s=\frac12$ will be dealt with explicitly in the formulas to come.
\end{notation}

\begin{definition} \label{self-sufficient def}
We say that $\gamma\in\Gamma(q)$
is {\em self-sufficient} if $\gamma$ cannot be written as a finite $\Q$-linear combination of elements of $\Gamma(q)
\setminus\{\gamma\}$. For example, if there exist characters $\chi_1,\chi_2\mod q$, not necessarily distinct, such that $L(\frac12+i\gamma,\chi_1)=L(\frac12+2i\gamma,\chi_2)=0$, then $\gamma\in\Gamma(q)$ is automatically not self-sufficient, and the same for~$2\gamma$. If $\gamma\in\Gamma(q)$ is self-sufficient, then in particular $\tfrac12+i\gamma$ is a simple zero of $\prod_{\chi\mod q,\, \chi\ne\chi_0} L(s,\chi)$.
\end{definition}

\begin{notation}
We introduce the notation
\begin{align*}
\Gamma^S(\chi) & = \{ \gamma\in\Gamma(\chi) \colon \gamma \text{ is self-sufficient} \}, \\
\Gamma^S(q) & = \bigcup_{\substack{\chi\mod q \\ \chi\ne\chi_0}} \Gamma^S(\chi).
\end{align*}
Notice for example that the statement ``every $\gamma\in\Gamma^S(q)$ is self-sufficient'' is stronger than the statement that $\Gamma^S(q)$ is a linearly independent set over~$\Q$, since the former statement also considers linear combinations involving elements of $\Gamma(q) \setminus \Gamma^S(q)$ (see Remark~\ref{stronger remark} below for a related observation). In fact, $\Gamma^S(q)$ is the intersection of all maximal linearly independent subsets of $\Gamma(q)$; equivalently~\cite[Theorem~1.12]{FIS}, $\Gamma^S(q)$ is the intersection of all subsets of $\Gamma(q)$ that are bases for the $\Q$-vector space generated by $\Gamma(q)$. In the language of matroid theory, $\Gamma(q)$ represents a finitary matroid and $\Gamma^S(q)$ is precisely the set of coloops of that matroid.
\end{notation}

We begin by stating two tidy results; both of these follow from our most general result, the statement of which (Theorem~\ref{inclusive theorem}) we delay momentarily for the purposes of exposition. The simplest prime number race is between a pair of contestants (that is, $r=2$); we note for example that a two-way race is exhaustive precisely when the difference $\pi(x;q,a)-\pi(x;q,b)$ changes sign infinitely often. Our first theorem gives conditions under which we can deduce that two-way races are weakly inclusive, inclusive, or strongly inclusive:

\begin{theorem}
\label{two-way theorem}
Assume GRH. Let $a$ and $b$ be distinct reduced residues\mod q.
\begin{enumerate}
\item If the set
$$\bigcup_{\substack{\chi\mod q \\ \chi(a)\ne\chi(b)}} \Gamma^S(\chi)$$
has at least three elements, then the prime number race between $a$ and $b\mod q$ is weakly inclusive.
\item There exists a constant $W(q)$ such that if
\begin{equation}
\sum_{\substack{\chi\mod q \\ \chi(a)\ne\chi(b)}} \sum_{\gamma\in\Gamma^S(\chi)} \frac1\gamma > W(q),
\label{somebody's W-robust}
\end{equation}
then the prime number race between $a$ and $b\mod q$ is inclusive.
\item If the sum
\begin{equation}
\sum_{\substack{\chi\mod q \\ \chi(a)\ne\pm\chi(b)}} \sum_{\gamma\in\Gamma^S(\chi)} \frac1\gamma
\label{somebody's robust}
\end{equation}
diverges, then the prime number race between $a$ and $b\mod q$ is strongly inclusive.
\end{enumerate}
\end{theorem}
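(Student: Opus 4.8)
The plan is to reduce the statement to a question about the support of a certain limiting distribution, and then to analyze that distribution via the Kronecker--Weyl equidistribution theorem. Recall that under GRH the explicit formula gives
\[
E(x;q,a) = -c_q(a) + \sum_{\substack{\chi\mod q \\ \chi\ne\chi_0}} \overline{\chi(a)} \sum_{\gamma\in\Gamma(\chi)} \frac{2\,(\cos(\gamma\log x), \sin(\gamma\log x))}{\tfrac14+\gamma^2}\cdot(\text{something}) + o(1),
\]
or more cleanly: after a change of variables $x = e^y$, the vector $E(e^y)$ is well approximated (in an $L^2$ or Cesàro-mean sense) by a finite truncation $\sum_{\gamma\le X} (\text{trigonometric term in }\gamma y)$. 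Grouping characters according to whether $\chi(a)\ne\pm\chi(b)$, one sees that the two-way race difference $E(x;q,a)-E(x;q,b)$ picks up the full contribution of each $\gamma\in\Gamma^S(\chi)$ for every $\chi$ with $\chi(a)\ne\pm\chi(b)$, whereas characters with $\chi(a)=\pm\chi(b)$ contribute terms that are either zero or correlated in a way that does not help produce mass at every point. (The $\pm$ rather than just $\ne$ is exactly because of the pairing $\chi\leftrightarrow\overline\chi$: if $\chi(a)=-\chi(b)$ then $\overline\chi(a)=-\overline\chi(b)$ as well, and the $\chi$ and $\overline\chi$ contributions to the difference combine into a single real oscillation, effectively halving the independent directions — this is the standard subtlety in the Rubinstein--Sarnak framework.)

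**The core argument** runs as follows. First I would establish existence of the limiting distribution $\nu$ of $Y\mapsto E(e^Y)$ (where $Y$ ranges uniformly over $[0,T]$, $T\to\infty$): this is standard under GRH alone and does not require any linear independence, because the truncated trigonometric sum equidistributes on a torus by Kronecker--Weyl, and the tail is controlled in mean square. Second, it suffices to show that $\nu$ assigns positive mass to every open ball in the relevant hyperplane; by the equidistribution statement this is equivalent to showing that the closed support of $\nu$ is all of the hyperplane $\{x_1+x_2=0\}$ (for $r=2$; after subtracting the mean, which lies in this hyperplane by the trace observation in the Definition). Third, and this is the heart of the matter, I would show that divergence of the sum~\eqref{somebody's robust} forces the support to be unbounded along the relevant line, and then use a self-sufficiency-plus-Kronecker--Weyl argument to upgrade unboundedness to \emph{everything}: each self-sufficient $\gamma$ contributes an independent random rotation $U_\gamma$ uniform on a circle, independent of all the other terms (this independence is exactly what self-sufficiency buys — there are no $\Q$-linear relations tying $\gamma$ to the rest of $\Gamma(q)$, so the relevant subtorus is a genuine product circle); the partial sum of these contributions is then a sum of independent mean-zero random variables whose variances are proportional to $1/\gamma^2$... wait, that would converge. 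Let me restate: the relevant quantity is a sum of independent terms of size $\asymp 1/\gamma$ in amplitude, uniformly rotated, so the \emph{support} of the partial sum grows like the sum of the amplitudes $\sum 1/\gamma$, which diverges by hypothesis. Combined with the fact that the support of a sum of independent symmetric random variables, each supported on a disk, is the disk of the summed radii, I conclude the support is all of $\R$ in the one-dimensional difference, hence all of the hyperplane.

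**The main obstacle** is making the independence and support claims fully rigorous in the presence of the non-self-sufficient zeros in $\Gamma(q)\setminus\Gamma^S(q)$. Those zeros may satisfy arbitrary $\Q$-linear relations among themselves and could in principle conspire; I must show that their contribution, however correlated, cannot \emph{shrink} the support contributed by the self-sufficient zeros. The clean way to do this is a conditioning argument: condition on the values of all the non-self-sufficient phases, and observe that, by self-sufficiency, the joint distribution of the self-sufficient phases $\{U_\gamma\}_{\gamma\in\Gamma^S(\chi),\,\chi(a)\ne\pm\chi(b)}$ is still the uniform product measure on the product of circles, \emph{independent} of the conditioning. Hence the conditional support contains a translate of the sum-of-disks set of radii $2/(\frac14+\gamma^2)^{1/2}\asymp 1/\gamma$; since this set is all of $\R^2$ (projected appropriately) when $\sum 1/\gamma=\infty$, every conditional distribution — and therefore the unconditional one — has full support. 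A secondary technical point is the passage between the mean-square approximation of $E(e^Y)$ by its truncation and pointwise/support statements about $\nu$; this is handled by a now-routine argument (as in Rubinstein--Sarnak and subsequent work such as \cite{MaNg}) showing that $L^2$-convergence of the truncations plus tightness yields convergence in distribution, and that the support of the limit contains the ``support'' built from any finite sub-collection of independent circle terms. I expect parts (a) and (b) to follow from the same machinery with ``full support'' weakened to ``support with $\ge 3$ nontrivial directions'' (enough for the projection onto $\cS$ to have positive measure by a parity/symmetry argument) and ``support containing a ball of radius depending on $W(q)$ around the origin'' respectively, with $W(q)$ emerging explicitly from the size of the mean term $c_q(a)-c_q(b)$ that must be overcome.
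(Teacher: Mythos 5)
Your plan for parts (b) and (c) has the same shape as the paper's proof: split the ordinates into a self-sufficient family (whose limiting phases form a product of independent uniform circles, by a Kronecker--Weyl/relative-independence argument) and the rest; use the ``robot-arm'' convexity fact that a sum of independent uniform circles of radii $\lambda_\gamma\asymp 1/\gamma$, with $\max\lambda_\gamma$ bounded and $\sum\lambda_\gamma$ large, can be steered to hit any target in a disc of radius about $\sum\lambda_\gamma$; then note that convolving/conditioning against the remaining, possibly correlated, zeros only translates and spreads the support. The paper implements your conditioning idea as a convolution decomposition $\mu=\mu^A*\mu^B$, constructing $\mu^B$ via tightness of truncations and L\'evy's continuity theorem rather than literal conditioning on an infinite collection of phases, and then needs a concrete tail bound (an $\R^r$ Chebyshev inequality plus Rubinstein--Sarnak's Gaussian-type tail for $\mu$) to pin down where $\mu^B$ has mass; you allude to these but do not see that they are the source of the explicit constant $W(q)$ in part (b), which does \emph{not} simply reduce to overcoming $c_q(a)-c_q(b)$. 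Also, your explanation of why the condition in (c) reads $\chi(a)\ne\pm\chi(b)$ is off: it is not about the $\chi\leftrightarrow\overline\chi$ pairing collapsing contributions, but simply that when $\chi(a)=\pm\chi(b)$ the real vectors $(\Re\chi(a),\Re\chi(b))$ and $(\Im\chi(a),\Im\chi(b))$ become proportional, so that character's contribution is confined to a line in $\R^2$ and cannot help fill a two-dimensional neighbourhood.

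The genuine gap is part (a). ``Weakly inclusive'' asserts only that the logarithmic density \emph{exists}; your strategy of showing the distribution has ``$\ge 3$ nontrivial directions'' is a statement about positivity of support and does not rule out the possibility that the limiting distribution concentrates positive mass on the diagonal $\{x_1=x_2\}$, in which case the density need not exist at all. The paper's argument is an absolute-continuity argument: for the scalar random variable built from $\pi(x;q,a)-\pi(x;q,b)$, the characteristic function is a product of Bessel factors, and three self-sufficient ordinates yield three nontrivial factors, hence a decay $|\widehat{\underline\mu}(t)|\ll|t|^{-3/2}$, which is integrable on $\R$ and therefore gives a density function by Fourier inversion. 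Absolute continuity with respect to one-dimensional Lebesgue measure then assigns zero mass to the boundary point, which is what makes the log-density exist. This mechanism --- and in particular why the threshold is three, not one or two --- is entirely missing from your sketch, so (a) does not follow from what you have written.
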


The total number of zeros of Dirichlet $L$-functions\mod q in the critical strip whose imaginary parts are between 0 and $T$ is asymptotic to a constant times $T\log T$~\cite[Corollary 14.7]{MV}. Theorem~\ref{two-way theorem} reveals that even if the set of self-sufficient ordinates is so thin that there are only $\ep T(\log T)^{-1}$ of them up to height $T$, we can still conclude that a two-way prime number race is inclusive. Note that we do require, in parts~(a) and~(b) of the theorem, the condition that the ordinates are associated with characters satisfying $\chi(a)\ne\chi(b)$; the reader might find this restriction intuitive after recalling the formula
\begin{equation}
\pi(x;q,a) - \pi(x;q,b) = \frac1{\phi(q)} \sum_{\chi\mod q} \big( \overline{\chi}(a)-\overline{\chi}(b) \big) \pi(x,\chi),
\label{which chis matter}
\end{equation}
in which the zeros of those $L(s,\chi)$ for which $\chi(a)=\chi(b)$ never appear when the explicit formula for $\pi(x,\chi)$ is inserted. Part~(c) of the theorem will be deduced from Theorem~\ref{inclusive theorem}(c) below, at which point the slightly different condition $\chi(a)\ne\pm\chi(b)$ wil be explained (the more general sufficient condition is more complicated in this case).

\begin{remark}
The methods of Ingham~\cite{In2}, as elaborated on by Kaczorowski~\cite{KaIV}, can be shown to imply that the set of $x$ for which $\pi(x;q,a)>\pi(x;q,b)$ has positive lower logarithmic density, assuming GRH and a weaker linear independence condition than our part (b)---namely that there exists a linearly independent set $\cY$ of (not necessarily self-sufficient) positive ordinates of zeros of Dirichlet $L$-functions\mod q, among those for which $\chi(a)\ne\chi(b)$, for which $\sum_{\gamma\in \cY} 1/\gamma$ is sufficiently large in terms of~$q$. However, that method does not address the property of being strongly inclusive (nor can it establish that the logarithmic densities exist).
\end{remark}

We can formulate races involving any $r$ functions, not just functions of the form $\pi(x;q,a)$. Restricting to $r=2$, for example, we say that the race between two functions $f(x)$ and $g(x)$ is exhaustive if the function $f(x)-g(x)$ has arbitrarily large sign changes, is weakly inclusive if the logarithmic densities of the sets $\{x>0\colon f(x)>g(x)\}$ and $\{x>0\colon g(x)>f(x)\}$ exist, is inclusive if those logarithmic densities are positive, and is strongly inclusive if a suitably normalized version of $f(x)-g(x)$ lies in any subinterval of~$\R$ with positive logarithmic density. Our methods apply equally well to other two-contestant prime number races, as the following theorem indicates. Let $\li(x) = \int_2^\infty {dt}/{\log t}$ denote the usual logarithmic integral, and note that $\Gamma^S(\chi_0)$ denotes the set of self-sufficient ordinates of zeros of the Riemann zeta function (which is the Dirichlet $L$-function associated to the principal character~$\chi_0$ for $q=1$).

\begin{theorem}
\label{pili theorem}
Assume RH.
\begin{enumerate}
\item If the set $\Gamma^S(\chi_0)$ has at least three elements, then the race between $\pi(x)$ and $\li(x)$ is weakly inclusive.
\item There exists a constant $W(1)$ such that if
$
\sum_{\gamma\in\Gamma^S(\chi_0)} \frac1\gamma > W(1)
$, then the race between $\pi(x)$ and $\li(x)$ is inclusive.
\item If the sum
$
\sum_{\gamma\in\Gamma^S(\chi_0)} \frac1\gamma
$
diverges, then the race between $\pi(x)$ and $\li(x)$ is strongly inclusive.
\end{enumerate}
\end{theorem}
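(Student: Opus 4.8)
The plan is to derive Theorem~\ref{pili theorem} as the case $q=1$ of our general result Theorem~\ref{inclusive theorem}, so that the only work specific to this race is recording the appropriate explicit formula. The competition between $\pi(x)$ and $\li(x)$ is a two-contestant race governed by the difference $\pi(x)-\li(x)$, which under RH is controlled by the zeros of $\zeta(s)=L(s,\chi_0)$ in just the way that the difference~\eqref{which chis matter} is controlled by the zeros of those $L(s,\chi)$ with $\chi(a)\ne\chi(b)$; for $q=1$ there is a single relevant character $\chi_0$ and \emph{every} ordinate of $\zeta$ is ``relevant'' (the coefficient $-1/(\tfrac12+i\gamma)$ never vanishes, and the $\chi(a)\ne\pm\chi(b)$ distinction of Theorem~\ref{two-way theorem} collapses). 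Starting from the explicit formula for $\psi(x)-x$ under RH, expanding $\pi$ in terms of $\psi$, and summing by parts, one obtains, after multiplying by $(\log x)/\sqrt x$ and truncating the sum over zeros at a slowly growing height $T=T(x)$,
\[
\frac{\log x}{\sqrt x}\bigl(\pi(x)-\li(x)\bigr)=-1-2\,\mathrm{Re}\!\!\sum_{0<\gamma\le T}\frac{x^{i\gamma}}{\tfrac12+i\gamma}+(\text{negligible for logarithmic density}).
\]
The bias $-1$ comes from the $-\tfrac12\li(x^{1/2})$ term in the expansion of $\pi$, and the coefficient of $x^{i\gamma}$ has modulus $2/\sqrt{\tfrac14+\gamma^2}\asymp 1/\gamma$; this is precisely the input fed into Theorem~\ref{inclusive theorem}.

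Next I would invoke the Kronecker--Weyl equidistribution theorem, exactly as in the proof of Theorem~\ref{inclusive theorem}: it produces a limiting measure $\nu$ on $\R$---the limiting logarithmic distribution of $\tfrac{\log x}{\sqrt x}(\pi(x)-\li(x))$, which exists because $\sum_\gamma\gamma^{-2}<\infty$---and realizes it as a convolution $\nu=\delta_{-1}*\nu_S*\nu_{\mathrm{rest}}$. Here $\nu_S$ is the law of $\sum_{\gamma\in\Gamma^S(\chi_0)}2\,\mathrm{Re}\bigl(Z_\gamma/(\tfrac12+i\gamma)\bigr)$ with the $Z_\gamma$ independent and uniform on the unit circle, a sum of independent random variables the $\gamma$-th of which has an arcsine law on an interval of length $4/\sqrt{\tfrac14+\gamma^2}\asymp 1/\gamma$ (the series converges almost surely since $\sum_\gamma\gamma^{-2}<\infty$), while $\nu_{\mathrm{rest}}$---coming from the non-self-sufficient ordinates and whatever $\Q$-linear relations they satisfy---is a pushforward of Haar measure on a sub-torus, of which I would use only that it is a Borel probability measure symmetric about $0$.

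The three conclusions then fall out of elementary properties of $\nu_S$. For part~(a): if $\Gamma^S(\chi_0)$ has at least three elements, then $\nu_S$, hence $\nu=\delta_{-1}*\nu_S*\nu_{\mathrm{rest}}$, is absolutely continuous, so in particular $\nu(\{0\})=0$; since $\{x\colon\pi(x)>\li(x)\}$ and $\{x\colon\tfrac{\log x}{\sqrt x}(\pi(x)-\li(x))>0\}$ differ by a set of logarithmic density $0$, the logarithmic density of the former exists, and likewise for the reverse inequality, so the race is weakly inclusive. For part~(b): choose $W(1)$ so that $\sum_{\gamma\in\Gamma^S(\chi_0)}1/\gamma>W(1)$ forces $\sum_{\gamma\in\Gamma^S(\chi_0)}2/\sqrt{\tfrac14+\gamma^2}>1$; then $\nu_S$ is supported on an interval of half-length exceeding $1$, so the support of $\delta_{-1}*\nu_S$ contains a neighbourhood of $0$, and since $\nu_S$ has strictly positive density throughout the interior of its support while convolving with $\nu_{\mathrm{rest}}$ only enlarges the support, $\nu$ puts positive mass on both $(0,\infty)$ and $(-\infty,0)$, so the race is inclusive. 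For part~(c): if $\sum_{\gamma\in\Gamma^S(\chi_0)}1/\gamma$ diverges, then the lengths of the arcsine intervals have divergent sum, so $\nu_S$---an almost surely convergent sum of independent non-degenerate variables whose supports are intervals of divergent total length---has support all of $\R$ with everywhere-positive density; the same then holds for $\nu$, which is exactly the assertion that the race between $\pi(x)$ and $\li(x)$ is strongly inclusive, the ``relevant hyperplane'' of Definition~\ref{inclusive def} being all of $\R$ when $r=2$.

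The main obstacle---indeed the only step not obtained verbatim from Theorem~\ref{inclusive theorem}---is the bookkeeping in the first paragraph: one must verify carefully that the RH explicit formula for $\pi(x)-\li(x)$ genuinely fits the hypothesis template of Theorem~\ref{inclusive theorem}, namely that the secondary terms (the prime-power corrections beyond the retained $-\tfrac12\li(x^{1/2})$, and the error from truncating the $\gamma$-sum at height $T(x)$) are negligible for purposes of logarithmic density, and that the coefficient scale $|1/\rho|$ is comparable to the $1/\gamma$ appearing in the hypotheses, so that the size and divergence conditions transfer without loss. This is entirely routine and is essentially carried out already in the modulus-$q$ case; if Theorem~\ref{inclusive theorem} is formally stated only for $q$ in a restricted range, one simply re-runs its proof word for word with $\Gamma(\chi_0)$ and $\Gamma^S(\chi_0)$ in place of $\Gamma(q)$ and $\Gamma^S(q)$. (I would also note that for $q=1$ a single self-sufficient ordinate already suffices for part~(a), but the uniform three-element hypothesis is the one inherited from Theorem~\ref{inclusive theorem}.)
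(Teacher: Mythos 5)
Your treatment of part~(a) matches the paper's proof: split the limiting logarithmic distribution $\mu_\pi$ of $\tfrac{\log x}{\sqrt x}(\pi(x)-\li(x))$ as a convolution $\mu_\pi=\mu_\pi^R*\mu_\pi^N$, with $\mu_\pi^R$ built from the ordinates in $\Gamma^S(\chi_0)$, and use three of them to get $|\widehat\mu_\pi^R(t)|\ll\min\{1,|t|^{-3/2}\}$, hence absolute continuity. (Your side remark that a single self-sufficient ordinate would already suffice for part~(a) is not right: one ordinate only gives $|\widehat\mu_\pi^R(t)|\asymp|t|^{-1/2}$ and two give $|t|^{-1}$, neither of which is integrable over~$\R$.)

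For parts~(b) and~(c), however, there is a genuine gap. You declare that you will use ``only that $\nu_{\mathrm{rest}}$ is a Borel probability measure symmetric about~$0$,'' but that symmetry is not established and has no reason to hold. If, hypothetically, three non-self-sufficient ordinates satisfy $\gamma_1+\gamma_2-\gamma_3=0$, the limiting subtorus $\{\zeta_1+\zeta_2-\zeta_3\in\Z\}$ is not preserved by the shift $\zeta\mapsto\zeta+(\tfrac12,\tfrac12,\tfrac12)$ that would negate $\Psi_2$, and the pushforward law of $2\Re\sum_j z_j e^{2\pi i\zeta_j}$ need not be symmetric. Since the hypothesis of part~(b) constrains only the self-sufficient ordinates, you have no {\it a priori} control over where $\nu_{\mathrm{rest}}$ places its mass; and your threshold $W(1)$ --- chosen only so that $\supp(\delta_{-1}*\nu_S)$ is a neighbourhood of the origin of half-length $>1$ --- is not large enough if $\nu_{\mathrm{rest}}$ sits far from~$0$. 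The paper closes this gap quantitatively via Proposition~\ref{has mass somewhere small}, which uses Rubinstein and Sarnak's exponential tail bound $\mu_\pi(\cB_V^c)\le c_1 e^{-c_2\sqrt V}$ to locate a point~$n$ within a fixed radius of~$0$ where $\mu_\pi^N$ has mass, and then sets $W(1)=\max\{8,|2-n|\}+\tfrac14\sum_\gamma\gamma^{-3}$ so that the robot-arm sum $\sum_{\gamma\le T}2/\sqrt{\tfrac14+\gamma^2}$ can reach $2-n$ rather than merely $1$; Lemma~\ref{robot variable lemma} and a Chebyshev bound then finish. Your appeal for part~(c) to ``support all of $\R$ with everywhere-positive density'' is the right intuition but, like the ``strictly positive density throughout the interior of its support'' claim in~(b), is asserted rather than proved; the paper avoids needing it by applying Lemma~\ref{robot variable lemma} directly to any target $y-n$, which the divergence of $\sum 1/\gamma$ makes reachable for arbitrary~$y$.
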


\noindent Part (a) says, in other words, that if $\zeta(s)$ has at least three self-sufficient zeros, then the logarithmic density of the sets $\{x>0\colon\pi(x)>\li(x)\}$ and $\{x>0\colon\pi(x)<\li(x)\}$ exist. Our methods also establish variants of Theorem~\ref{pili theorem} for primes in arithmetic progressions (see Theorem~\ref{pili theorem for APs}).

We move now to prime number races with more than two contestants.
To state our next result, we need to introduce some additional terminology.

\begin{definition} \label{robustdefn}
For any positive integer~$k$, we say that $\chi$ is {\em $k$-sturdy} if $\#\Gamma^S(\chi) \ge k$.
For any nonnegative real number~$W$, we say that $\chi$ is {\em $W$-robust} if $\sum_{\gamma\in \Gamma^S(\chi)} 1/\gamma \ge W$.
We say that $\chi$ is {\em robust} if $\sum_{\gamma\in \Gamma^S(\chi)} 1/\gamma$ diverges.
\end{definition}

\begin{remark}
We reiterate that the definition of self-sufficient, and hence the definitions of $k$-sturdy and robust, depend upon the chosen conductor~$q$. If $\chi^*$ is a character\mod q and $\chi$ is the character\mod{q^2} induced by $\chi^*$, for example, then $L(s,\chi^*)$ and $L(s,\chi)$ are exactly the same function with exactly the same zeros; and yet a particular ordinate $\gamma\in\Gamma(\chi^*) = \Gamma(\chi)$ might be self-sufficient\mod q but not self-sufficient\mod{q^2} (for example, if some primitive character\mod{q^2} also had $\gamma$ as the ordinate of one of its zeros). In our setting, however, the conductor $q$ will always remain fixed.
\end{remark}

\begin{theorem} \label{all-way theorem}
Assume GRH. Let $q\ge3$ be an integer.
\begin{enumerate}
\item If every nonprincipal character $\chi\mod q$ is $(2\phi(q)+1)$-sturdy, then every prime number race\mod q, including the full $\phi(q)$-way race, is weakly inclusive.
\item There exists a constant $W(q)$ such that if every nonprincipal character $\chi\mod q$ is $W(q)$-robust, then every prime number race\mod q, including the full $\phi(q)$-way race, is inclusive.
\item If every nonprincipal character $\chi\mod q$ is robust, then every prime number race\mod q, including the full $\phi(q)$-way race, is strongly inclusive.
\end{enumerate}
\end{theorem}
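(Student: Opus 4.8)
The plan is to read off all three parts from the limiting distribution of the normalized error vector. By the explicit formula together with the Kronecker--Weyl equidistribution theorem (the machinery developed in the earlier sections), under GRH the map $y\mapsto E(e^y)$ has a limiting distribution $\mu$ on $\R^r$, equal to the law of
\[
\bX = \bv_0 + \sum_{\chi\ne\chi_0}\mathrm{Re}\bigl(\overline\chi(\ba)\,Z_\chi\bigr), \qquad Z_\chi = \sum_{\gamma\in\Gamma(\chi)} b_\gamma\,e^{2\pi i\Theta_\gamma}, \qquad b_\gamma = \frac{2}{\lvert\tfrac12+i\gamma\rvert},
\]
where $\overline\chi(\ba)=(\overline\chi(a_1),\dots,\overline\chi(a_r))\in\C^r$, $\bv_0\in\R^r$ is a fixed vector with $\lVert\bv_0\rVert=O_q(1)$ (and $\sum_j(\bv_0)_j=0$ when $r=\phi(q)$), and $(\Theta_\gamma)_{\gamma\in\Gamma(q)}$ is distributed as Haar measure on the closure $\mathbb{T}\subseteq\prod_{\gamma\in\Gamma(q)}\R/\Z$ of the one-parameter subgroup $\{(\gamma y)_\gamma:y\in\R\}$. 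The hypotheses enter only through their effect on $\mathbb{T}$: a self-sufficient ordinate is, by Definition~\ref{self-sufficient def}, a coloop of the $\Q$-linear matroid $\Gamma(q)$ and so occurs in no nontrivial $\Q$-relation among $\Gamma(q)$; hence the coordinates $(\Theta_\gamma)_{\gamma\in\Gamma^S(q)}$ are mutually independent, uniform on $\R/\Z$, and independent of all remaining phases. Moreover each element of $\Gamma^S(q)$ has multiplicity one in $\Gamma(q)$ and is therefore the ordinate of a zero of a unique $L(s,\chi)$, so that $\Gamma^S(q)=\bigsqcup_{\chi\ne\chi_0}\Gamma^S(\chi)$.

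For part~(c) the goal is to show $\mathrm{supp}(\mu)=H$, where $H=\R^r$ when $r<\phi(q)$ and $H=\{\bx\in\R^r:x_1+\cdots+x_r=0\}$ when $r=\phi(q)$. Split $Z_\chi=Z_\chi^S+Z_\chi^{NS}$ into its self-sufficient and non-self-sufficient parts, so that $\bX=\bv_0+\sum_{\chi\ne\chi_0}\mathrm{Re}(\overline\chi(\ba)Z_\chi^S)+R$ with $R$ independent of the mutually independent family $\{Z_\chi^S\}$. For a complex character $\chi$, grouping $\chi$ with $\overline\chi$ produces the independent random Fourier series $Z_\chi^S+\overline{Z_{\overline\chi}^S}$ with radii $b_\gamma$, $\gamma\in\Gamma^S(\chi)\cup\Gamma^S(\overline\chi)$; since $\chi$ is robust, $\sum b_\gamma=\infty$, and the standard probabilistic fact that a convergent random series $\sum_n c_n e^{2\pi i\Theta_n}$ with $\sum c_n^2<\infty$ and $\sum c_n=\infty$ has support all of $\C$ (the same lemma underlying Theorem~\ref{two-way theorem}) shows that this pair contributes the full real $2$-plane $\mathrm{span}_\R\{\mathrm{Re}\,\overline\chi(\ba),\mathrm{Im}\,\overline\chi(\ba)\}$; for a real character, robustness makes $\mathrm{Re}(Z_\chi^S)$ supported on all of $\R$, contributing the line $\R\cdot\chi(\ba)$. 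Using $\mathrm{supp}(A+B)=\overline{\mathrm{supp}(A)+\mathrm{supp}(B)}$ for independent summands, $\mathrm{supp}(\mu)$ is the closure of a translate of $V+C$, where $C$ is some nonempty set and $V$ is the \emph{linear} subspace obtained by summing all of these planes and lines. Complexifying, $V\otimes_\R\C=\mathrm{span}_\C\{\chi(\ba):\chi\ne\chi_0\}$, and a short argument from the orthogonality relations for Dirichlet characters (if $\sum_j c_j\overline\chi(a_j)=0$ for every nonprincipal $\chi$ then the function $a_j\mapsto c_j$, extended by $0$, is constant) identifies this with $\C^r$ when $r<\phi(q)$ and with $\{\bu:\sum_ju_j=0\}$ when $r=\phi(q)$; in either case $V=H$. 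Since $H$ is a subspace and $\bX\in H$ almost surely, $\mathrm{supp}(\mu)=H$. Finally, conditioning on all phases but one self-sufficient $\Theta_{\gamma_0}$ expresses $\bX$ as a fixed point plus a non-degenerate ellipse traced at uniform speed, which meets any given sphere in finitely many points; hence $\mu(\partial\cB)=0$ for every ball $\cB$, so weak convergence gives that the logarithmic density of $\{x:E(x)\in\cB\}$ exists and equals $\mu(\cB)$, which is positive whenever $\cB$ is open and meets $H=\mathrm{supp}(\mu)$. That is precisely strong inclusiveness.

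Parts (a) and (b) are lighter. The limiting distribution $\mu$ exists already under GRH, so weak inclusiveness reduces to $\mu(\partial\cS)=0$ for each permuted wedge $\cS$; since $\partial\cS\subseteq\bigcup_{i\ne j}\{x_i=x_j\}$ it suffices that $\mathbb{P}(X_i=X_j)=0$ for all $i\ne j$. The coefficient of a free phase $\Theta_\gamma$, $\gamma\in\Gamma^S(\chi)$, in $X_i-X_j$ is nonzero exactly when $\chi(a_i)\ne\chi(a_j)$, and characters separate residues, so a single self-sufficient ordinate on a suitable character already makes $X_i-X_j$ atomless; the hypothesis that every nonprincipal character is $(2\phi(q)+1)$-sturdy comfortably supplies this simultaneously for every pair, including for the measure restricted to the hyperplane that governs the $\phi(q)$-way race. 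For part~(b), since the modulus $q$ is fixed only $O_q(1)$ ordinates lie below any fixed height, so a $W$-robust character is automatically $k$-sturdy with $k\to\infty$ as $W\to\infty$; moreover the disk of radius $\gg_q W$ that each conjugate pair contributes shows $\mathrm{supp}(\mu)$ contains a ball in $H$ of radius $\gg_q W$ about a point at distance $O_q(1)$ from the origin. Choosing $W(q)$ large enough that this ball meets every permuted wedge (the required radius is $O_q(1)$) forces every relevant logarithmic density to be positive, and taking $W(q)$ also large enough to guarantee $(2\phi(q)+1)$-sturdiness secures their existence via part~(a).

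The main obstacle is the structural bridge underpinning everything in paragraph~1: turning ``$\gamma$ is a coloop of the matroid $\Gamma(q)$'' into ``the $\Theta_\gamma$-coordinate of Haar measure on $\mathbb{T}$ splits off as an independent uniform variable'', and doing so compatibly with the pairing of a complex character $\chi$ with $\overline\chi$ when one passes from the $\C^r$-valued spectral expansion to the $\R^r$-valued error term $E(x)$ --- in particular checking that $\Gamma^S(\chi)$ and $\Gamma^S(\overline\chi)$ are disjoint and that their joint phases remain independent uniforms after the conjugation that links the zeros of $L(s,\chi)$ and $L(s,\overline\chi)$. Once that independence and the bookkeeping of conjugate pairs are in place, the probabilistic support lemma and the character-table linear algebra are comparatively routine, and the passage from $\mathrm{supp}(\mu)=H$ and $\mu(\partial\cB)=0$ to the stated density statements is standard.
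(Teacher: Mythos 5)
Your strategy is conceptually the same as the paper's: decompose the limiting distribution $\mu$ via the Kronecker--Weyl machinery into a piece governed by the self-sufficient ordinates (whose phases are jointly independent uniforms) and a residual piece, show existence of the logarithmic densities by showing $\mu$ assigns no mass to the diagonals $\{x_i=x_j\}$ (respectively spheres for~(c)), and show positivity by a ``robot-arm'' support argument that lets the self-sufficient part reach any target whenever $\sum_\gamma 1/\gamma$ is large. The paper organizes this through an intermediate statement (Theorem~\ref{inclusive theorem} and Corollary~\ref{inclusive cor}) and deduces Theorem~\ref{all-way theorem} in one line via orthogonality of characters, but the underlying mathematics is the one you describe.

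There is, however, a genuine gap in the very first step of your write-up, and it is one the paper explicitly singles out and works around. You assert that $\mu$ equals the law of the \emph{full} random series $\bX=\bv_0+\sum_{\chi\ne\chi_0}\Re(\overline\chi(\ba)Z_\chi)$ with $(\Theta_\gamma)_{\gamma\in\Gamma(q)}$ distributed as Haar measure on the closure $\T$ of the ray. But without any independence hypothesis on the non-self-sufficient ordinates, it is not clear that the infinite sum over $\Gamma(q)\setminus\Gamma^S(q)$ converges almost surely under that Haar measure (Kolmogorov's criterion needs independence, and the dependencies induced by the rational relations among those ordinates are uncontrolled). The paper flags precisely this in Section~\ref{sums of rvs section}, remarking that ``it is not even clear \emph{a priori} that the potentially infinite sum in the proposed definition of $X^B$ converges almost surely,'' and therefore does \emph{not} identify $\mu$ with the law of a single random vector. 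Instead it proves $\mu=\mu^A*\mu^B$ with $\mu^A$ the law of only the (independent, hence a.s.\ convergent) self-sufficient sum and $\mu^B$ an abstract measure produced by a tightness/L\'evy argument (Proposition~\ref{existence of muN prop}). Your later uses of $\mu$ only require the factorization $\mu=\mu^A*\mu^B$, not the explicit law of the residual piece, so the gap is repairable; but as written the foundational claim is stronger than what is available.

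A second point, which you acknowledge as the ``main obstacle,'' is the bridge from ``$\gamma$ is a coloop of the matroid $\Gamma(q)$'' to ``$\Theta_\gamma$ is a free uniform coordinate of $\T$.'' You are right that this is the load-bearing step; the paper establishes it (Lemma~\ref{kwconvolution} and Lemma~\ref{relatively independent lemma Devin} in Appendix~\ref{KW appendix}, via relative independence and a direct-product decomposition of the limiting subtorus), but in a blind proposal this cannot be taken as given, and you have not sketched how to prove it.

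One genuine difference worth noting: for part~(a) you avoid absolute continuity altogether and instead argue atomlessness of each $X_i-X_j$ from a single free arcsine-distributed phase. This is lighter than the paper's route (which establishes integrability of $\widehat\mu_k^S$ using at least $2r{+}1$ self-sufficient zeros per character and Bessel decay, then applies Lemma~\ref{conv vanish subsp}); your shortcut is valid for the weaker conclusion in~(a), since $\mu(\partial\cS)\le\sum_j\mathbb P(X_j=X_{j+1})$ and convolving an atomless coordinate with anything preserves atomlessness. Similarly your part~(c) finishes with a conditioning argument (ellipse meets sphere in finitely many points) in place of the paper's Lemma~\ref{conv vanish sphere}; both are correct. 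If you repair the first gap by replacing ``the law of $\bX$'' with ``a measure of the form $\mu^A*\mu^B$ where $\mu^A$ is the law of the self-sufficient sum'' and supply the torus-decomposition lemma, the argument goes through.
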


\noindent The irrelevance of the principal character $\chi_0$ is again intuitive upon examination of equation~\eqref{which chis matter}, in which the summand $\chi=\chi_0$ always vanishes.

Both Theorem~\ref{two-way theorem}(b)--(c) and Theorem~\ref{all-way theorem} are special cases of the following result, the proof of which is the main focus of the rest of this paper, and its corollary. (This following result implies a slightly weaker version of Theorem~\ref{two-way theorem}(a); we discuss our slightly stronger version, as well as Theorem~\ref{pili theorem} and its variants for arithmetic progressions, at the end of Section~\ref{log densities section}.) Let $\Re z$ and $\Im z$ denote the real and imaginary parts, respectively, of the complex number~$z$.

\begin{theorem}
\label{inclusive theorem}
Assume GRH. Let $r\ge2$ be an integer, and let $a_1,\dots,a_r$ be distinct reduced residues\mod q.
\begin{enumerate}
\item Suppose that the set of vectors
\begin{multline} \label{Re and Im vectors}
\big\{ (1,\dots,1) \big\} \cup \big\{ \big( \Re\chi(a_1), \dots, \Re\chi(a_r) \big) \colon \chi\mod q \text{ is $(2r+1)$-sturdy} \big\} \\
\cup \big\{ \big( \Im\chi(a_1), \dots, \Im\chi(a_r) \big) \colon \chi\mod q \text{ is $(2r+1)$-sturdy} \big\}
\end{multline}
spans the vector space $\R^r$. Then the prime number race among $a_1,\dots,a_r$\mod q is weakly inclusive.
\item There exists a constant $W(q)$ such that, if the set of vectors
\begin{multline*}
\big\{ (1,\dots,1) \big\} \cup \big\{ \big( \Re\chi(a_1), \dots, \Re\chi(a_r) \big) \colon \chi\mod q \text{ is $W(q)$-robust} \big\} \\
\cup \big\{ \big( \Im\chi(a_1), \dots, \Im\chi(a_r) \big) \colon \chi\mod q \text{ is $W(q)$-robust} \big\}
\end{multline*}
spans $\R^r$, then the prime number race among $a_1,\dots,a_r$\mod q is inclusive.
\item Suppose that the set of vectors
\begin{multline*}
\big\{ \big( \Re\chi(a_1), \dots, \Re\chi(a_r) \big) \colon \chi\mod q \text{ is robust},\, \chi\ne\chi_0 \big\} \\
\cup \big\{ \big( \Im\chi(a_1), \dots, \Im\chi(a_r) \big) \colon \chi\mod q \text{ is robust},\, \chi\ne\chi_0 \big\}
\end{multline*}
spans $\R^r$ when $r<\phi(q)$, or spans $\{x_1+\cdots+x_{\phi(q)}=0\}\subset\R^{\phi(q)}$ when $r=\phi(q)$. Then the prime number race among $a_1,\dots,a_r$\mod q is strongly inclusive.
\end{enumerate}
\end{theorem}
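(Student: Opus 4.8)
Our plan is to run the Rubinstein--Sarnak argument, feeding in the self-sufficient ordinates wherever they used LI.

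\emph{Step 1: explicit formula and limiting distribution.} Under GRH, the explicit formula for $\psi(x;q,a)$, partial summation, and the usual treatment of prime powers give, with $x=e^{y}$,
\begin{equation*}
E(e^{y}) \;\approx\; -\bc_{q}\;-\;\sum_{\gamma\in\Gamma(q)} 2\,\Re\!\Bigl(\frac{e^{i\gamma y}}{\tfrac12+i\gamma}\,\bv_{\gamma}\Bigr),
\end{equation*}
where $\bc_{q}=(C(q,a_{1}),\dots,C(q,a_{r}))$ with $C(q,a)=\#\{b\bmod q\colon b^{2}\equiv a\}-1$, where $\bv_{\gamma}=\sum_{\chi\ne\chi_{0}}m_{\chi}(\gamma)\,\overline{\chi}(\ba)\in\C^{r}$ with $\overline{\chi}(\ba):=(\overline{\chi}(a_{1}),\dots,\overline{\chi}(a_{r}))$ and $m_{\chi}(\gamma)$ the multiplicity of $\tfrac12+i\gamma$ as a zero of $L(s,\chi)$, and where ``$\approx$'' means equality up to a term negligible in the $L^{2}$-averaged sense in which our limits are taken (a fixed vector must be added when some $L(\tfrac12,\chi)=0$, changing nothing). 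The series on the right is only conditionally convergent, but since $\sum_{\gamma}\gamma^{-2}<\infty$ one may truncate it; the Kronecker--Weyl equidistribution theorem, applied to the closure $\cT$ of the one-parameter subgroup $\{(\gamma y)_{\gamma}\bmod 1\colon y\in\R\}$ of the relevant torus, then shows that $E(e^{y})$ has a limiting distribution $\mu$ in the sense of logarithmic density: $\mu$ is the law of
\begin{equation*}
\bX \;=\; -\bc_{q}\;-\;\sum_{\gamma\in\Gamma(q)}V_{\gamma},\qquad V_{\gamma}:=2\,\Re\!\Bigl(\frac{e^{i\Theta_{\gamma}}}{\tfrac12+i\gamma}\,\bv_{\gamma}\Bigr),
\end{equation*}
with $(\Theta_{\gamma})_{\gamma}$ distributed by normalized Haar measure on $\cT$ (this random series converges almost surely, being a sum of independent mean-zero terms with summable variances). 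Consequently, whenever $\mu(\partial U)=0$ the logarithmic density of $\{x\colon E(x)\in U\}$ exists and equals $\mu(U)$, and for open $U$ it is positive precisely when $U$ meets $\operatorname{supp}\mu$. Everything thus reduces to two properties of $\mu$: that it charges no wedge boundary (and, for part~(c), no sphere), and that its support is large.

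\emph{Step 2: self-sufficient ordinates act freely.} The set $\Gamma^{S}(q)$ is exactly the set of coloops of the $\Q$-linear matroid on $\Gamma(q)$, so no nontrivial integer relation among ordinates in $\Gamma(q)$ has a nonzero coefficient on any element of $\Gamma^{S}(q)$; hence on $\cT$ the coordinates $(\Theta_{\gamma})_{\gamma\in\Gamma^{S}(q)}$ are mutually independent, each uniform on $\R/\Z$, and independent of $(\Theta_{\gamma})_{\gamma\notin\Gamma^{S}(q)}$. For $\gamma\in\Gamma^{S}(q)$, self-sufficiency forces $\tfrac12+i\gamma$ to be a simple zero of $\prod_{\chi\ne\chi_{0}}L(s,\chi)$, so there is a unique nonprincipal $\chi_{\gamma}$ with $\gamma\in\Gamma^{S}(\chi_{\gamma})$ and $\bv_{\gamma}=\overline{\chi_{\gamma}}(\ba)$; as $\Theta_{\gamma}$ runs over the circle, $V_{\gamma}$ traces the set $R_{\gamma}\,\cE_{\chi_{\gamma}}$, where $R_{\gamma}=2/|\tfrac12+i\gamma|\asymp 1/\gamma$ and $\cE_{\chi}$ is the fixed origin-symmetric ellipse $\{\cos\phi\,\Re\overline{\chi}(\ba)-\sin\phi\,\Im\overline{\chi}(\ba)\colon\phi\in\R/\Z\}$ in the plane $P_{\chi}:=\spn_{\R}\{\Re\chi(\ba),\Im\chi(\ba)\}$ (a segment through $0$ if $\chi$ is real). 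We obtain an independent decomposition $\bX=\bX^{S}+\bX^{NS}$, where $\bX^{S}=-\sum_{\gamma\in\Gamma^{S}(q)}V_{\gamma}$ and $\bX^{NS}$ has mean $-\bc_{q}$ and variance $O_{q}(1)$.

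\emph{Step 3: regularity of $\mu$ (the densities exist).} For part~(a) we must show $\mu$ charges no coordinate hyperplane $\{x_{i}=x_{j}\}$, i.e.\ that $X_{i}-X_{j}=\langle\bX,e_{i}-e_{j}\rangle$ is non-atomic. The spanning hypothesis on the set~\eqref{Re and Im vectors} forces, for each $i\ne j$, some $(2r+1)$-sturdy $\chi$ with $\chi(a_{i})\ne\chi(a_{j})$ (the vector $e_{i}-e_{j}$ is orthogonal to $(1,\dots,1)$ but not to all of $\R^{r}$, so cannot be orthogonal to the whole spanning set); such a $\chi$ carries a self-sufficient $\gamma$, and $\langle V_{\gamma},e_{i}-e_{j}\rangle$ is a nonconstant multiple of $\cos(\Theta_{\gamma}+\mathrm{const})$ with $\Theta_{\gamma}$ free and uniform, so $X_{i}-X_{j}$ is (arcsine-type)$\,+\,$(independent), hence non-atomic. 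The count $2r+1$ furthermore upgrades this to absolute continuity of $\mu$: each $V_{\gamma}$ with $\gamma\in\Gamma^{S}(q)$ contributes to $\widehat{\mu}(\xi)$ an independent Bessel-type factor decaying like $|\xi|^{-1/2}$ when $\xi\not\perp P_{\chi_{\gamma}}$, so a $(2r+1)$-sturdy character active in the direction $\xi$ supplies $\ge 2r+1$ such factors and $\widehat{\mu}(\xi)\ll_{q}|\xi|^{-(2r+1)/2}$ with exponent $>r$; this yields $\widehat{\mu}\in L^{1}$ on the affine span of $\operatorname{supp}\mu$ (all of $\R^{r}$, or the hyperplane $\{x_{1}+\cdots+x_{\phi(q)}=0\}$ when $r=\phi(q)$), so $\mu$ is absolutely continuous there, charging no hyperplane and no sphere. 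This proves part~(a); under the stronger hypothesis of~(c) the same holds a fortiori (there robustness furnishes infinitely many such factors in a spanning family of planes), and under that of~(b) the non-atomicity of the $X_{i}-X_{j}$, which is all that ``inclusive'' demands for existence of the wedge densities, follows as above since $W(q)$-robust characters carry self-sufficient zeros.

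\emph{Step 4: support of $\mu$ (the densities are positive), and the main obstacle.} Since the $V_{\gamma}$ are independent, $\operatorname{supp}\bX^{S}$ contains the closure of the Minkowski sum $\sum_{\gamma\in\Gamma^{S}(q)}R_{\gamma}\cE_{\chi_{\gamma}}$; grouping by character and using convexity of $\cE_{\chi}$ gives $\sum_{\gamma\in\Gamma^{S}(\chi)}R_{\gamma}\cE_{\chi}=\bigl(\sum_{\gamma\in\Gamma^{S}(\chi)}R_{\gamma}\bigr)\cE_{\chi}$, which swells to all of $P_{\chi}$ as soon as $\chi$ is robust. Running over a family of robust characters whose $\Re$/$\Im$ vectors span $\R^{r}$ (respectively span $\{x_{1}+\cdots+x_{\phi(q)}=0\}$ when $r=\phi(q)$, where all $\bv_{\gamma}$ and $\bc_{q}$ lie), the planes $P_{\chi}$ jointly span, so $\operatorname{supp}\bX^{S}$, and hence $\operatorname{supp}\mu$, is the whole space (resp.\ that hyperplane); thus every centre $\bx$ lies in $\operatorname{supp}\mu$ and $\mu(\cB_{\rho}(\bx))>0$, proving~(c). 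For~(b), each $W(q)$-robust $\chi$ gives $\bigl(\sum_{\gamma\in\Gamma^{S}(\chi)}R_{\gamma}\bigr)\cE_{\chi}\supseteq c_{q}W(q)\cdot(\text{unit disk of }P_{\chi})$, so the spanning hypothesis makes $\operatorname{supp}\bX^{S}$ contain an origin-centred ball of radius $\gg_{q}W(q)$; taking $W(q)$ large enough in terms of $q$ alone (to dominate $|\bc_{q}|$, a standard deviation of $\bX^{NS}$, and the geometry of the finitely many possible spanning configurations) forces $\operatorname{supp}\mu=\operatorname{supp}\bX^{S}+\operatorname{supp}\bX^{NS}$ to meet every wedge of the form~\eqref{wedge definition} and each of its coordinate permutations, so those densities are positive. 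The heart of the difficulty lies in Step~1, in making the limiting distribution rigorous (uniform control under GRH of the tails of the conditionally convergent zero-sum, and the passage to the limit in Kronecker--Weyl), and in the characteristic-function estimate of Step~3, where the exact exponent forces the sturdiness count $2r+1$ and where the single direction $\xi=(1,\dots,1)$---covered in~\eqref{Re and Im vectors} only by the extra vector $(1,\dots,1)$---must be treated separately; Steps~2 and~4 are comparatively soft, turning on the matroid/Kronecker--Weyl bridge, the lemma that the support of an almost surely convergent sum of independent vector summands contains the closure of the Minkowski sum of their supports, and the identity $s\cE+t\cE=(s+t)\cE$, with the degenerate case $r=\phi(q)$ carried in parallel throughout.
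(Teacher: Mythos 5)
Your outline follows essentially the same road as the paper: decompose via Kronecker--Weyl and the coloop/relative-independence structure into a self-sufficient contribution and a remainder, obtain regularity from Bessel-factor decay of the characteristic function of the self-sufficient part, obtain positivity by showing that part's support swells as the robustness constant grows, and control the remainder crudely. You also correctly isolate the two most delicate points. But one step contains a genuine error, and the other is flagged without being resolved.

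The error is in Step~1 (and propagates to Step~2): you assert that the full random series $\bX=-\bc_q-\sum_{\gamma\in\Gamma(q)}V_\gamma$ converges almost surely ``being a sum of independent mean-zero terms with summable variances.'' The summands $V_\gamma$ with $\gamma\notin\Gamma^S(q)$ are \emph{not} independent: the phases $\Theta_\gamma$ of non-self-sufficient ordinates are jointly constrained by the very $\Q$-linear relations that make those ordinates non-self-sufficient, so they live on a proper subtorus, and the Kolmogorov--Khinchin a.s.-convergence criterion does not apply to that subsum. The paper explicitly names this obstacle (``it is not even clear \emph{a priori} that the potentially infinite sum in the proposed definition of $X^B$ converges almost surely'') and routes around it: only the self-sufficient series $X^A$ is realized as an honest random series (Lemma~\ref{nuSTweakconv}); the companion measure $\mu^B$ is produced abstractly, by dividing characteristic functions on a neighbourhood of the origin where the Bessel product is nonvanishing, using L\'evy continuity to deduce tightness of $(\mu_T^B)$, extracting a weakly convergent subsequence, and checking $\mu=\mu^A*\mu^B$ afterwards (Proposition~\ref{existence of muN prop}). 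Without a substitute for this compactness argument your $\bX^{NS}$ is not a well-defined object, and the statements you make about its law (mean $-\bc_q$, variance $O_q(1)$, participation in the support sum) are floating.

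The flagged-but-unresolved issue is the direction $(1,\dots,1)$ in Step~3. Your hypothesis only guarantees that the vectors $\Re\chi(\ba),\Im\chi(\ba)$ of sturdy characters \emph{together with} $(1,\dots,1)$ span $\R^r$; the active directions of the Bessel product may then span only a hyperplane $\cV_k^S$ not containing $(1,\dots,1)$, and $\widehat{\mu}_k^S(\xi)$ need not decay at all along $(1,\dots,1)$, so the conclusion ``$\widehat\mu\in L^1$ on the affine span of $\operatorname{supp}\mu$'' does not follow. The resolution the paper supplies is to introduce Lebesgue measure on the subspace $\cV_k^S$, prove that $\mu_k^S$ is absolutely continuous with respect to $\lambda_{\cV_k^S}$ (Corollary~\ref{abscontcor}), and then deduce that the convolution $\mu=\mu_k^S*\mu_k^N$ assigns zero mass to each boundary hyperplane $\{x_i=x_j\}$ because $\cV_k^S$ is never contained in any such hyperplane: the diagonal hyperplanes all contain $(1,\dots,1)$ while $\cV_k^S$, if proper, does not (Lemma~\ref{conv vanish subsp}; with the sphere analogue Lemma~\ref{conv vanish sphere} for part~(c)). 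This is exactly the ``separate treatment'' you acknowledged is needed, and it is not a formality---it is the reason the constant vector $(1,\dots,1)$ appears in the theorem's hypothesis at all.

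Beyond these two points the strategy is sound: the positivity argument of Step~4 using Minkowski sums of the ellipses $R_\gamma\cE_{\chi_\gamma}$ matches in spirit the paper's Lemma~\ref{robot arm lemma}/Lemma~\ref{robot variable lemma} combined with the cylinder/ball Propositions~\ref{omnipresent temp try} and~\ref{omnipresent temp try, strong}, and your observation that $W(q)$-robustness implies $(2r+1)$-sturdiness (making the regularity upgrade automatic in parts~(b) and~(c)) is precisely how the paper chains the assumptions, by taking $W(q)\ge(2\phi(q)+1)/\min\Gamma(q)$.
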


\noindent We point out that Theorem~\ref{two-way theorem}(c) follows from Theorem~\ref{inclusive theorem}(c) by the following argument: if the sum~\eqref{somebody's robust} diverges, then there exists at least one character $\chi\mod q$ with $\chi(a)\ne\pm\chi(b)$ that is robust. But note that $\chi(a)\ne\pm\chi(b)$ is equivalent to $\Im\big( \chi(a)\overline\chi(b) \big) \ne 0$ since the only real character values on reduced residues are $\pm1$; and a short calculation verifies that $\Im\big( \chi(a)\overline\chi(b) \big) \ne 0$ is equivalent to the linear independence of $(\Re\chi(a),\Re\chi(b))$ and $(\Im\chi(a),\Im\chi(b))$ over~$\R$.


A slightly simpler statement follows immediately from Theorem~\ref{inclusive theorem}:

\begin{cor}
\label{inclusive cor}
Assume GRH. Let $r\ge2$ be an integer, and let $a_1,\dots,a_r$ be distinct reduced residues\mod q.
\begin{enumerate}
\item Suppose that the set of vectors
\begin{equation*}
\big\{ (1,\dots,1) \big\} \cup \big\{ \big( \chi(a_1), \dots, \chi(a_r) \big) \colon \chi\mod q \text{ is $(2r+1)$-sturdy} \big\}
\end{equation*}
spans the vector space $\C^r$. Then the prime number race among $a_1,\dots,a_r$\mod q is weakly inclusive.
\item There exists a constant $W(q)$ such that, if the set of vectors
\begin{equation*}
\big\{ (1,\dots,1) \big\} \cup \big\{ \big( \chi(a_1), \dots, \chi(a_r) \big) \colon \chi\mod q \text{ is $W(q)$-robust} \big\}
\end{equation*}
spans $\C^r$, then the prime number race among $a_1,\dots,a_r$\mod q is inclusive.
\item Suppose that the set of vectors
\begin{equation*}
\big\{ \big( \chi(a_1), \dots, \chi(a_r) \big) \colon \chi\mod q \text{ is robust},\, \chi\ne\chi_0 \big\}
\end{equation*}
spans $\C^r$ when $r<\phi(q)$, or spans $\{z_1+\cdots+z_{\phi(q)}=0\} \subset \C^{\phi(q)}$ when $r=\phi(q)$. Then the prime number race among $a_1,\dots,a_r$\mod q is strongly inclusive.
\end{enumerate}
\end{cor}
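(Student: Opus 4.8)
The plan is to obtain Corollary~\ref{inclusive cor} as an immediate consequence of Theorem~\ref{inclusive theorem}, the only content being the linear-algebra observation that spanning $\C^r$ by a family of complex vectors forces the real and imaginary parts of those vectors to span $\R^r$. Concretely, in each of parts (a)--(c) let $S$ denote the relevant set of characters (the $(2r+1)$-sturdy ones in (a), the $W(q)$-robust ones in (b), and the robust $\chi\neq\chi_0$ in (c)), and for $\chi\in S$ write $\bv_\chi = (\chi(a_1),\dots,\chi(a_r)) = \bu_\chi + i\bw_\chi$ with $\bu_\chi = (\Re\chi(a_1),\dots,\Re\chi(a_r))$ and $\bw_\chi = (\Im\chi(a_1),\dots,\Im\chi(a_r))$ the vectors occurring in Theorem~\ref{inclusive theorem}; likewise $W(q)$ is taken to be the constant furnished by that theorem.

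First I would assume the spanning hypothesis of the appropriate part of Corollary~\ref{inclusive cor}, and take an arbitrary $\bx$ in the target real space (all of $\R^r$ in parts (a), (b), and in part (c) when $r<\phi(q)$; the hyperplane $\{x_1+\cdots+x_{\phi(q)}=0\}$ in part (c) when $r=\phi(q)$). Viewing $\bx$ inside the corresponding complex space, the hypothesis lets me write $\bx = c_0(1,\dots,1) + \sum_{\chi\in S}c_\chi\bv_\chi$ with $c_0,c_\chi\in\C$ (dropping the $c_0$ term in part (c)). Taking real parts coordinatewise, and using $\Re\!\big((u+iv)(\bp+i\bq)\big) = u\bp - v\bq$ for $u,v\in\R$ and $\bp,\bq\in\R^r$, gives
\[
\bx = \Re(c_0)\,(1,\dots,1) + \sum_{\chi\in S}\big(\Re(c_\chi)\,\bu_\chi - \Im(c_\chi)\,\bw_\chi\big),
\]
so $\bx$ lies in the real span of $\{(1,\dots,1)\}\cup\{\bu_\chi,\bw_\chi\colon\chi\in S\}$. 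As $\bx$ was arbitrary, this set spans $\R^r$ (or the real hyperplane), which is exactly the hypothesis of the corresponding part of Theorem~\ref{inclusive theorem}, and the conclusion follows. For the case $r=\phi(q)$ in part (c) I would additionally note that $\{a_1,\dots,a_{\phi(q)}\}$ is the full set of reduced residues\mod q, so $\sum_j\chi(a_j)=0$ for every $\chi\neq\chi_0$; hence each $\bv_\chi$, and therefore each of $\bu_\chi$ and $\bw_\chi$, lies in the hyperplane $\{x_1+\cdots+x_{\phi(q)}=0\}$, and the displayed identity shows these vectors span that hyperplane.

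I do not expect any genuine obstacle here. The two points worth stating carefully are that only one implication is needed---complex spanning implies real-plus-imaginary-part spanning---so there is no need for the character set $S$ to be closed under $\chi\mapsto\overline\chi$ (it generally is not, since $\Gamma^S(\chi)$ and $\Gamma^S(\overline\chi)$ need not have equal cardinality), and that in the $r=\phi(q)$ case one must verify that the real-parts argument stays inside the sum-zero hyperplane, which is what the orthogonality relation for characters provides.
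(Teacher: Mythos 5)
Your proposal is correct and matches the paper's intent: the paper derives Corollary~\ref{inclusive cor} directly from Theorem~\ref{inclusive theorem} as an "immediate" consequence, with the only content being exactly the linear-algebra reduction you give (complex spanning implies that the real and imaginary parts span the corresponding real space, via writing a real vector as a $\C$-combination and taking real parts). Your careful handling of the $r=\phi(q)$ hyperplane case and your remark that closure under $\chi\mapsto\overline\chi$ is not needed are both sound and consistent with the paper's setup.
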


Theorem~\ref{all-way theorem} follows from Corollary~\ref{inclusive cor} because the set $\big\{ \big( \chi(a_1), \dots, \chi(a_r) \big) \colon \chi\mod q \big\}$ always spans $\C^r$, by the orthogonality of Dirichlet characters (it is of course important here that $a_1,\dots,a_r$ are distinct modulo~$q$). Similarly, Theorem~\ref{two-way theorem}(b) follows from Corollary~\ref{inclusive cor}(b) because the divergence of the sum~\eqref{somebody's robust} is equivalent to the assertion that at least one character $\chi\mod q$ with $\chi(a)\ne\chi(b)$ is robust, whereupon the set $\big\{ (1,1), \big( \chi(a),\chi(b) \big) \big\}$ spans~$\C^2$.

\begin{remark}
Corollary~\ref{inclusive cor} is simpler to apply in practice, but Theorem~\ref{inclusive theorem} is indeed somewhat stronger. For example, take any three-way race modulo $5$ (say among $a_1,a_2,a_3$), and suppose that exactly one Dirichlet $L$-function\mod5 is robust, one that corresponds to a complex character $\chi\mod5$. Then one can check that
\[
\{ (1,1,1), (\Re\chi(a_1),\Re\chi(a_2),\Re\chi(a_3)), (\Im\chi(a_1),\Im\chi(a_2),\Im\chi(a_3))\}
\]
spans $\R^3$, and so Theorem~\ref{inclusive theorem}(b) tells us that this race is strongly inclusive; but we cannot reach that conclusion from Corollary~\ref{inclusive cor}, because $\{(1,1,1), (\chi(a_1),\chi(a_2),\chi(a_3)\}$ does not span $\C^3$.
\end{remark}

\begin{remark}
The method that we use to prove that prime number races are weakly inclusive actually yields, in every case, an additional conclusion as well: ``ties have density zero''. More precisely, every time we establish that a prime number race is weakly inclusive, we also establish the fact that every set of the form $\{x>0\colon \pi(x;q, a_j) = \pi(x;q,a_k)\}$ has logarithmic density zero. Equivalently, the $r!$ logarithmic densities of the sets satisfying the inequalities~\eqref{chain of inequalities}, corresponding to the $r!$ possible permutations, not only exist but sum to~$1$. Indeed, for any function $f(x)$ satisfying $f(x)=o(\sqrt x/\log x)$, our proofs that races are weakly inclusive actually show that the sets $\{x>0\colon |\pi(x;q, a_j) - \pi(x;q,a_k)| < f(x)\}$ have logarithmic density~$0$. Analogous comments apply to Theorem~\ref{pili theorem} and its variants for arithmetic progressions. Of course, races that are proved to be inclusive or strongly inclusive are certainly weakly inclusive as well and hence also have the property that ties have density zero.
\end{remark}

\begin{remark}  \label{variants of pi remark}
Up to this point, we have been using exclusively the function $\pi(x;q,a)$ that counts primes each with weight~$1$. Common weighted variants of this function are $\theta(x;q,a)$, which counts each relevant prime $p$ with weight $\log p$, and $\psi(x;q,a)$, which counts prime powers as well as primes via the von Mangoldt function $\Lambda(n)$. We remark that all of the theorems we prove herein also hold for prime number races using these weighted counting functions, that is, for inequalities of the form
\[
\theta(x;q, \sigma_1) > \dots > \theta(x;q, \sigma_r) \quad\text{and}\quad \psi(x;q, \sigma_1) > \dots > \psi(x;q, \sigma_r).
\]
By this we mean that if we replace every occurrence of $\pi$ with $\theta$ (or $\psi$) and every occurrence of $\li(x)$ with $x$, then all theorems in this paper remain valid. We comment on these variants at the end of Section~\ref{explicit formulae section}.
\end{remark}

In this paper, we are retaining GRH as a hypothesis but substantially weakening the linear independence hypothesis LI. One might also speculate whether it is possible to weaken or remove the assumption of GRH itself.
However, Ford and Konyagin~\cite{FK1,FK2} have shown that given a prime number race with at least three contestants, there exist specific points in the critical strip (with $\frac12 < \sigma < 1$) such that if Dirichlet $L$-functions\mod q have zeros at precisely those points, then the prime number race is not exhaustive (much less inclusive). Further joint work with Lamzouri~\cite{FKL} leads to a similar construction that would cause two-way prime number races to be not inclusive.
We have considered the problem of constructing analogous hypothetical configurations of zeros, satisfying GRH, that would force a prime number race to be not inclusive; the results of this paper show, however, that such configurations must necessarily be extremely complicated, in that 100\% of the zeros (of some Dirichlet $L$-functions modulo $q$, at least) would need be involved in linear combinations with one another.

To summarize our main goals: we need to establish all three parts of Theorem~\ref{inclusive theorem} and all three parts of Theorem~\ref{pili theorem}, as well as part~(a) of Theorem~\ref{two-way theorem}. (We have already deduced Theorem~\ref{two-way theorem}(c) and Corollary~\ref{inclusive cor} from Theorem~\ref{inclusive theorem}, and Theorem~\ref{two-way theorem}(b) and Theorem~\ref{all-way theorem} in turn follow from Corollary~\ref{inclusive cor}.) The~(a) parts of these results are established in Section~\ref{log densities section}, while their~(b) and~(c) parts are established in Section~\ref{cylinder section}. Furthermore, an analogue of Theorem~\ref{pili theorem} for the counting function of primes in a single arithmetic progression is given in Theorem~\ref{pili theorem for APs} below. In the next section we describe the notational conventions used throughout this paper, then give a mathematical outline of the structure of the proofs of our theorems; we conclude the next section by describing the contents of each remaining section.

\section{Notation, conventions, and structure of the proof}  \label{notation section}

Throughout this paper, we will fix a modulus $q\ge3$ and an integer $r$ in the range $2\le r\le\phi(q)$, and we will also fix integers $a_1,\dots,a_r$, all relatively prime to $q$, that represent $r$ distinct residue classes\mod q. Also throughout this paper, we will assume the generalized Riemann hypothesis for Dirichlet $L$-functions with conductor~$q$ (corresponding to both primitive and imprimitive characters, and thus including the Riemann zeta function, for example).

For functions $f(x)$ and $g(x)$ we interchangeably use the notations $f(x)=O(g(x))$ and $f(x) \ll g(x)$ and $g(x) \gg f(x)$ with their usual meanings, namely that there exist positive constants $x_0$ and $M$ such that $|f(x)| \le Mg(x)$ for all $x \ge x_0$. Since $q$ is fixed, the implicit constants in such expressions may depend on~$q$.

In mathematical expressions, we will use lowercase boldface letters such as $\bx$ to denote vectors, and uppercase boldface letters such as $\bM$ to denote matrices; we will also use uppercase calligraphic letters such as $\cB$ to denote sets.
The $n$-dimensional torus will be denoted $\mathbb{T}^n$ and the infinite dimensional torus 
will be denoted $\mathbb{T}^{\infty}$.

\begin{definition} \label{log dist def}
We say that a function $h\colon [1,\infty)\to\R^r$ possesses a {\em limiting logarithmic distribution} if there exists a probability measure $\mu$ on $\R^r$ such that 
\begin{equation}
  \lim_{x \to \infty} \bigg( \frac1{\log x} \int_1^x f(h(t)) \frac{dt}t \bigg) = \int_{\R^r} f(\bx) \,d \mu(\bx)
\end{equation}
for all bounded, continuous functions on~$\R^r$. A simple change of variables shows that the limiting logarithmic distribution~$\mu$ of~$h(t)$, when it exists, is the same as the usual limiting distribution of~$h(e^t)$:
\begin{equation}
  \lim_{y \to \infty} \bigg( \frac1y \int_0^y f(h(e^t)) \,dt \bigg) = \int_{\R^r} f(\bx) \,d \mu(\bx).
\end{equation}
\end{definition}

\begin{notation} \label{char fn and conv}
Given a probability measure $\mu$ on $\R^r$, we define its {\em characteristic function} (or Fourier transform) using the normalization
\begin{equation}
  \label{charfcn}
  \widehat\mu(\bt) = \int_{\R^r}  e^{i \bt \cdot \bx} \,d\mu(\bx)
\end{equation}
for $\bt  \in \R^r$. We also define the convolution $\mu*\nu$ of two probability measures $\mu$ and $\nu$ on $\R^r$ to be the measure on $\R^r$ satisfying
\begin{equation}  \label{convolution}
  (\mu*\nu)(\cZ) = \int_{\R^r} \int_{\R^r} \one_{\cZ}(\bx_1+\bx_2)  \,d\mu(\bx_1) \,d\nu(\bx_2)
\end{equation}
for any Borel subset $\cZ$ of~$\R^r$. Note that if if $\cX$ and $\cY$ are Borel subsets of $\R^r$ such that the pointwise sum $\cX+\cY$ is contained in~$\cZ$, then
\begin{equation}  \label{convolution lower bound}
  (\mu*\nu)(\cZ) \ge \int_{\R^r} \int_{\R^r} \one_{\cX}(\bx_1) \one_{\cY}(\bx_2)  \,d\mu(\bx_1) \,d\nu(\bx_2) = \mu(\cX) \nu(\cY),
\end{equation}
an inequality we will employ often.
\end{notation}

In the above definitions (and also in Appendix~\ref{probability appendix}), $\mu$ denotes a generic probability measure. However, for the rest of the paper, we will use $\mu$ only to denote a specific limiting distribution, defined momentarily in equation~\eqref{mu defined here}, which depends upon $q$ and $a_1,\dots,a_r$.

\subsection{Outline of the proofs} \label{outline section}

We now describe our approach to establishing Theorem~\ref{inclusive theorem} (from which Theorems~\ref{two-way theorem}(b), Theorem~\ref{all-way theorem}, and Corollary~\ref{inclusive cor} all follow), as well as the related Theorems~\ref{two-way theorem}(a) and~\ref{pili theorem} and the variant of the latter for arithmetic progressions (Theorem~\ref{pili theorem for APs}).

In their seminal paper~\cite{RS}, Rubinstein and Sarnak deduced from GRH that $E(x)$ (see equation~\eqref{Ey}) possesses a limiting logarithmic distribution $\mu$ on $\R^r$, so that
\begin{equation}  \label{mu defined here}
  \lim_{y \to \infty} \bigg( \frac1y \int_0^y f(E(e^t)) \,dt \bigg) = \int_{\R^r} f(\bx) \,d \mu(\bx)
\end{equation}
for any bounded continuous function~$f$. However, this by itself is not enough to show that the logarithmic density of the set $\{t\ge1\colon E(t)\in\cS\}$ exists: we would like to take $f(\bx)$ to be the indicator function $\one_\cS$ of the wedge $\cS$, since
\begin{equation}  \label{relevant delta}
\delta\big( \{t\ge1\colon E(t)\in\cS\} \big) = \lim_{y \to \infty} \bigg( \frac1y \int_0^y \one_\cS(E(e^t)) \,dt \bigg);
\end{equation}
but we cannot immediately do so since $\one_\cS$ is not continuous. (Indeed, this is the significant mathematical issue that must be addressed, to rule out the possibility that the distribution concentrates its mass on the boundary of the set~$\cS$.) By assuming LI, Rubinstein and Sarnak showed that $\mu$ is absolutely continuous with respect to Lebesgue measure on $\R^r$, and thus possesses a density function $g(\bx)$, meaning that $d\mu(\bx) = g(\bx)\,d\bx$. (We are lying slightly here and in the next 
few paragraphs for the purposes of exposition; at the end of this section we will own up to the lie.) This is already enough to show that the prime number race is weakly inclusive. Finally, from the decay of the characteristic function $\widehat\mu$, they conclude that the density function $g(\bx)$ is actually the restriction to real arguments of a function that is entire in each variable, which implies that the prime number race is strongly inclusive, essentially because entire functions cannot vanish on sets of positive measure.

The main innovation of this article is to prove similar results under much weaker assumptions on the linear independence of the ordinates of zeros of Dirichlet $L$-functions.

We begin by proving that the limiting logarithmic distribution $\mu$ described above can be written as a convolution $ \mu= \mu^R_W*\mu^N_W$ of two probability measures on $\R^r$
(see Proposition \ref{muconvolutionrobust}). Roughly speaking, $\mu^R_W$ corresponds to the self-sufficient zeros of the $W$-robust characters\mod q and $\mu^N_W$ to the rest of the zeros. Similarly, we can write $\mu=\mu_k^S*\mu_k^N$ where $\mu_k^S$ corresponds to the self-sufficient zeros of the $k$-sturdy characters
(see Proposition \ref{muconvolutionsturdy}). We accomplish this via a close look at the proof of the Kronecker--Weyl theorem, which says that a line of the form $\{ y(\xi_1, \ldots, \xi_m) \colon y \in \R \}$ inside the $m$-dimensional torus $\T^m$ is equidistributed in some subtorus~$\cA$ determined by the rational linear relations among the~$\xi_j$. We prove that if the set $\{\xi_1, \ldots, \xi_m\}$ can be partitioned into two subsets that are relatively independent, in the sense of Definition~\ref{relind} below, then the limiting subtorus decomposes as the direct sum of two smaller subtori; this decomposition allows our limiting logarithmic distributions to be written as convolutions.

Next, we show that $\mu_k^S$ is absolutely continuous with respect to Lebesgue measure on~$\R^r$. 
We do so in a similar way to Rubinstein and Sarnak, namely by showing that the characteristic function $\widehat\mu_k^S$ decays rapidly enough to be absolutely integrable. We then show that the convolution $\mu_k^S*\nu$ remains absolutely continuous for any probability measure $\nu$. In particular, $\mu=\mu_k^S*\mu_k^N$ is absolutely continuous with respect to Lebesgue measure on~$\R^r$, and we again conclude that the logarithmic density of the set $\{t\ge1\colon E(t)\in\cS\}$ exists, where $\cS$ is the wedge defined in equation~\eqref{wedge definition}.

Finally, we establish that the support of $\mu^R_W$ becomes, as~$W$ increases, large enough to include any prescribed point in~$\R^r$. It follows that the same is true of any convolution $\mu^R_W*\nu$; thus in particular, $\mu=\mu^R_W*\mu^N_W$ must assign positive mass to the wedge $\cS$ (regardless of any biases stemming from quadratic residues vs.~nonresidues, hypothetical zeros at $s=\frac12$, or the unknown properties of~$\mu^N_W$), thereby showing that the prime number race is inclusive. There is an analogous density~$\mu^R$ corresponding to the self-sufficient zeros of robust characters (the case ``$W=\infty$'') which is relevant to showing that prime number races are strongly inclusive (although, in practice, we derive those results from $\mu^R_W$ itself for finite but arbitrarily large values of~$W$).
It is possible to use the same method as Rubinstein and Sarnak to show that~$\mu^R$ is supported on all of $\R^r$, namely by showing that $\widehat\mu^R$ decays sufficiently rapidly. However, doing so would require a more stringent definition of robustness (roughly speaking, we would need about~$T$ of the zeros up to height $T$ to be self-sufficient, as opposed to the $T/\log T$ or so implicit in our actual definition). Instead, we use the characteristic function $\widehat\mu^R_W$ to write down a concrete $\R^r$-valued random variable whose distribution is also $\mu^R_W$, and then we show directly that the support of this random variable is sufficiently large. Roughly speaking, the condition that the vectors in equation~\eqref{Re and Im vectors} span~$\R^r$ yields that the random variable is supported ``in all directions'', while the $W$-robustness of each relevant character implies that the random variable reaches sufficiently far ``in that character's directions''.

We now reveal and rectify the slight lies in the above exposition.
When discussing Rubinstein and Sarnak's result, we described the limiting logarithmic distribution of $E(x)$ as being absolutely continuous with respect to Lebesgue measure on~$\R^r$. This is true as long as $r<\phi(q)$; but if $r=\phi(q)$, so that we are racing all the reduced residue classes\mod q against one another, then it is false because $\sum_{(a,q)=1} E(x;q,a) = o(1)$ as discussed in Definition~\ref{inclusive def}(d). It follows that the limiting logarithmic distribution $\mu$ must be supported on the hyperplane $\cW=\{x_1+\cdots+x_r=0\}$, hence cannot be absolutely continuous with respect to Lebesgue measure on~$\R^r$. However, it does turn out to be absolutely continuous with respect to Lebesgue measure on $\cW$, which is sufficient since $\cW$ intersects the wedge $\cS$ and all similar wedges produced by permuting coordinates. Indeed, these wedges are translation-invariant in the direction $(1,\dots,1)$, which is orthogonal to the hyperplane~$\cW$. (Section~\ref{AC section} contains precise definitions of all the terminology in this paragraph.)

Similarly, our assertion that $\mu_k^S$ is absolutely continuous with respect to Lebesgue measure on~$\R^r$ would be true if we strengthened the hypothesis in Theorem~\ref{inclusive theorem}(a), by demanding that the set of vectors in equation~\eqref{Re and Im vectors} spanned $\R^r$ even when the constant vector $(1,\dots,1)$ was removed. However, given the current hypothesis, it might only be the case that $\mu_k^S$ is absolutely continuous with respect to Lebesgue measure on a hyperplane not containing $(1,\dots,1)$. Again this is sufficient, however, as such a hyperplane still intersects all wedges similar to~$\cS$, which are invariant under shifts in the direction $(1,\dots,1)$. The same comments apply to our statements that $\mu^R_W$ and $\mu^R$ are supported on all of~$\R^r$. The complication of including the constant vector $(1,\dots,1)$ in the statements of our theorems is necessary when $r=\phi(q)$; for smaller values of $r$, the complication is not necessary, but it does result in a weaker hypothesis and thus a stronger theorem.

\subsection{Organization of this paper}

We now summarize the contents of the remainder of this paper by section, including pointers to the most important auxiliary results; the discussion will also briefly introduce notation for the most prominent objects of study, which are fully defined as they arise in the argument.

In Section~\ref{explicit formulae section} we describe the traditional explicit formulas for the $r$-dimensional error term $E(x)$ defined in equation~\eqref{Ey}, including a version thereof (denoted by $E_T(x)$) where sums over zeros of the relevant $L(s,\chi)$ are truncated at height~$T$. Near the end of that section, we comment on prime number races involving the weighted counting functions $\theta(x;q,a)$ and $\psi(x;q,a)$ in place of $\pi(x;q,a)$.
We show in Section~\ref{convolution section} that the limiting logarithmic distribution of $E_T(x)$ can be decomposed, in a general way, as a convolution of two measures~$\mu_T^A$ and~$\mu_T^B$, where the contribution from an arbitrary prescribed set of self-sufficient zeros of robust characters is captured by~$\mu_T^A$ and separated from the contribution~$\mu_T^B$ of the remaining zeros. We show directly, using standard theorems from probability, that $\mu_T^A$ has a limiting distribution $\mu^A$ as $T\to\infty$, and subsequently that the limiting logarithmic distribution~$\mu$ of~$E(x)$ can itself be written as a convolution of this first limiting distribution $\mu^A$ and a second (less concrete) distribution $\mu^B$.
We also establish an upper bound for the mass that $\mu^B$ assigns to vectors of large norm from an analogous known estimate for~$\mu$ itself.
Based on this general decomposition, we then define specific random variables $X^R_W$ and $X^S_k$, using the self-sufficient zeros of $W$-robust and $k$-sturdy characters, respectively, whose probability distributions~$\mu^R_W$ and~$\mu^S_k$ are related to~$\mu$.

In Section~\ref{AC section} we explicitly describe the subspace $\cV_k^S$ of $\R^r$ that is the support of $\mu_{k}^S$ (the limiting distribution corresponding to the self-sufficient zeros of $k$-sturdy characters). We further show that $\mu_{k}^S$ is absolutely continuous with respect to Lebesgue measure on this subspace $\cV_k^S$, by writing down the formula for its characteristic function and showing that it decays rapidly enough to be integrable over that subspace.

Theorem~\ref{inclusive theorem}(a), the assertion that logarithmic densities exist if there are enough $k$-sturdy characters, is established in
Section~\ref{log densities section}. First we show (using absolute continuity) that the distribution $\mu_k^S$ does not concentrate on any points of the hyperplanes forming the boundary of the wedge $\cS$ defined in equation~\eqref{wedge definition}. It follows that $\mu$ itself has the same property, as convolving $\mu_k^S$ with the second distribution $\mu_k^N$ can only further smooth the distribution (in a precise sense that we describe). We also explain the slightly stronger Theorem~\ref{two-way theorem}(a) in this section, as well as the analogue Theorem~\ref{pili theorem}(a) for~$\pi(x)$ itself.

The complementary parts~(b) and~(c) of Theorem~\ref{inclusive theorem}---the assertions that logarithmic densities of (wedges or) balls in $\R^r$ are positive if there are enough ($W$-)robust characters---are proved in Section~\ref{cylinder section}.
Under the hypotheses of Theorem~\ref{inclusive theorem}(b), we show that $\mu^R_W$ assigns positive mass to certain ``cylinders'' (see Definition~\ref{cylinder def}) parallel to $(1,\dots,1)$, where these cylinders fill more and more of~$\R^r$ as~$W$ grows; we deduce that $\mu^R_W$ gives mass to every wedge such as~$\cS$ when~$W$ is sufficiently large. Again, it follows that $\mu$ itself has this property, as convolving $\mu^R_W$ with the second distribution $\mu^N_W$ can only further expand the support. We also show (Proposition~\ref{need all these directions prop}) how the precise form of our hypotheses such as equation~\eqref{Re and Im vectors} results naturally from our approach. The stronger hypotheses of Theorem~\ref{inclusive theorem}(c) imply that $\mu^R_W$ assigns positive mass to balls rather than cylinders, and also allow us to take~$W$ arbitrarily large.
In addition to establishing Theorem~\ref{pili theorem}(b)--(c) by similar methods, we also include two variants (Theorem~\ref{pili theorem for APs}) for the races between $\pi(x;q,a)$ and either $\li(x)/\phi(q)$ or $\pi(x)/\phi(q)$.

Appendix~\ref{probability appendix} contains background facts on probability measures, including tight sequences, weak convergence, and absolute continuity. In particular, we state a slight strengthening (Theorem~\ref{tightnesscriterion}) of an existing criterion for establishing tightness of a sequence of probability measures, and make explicit (Lemma~\ref{density}) the connection between the integrability of a characteristic function and the existence of a density function.
Appendix~\ref{KW appendix} contains material related to the Kronecker--Weyl equidistribution theorem, including
a decomposition of the limiting subtorus as a direct sum of two smaller subtori, in the case that the coordinates of the defining vector can be partitioned into two relatively independent sets.

\section{Explicit formulae, random variables, and probability measures}  \label{explicit formula section}

In this section, we convert the problem of showing that a given race $\{ a_1, \ldots, a_r \}$ modulo $q$ 
is strongly or weakly inclusive into a problem about random variables and convolutions of probability measures.  The first step in this 
conversion is the use of ``explicit formulae" in the style of Riemann.

\subsection{Explicit formulae}  \label{explicit formulae section}

As in prior work on this subject, our analysis begins with the explicit formula for the counting function of primes in an arithmetic progression. This explicit formula can be phrased in terms of the error term $E(x;q,a)$ defined in equation~\eqref{Exqa} as follows~\cite[Lemma~2.1]{RS}:
for $x\ge2$,
\begin{equation}  \label{bringing in explicit formula}
  E(x;q,a) =  -c(q,a) - \sum_{\chi \ne \chi_0} \overline\chi(a)
  \sum_{ \substack{ \gamma\in\R \\ L(\frac12+i \gamma,\chi)=0}} \frac{x^{i\gamma}}{\frac{1}{2}+i \gamma} + O\bigg( \frac1{\log x} \bigg),
\end{equation}
where
\[
c(q,a)=-1+\# \{ 0 \le b \le q-1 \colon b^2 \equiv a \mod q  \}.
\]
Rubinstein and Sarnak showed, assuming only GRH (as we do throughout), that $E(x;q,a)$ has a limiting logarithmic distribution; indeed, they established such a result~\cite[Theorem~1.1]{RS} for the vector-valued function $E(x)$ defined in equation~\eqref{Ey}.

\begin{prop}  \label{Ex lld prop}
The function $E(x)$ has a limiting logarithmic distribution $\mu$, in the sense of Definition~\ref{log dist def}. In particular, equation~\eqref{mu defined here} holds for all bounded continuous functions~$f$.
\end{prop}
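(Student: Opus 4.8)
The plan is to establish the limiting logarithmic distribution for $E(x)$ by the now-standard route that originates in Rubinstein--Sarnak's paper, combining the explicit formula~\eqref{bringing in explicit formula} with a general criterion for the existence of limiting distributions of almost periodic functions. First I would rewrite the right-hand side of~\eqref{bringing in explicit formula}, grouping the zeros $\frac12\pm i\gamma$ in conjugate pairs: since $L(\frac12-i\gamma,\chi)=0 \iff L(\frac12+i\gamma,\overline\chi)=0$, the contribution of each character $\chi$ and each positive ordinate $\gamma\in\Gamma(\chi)$ combines with that of $\overline\chi$ to produce a real trigonometric term of the shape $\frac{2}{\sqrt{\tfrac14+\gamma^2}}\,\Re\!\big(\overline\chi(a)\,e^{i(\gamma\log x - \arg(\tfrac12+i\gamma))}\big)$. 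Thus $E(x;q,a)$ is, up to the constant $-c(q,a)$ and an error $O(1/\log x)$, a sum over $\gamma\in\Gamma(q)$ of terms oscillating at ``frequency'' $\gamma$ in the variable $u=\log x$, with coefficients bounded by $2/|\tfrac12+i\gamma|\asymp 1/\gamma$. Assembling the $r$ coordinates, $E(e^u)$ is an $\R^r$-valued function of $u$ that is a uniformly-convergent-in-mean limit of trigonometric polynomials, because $\sum_{\gamma\in\Gamma(q)} 1/\gamma^2 < \infty$ (the ordinates up to $T$ number $\asymp T\log T$, so $\sum 1/\gamma^2$ converges, using GRH only to place the zeros on the critical line).

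Next I would invoke the Kronecker--Weyl equidistribution theorem together with a standard truncation argument. For a fixed $T$, the truncated error term $E_T(e^u)$ is an honest trigonometric polynomial in $u$ with frequencies $\{\gamma : \gamma\in\Gamma(q),\ \gamma\le T\}$; the curve $u\mapsto (\gamma u \bmod 1)_{\gamma\le T}$ is equidistributed with respect to Haar measure on the closed subtorus $\cA_T\subseteq\T^{N(T)}$ cut out by the $\Q$-linear relations among the $\gamma$, so $E_T(e^u)$ has a limiting logarithmic distribution $\mu_T$, namely the pushforward of Haar measure on $\cA_T$ under the relevant linear-then-trigonometric map. The hypothetical zeros at $s=\tfrac12$, if any, contribute a fixed additive constant and cause no difficulty. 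To pass from $\mu_T$ to a limiting distribution $\mu$ for the full $E(x)$, I would show the sequence $(\mu_T)$ is tight (the tail bound $\sum_{\gamma>T}1/\gamma^2 \to 0$ controls the second moment of $E-E_T$ uniformly, via Chebyshev) and that $E(e^u)$ is uniformly approximated in the relevant averaged sense by $E_T(e^u)$; a Prokhorov-type argument then yields a weakly convergent subsequence, and the mean-square approximation forces the limit to be independent of the subsequence, giving a genuine limiting logarithmic distribution. Equivalently, one can cite~\cite[Theorem~1.1]{RS} directly, since the statement here is exactly theirs.

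The step I expect to be the main obstacle is the interchange of the two limits --- the $T\to\infty$ limit defining $\mu$ and the $u\to\infty$ (i.e.\ $x\to\infty$) averaging limit --- which is precisely the point where one needs a quantitative mean-square estimate $\frac1Y\int_0^Y |E(e^u)-E_T(e^u)|^2\,du \ll \sum_{\gamma>T} 1/\gamma^2$ holding uniformly in $Y$, not merely in the $Y\to\infty$ limit. This requires care with the off-diagonal terms when squaring the truncated tail and integrating, but it follows from GRH-based bounds on $\sum_{0<\gamma\le T}1$ and from orthogonality-type cancellation of $e^{i(\gamma-\gamma')u}$ for $\gamma\ne\gamma'$ after averaging; the contribution of the $O(1/\log x)$ error term in~\eqref{bringing in explicit formula} is negligible on average. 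Once this uniform approximation is in hand, the existence of $\mu$ and the validity of~\eqref{mu defined here} for all bounded continuous $f$ follow formally. Since this is exactly the content of~\cite[Theorem~1.1]{RS}, in the write-up I would either reproduce the short argument just sketched or simply quote their result, as the proposition is stated.
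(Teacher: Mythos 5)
Your proposal is correct and matches the paper's approach: the paper's entire ``proof'' of this proposition is to cite Rubinstein--Sarnak~\cite[Theorem~1.1]{RS} directly (as you note is possible), and the proof sketch you give is precisely the Rubinstein--Sarnak argument---truncate the explicit formula, apply Kronecker--Weyl to the truncated trigonometric polynomial to get $\mu_T$, and then pass to the limit $T\to\infty$ using the uniform mean-square bound on $E-E_T$ coming from $\sum_{\gamma>T}\gamma^{-2}\to 0$. Both routes are fine; the paper simply elects to quote the result rather than reproduce it.
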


\noindent We emphasize to the reader that throughout the rest of the main body of the paper, the symbol~$\mu$ denotes this specific measure, which depends upon $q$ and $a_1,\dots,a_r$. (It reverts to denoting a generic measure in Appendix~\ref{probability appendix}.)

Truncated versions of these explicit formulae will also be important in our analysis. We define these truncations now, and also introduce some vector-based notation that will prove convenient in the arguments to come.

\begin{definition}  \label{ETx def}
Define $\bv_\chi  =  (\chi(a_1), \ldots, \chi(a_r))$, and set
\begin{equation}
  \label{bdefn}
 \bb = -\big( c(q,a_1), \ldots, c(q,a_r) \big)  -2\sum_{\chi \ne \chi_0} \big( \ord_{s=1/2} L(s,\chi) \big) \overline\bv_\chi.
\end{equation}
Also define $\bx_\chi = \Re \bv_\chi$ and $\by_\chi = \Im \bv_\chi$.

Furthermore, define
$\theta_{\gamma} = \text{arg}(\frac{1}{2}+i \gamma)$, so that $\frac{1}{2}+i\gamma = e^{i \theta_\gamma} \sqrt{\frac{1}{4}+\gamma^2}$. Finally, for any positive real number $T$, define
\begin{equation}  \label{ETysum}
E_T(x) =  \bb+  2 \Re \sum_{\substack{\chi\mod q \\ \chi\ne\chi_0}} \overline\bv_\chi \sum_{\substack{ 0<\gamma \le T \\ L(\frac{1}{2}+i \gamma,\chi)=0}} e^{-i\theta_\gamma} \frac{x^{i\gamma}}{\sqrt{\frac{1}{4}+\gamma^2}}.
\end{equation}
This function was called $E^{(T)}(x)$ in \cite{RS}; in addition to modifying the notation, we have also removed the contributions from any zeros at $s=\frac12$ from the sums, placing them instead into the constant vector~$\bb$.
\end{definition}

Despite the new notation, a comparison of equations~\eqref{bringing in explicit formula} and~\eqref{ETysum} confirms that $E_T(x)$ really is a truncation of $E(x)$, and indeed one can show~\cite[equations~(2.5) and~(2.6)]{RS} that $E(x) = E_T(x) + O\big( x^{1/2} T^{-1} \log^2T + 1/\log x \big)$. Rubinstein and Sarnak analyzed these truncations as well~\cite[Lemma~2.3]{RS}:

\begin{prop}  \label{quoting ET}
For every $T>0$, the function $E_T(x)$ has a limiting logarithmic distribution~$\mu_T$. Furthermore, the probability measures $\{\mu_T\colon T>0\}$ converge weakly to $\mu$.
\end{prop}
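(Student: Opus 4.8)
The plan is to reduce the statement to a finite-dimensional Kronecker--Weyl equidistribution argument together with two standard probability facts: (i) a continuous image of a uniformly-distributed measure on a compact torus has a limiting distribution, and (ii) a tight sequence of probability measures whose characteristic functions converge pointwise converges weakly. The first assertion concerns a fixed~$T$; the second concerns the passage $T\to\infty$.

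First I would fix $T>0$ and write $E_T(e^t)$ explicitly. The inner sums in~\eqref{ETysum} run over only finitely many ordinates $\gamma_1,\dots,\gamma_m$ (those zeros of the relevant $L(s,\chi)$ with $0<\gamma\le T$), so $E_T(e^t)$ is of the form $F(t\gamma_1,\dots,t\gamma_m)$ for an explicit continuous, $(2\pi\Z)^m$-periodic function $F\colon\T^m\to\R^r$ --- namely the one obtained by replacing each $x^{i\gamma_j}=e^{it\gamma_j}$ by $e^{i\phi_j}$ and taking the real part as in~\eqref{ETysum}, then adding the constant vector~$\bb$. By the Kronecker--Weyl theorem, the curve $\{t(\gamma_1,\dots,\gamma_m)\colon t\in\R\}$ is equidistributed (as $t$ runs over $[0,y]$, $y\to\infty$, with respect to normalized Lebesgue measure $dt/y$) in a closed connected subtorus $\cA\subseteq\T^m$, the one cut out by the $\Q$-linear relations among the $\gamma_j$. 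Composing with the continuous function~$F$ and invoking the portmanteau theorem (continuous functions push equidistribution forward to weak convergence) shows that for every bounded continuous $f\colon\R^r\to\R$,
\[
\lim_{y\to\infty}\frac1y\int_0^y f\big(E_T(e^t)\big)\,dt=\int_\cA f(F(\btheta))\,d\btheta,
\]
where $d\btheta$ is the normalized Haar measure on~$\cA$. This identifies $\mu_T$ as the push-forward $F_*(\text{Haar}_\cA)$ and in particular establishes its existence as a probability measure on~$\R^r$; this is exactly the content of~\cite[Lemma~2.3]{RS}, so I would simply cite it, but it is worth recording the shape of the argument because the convolution decompositions in Section~\ref{convolution section} will dissect precisely this torus~$\cA$.

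For the weak convergence $\mu_T\Rightarrow\mu$ as $T\to\infty$, I would argue in two steps. Tightness: from the estimate $E(x)=E_T(x)+O(x^{1/2}T^{-1}\log^2 T+1/\log x)$ quoted after Definition~\ref{ETx def}, together with the fact (Proposition~\ref{Ex lld prop}) that $E(x)$ itself has a limiting logarithmic distribution~$\mu$ and hence that $\{E(e^t)\}$ does not escape to infinity, one derives a uniform-in-$T$ tail bound of the form $\mu_T\big(\{\bx\colon|\bx|>R\}\big)\le c(R)$ with $c(R)\to0$ as $R\to\infty$; concretely, one bounds $\int|E_T(e^t)|^2\,dt/y$ uniformly in $T$ and $y$ using orthogonality of the characters $e^{it\gamma}$ over long intervals (the cross terms vanish in the limit and the diagonal gives $\sum_\gamma 1/(\frac14+\gamma^2)<\infty$), then applies Chebyshev's inequality. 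This makes $\{\mu_T\colon T>0\}$ tight. Convergence of characteristic functions: $\widehat{\mu_T}(\bt)=\lim_{y\to\infty}\frac1y\int_0^y e^{i\bt\cdot E_T(e^t)}\,dt$ and $\widehat\mu(\bt)=\lim_{y\to\infty}\frac1y\int_0^y e^{i\bt\cdot E(e^t)}\,dt$; using $|e^{i\bt\cdot E(e^t)}-e^{i\bt\cdot E_T(e^t)}|\le|\bt|\,|E(e^t)-E_T(e^t)|$ and the error estimate above, one shows $\widehat{\mu_T}(\bt)\to\widehat\mu(\bt)$ for each fixed~$\bt$ (after being slightly careful about the $x^{1/2}T^{-1}$ term, which is handled by first truncating the $t$-integral and using that $e^{t/2}$ is negligible on a density-zero set, exactly as in~\cite[\S2]{RS}). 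By the Lévy continuity theorem in the form valid for tight families (Theorem~\ref{tightnesscriterion} in Appendix~\ref{probability appendix}), tightness plus pointwise convergence of characteristic functions to the characteristic function of~$\mu$ forces $\mu_T\Rightarrow\mu$.

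The main obstacle --- and the only genuinely delicate point --- is the uniform-in-$T$ tightness: one must ensure that the truncated objects $E_T$ do not develop fat tails as $T$ grows, despite the $x^{1/2}T^{-1}\log^2T$ error term involving a power of~$x$. The resolution is the standard one: the bad set where $e^{t/2}$ overwhelms the bound has logarithmic density tending to~$0$, so it does not affect the limiting distribution; on the complement the error is $o(1)$ uniformly. All of this is carried out in~\cite{RS}, so in the body of the paper I would state Proposition~\ref{quoting ET} with a citation to~\cite[Lemmas~2.1--2.3]{RS} and the remark that the only modification is the bookkeeping of zeros at $s=\tfrac12$ into~$\bb$, which changes nothing in the argument.
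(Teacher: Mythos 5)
Your overall structure is correct and matches what the paper actually does, which (like your proposal) defers to Rubinstein--Sarnak: for fixed $T$ the existence of $\mu_T$ is the Kronecker--Weyl pushforward argument, and the weak convergence $\mu_T\Rightarrow\mu$ is obtained by combining tightness with pointwise convergence of characteristic functions. The paper gives no proof at all; it simply cites \cite[Lemma~2.3]{RS} and remarks that the weak-convergence assertion is implicit there or follows from \cite[Theorem~2.9]{ANS}. Your tightness argument via the uniform-in-$T$ second moment of $E_T$, driven by $\sum_\gamma (\tfrac14+\gamma^2)^{-1}<\infty$, is sound and is the standard way to do it.

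There is, however, a real confusion in your description of how the truncation error is controlled in the characteristic-function step. You write that ``the bad set where $e^{t/2}$ overwhelms the bound has logarithmic density tending to $0$,'' but for fixed $T$ the set of $t$ on which $e^{t/2}T^{-1}\log^2T$ is large is a ray $[c_T,\infty)$, which has logarithmic density $1$, not $0$. The pointwise estimate $E(x)=E_T(x)+O(x^{1/2}T^{-1}\log^2T+1/\log x)$ is therefore useless on a full-density set and cannot drive the argument on its own. What Rubinstein--Sarnak actually prove, and what you need, is a \emph{mean-square} bound of the shape
\[
\limsup_{Y\to\infty}\frac1Y\int_0^Y\big|E(e^t)-E_T(e^t)\big|^2\,dt\ \ll\ \frac{\log^2 T}{T},
\]
which tends to $0$ as $T\to\infty$; then Cauchy--Schwarz and $|e^{ia}-e^{ib}|\le|a-b|$ give $\sup_{\|\bt\|\le R}|\widehat\mu_T(\bt)-\widehat\mu(\bt)|\to0$. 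Since you ultimately cite \cite[\S2]{RS} for the details, your conclusion stands, but the mechanism you name (``pointwise bound off a density-zero exceptional set'') is not the one that works; replace it with the $L^2$-averaged bound. A secondary quibble: this same $L^2$ estimate, combined with the known existence of $\mu$, would also have given you tightness of $\{\mu_T\}$ directly, so your separate orthogonality computation, while correct, is not where the subtlety lies --- the delicate point is the $L^2$ truncation estimate, not the tail bound.
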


\noindent Rubinstein and Sarnak do not explicitly state that $\{\mu_T\colon T>0\}$ converges weakly to $\mu$, but that deduction is implicit in the proof of~\cite[Lemma~2.3]{RS}, and it also follows from the argument in~\cite[Theorem~2.9]{ANS}.

As mentioned in Remark~\ref{variants of pi remark}, all our results involving the prime counting functions $\pi(x;q,a)$ are equally valid if we replace every occurrence of $\pi$ with either $\theta$ or $\psi$ (and, where appropriate, replace every occurrence of $\li(x)$ with $x$); we are now in a position to justify this remark. A straightforward partial summation argument (as in the proof of \cite[Lemma~2.1]{RS}) shows that
\begin{equation*}
\bigg| E(x;q,a) - \frac1{\sqrt x} \big( \phi(q) \theta(x;q,a) - \theta(x) \big) \bigg| \ll \frac1{\log x}.
\end{equation*}
Comparing to equation~\eqref{bringing in explicit formula}, we see that this bound suffices to imply that all results in this paper that are true for $E(x;q,a)$ (and its vector-valued analogues) are also true for the difference $\frac1{\sqrt x} \big( \phi(q) \theta(x;q,a) - \theta(x) \big)$. Furthermore, it is easy to see (and also contained in the proof of \cite[Lemma~2.1]{RS}) that
\[
\frac1{\sqrt x} \big( \phi(q) \theta(x;q,a) - \theta(x) \big) - \frac1{\sqrt x} \big( \phi(q) \psi(x;q,a) - \psi(x) \big) = -c(q,a) + O\bigg( \frac1{\log x} \bigg).
\]
The presence of the constant $-c(q,a)$ on the right-hand side causes some superficial changes, particularly in the definition~\eqref{bdefn} of $\bb$ in Definition~\ref{ETx def}: the term $-\big( c(q,a_1), \ldots, c(q,a_r) \big)$ would disappear if we switched from $\theta$ to $\psi$. However, the exact value of this constant vector $\bb$ has no effect on our arguments, and thus all of the proofs remain valid with $\frac1{\sqrt x} \big( \phi(q) \psi(x;q,a) - \psi(x) \big)$ in place of $E(x;q,a)$ as well. Similar remarks hold for the error terms relevant to Theorems~\ref{pili theorem} and~\ref{pili theorem for APs}.

\subsection{Separating the zeros into two sets}  \label{convolution section}

We now begin the process of dealing with our assumption that only some of the positive ordinates of nontrivial zeros of the Dirichlet $L$-functions\mod q are self-sufficient, that is, not involved in linear relations over the rationals with the other ordinates. We will partition the multiset of ordinates of zeros into two multisets, one containing many of the self-sufficient ordinates and the other containing the remainder of the ordinates. A key fact used in the argument to follow is that these two sets of ordinates are ``relatively independent'' over the rational numbers:

\begin{definition} \label{relind}
Let $0\le k_1\le k$ be integers. The finite multisets of real numbers $\{ \xi_1, \dots, \xi_{k_1} \}$ and $\{\xi_{k_1+1}, \dots, \xi_k\}$ are {\it relatively independent (over $\Q$)} if the following statement holds: whenever $\alpha_1,\dots,\alpha_k$ are rational numbers such that $\alpha_1 \xi_1 + \cdots + \alpha_k \xi_k = 0$, then both $\alpha_1 \xi_1 + \cdots + \alpha_{k_1} 
\xi_{k_1} = 0$ and $\alpha_{k_1+1} \xi_{k_1+1} + \cdots + \alpha_{k} 
\xi_{k} = 0$.
\end{definition}

\begin{remark} \label{stronger remark}
It is easy to check that this property of relative independence is preserved under taking subsets, and under multiplying all elements of both multisets by a nonzero real constant. It is also easy to check that if $\{ \xi_1, \dots, \xi_{k_1} \}$ and $\{\xi_{k_1+1}, \dots, \xi_k\}$ are relatively independent, and both multisets $\{ \xi_1, \dots, \xi_{k_1} \}$ and $\{\xi_{k_1+1}, \dots, \xi_k\}$ are individually linearly independent, then their union $\{ \xi_1, \dots, \xi_k\}$ is also linearly independent; more generally, if $d_1$, $d_2$, and $d$ are the dimensions of the $\Q$-vector spaces spanned, respectively, by $\{ \xi_1, \dots, \xi_{k_1} \}$, $\{\xi_{k_1+1}, \dots, \xi_k\}$, and $\{ \xi_1, \dots, \xi_k\}$, then $d=d_1+d_2$.
\end{remark}

\begin{remark} \label{ss and li and ri}
Let $\cG$ be a submultiset of $\Gamma(q)$. It is easy to check from Definitions~\ref{self-sufficient def} and~\ref{relind} that every element of $\cG$ is self-sufficient if and only if both $\cG$ is linearly independent and $\cG$ and $\Gamma(q)\setminus \cG$ are relatively independent.
\end{remark}

Our first goal is to show, given any set $\cG$ of self-sufficient ordinates of zeros, that the probability measure $\mu_T$  defined in Proposition~\ref{quoting ET} is a convolution of two related probability measures $\mu^A_{T}$ and $\mu^B_{T}$. We also compute the characteristic function of~$\mu^A_T$, which is constructed from the self-sufficient ordinates in $\cG$ less than~$T$; similarly, $\mu^B_T$ is constructed from the remaining ordinates less than~$T$.

\begin{definition} \label{G def}
Let $\cG$ be a subset of $\Gamma(q)$ such that every element of $\cG$ is self-sufficient. For $\gamma\in\cG$, let $\chi_\gamma$ denote the nonprincipal character\mod q for which $L(\frac12+i\gamma,\chi_\gamma)=0$ (this character is unique because $\gamma$ is self-sufficient).
Further, let $\bv_\gamma$ denote the vector $\bv_{\chi_\gamma}$.
\end{definition}

\begin{lemma} \label{nuTconvolution}
For $T >0$, let $\mu_T$ be the probability measure arising in Proposition~\ref{quoting ET}. There exist probability measures $\mu_T^A$ and $\mu_T^B$ such that:
\begin{enumerate}
 \item $\mu_{T} = \mu^A_{T} * \mu^B_{T}$;
 \item for $\bt \in \R^r$,
 \begin{equation}
   \label{ftformula}
   \widehat\mu^A_T(\bt) 
   = \prod_{\substack{\gamma\in\cG \\ 0<\gamma \le T}}  J_0 \bigg( \frac{2 |\bt \cdot \bv_\gamma| }{\sqrt{\frac{1}{4}+\gamma^2} }   \bigg).
 \end{equation}
\end{enumerate}
\end{lemma}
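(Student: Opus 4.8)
The plan is to apply the Kronecker--Weyl decomposition (from Appendix~\ref{KW appendix}) to the line $\{(y\gamma)_\gamma : y\in\R\}$ inside a torus indexed by all ordinates $0<\gamma\le T$ of nonprincipal $L$-functions\mod q, after first rewriting $E_T(x)$ as an explicit function of the point $(x^{i\gamma})_\gamma = (e^{i\gamma\log x})_\gamma$ on that torus. Concretely, write $x = e^y$ and set $m = \#\{\gamma : 0<\gamma\le T\}$; grouping the $\gamma$ in $\cG$ (say $\gamma_1,\dots,\gamma_{k_1}$) first and the remaining $\gamma\notin\cG$ (say $\gamma_{k_1+1},\dots,\gamma_m$) second, equation~\eqref{ETysum} exhibits $E_T(e^y)$ as $\bb + F(e^{iy\gamma_1},\dots,e^{iy\gamma_m})$ for a continuous function $F\colon\T^m\to\R^r$. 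The limiting logarithmic distribution $\mu_T$ of $E_T$ is then the pushforward under $\bz\mapsto\bb + F(\bz)$ of the Haar measure on the subtorus $\cA\subseteq\T^m$ in which the line $\{y(\gamma_1,\dots,\gamma_m):y\in\R\}$ equidistributes, by the Kronecker--Weyl theorem.

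The key step is that $\cG$ and $\Gamma(q)\setminus\cG$ are relatively independent over $\Q$: this holds by Remark~\ref{ss and li and ri}, since every element of $\cG$ is self-sufficient. Hence the set $\{\gamma_1,\dots,\gamma_m\}$ of defining frequencies of the line splits into two relatively independent pieces, and the decomposition result in Appendix~\ref{KW appendix} gives $\cA = \cA_1 \oplus \cA_2$, where $\cA_1\subseteq\T^{k_1}$ is the closure of the line generated by $(\gamma_1,\dots,\gamma_{k_1})$ and $\cA_2\subseteq\T^{m-k_1}$ is the closure of the line generated by $(\gamma_{k_1+1},\dots,\gamma_m)$; moreover Haar measure on $\cA$ is the product of the Haar measures on $\cA_1$ and $\cA_2$. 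Now observe from~\eqref{ETysum} that $F$ is additive across the two blocks of coordinates: writing $F(\bz^{(1)},\bz^{(2)}) = F_A(\bz^{(1)}) + F_B(\bz^{(2)})$ where $F_A$ collects exactly the terms with $\gamma\in\cG$ (using $\chi_\gamma$, $\bv_\gamma$ as in Definition~\ref{G def}) and $F_B$ the rest. Define $\mu_T^A$ to be the pushforward of Haar measure on $\cA_1$ under $\bz^{(1)}\mapsto F_A(\bz^{(1)})$, and $\mu_T^B$ the pushforward of Haar measure on $\cA_2$ under $\bz^{(2)}\mapsto \bb + F_B(\bz^{(2)})$. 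Since $(\bz^{(1)},\bz^{(2)})$ is distributed as the product measure on $\cA_1\times\cA_2$ and $E_T = F_A(\bz^{(1)}) + (\bb + F_B(\bz^{(2)}))$ is the sum of the two independent pieces, part~(a) follows: $\mu_T = \mu_T^A * \mu_T^B$.

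For part~(b), I would compute $\widehat\mu_T^A(\bt) = \E\big[e^{i\bt\cdot F_A(\bz^{(1)})}\big]$ directly. Because $\gamma\in\cG$ is self-sufficient, $\frac12+i\gamma$ is a \emph{simple} zero and $\chi_\gamma$ is the unique character with $L(\frac12+i\gamma,\chi_\gamma)=0$, so the $\gamma$-th term of $2\Re\sum_\chi\overline\bv_\chi\sum_\gamma e^{-i\theta_\gamma}x^{i\gamma}/\sqrt{\tfrac14+\gamma^2}$ contributes exactly $\tfrac{2}{\sqrt{\frac14+\gamma^2}}\Re\big(\overline\bv_\gamma\, e^{i(y\gamma-\theta_\gamma)}\big)$, and as a function of the independent uniform angle $u_\gamma := y\gamma - \theta_\gamma \in \R/2\pi\Z$ this is $\tfrac{2}{\sqrt{\frac14+\gamma^2}}\big(\cos u_\gamma\,\bx_\gamma + \sin u_\gamma\,(-\by_\gamma)\big)$ — in fact it traces out, as $u_\gamma$ varies, a circle of radius $\tfrac{2|\bt\cdot\bv_\gamma|}{\sqrt{\frac14+\gamma^2}}$ in the $\bt$-direction. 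The relative independence guarantees the angles $\{u_\gamma : \gamma\in\cG,\, \gamma\le T\}$ are jointly uniformly distributed and independent on $(\R/2\pi\Z)^{k_1}$ (this is exactly what the subtorus $\cA_1$ being all of $\T^{k_1}$ would say; even if $\cA_1$ is a proper subtorus, the self-sufficiency of each individual $\gamma$ forces each coordinate to be individually uniform and the block structure makes them jointly independent across $\gamma$). Hence $\widehat\mu_T^A(\bt)$ factors as a product over $\gamma\in\cG$ with $\gamma\le T$, and each factor is $\E_{u}\big[\exp\big(i\,\tfrac{2}{\sqrt{\frac14+\gamma^2}}\,\Re(\overline{\bt\cdot\bv_\gamma}\,e^{iu})\big)\big] = \E_u\big[\exp\big(i\,\tfrac{2|\bt\cdot\bv_\gamma|}{\sqrt{\frac14+\gamma^2}}\cos u\big)\big] = J_0\big(\tfrac{2|\bt\cdot\bv_\gamma|}{\sqrt{\frac14+\gamma^2}}\big)$, using the classical integral representation $J_0(z) = \tfrac1{2\pi}\int_0^{2\pi} e^{iz\cos u}\,du$. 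This yields~\eqref{ftformula}.

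I expect the main obstacle to be the careful handling of the block-independence of the angles $u_\gamma$ for $\gamma\in\cG$ when the ambient subtorus $\cA$ is genuinely smaller than $\T^m$ (which it will be, since $\Gamma(q)\setminus\cG$ is riddled with rational relations) — that is, making rigorous that the projection of Haar measure on $\cA$ onto the $\cG$-coordinates is exactly the full Haar measure on $\T^{k_1}$, so that the $u_\gamma$ are genuinely i.i.d.\ uniform. This is precisely what relative independence buys us, but it must be routed cleanly through the direct-sum decomposition $\cA=\cA_1\oplus\cA_2$ of Appendix~\ref{KW appendix}, together with the fact that each singleton $\{\gamma\}\subseteq\cG$ is linearly independent (so $\cA_1$ has full dimension $k_1$, i.e.\ $\cA_1=\T^{k_1}$). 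A secondary bookkeeping point is the role of the phase shifts $\theta_\gamma = \arg(\tfrac12+i\gamma)$ and of the contributions of any zeros at $s=\tfrac12$: the former are harmless constant translations of uniform angles, and the latter have already been absorbed into $\bb$ in Definition~\ref{ETx def}, so neither affects the computation.
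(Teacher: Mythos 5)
Your proposal is correct and follows essentially the same approach as the paper: pass to the torus via a diagonal line, use the relative independence of $\cG$ and $\Gamma(q)\setminus\cG$ (Remark~\ref{ss and li and ri}) to split the Kronecker--Weyl subtorus as $\cA_1\times\cA_2$, deduce the convolution, then compute the characteristic function using $\cA_1=\T^{k_1}$ and the Bessel integral $J_0(|z|)=\int_0^1 e^{i\Re(ze^{2\pi i\theta})}\,d\theta$ (the paper packages the torus-splitting step as Corollary~\ref{kwcorollary} and Lemma~\ref{kwconvolution}, which you re-derive inline). One exposition slip: the reason $\cA_1=\T^{k_1}$ is that the \emph{entire} set $\cG\cap(0,T]$ is $\Q$-linearly independent (again by Remark~\ref{ss and li and ri}), not that ``each singleton $\{\gamma\}\subseteq\cG$ is linearly independent''; and the fallback parenthetical claiming that individual uniformity of each coordinate on a proper subtorus would still yield joint independence is false and should be dropped, though it is not needed since $\cA_1$ is indeed the full torus.
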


\begin{proof}
Define $k = \# \big( \Gamma(q) \cap (0,T] \big)$, that is, the number of zeros (counting multiplicity) up to height $T$ of all nonprincipal Dirichlet $L$-functions modulo~$q$. Also define $k_1 = \#(\cG \cap (0,T] \big)$
to be the number of ordinates $\gamma$ on the right-hand side of equation~\eqref{ftformula}. 
Let $\zeta^A$ denote a variable taking values in the $k_1$-dimensional torus, whose coordinates $\zeta_\gamma$ are indexed by those ordinates. Similarly, let 
$\zeta$ denote a variable taking values in the $k$-dimensional torus, whose coordinates are indexed by the multiset $\Gamma(q) \cap (0,T]$, and let $\zeta^B$ denote a variable taking values in the $(k-k_1)$-dimensional torus, whose coordinates are indexed by the remaining ordinates (those not in~$\cG$). There is a diagonal embedding $\Delta(t)$ of $\R$ into the $k$-dimensional torus, where the real number $t$ is replaced by the vector $\zeta$ for which each coordinate $\zeta_\gamma$ has been replaced by $\frac{t\gamma}{2\pi}$; there are similar diagonal embeddings $\Delta_1(t)$ of $\R$ into the $k_1$-dimensional torus and $\Delta_2(t)$ into the $(k-k_1)$-dimensional torus.

Now define
\begin{align*}
\Psi_1(\zeta^A) &= 2 \Re \bigg( \sum_{\substack{\gamma\in\cG \\ 0<\gamma\le T}}
\frac{e^{2 \pi i \zeta_\gamma}}{\frac{1}{2}+i \gamma} \overline\bv_\gamma \bigg) \\
\Psi_2(\zeta^B) &= 2 \Re \bigg( \sum_{\substack{\gamma\in\Gamma(q)\setminus\cG \\ 0<\gamma\le T}}
\frac{e^{2 \pi i \zeta_\gamma}}{\frac{1}{2}+i \gamma} \overline\bv_\gamma \bigg) \\
\Psi(\zeta) &= 2 \Re \bigg( \sum_{\substack{\gamma\in\Gamma(q) \\ 0<\gamma\le T}}
\frac{e^{2 \pi i \zeta_\gamma}}{\frac{1}{2}+i \gamma} \overline\bv_\gamma \bigg),
\end{align*}
and further define corresponding functions from $\R$ to $\R^r$:
\begin{align*}
  \eta_1(t) = \Psi_1\big( \Delta_1(t) \big), \quad \eta_2(t) = \Psi_2\big( \Delta_2(t) \big), \quad \eta(t) = \Psi\big( \Delta(t) \big).
\end{align*}
By Remark~\ref{ss and li and ri}, the set of ordinates $\gamma$ appearing in $\eta_1(t)$ and the set of ordinates $\gamma$ appearing in $\eta_2(t)$ are relatively independent sets, and this property is preserved when dividing all ordinates by~$2\pi$. This relative independence will be the crucial property that allows us to decompose $\mu_{T}$ as the convolution $\mu^A_{T} * \mu^B_{T}$.

By Corollary~\ref{kwcorollary} (allowing for the different conventions for indexing the variables), the functions $\eta_1(t)$, $\eta_2(t)$, and $\eta(t)$ possess limiting distributions $\nu_1$, $\nu_2$, and $\nu$, respectively; and by Lemma~\ref{kwconvolution} we know that $\nu=\nu_1*\nu_2$.
On the other hand, an examination of our notation, particularly that of Definition~\ref{ETx def}, reveals that $E_T(x) = \bb + \eta(\log x)$; it follows from Proposition~\ref{quoting ET} that $\mu_T$ is simply the translation of $\nu$ by the vector $\bb$, that is, $\mu_T(\cB) = \nu(\cB-\bb)$ for all Borel sets $\cB\subset \R^r$.
Define $\mu_T^B$ to be the translation of $\nu_2$ by the vector $\bb$, and simply define $\mu_T^A = \nu_1$; then it is trivial to check that $\mu_{T} = \mu^A_{T} * \mu^B_{T}$, establishing part~(a).

As for part (b), we begin by observing that by part (a) and the definition of $\mu^A_{T}$, Corollary~\ref{kwcorollary} implies that there exists a subtorus $\cA$ of $\mathbb{T}^{k_1}$ such that 
\begin{equation}  \label{proving part b}
  \lim_{y \to \infty} \frac{1}{y} \int_{0}^{y}  f(\eta_1(t)) \,dt = \int_{\cA} (f \circ  \Psi_1)(\ba) \,d \ba 
  = \int_{\R^r} f(\bx) \,d \mu^A_{T}(\bx)
\end{equation}
for all bounded continuous functions~$f$. Since the set $\frac1{2\pi} \cG$ is linearly independent by Remark~\ref{ss and li and ri}, it follows from Lemma~\ref{kw} that $\cA =\mathbb{T}^{k_1}$ and $d\ba = d\zeta^A$, which denotes Haar measure on $\T^{k_1}$.
Choosing $f(\bx) = e^{i \bt \cdot \bx}$, we see from equations~\eqref{charfcn} and~\eqref{proving part b} that 
\begin{equation}
   \label{ftdefinition}
   \widehat\mu^A_T(\bt) 
   = \int_{\R^r}  e^{i \bt \cdot \bx} \,d\mu^A_T(\bx)
   = \int_{\T^{k_1}} e^{i   \bt  \cdot  \Psi_1(\zeta^A)} \,d \zeta^A.
\end{equation}
Since
\[
\bt\cdot \Psi_1(\zeta^A) = \bt\cdot 2 \Re \bigg( \sum_{\substack{\gamma\in\cG \\ 0<\gamma\le T}} 
\frac{e^{2 \pi i \zeta_\gamma}}{\frac{1}{2}+i \gamma} \overline\bv_\gamma \bigg) = \sum_{\substack{\gamma\in\cG \\ 0<\gamma\le T}} 
\Re \bigg( \frac{2 \bt\cdot \overline\bv_\gamma}{\frac{1}{2}+i \gamma}e^{2 \pi i \zeta_\gamma} \bigg),
\]
we see that
\begin{align*}
\widehat\mu^A_T(\bt) 
&= \int_{\T^{k_1}} \exp \bigg( i \sum_{\substack{\gamma\in\cG \\ 0<\gamma\le T}} \Re \bigg( \frac{2 \bt\cdot \overline\bv_\gamma}{\frac{1}{2}+i \gamma}e^{2 \pi i \zeta_\gamma} \bigg) \bigg) \,d \zeta^A 
= \prod_{\substack{\gamma\in\cG \\ 0<\gamma\le T}} \int_{\T} \exp \bigg( i \Re \bigg( \frac{2 \bt\cdot \overline\bv_\gamma}{\frac{1}{2}+i \gamma}e^{2 \pi i \zeta_\gamma} \bigg) \bigg) \,d \zeta_\gamma.
\end{align*}
Using the identity $J_0(|z|)=\int_{0}^{1}  e^{i \Re( z e^{2 \pi i \theta})} \,d \theta$, we conclude that
\begin{align*} 
 \widehat\mu^A_T(\bt) &= \prod_{\substack{\gamma\in\cG \\ 0<\gamma\le T}} J_0 \bigg( \bigg| \frac{2 \bt\cdot \overline\bv_\gamma}{\frac{1}{2}+i \gamma} \bigg| \bigg) = \prod_{\substack{\gamma\in\cG \\ 0<\gamma\le T}} J_0 \bigg( \frac{2 |\bt\cdot \bv_\gamma|}{\sqrt{\frac{1}{4}+\gamma^2}} \bigg)
\end{align*}
as desired.
\end{proof}

The computation in part~(b) of the above proof has been done by various authors in various ways---see for example~\cite[Theorem~1.9]{ANS} and~\cite[Section 3.1]{RS}.

\subsection{Random variables and convolutions}  \label{sums of rvs section}

From these results concerning the measure $\mu_T$ associated with the truncated error term, we can deduce analogous results about the limiting logarithmic distribution $\mu$ given in Proposition~\ref{Ex lld prop}. In particular, we prove in this section that $\mu$ is itself the convolution of two probability measures $\mu^A$ and $\mu^B$.

In order to study the probability measure $\mu$, it is convenient to interpret $\mu$ in terms of a vector-valued 
random variable. 
We can guess which random variable to study from the explicit formula~\eqref{ETysum}.  Note that if LI is true, then
the values of the functions  $e^{i (y \gamma-\theta_{\gamma})}$ behave in the limit like the values of 
$\{ Z_{\gamma} \}$, a sequence of independent random variables, where each $Z_\gamma$ is a random variable uniformly distributed on the unit circle in $\C$.  In fact, assuming LI, one can deduce (from Lemma~\ref{nuSTweakconv} below, for example) that $\mu$ equals the probability measure
associated to the vector-valued random variable
\begin{equation*}
X  = \bb + 2 \Re \sum_{\substack{\chi\mod q \\ \chi\ne\chi_0}} \overline\bv_\chi \sum_{\substack{\gamma>0 \\ L(\frac12+i\gamma,\chi)=0}} \frac{Z_\gamma}{\sqrt{\frac14+\gamma^2}}.
\end{equation*}

However, in our setting we are not assuming the truth of LI. Heuristically, we are motivated by the idea that there should be independent random variables $X^A$ and $X^B$ such that $X = X^A+ X^B$, where $X^A$ is made from all of the (self-sufficient) zeros in $\cG$ and $X^B$ from all of the remaining zeros; in this situation, the distribution $\mu$ would decompose as $\mu=\mu^A*\mu^B$. For example, using the vectors $\bv_\gamma$ from Definition~\ref{G def}, we can define the random vector $X^A$ to be
\begin{equation}  \label{XA def}
X^A =  2 \Re \sum_{\gamma\in\cG} \frac{Z_\gamma \overline\bv_\gamma}{\sqrt{\frac14+\gamma^2}},
\end{equation}
where the $Z_\gamma$ appearing in the definition are jointly independent random variables, each uniformly distributed on the unit circle in~$\C$. Perhaps the definition of $X^B$ would look like
\begin{equation*}
X^B \underset?= \bb + 2 \Re \sum_{\substack{\chi\mod q \\ \chi\ne\chi_0}} \overline\bv_\chi \sum_{\gamma \in \Gamma(\chi) \setminus \cG} \frac{Z_\gamma}{\sqrt{\frac14+\gamma^2}},
\end{equation*}
where the dependences among the various $Z_\gamma$ are ``inherited'' from any $\Z$-linear relations among the various $\gamma$: if 
$c_1\gamma_1 + \dots + c_k\gamma_k = 0$,
then 
$(Z_{\gamma_1})^{c_1} \cdots (Z_{\gamma_k})^{c_k} = 1$. However, in addition to this description of the dependences being less precise than we would like, it is not even clear {\it a priori} that the potentially infinite sum in the proposed definition of $X^B$ converges almost surely.

For this reason, we take a different approach: we do define the random variable $X^A$ formally, exactly as in equation~\eqref{XA def}, which will give rise to a distribution $\mu^A$. Then, later (in Proposition~\ref{existence of muN prop}), we show that there exists a distribution $\mu^B$ such that $\mu=\mu^A*\mu^B$. Fortunately, we will need to know little about $\mu^B$ other than its mere existence.

\begin{lemma} \label{nuSTweakconv}
There exists a probability measure $\mu^A$ on $\R^r$ such that $\mu^A_{T} \to \mu^A$ weakly. 
\end{lemma}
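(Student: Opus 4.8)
\textbf{Proof proposal for Lemma~\ref{nuSTweakconv}.}

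The plan is to establish that the sequence $\{\mu^A_T\colon T>0\}$ of probability measures on $\R^r$ converges weakly, by proving two things: first, that the characteristic functions $\widehat\mu^A_T(\bt)$ converge pointwise as $T\to\infty$ to some function $\phi(\bt)$; and second, that $\phi$ is continuous at $\bt=\bzero$ (equivalently, that the family $\{\mu^A_T\}$ is tight). By L\'evy's continuity theorem, these two facts together imply that $\{\mu^A_T\}$ converges weakly to a probability measure $\mu^A$ whose characteristic function is $\phi$. The explicit product formula~\eqref{ftformula} for $\widehat\mu^A_T(\bt)$ makes this approach very natural.

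First I would handle pointwise convergence of the characteristic functions. From equation~\eqref{ftformula}, we have
\[
\widehat\mu^A_T(\bt) = \prod_{\substack{\gamma\in\cG \\ 0<\gamma\le T}} J_0\bigg( \frac{2|\bt\cdot\bv_\gamma|}{\sqrt{\frac14+\gamma^2}} \bigg),
\]
which is a partial product of the infinite product $\prod_{\gamma\in\cG} J_0\big( 2|\bt\cdot\bv_\gamma|/\sqrt{\tfrac14+\gamma^2} \big)$. To show this infinite product converges (to a finite, possibly zero, limit), I would use the standard estimate $J_0(u) = 1 + O(u^2)$ for $u$ in a bounded range, together with the fact that $|\bt\cdot\bv_\gamma| \le \|\bt\| \cdot \|\bv_\gamma\| \ll \|\bt\|$ uniformly (since each $|\chi(a_j)|=1$, so $\|\bv_\gamma\| = \sqrt r$), and the convergence of $\sum_{\gamma\in\Gamma(q)} 1/\gamma^2$ (which follows from the classical zero-counting estimate $\#\{\gamma\in\Gamma(q)\colon\gamma\le T\} \ll T\log T$). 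Hence $\sum_{\gamma\in\cG} \big(2|\bt\cdot\bv_\gamma|/\sqrt{\tfrac14+\gamma^2}\big)^2 \ll \|\bt\|^2 \sum_\gamma 1/\gamma^2 < \infty$, so the infinite product converges absolutely for every fixed $\bt$; call the limit $\phi(\bt)$. This also shows $\widehat\mu^A_T(\bt)\to\phi(\bt)$ pointwise.

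Next I would verify tightness of $\{\mu^A_T\}$, which is the step I expect to be the main obstacle. The cleanest route is via the tightness criterion recorded in Appendix~\ref{probability appendix} (Theorem~\ref{tightnesscriterion}), which bounds the tail mass $\mu^A_T(\R^r\setminus\cB)$ of a ball $\cB$ in terms of an integral of $1-\Re\widehat\mu^A_T(\bt)$ over a small neighbourhood of the origin. What must be shown is that this bound is uniform in $T$. For this I would establish a $T$-uniform lower bound of the form $\widehat\mu^A_T(\bt) \ge 1 - C\|\bt\|^2$ for $\|\bt\|$ small, using the bound $J_0(u)\ge 1-u^2/4$ (valid for all real $u$) and again $\sum_{\gamma\in\cG}\big(2|\bt\cdot\bv_\gamma|/\sqrt{\tfrac14+\gamma^2}\big)^2 \ll \|\bt\|^2$ with an implied constant independent of $T$; since the partial products of nonnegative factors each at least $1-C_\gamma\|\bt\|^2$ are bounded below by $1 - \sum C_\gamma\|\bt\|^2$ (for $\|\bt\|$ small enough that all factors are nonnegative), this gives the desired uniformity. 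Alternatively, one can simply invoke that $\{\mu_T\}$ converges weakly to $\mu$ (Proposition~\ref{quoting ET}) hence is tight, and that $\mu_T = \mu^A_T * \mu^B_T$ (Lemma~\ref{nuTconvolution}); tightness of a convolution family does not immediately give tightness of the factors, but one can recover it because $|\widehat\mu_T(\bt)| = |\widehat\mu^A_T(\bt)|\cdot|\widehat\mu^B_T(\bt)| \le |\widehat\mu^A_T(\bt)|$, so a lower bound near the origin for $\widehat\mu_T$ (from tightness of $\{\mu_T\}$) transfers directly to $\widehat\mu^A_T$. Either way, tightness follows, and then by Prokhorov's theorem combined with the pointwise convergence of characteristic functions identified above, every weakly convergent subsequence of $\{\mu^A_T\}$ has the same limit $\mu^A$ (the measure with characteristic function $\phi$), so the whole family converges weakly to $\mu^A$.
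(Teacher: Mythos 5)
Your proof is correct, but it follows a genuinely different route from the paper's. You establish pointwise convergence of the characteristic functions and then verify tightness (or, equivalently, continuity of the limit at the origin) so that the partial converse of L\'evy's theorem produces a limit measure $\mu^A$ out of thin air. The paper instead constructs $\mu^A$ directly: it defines the $\R^r$-valued random series $X^A = 2\Re\sum_{\gamma\in\cG} Z_\gamma\overline\bv_\gamma/\sqrt{\tfrac14+\gamma^2}$, splits its coordinates into real and imaginary parts, and invokes the Kolmogorov--Khinchin two-series criterion (using the same convergence of $\sum 1/\gamma^2$ that you use) to show the series converges almost surely. With $\mu^A$ in hand as the law of $X^A$, the paper computes $\widehat\mu^A$ and checks $\widehat\mu^A_T\to\widehat\mu^A$ pointwise, at which point the \emph{easy} direction of L\'evy's theorem finishes the argument---no tightness estimate is needed because the limit is already known to be a characteristic function of a probability measure. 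Your approach is more compact and avoids Kolmogorov--Khinchin, at the cost of carrying out the tightness estimate (which you do correctly, via $J_0(u)\ge 1-u^2/4$, or alternatively by transferring the tightness of $\{\mu_T\}$ through the factorization $|\widehat\mu_T|\le|\widehat\mu^A_T|$). The paper's approach buys something used later: the concrete random series $X^A$ (and its analogues $X^S_k$, $X^R_W$) is manipulated directly in the proofs of Proposition~\ref{has mass somewhere small} and throughout Section~\ref{cylinder section}, so constructing $\mu^A$ as an honest law rather than as an abstract weak limit is not wasted effort.
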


\begin{proof}
Define $X^A$ as in equation~\eqref{XA def} above.
Observe that 
$X^A  = (Y_1, \ldots, Y_r )$
where 
\[
 Y_j = 2 \Re \sum_{\gamma\in\cG} \frac{\overline\chi_\gamma(a_j) Z_{\gamma}}{|\frac{1}{2}+i \gamma|}
\]
is a real-valued random variable defined on $\T^{\infty}$; indeed,
$Y_j=Y_j^\Re - Y_j^\Im$ where 
\[
    Y_j^\Re = 2 \sum_{\gamma\in\cG} \frac{\Re\overline\chi_\gamma(a_j) \cdot \Re Z_{\gamma}}{|\frac{1}{2}+i \gamma|}
      \text{ and }
    Y_j^\Im = 2 \sum_{\gamma\in\cG} \frac{\Im\overline\chi_\gamma(a_j) \cdot \Im Z_{\gamma}}{|\frac{1}{2}+i \gamma|}.
\]
Since the $Z_{\gamma}$ are independent, identically distributed random variables with values on the unit circle,
it follows that $\{ \Re Z_{\gamma}\}$ and $\{ \Im Z_{\gamma}\}$  are sets of independent, identically distributed random variables with values in $[-1,1]$.

The total number of zeros of Dirichlet $L$-functions\mod q in the critical strip whose imaginary parts are between 0 and $T$ is asymptotic to $\phi(q)(T\log qT)/2\pi$~\cite[Corollary 14.7]{MV}; it follows easily that
\begin{equation}  \label{pre yes it converges}
\sum_{\chi\mod q} 
 \sum_{\substack{ \gamma\in\Gamma(\chi)}} |\tfrac{1}{2}+i \gamma|^{-2} \text{ converges}.
\end{equation}
In particular, $\sum_{\gamma\in\cG} |\tfrac{1}{2}+i \gamma|^{-2}$ converges,
whence a theorem of Komolgorov and Khinchin \cite[Theorem~1, p. 384]{Sh} 
implies that each $Y_j^\Re$ converges with probability~$1$ and each $Y_j^\Im$ converges with probability~$1$.  
It follows that each $Y_j$ converges with probability 1, and therefore
$X^A$ converges almost surely.

Let $\mu^A$ denote the probability measure on $\R^r$ associated to the random vector $X^A$, namely the pushforward (see Definition~\ref{pushforward def}) of Haar measure on $\T^{\infty}$ under the map given by the right-hand side of equation~\eqref{XA def}.
A computation similar to Lemma~\ref{nuTconvolution}(b) shows that 
\begin{equation} 
      \label{muhatR}
     \widehat\mu^A(\bt) 
   = \prod_{\gamma\in\cG}  J_0 \bigg( \frac{2 |\bt \cdot \bv_\gamma| }{\sqrt{\frac{1}{4}+\gamma^2} }   \bigg).
 \end{equation}
 Observe that since $J_0(z) = 1+O(|z|^2)$ for $|z| \le 1$, the convergence of
 $\sum_{\gamma\in\cG}  (\frac{1}{4} + \gamma^2)^{-1}$ implies that the above infinite product converges absolutely.
It follows from equation~\eqref{ftformula} 
that $\widehat\mu^A_T(\bt) \to \widehat\mu^A(\bt)$ for all $\bt \in \R^r$ as $T \to \infty$.  
By Levy's theorem \cite[p. 383]{Bi}, $\mu^A_{T} \to \mu^A$ weakly as $T \to \infty$. 
\end{proof}

We remark that Lemma~\ref{nuSTweakconv} has no assumption about the size of~$\cG$. Its statement is valid even if $\cG$ is empty, for example: in that case, the random variable $X^A$ defined in equation~\eqref{XA def} would be identically~$0$, relevant characteristic functions such as the one in equation~\eqref{ftformula} would be identically~$1$, and the corresponding measure $\mu^A$ would be a Dirac delta measure. On the other hand, we will apply the lemma only in situations where $\cG$ contains plenty of self-sufficient zeros. Similar comments apply to the next two propositions.

\begin{prop}  \label{existence of muN prop}
There exists a probability measure $\mu^B$ such that $\mu = \mu^A*\mu^B$. 
\end{prop}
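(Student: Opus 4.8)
The plan is to obtain $\mu^B$ as the weak limit of the measures $\mu_T^B$ produced in Lemma~\ref{nuTconvolution}, and then pass the relation $\mu_T = \mu_T^A * \mu_T^B$ to the limit. First I would observe that by Proposition~\ref{quoting ET}, $\mu_T \to \mu$ weakly as $T\to\infty$, and by Lemma~\ref{nuSTweakconv}, $\mu_T^A \to \mu^A$ weakly. The natural candidate for $\mu^B$ is a weak limit of a subsequence of $\{\mu_T^B\}$, so the first substantive step is to establish that the family $\{\mu_T^B \colon T>0\}$ is tight. Once tightness is in hand, Prokhorov's theorem gives a subsequence $T_n\to\infty$ along which $\mu_{T_n}^B$ converges weakly to some probability measure $\mu^B$.

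The tightness of $\{\mu_T^B\}$ should follow from the tightness of $\{\mu_T\}$ (which holds since $\mu_T\to\mu$ weakly, so the family is automatically tight) together with the fact that $\mu_T = \mu_T^A * \mu_T^B$ and $\{\mu_T^A\}$ is also tight (being weakly convergent). Concretely, I would argue at the level of random variables: write $X_T = X_T^A + X_T^B$ where $X_T^A, X_T^B$ are independent with distributions $\mu_T^A, \mu_T^B$. Since $\{X_T^A\}$ and $\{X_T\}$ are each tight, and $X_T^B = X_T - X_T^A$ with the two summands independent, one deduces that $\{X_T^B\}$ is tight — this is a standard fact (a sum of independent random variables is tight in each coordinate only if each summand is, given the other is; more cleanly, for independent $U,V$ with $U+V$ tight and $U$ tight, $V=(U+V)-U$ has tight distribution). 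This is the step I expect to require the most care, since one must be slightly attentive to the fact that tightness of a difference of independent variables is what is needed, and it may be cleanest to phrase it via characteristic functions: $\widehat{\mu_T^B}(\bt) = \widehat{\mu_T}(\bt)/\widehat{\mu_T^A}(\bt)$ wherever the denominator is nonzero, together with the explicit product formula~\eqref{ftformula} showing $\widehat{\mu_T^A}$ is bounded away from $0$ on bounded sets uniformly in $T$ (since $\sum_{\gamma\in\cG}(\tfrac14+\gamma^2)^{-1}$ converges by~\eqref{pre yes it converges}, and $J_0(z)=1+O(|z|^2)$). This gives pointwise convergence $\widehat{\mu_{T}^B}(\bt)\to \widehat{\mu}(\bt)/\widehat{\mu^A}(\bt)$ for every $\bt$, and then an appeal to Lévy's continuity theorem — once one checks the limiting function is continuous at $\bzero$, which it is since $\widehat{\mu^A}(\bzero)=1$ — yields directly that $\mu_T^B$ converges weakly to a probability measure $\mu^B$, bypassing the need for a subsequence argument entirely.

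Finally I would conclude by taking limits in the identity $\widehat{\mu_T}(\bt) = \widehat{\mu_T^A}(\bt)\,\widehat{\mu_T^B}(\bt)$: the left side tends to $\widehat{\mu}(\bt)$, the first factor on the right to $\widehat{\mu^A}(\bt)$, and the second to $\widehat{\mu^B}(\bt)$, so $\widehat{\mu}(\bt) = \widehat{\mu^A}(\bt)\,\widehat{\mu^B}(\bt) = \widehat{\mu^A * \mu^B}(\bt)$ for all $\bt\in\R^r$. Since a probability measure on $\R^r$ is determined by its characteristic function, $\mu = \mu^A * \mu^B$, as desired. The only genuine obstacle is the tightness/continuity-at-zero verification for the quotient of characteristic functions; everything else is bookkeeping with Lévy's theorem and the results already proved in Lemmas~\ref{nuTconvolution} and~\ref{nuSTweakconv} and Proposition~\ref{quoting ET}.
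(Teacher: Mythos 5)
Your overall plan matches the paper's: show the family $\{\mu_T^B\}$ is tight, extract a weakly convergent subsequence by Prokhorov, and pass the identity $\mu_T = \mu_T^A * \mu_T^B$ to the limit. But two of your specific claims about the characteristic-function route are wrong, and they are not cosmetic.

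First, you assert that the product formula~\eqref{ftformula} shows $\widehat{\mu_T^A}$ is ``bounded away from $0$ on bounded sets uniformly in $T$,'' citing the convergence of $\sum_\gamma(\tfrac14+\gamma^2)^{-1}$ and $J_0(z)=1+O(|z|^2)$. Those facts only show the \emph{tail} of the product converges absolutely; they say nothing about the \emph{head}. For a general $\bt$ in a bounded set, one of the small-$\gamma$ factors $J_0\bigl(2|\bt\cdot\bv_\gamma|/\sqrt{\tfrac14+\gamma^2}\bigr)$ can land exactly on a zero of $J_0$ (the first one is at $\approx 2.4048$), so $\widehat{\mu_T^A}(\bt)$ can vanish. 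The paper avoids this by constructing a specific small neighbourhood $\cN=\{\bt:\max_j|t_j|<3/(5r)\}$ of the origin on which \emph{every} Bessel argument is at most $12/5<2.4$; nonvanishing is only claimed there.

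Second, because the quotient $\widehat{\mu_T^B}(\bt) = \widehat{\mu_T}(\bt)/\widehat{\mu_T^A}(\bt)$ is only controlled on $\cN$, you do \emph{not} get pointwise convergence of $\widehat{\mu_T^B}$ ``for every $\bt$,'' and the standard L\'evy continuity theorem — which needs convergence on all of $\R^r$ — does not apply. So your proposed shortcut of ``bypassing the need for a subsequence argument entirely'' does not go through. The paper instead invokes a strengthened tightness criterion (Theorem~\ref{tightnesscriterion}) that requires convergence of the characteristic functions only in a neighbourhood of $\bzero$ together with continuity of the limit at $\bzero$, obtains tightness from that, and \emph{then} extracts a subsequence via Prokhorov (Theorem~\ref{tightsubsequence}) and passes to the limit using Lemma~\ref{weakconv}. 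That subsequence step is genuinely needed here.

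Your backup argument at the level of random variables (if $\{U_T\}$ and $\{U_T+V_T\}$ are tight and $U_T,V_T$ independent, then $\{V_T\}$ is tight) is actually correct and can be made rigorous; it is an alternative route to tightness that avoids the Bessel-zero issue. But it only gives tightness, so you would still need the subsequence extraction — it does not rescue the ``bypass L\'evy'' claim.
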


\begin{proof}
Define
\[
\cN = \bigg\{ \bt = (t_1,\dots,t_r) \in \R^r \colon \max_{1\le j\le r} |t_j| < \frac3{5r} \bigg\}.
\]
Note that for any zero $\rho$ of any $L(s,\chi)$ (assuming GRH),
\[
\frac{2}{|\rho|} \bigg| \sum_{j=1}^{r} t_j \overline\chi(a_j) \bigg| \le 4\bigg| \sum_{j=1}^{r} t_j \overline\chi(a_j) \bigg| \le 4r\max_{1\le j\le r} |t_j| < \frac{12}5
\]
for all $\bt\in\cN$. The least positive root of the Bessel function $J_0(t)$ occurs at $t\approx 2.404$; in particular, the above inequality shows that $J_0\big( \frac{2}{|\rho|} \big| \sum_{j=1}^{r} t_j \overline\chi(a_j) \big| \big) \ne0$ for all $\bt\in\cN$. We conclude from equation~\eqref{ftformula} that $\widehat\mu^A_T(\bt) \ne 0$ for $\bt \in \cN$, independent of the value of $T \ge 0$.

Next, we prove that the family of probability measures $(\mu^B_T)_{T > 0}$ is tight.
By Lemma~\ref{nuTconvolution}(a), $\widehat\mu_{T}(\bt) = \widehat\mu^A_T(\bt) \widehat\mu^B_T(\bt)$
for all $\bt \in \R^r$; when $\bt \in \cN$, we may divide by $\widehat\mu^A_T(\bt)$ to obtain
$\widehat\mu^B_T(\bt) =  \widehat\mu_{T} (\bt) /  \widehat\mu^A_T(\bt)$.  
Since $\mu_T \to \mu$ weakly by Proposition~\ref{quoting ET} and $\mu^A_T \to \mu^A$ weakly by Lemma~\ref{nuSTweakconv}, it follows from Levy's theorem that
\[
h(\bt) := \lim_{T \to \infty} \widehat\mu^B_T(\bt) =  \lim_{T \to \infty} \frac{\widehat\mu_T(\bt)}{\widehat\mu^A_T(\bt)} = \frac{\widehat\mu(\bt)}{\widehat\mu^A(\bt)}
\]
exists for $\bt \in \cN$. (The argument that $\widehat\mu^A(\bt)\ne0$ is the same as the argument above showing $\widehat\mu^A_T(\bt)\ne0$, using the formula~\eqref{muhatR} in place of~\eqref{ftformula}.)
By \cite[Theorem~1, p. 278]{Sh}, both $\widehat\mu(\bt)$ and $\widehat\mu^A(\bt)$ are continuous at $\bt={\bf 0}$; and so $h(\bt)$ is also continuous at $\bt= {\bf 0}$.
Thus, by Theorem~\ref{tightnesscriterion}, $(\mu^B_T)_{T > 0}$ is tight.

By Theorem~\ref{tightsubsequence} 
there exists a subsequence $( \mu^B_{T_k} )_{k \in \mathbb{N}}$ and a probability measure $\mu^B$ such
that $\mu^B_{T_k} \to \mu^B$ weakly.   By Lemma~\ref{weakconv}, it follows that 
$\mu^A_{T_k}*\mu^B_{T_k} \to \mu^A*\mu^B$
weakly.   On the other hand, $\mu^A_{T_k}*\mu^B_{T_k} = \mu_{T_k}$ by Lemma~\ref{nuTconvolution}(a) 
and $\mu_{T_k} \to \mu$ weakly by Proposition~\ref{quoting ET}.  Combining these facts,
\[
  \int_{\R^r} f(\bx) \,d (\mu^A*\mu^B)(\bx)=  \int_{\R^r} f(\bx) \,d\mu(\bx) 
\]
for all bounded continuous $f(x)$ on $\R^r$, and thus $\mu=\mu^A*\mu^B$ by 
\cite[Theorem~1.2]{Bi2}.
\end{proof}

\begin{remark} \label{muA muB on hyperplane}
When $r=\phi(q)$, the measures $\mu^A$ and $\mu^B$ described in Lemma~\ref{nuSTweakconv} and Proposition~\ref{existence of muN prop} are actually supported on the hyperplane $x_1+\cdots+x_{\phi(q)}=0$. This holds for $\mu^A$ because the random variable $X^A$ from equation~\eqref{XA def} is supported on this hyperplane, which in turn follows from the fact that for each nonprincipal character~$\chi$, the vector $\bv_\chi$ lies in the hyperplane by orthogonality. Then, the fact that $\mu$ itself is supported on this hyperplane (as mentioned in Section~\ref{outline section}) implies that $\mu^B$ is also supported on the hyperplane, since the support of $\mu=\mu^A*\mu^B$ is the set sum of the supports of $\mu^A$ and $\mu^B$. (While this argument is self-contained, it could be reassuring to note that the constant vector $\bb$ defined in Definition~\ref{ETx def} does lie in this hyperplane when $r=\phi(q)$.)
\end{remark}

Two special cases of the general results above will be crucial for our proofs of Theorems~\ref{all-way theorem} and~\ref{inclusive theorem}. The following proposition is relevant for part~(a) of each theorem, while the proposition after it is relevant for parts~(b) and~(c) of each theorem.

\begin{prop}  \label{muconvolutionsturdy}
For any positive integer $k$, there exist probability measures $\mu^S_k$ and $\mu^N_k$ such that $\mu = \mu_k^S*\mu^N_k$ and
 \begin{equation}
  \label{muhatn0S}
  \widehat\mu_{k}^{S}(\bt) = \prod_{\substack{\chi\mod q \\ \chi\ne\chi_0 \\ \chi \ k\text{\rm-sturdy}}} \prod_{\gamma  \in \Gamma^{S}(\chi)} J_0 \bigg( \frac{2 |\bt \cdot \bv_\chi| }{\sqrt{\frac{1}{4}+\gamma^2} }   \bigg). 
\end{equation}
\end{prop}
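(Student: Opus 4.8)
The plan is to specialize the general convolution results of the previous subsection to an appropriately chosen set of self-sufficient zeros. Concretely, I would take
\[
\cG = \bigcup_{\substack{\chi\mod q \\ \chi\ne\chi_0 \\ \chi\ k\text{-sturdy}}} \Gamma^S(\chi).
\]
First I would check that this is an admissible choice in the sense of Definition~\ref{G def}: it is a subset of $\Gamma(q)$, and every element of $\cG$ is self-sufficient directly from the definition of $\Gamma^S(\chi)$. Moreover this union is disjoint: if some $\gamma$ belonged to $\Gamma^S(\chi_1)\cap\Gamma^S(\chi_2)$ with $\chi_1\ne\chi_2$, then $\gamma$ would occur with multiplicity at least~$2$ in the multiset $\Gamma(q)$, so a second copy of $\gamma$ would lie in $\Gamma(q)\setminus\{\gamma\}$, contradicting self-sufficiency. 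Consequently, for each $\gamma\in\cG$ the unique character $\chi_\gamma$ of Definition~\ref{G def} is the character $\chi$ with $\gamma\in\Gamma^S(\chi)$, and hence $\bv_\gamma = \bv_{\chi_\gamma} = \bv_\chi$.

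With this $\cG$ in hand, Lemma~\ref{nuSTweakconv} produces a probability measure $\mu^A$ with $\mu^A_T\to\mu^A$ weakly and characteristic function given by equation~\eqref{muhatR}, while Proposition~\ref{existence of muN prop} produces a probability measure $\mu^B$ with $\mu=\mu^A*\mu^B$. I would then simply set $\mu^S_k=\mu^A$ and $\mu^N_k=\mu^B$, so that $\mu=\mu^S_k*\mu^N_k$ as required. For the characteristic function, I would start from~\eqref{muhatR} and reorganize the product over $\gamma\in\cG$ as an iterated product, first over the nonprincipal $k$-sturdy characters $\chi$ and then over $\gamma\in\Gamma^S(\chi)$; this reorganization is legitimate precisely because the union defining $\cG$ is disjoint, and there are only finitely many $k$-sturdy characters (being characters\mod q). Substituting $\bv_\gamma=\bv_\chi$ from the previous paragraph turns~\eqref{muhatR} into
\[
\widehat\mu^A(\bt) = \prod_{\substack{\chi\mod q \\ \chi\ne\chi_0 \\ \chi\ k\text{-sturdy}}} \prod_{\gamma\in\Gamma^S(\chi)} J_0\bigg( \frac{2|\bt\cdot\bv_\chi|}{\sqrt{\tfrac14+\gamma^2}} \bigg),
\]
which is exactly~\eqref{muhatn0S}.

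The proposition is therefore a direct corollary of the preceding general results, and I do not expect a substantial obstacle; the only points requiring a moment's care are the disjointness of the union (so that the double product is unambiguous and $\bv_\gamma$ is correctly identified with $\bv_\chi$) and the absolute convergence of the infinite product over all $\gamma\in\cG$. The latter is immediate since $\sum_{\gamma\in\cG}(\tfrac14+\gamma^2)^{-1}$ is dominated by the convergent sum in~\eqref{pre yes it converges}, exactly as in the proof of Lemma~\ref{nuSTweakconv}; the edge case where no character is $k$-sturdy ($\cG=\emptyset$) is also covered, the formula then reducing to the empty product~$1$, consistent with $\mu^S_k$ being a point mass at the origin.
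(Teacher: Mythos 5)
Your argument is correct and is exactly the specialization the paper intends: take $\cG=\bigcup_{\chi\ne\chi_0,\ \chi\ k\text{-sturdy}}\Gamma^S(\chi)$, apply Lemma~\ref{nuSTweakconv} and Proposition~\ref{existence of muN prop}, and reindex the product in~\eqref{muhatR} over characters. The paper leaves this routine specialization implicit (pointing to the random variable~\eqref{muSdefn}), and your added checks on disjointness of the union and on $\bv_\gamma=\bv_\chi$ are exactly the right small points to verify.
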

 
Indeed, $\mu_k^S$ can be defined as the distribution of the random variable
\begin{equation}
  \label{muSdefn}
X_k^S = 2 \Re \sum_{\substack{\chi\mod q \\ \chi\ne\chi_0 \\ \chi \text{ $k$-sturdy}}} \overline\bv_\chi \sum_{\gamma\in\Gamma^S(\chi)} \frac{Z_\gamma}{\sqrt{\frac14+\gamma^2}}
\end{equation}
where we stipulate that $\{Z_\gamma\}$ is an independent collection of random variables, each uniformly distributed on the unit circle in~$\C$; however, the characteristic function~\eqref{muhatn0S} is essentially all we will need going forward.

\begin{prop}  \label{muconvolutionrobust}
For any positive real number $W>0$, there exist probability measures $\mu^R_W$ and $\mu^N_W$ such that $\mu = \mu^R_W*\mu^N_W$ and
 \begin{equation}
  \label{muhatn0SB}
  \widehat\mu^R_W(\bt) = \prod_{\substack{\chi\mod q \\ \chi\ne\chi_0 \\ \chi \text{ \rm $W$-robust}}} \prod_{\gamma  \in \Gamma^{S}(\chi)} J_0 \bigg( \frac{2 |\bt \cdot \bv_\chi| }{\sqrt{\frac{1}{4}+\gamma^2} }   \bigg). 
\end{equation}
\end{prop}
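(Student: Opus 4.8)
The plan is to obtain Proposition~\ref{muconvolutionrobust} as the special case of the general construction of Sections~\ref{convolution section} and~\ref{sums of rvs section} corresponding to the particular choice
\[
\cG = \bigcup_{\substack{\chi\mod q \\ \chi\ne\chi_0 \\ \chi \text{ $W$-robust}}} \Gamma^S(\chi).
\]
First I would check that this $\cG$ is a legitimate input for that machinery, namely that $\cG \subseteq \Gamma(q)$ and that every element of $\cG$ is self-sufficient. Both are immediate: each $\Gamma^S(\chi)$ with $\chi\ne\chi_0$ lies in $\Gamma(\chi)\subseteq\Gamma(q)$ and consists of self-sufficient ordinates by definition. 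I would also record that the union is \emph{disjoint}: if $\gamma$ is self-sufficient then (as remarked after Definition~\ref{self-sufficient def}) $\tfrac12+i\gamma$ is a simple zero of $\prod_{\chi\ne\chi_0}L(s,\chi)$, so $\gamma$ belongs to $\Gamma^S(\chi)$ for at most one character $\chi$, and moreover $\chi_\gamma=\chi$ in the notation of Definition~\ref{G def}, whence $\bv_\gamma=\bv_\chi$.

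With this $\cG$ in hand, Lemma~\ref{nuSTweakconv} produces a probability measure on $\R^r$ that I will rename $\mu^R_W$, satisfying $\mu^A_T \to \mu^R_W$ weakly and, by equation~\eqref{muhatR},
\[
\widehat\mu^R_W(\bt) = \prod_{\gamma\in\cG} J_0\bigg( \frac{2|\bt\cdot\bv_\gamma|}{\sqrt{\tfrac14+\gamma^2}} \bigg).
\]
Because the union defining $\cG$ is disjoint and $\bv_\gamma=\bv_\chi$ whenever $\gamma\in\Gamma^S(\chi)$, I can regroup this product according to which character each $\gamma$ belongs to, obtaining exactly the double product in equation~\eqref{muhatn0SB}. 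Then Proposition~\ref{existence of muN prop}, applied with the same $\cG$, furnishes a probability measure $\mu^B$---which I will rename $\mu^N_W$---such that $\mu = \mu^R_W * \mu^N_W$, completing the proof.

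I do not anticipate a genuine obstacle: the substantive content has already been carried out in the general setting of Lemma~\ref{nuSTweakconv} and Proposition~\ref{existence of muN prop}, and what remains is the bookkeeping of choosing $\cG$, verifying that it satisfies the hypotheses (a submultiset of $\Gamma(q)$, all of whose elements are self-sufficient), and performing the cosmetic regrouping of the characteristic function by character. The only point requiring a moment's care is the disjointness of the union over characters together with the identification $\bv_\gamma=\bv_\chi$, both of which follow from the simplicity of self-sufficient zeros. I would remark that Proposition~\ref{muconvolutionsturdy} is proved in precisely the same way, taking instead $\cG = \bigcup_{\chi\ne\chi_0,\ \chi \ k\text{-sturdy}} \Gamma^S(\chi)$, so that equation~\eqref{muhatn0S} likewise follows from equation~\eqref{muhatR} after the analogous regrouping.
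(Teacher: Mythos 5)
Your proposal is correct and is exactly the route the paper takes: both Propositions~\ref{muconvolutionsturdy} and~\ref{muconvolutionrobust} are stated as ``special cases of the general results above,'' and the paper's intent is precisely to apply Lemma~\ref{nuSTweakconv} and Proposition~\ref{existence of muN prop} with $\cG = \bigcup_{\chi\ne\chi_0,\ \chi\text{ $W$-robust}} \Gamma^S(\chi)$, after which equation~\eqref{muhatn0SB} follows from equation~\eqref{muhatR} by grouping the factors by character, using that each self-sufficient $\gamma$ lies in a unique $\Gamma^S(\chi)$ with $\bv_\gamma = \bv_\chi$. Your justification of the disjointness via the simplicity of self-sufficient zeros is the right observation and is the only hypothesis that needed checking.
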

 
In this case also, $\mu^R_W$ can be defined as the distribution of the random variable
\begin{equation}
  \label{XR def}
X^R_W = 2 \Re \sum_{\substack{\chi\mod q \\ \chi\ne\chi_0 \\ \chi \text{ $W$-robust}}} \overline\bv_\chi \sum_{\gamma\in\Gamma^S(\chi)} \frac{Z_\gamma}{\sqrt{\frac14+\gamma^2}}
\end{equation}
where we stipulate that $\{Z_\gamma\}$ is an independent collection of random variables, each uniformly distributed on the unit circle in~$\C$.

We will need one more fact regarding these decompositions of the form $\mu=\mu^A*\mu^B$, namely that the mysterious probability measure $\mu^B$ has mass somewhere sufficiently near the origin in~$\R^r$. We can accomplish this task by using a multidimensional version of Chebyshev's inequality.

Given any probability measure~$\mu_1$ on~$\R^r$, we can write $\E(\mu_1) = \int_{\R^r} \bu \,d\mu_1(\bu)$ and
\[
\sigma^2(\mu_1) 
= \int_{\R^r} \|\bu\|^2 \,d\mu_1(\bu) - \bigg\| \int_{\R^r} \bu \,d\mu_1(\bu) \bigg\|^2.
\]
We write $\sigma(\mu_1) = \sqrt{\sigma^2(\mu_1)}$ (in contrast to some usage where $\sigma(\mu_1)$ is an $\R^r$-valued quantity).
In this setting, Chebyshev's inequality is then valid (see~\cite[Section~4]{Fe} for the equivalent statement for random variables) in the form
\begin{equation} \label{Rr Chyeyshev}
\mu\big( \big\{ x\in\R^r\colon |x - \E(\mu_1)| \ge \lambda\sigma(\mu_1) \big\} \big) \le \frac1{\lambda^2}.
\end{equation}
When two measures are related via a convolution, we can use this inequality to compare the masses the measures assign to complements of balls.

\begin{definition} \label{ball def}
Let $\cB_{\rho}(\bx_0)$ denote the ball in $\mathbb{R}^r$ with radius $\rho$ and center $\bx_0$, with the shorthand $\cB_\rho = \cB_\rho(\bzero)$ for balls centered at the origin; we further define $\cB^c_\rho = \{ x \in \R^r  \colon \|x\| \ge \rho \}$ to be the complement of~$\cB_\rho$.
\end{definition}

\begin{lemma} \label{tail bounds for convolution}
Let $\mu_1$ and $\mu_2$ be any probability measures on $\R^r$ and set $\mu_0=\mu_1*\mu_2$. Define $V_0 = 2\sigma(\mu_1) + \|\E(\mu_1)\|$. Then for $V >V_0$,
\[
  \mu_2(\cB^c_{V}) \ll \mu_0(\cB^c_{V-V_0}). 
\]
\end{lemma}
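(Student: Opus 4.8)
The plan is to bound $\mu_2(\cB^c_V)$ by expressing $\mu_2$ in terms of the given convolution identity $\mu_0 = \mu_1 * \mu_2$ and applying the Chebyshev-type inequality~\eqref{Rr Chyeyshev} to the ``nuisance'' factor $\mu_1$. The heuristic is that if $X_1$ and $X_2$ are independent random variables with distributions $\mu_1$ and $\mu_2$, then $X_1 + X_2$ has distribution $\mu_0$; and when $\|X_2\| \ge V$ is large, $X_1$ typically has bounded norm (it concentrates near $\E(\mu_1)$ within a few standard deviations), so $\|X_1 + X_2\| \ge V - V_0$ should hold most of the time. The only way this can fail is if $X_1$ is itself atypically large, which happens with controlled probability by Chebyshev.

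First I would write, using the convolution formula~\eqref{convolution} applied to the set $\cZ = \cB^c_{V-V_0}$,
\[
\mu_0(\cB^c_{V-V_0}) = \int_{\R^r} \int_{\R^r} \one_{\cB^c_{V-V_0}}(\bx_1 + \bx_2) \,d\mu_1(\bx_1)\,d\mu_2(\bx_2) = \int_{\R^r} \mu_1\big( \{\bx_1 \colon \|\bx_1 + \bx_2\| \ge V - V_0\} \big) \,d\mu_2(\bx_2).
\]
Next I would restrict the outer integral to the region $\|\bx_2\| \ge V$, which only decreases the integrand's support, and on that region observe that $\|\bx_1 + \bx_2\| \ge \|\bx_2\| - \|\bx_1\| \ge V - \|\bx_1\|$, so that $\|\bx_1\| \le V_0$ forces $\|\bx_1 + \bx_2\| \ge V - V_0$. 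Hence for each fixed $\bx_2$ with $\|\bx_2\| \ge V$,
\[
\mu_1\big( \{\bx_1 \colon \|\bx_1 + \bx_2\| \ge V - V_0\} \big) \ge \mu_1\big( \{\bx_1 \colon \|\bx_1\| \le V_0\} \big) = \mu_1(\cB_{V_0}).
\]
Putting these together gives $\mu_0(\cB^c_{V-V_0}) \ge \mu_1(\cB_{V_0}) \cdot \mu_2(\cB^c_V)$.

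It then remains to show that $\mu_1(\cB_{V_0})$ is bounded below by an absolute positive constant (depending only on $r$, hence on $q$, which is consistent with the $\ll$ notation). Here is where Chebyshev's inequality~\eqref{Rr Chyeyshev} enters: the complement $\cB^c_{V_0}$ with $V_0 = 2\sigma(\mu_1) + \|\E(\mu_1)\|$ satisfies $\cB^c_{V_0} \subseteq \{\bx \colon |\bx - \E(\mu_1)| \ge 2\sigma(\mu_1)\}$, because $\|\bx\| \ge V_0$ implies $|\bx - \E(\mu_1)| \ge \|\bx\| - \|\E(\mu_1)\| \ge 2\sigma(\mu_1)$. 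Applying~\eqref{Rr Chyeyshev} with $\lambda = 2$ yields $\mu_1(\cB^c_{V_0}) \le \tfrac14$, hence $\mu_1(\cB_{V_0}) \ge \tfrac34$. Therefore $\mu_2(\cB^c_V) \le \tfrac43 \mu_0(\cB^c_{V-V_0}) \ll \mu_0(\cB^c_{V-V_0})$, as claimed. I do not anticipate a serious obstacle here; the only mild subtlety is the degenerate case where $\sigma(\mu_1) = \infty$ or $\E(\mu_1)$ is undefined, in which case $V_0 = \infty$ and the hypothesis $V > V_0$ is vacuous, so the statement holds trivially — I would remark on this briefly so the argument is airtight.
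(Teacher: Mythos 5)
Your proof is correct and follows essentially the same approach as the paper: both use the convolution lower bound together with Chebyshev's inequality with $\lambda=2$ to show $\mu_1$ has mass at least $\tfrac34$ near the origin (the paper works with the ball $\cB_{2\sigma(\mu_1)}(\E(\mu_1))$, you with the slightly larger origin-centered ball $\cB_{V_0}$, but the argument is the same).
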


\begin{proof}
Note that if both $\|\bx-\E(\mu_1)\|<2\sigma(\mu_1)$ and $\|\by\|\ge V$ hold, then by the triangle inequality, $\|\bx+\by\| > V-2\sigma(\mu_1)-\|\E(\mu_1)\| = V-V_0$. In other words, $\cB_{2\sigma(\mu_1)}(\E(\mu_1)) + \cB^c_V \subset \cB^c_{V-V_0}$, and
consequently $\mu_1\big( \cB_{2\sigma(\mu_1)}(\E(\mu_1)) \big) \mu_2(\cB^c_V) \le \mu_0(\cB^c_{V-V_0})$ by equation~\eqref{convolution lower bound}.
On the other hand, equation~\eqref{Rr Chyeyshev} with $\lambda=2$ implies that $\mu_1\big( \cB_{2\sigma(\mu_1)}(\E(\mu_1)) \big) = 1 - \mu_1\big( \cB^c_{2\sigma(\mu_1)}(\E(\mu_1)) \big) \ge \frac34$, which establishes the lemma.
\end{proof}

We will apply the following proposition in Section~\ref{cylinder section}, in the proofs of Theorem~\ref{inclusive theorem}(b)--(c) with $\mu^B = \mu^N_W$ and in the proofs of Theorem~\ref{pili theorem}(b)--(c) with $\mu^B$ equal to a measure $\mu^N_{\pi}$ related to the race between $\pi(x)$ and $\li(x)$.

\begin{prop} \label{has mass somewhere small}
There exists a constant $\beta(q)>0$ such that, for any decomposition $\mu=\mu^A*\mu^B$ of the type considered in this section, and any $\rho>0$, there exists $\bn\in\R^r$ with $\|\bn\| \le \beta(q)$ such that $\mu^B(\cB_\rho(\bn)) > 0$. Moreover, if $r=\phi(q)$ then~$\bn$ is contained in the hyperplane $x_1+\cdots+x_{\phi(q)}=0$.
\end{prop}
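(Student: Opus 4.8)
The plan is to bound the second moment of $\mu^B$ using the second moment of $\mu$ together with the second moment of $\mu^A$, then apply the multidimensional Chebyshev inequality~\eqref{Rr Chyeyshev} to conclude that $\mu^B$ cannot put all its mass far from the origin. First I would record that $\mu^A$ has finite second moment and compute $\E(\mu^A)$ and $\sigma(\mu^A)$ from the defining random variable $X^A = 2\Re\sum_{\gamma\in\cG} Z_\gamma \overline\bv_\gamma/\sqrt{\tfrac14+\gamma^2}$ of equation~\eqref{XA def}: since each $Z_\gamma$ is uniform on the unit circle, $\E(Z_\gamma)=0$, hence $\E(\mu^A)=\bzero$; and by independence $\sigma^2(\mu^A) = \sum_{\gamma\in\cG} \E\|2\Re(Z_\gamma\overline\bv_\gamma)\|^2/(\tfrac14+\gamma^2) \le 4\sum_{\gamma\in\cG}\|\bv_\gamma\|^2/(\tfrac14+\gamma^2)$, which is finite because $\|\bv_\gamma\|^2 = r$ (each coordinate being a root of unity, so of modulus $1$) and $\sum_{\gamma\in\Gamma(q)}(\tfrac14+\gamma^2)^{-1}$ converges by~\eqref{pre yes it converges}. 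In fact $\sigma^2(\mu^A)\le 4r\sum_{\gamma\in\Gamma(q)}(\tfrac14+\gamma^2)^{-1} =: C_1(q)$, a bound depending only on $q$ and not on the particular choice of $\cG$ used in the decomposition.

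Next I would obtain a bound, depending only on $q$, for the second moment of $\mu$ itself. This follows from the known tail estimate for $\mu$ referenced in the outline (``an analogous known estimate for $\mu$''): Rubinstein and Sarnak's bound $\mu(\cB^c_V) \ll e^{-c\sqrt V}$, or more simply any bound implying $\int_{\R^r}\|\bx\|^2\,d\mu(\bx) =: C_2(q) < \infty$, does the job; alternatively one can argue directly from the explicit formula that $E(x)$ has bounded logarithmic second moment. Granting $\sigma^2(\mu)\le C_2(q)$ and $\E(\mu) = \bb$ (which one reads off from~\eqref{mu defined here} and~\eqref{ETysum}, the oscillatory terms averaging to zero), I would then use the convolution identity $\mu = \mu^A*\mu^B$ to extract information about $\mu^B$. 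Writing $X = X^A + X^B$ for independent summands realizing this convolution, we have $\E(X^B) = \bb - \E(\mu^A) = \bb$ and $\sigma^2(\mu) = \sigma^2(\mu^A) + \sigma^2(\mu^B)$ by independence, so $\sigma^2(\mu^B) = \sigma^2(\mu) - \sigma^2(\mu^A) \le C_2(q)$, and also $\|\E(\mu^B)\| = \|\bb\|$ is bounded in terms of $q$ alone (by~\eqref{bdefn}, using $\|\bv_\chi\|^2 = r$ and the standard bound $\sum_\chi \ord_{s=1/2}L(s,\chi) \ll \log q$). Set $\beta(q) = \|\bb\| + 2\sqrt{C_2(q)} + \rho$—actually, to make it independent of $\rho$, set $\beta(q) = \|\bb\| + 2\sqrt{C_2(q)} + 1$ and handle $\rho\ge 1$ trivially and $\rho<1$ by the ball $\cB_1$ containing $\cB_\rho$ of some center. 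By Chebyshev~\eqref{Rr Chyeyshev} applied to $\mu_1 = \mu^B$ with $\lambda = 2$, we get $\mu^B\big(\{\bx : \|\bx - \bb\| < 2\sigma(\mu^B)\}\big) \ge \tfrac34 > 0$; hence the ball $\cB_{2\sqrt{C_2(q)}}(\bb)$, which contains this set, has positive $\mu^B$-mass. Since any ball of positive mass contains a point $\bn$ with $\mu^B(\cB_\rho(\bn))>0$ for every $\rho>0$ (if the open ball has positive measure, some concentric point does—more carefully, a ball of positive measure cannot be covered by countably many balls of radius $\rho$ each of measure zero), we are done: take $\bn$ to be such a point, noting $\|\bn\| \le \|\bb\| + 2\sqrt{C_2(q)} \le \beta(q)$.

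For the final sentence, when $r = \phi(q)$: by Remark~\ref{muA muB on hyperplane}, $\mu^B$ is supported on the hyperplane $x_1+\cdots+x_{\phi(q)}=0$, so any point $\bn$ with $\mu^B(\cB_\rho(\bn))>0$ automatically lies in (the $\rho$-neighbourhood of, hence by taking $\rho\to 0$ and a limit, or just directly in) that hyperplane—indeed any ball around a point off the hyperplane that is small enough misses the support entirely, so a positive-mass point must lie on it.

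The main obstacle I anticipate is making precise the ``known estimate for $\mu$'' giving a uniform-in-$q$-only second moment bound $C_2(q)$, and correspondingly the step that extracts $\sigma^2(\mu^B) \le C_2(q)$: one must be slightly careful that $X^A$ and $X^B$ really are realized as \emph{independent} random variables with $X = X^A + X^B$ on a common space, which is exactly the content of the convolution $\mu = \mu^A*\mu^B$ from Proposition~\ref{existence of muN prop}, so that variances genuinely add. The argument that a Borel set of positive measure contains a point all of whose neighbourhoods have positive measure (a standard density/Baire-type observation) should be stated but is routine. Everything else—the finiteness of $\sigma^2(\mu^A)$ and $\|\bb\|$, the application of Chebyshev—is bookkeeping with the convergent sum~\eqref{pre yes it converges} and the bound $\|\bv_\chi\| = \sqrt r$.
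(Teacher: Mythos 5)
Your strategy is correct in outcome and starts the same way as the paper (computing the variance of $\mu^A$ from the random variable $X^A$, and citing Rubinstein--Sarnak's tail bound for $\mu$), but you then take a genuinely different route to bound $\mu^B$. You push through first and second moments: $\E(\mu^A)=\bzero$, $\sigma^2(\mu^A)\ll_q 1$, $\sigma^2(\mu)\ll_q 1$ from the RS tail, then use additivity of variances under the convolution $\mu=\mu^A*\mu^B$ to get $\sigma^2(\mu^B)\le\sigma^2(\mu)$ and a bound on $\|\E(\mu^B)\|$, and apply Chebyshev directly to $\mu^B$. The paper instead proves a small transfer lemma (Lemma~\ref{tail bounds for convolution}): using Chebyshev on $\mu^A$ only, together with the set inclusion $\cB_{2\sigma(\mu^A)}(\E(\mu^A))+\cB^c_V\subset\cB^c_{V-V_0}$ and equation~\eqref{convolution lower bound}, it obtains $\mu^B(\cB^c_V)\ll\mu(\cB^c_{V-V_0})$ directly, with no need to determine $\E(\mu)$ or $\E(\mu^B)$ or to subtract variances. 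The advantage of the paper's route is exactly that it never invokes $\E(\mu)=\bb$ (a claim your write-up states but waves at rather than proves: one must show $\int_\cA\Psi(\ba)\,d\ba=\bzero$ for the limiting subtorus, which follows from the restriction of each coordinate character to the subtorus being nontrivial since $\gamma>0$, but you should make this explicit or replace it by the weaker bound $\|\E(\mu)\|\le\sqrt{\int\|\bx\|^2\,d\mu}\ll_q 1$ which already suffices). Aside from this point, your covering step (a ball of positive $\mu^B$-mass must contain a point $\bn$ with $\mu^B(\cB_\rho(\bn))>0$; the paper covers $\cB_{\beta(q)}$ by finitely many $\rho$-balls) and the hyperplane step via Remark~\ref{muA muB on hyperplane} agree with the paper.
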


\begin{proof}
We apply Lemma~\ref{tail bounds for convolution} with $\mu_1$ and $\mu_2$ equal to $\mu^A$ and~$\mu^B$, so that $\mu_0 = \mu^A*\mu^B = \mu$.
By equation~\eqref{XA def}, $\mu_A$ is the distribution of the random vector
\begin{equation*}
X^A =  2 \Re \sum_{\gamma\in\cG} \frac{Z_\gamma \overline\bv_\gamma}{\sqrt{\frac14+\gamma^2}},
\end{equation*}
where the $Z_\gamma$ are independent random variables with mean~$0$ and variance~$\frac12$; therefore the variance of~$\mu_1$ equals
\[
4 \sum_{\gamma\in\cG} \frac{\frac12 \|\bv_\gamma\|^2}{\frac14+\gamma^2} = 2r \sum_{\gamma\in\cG} \frac1{\frac14+\gamma^2} \ll \phi(q) \sum_{\gamma\in\Gamma(q)\cup \Gamma(\chi_0)} \frac1{\frac14+\gamma^2} \ll_q 1.
\]
Therefore by Lemma~\ref{tail bounds for convolution}, there exists a constant $V_0$ depending on~$q$ such that $\mu^B(\cB^c_{V}) \ll \mu(\cB^c_{V-V_0})$ when $V>V_0$.

Rubinstein and Sarnak prove~\cite[Theorem~1.2]{RS}, assuming GRH, that there exist positive constants $c_1$ and $c_2$ depending on~$q$
such that $\mu = \mu_{q;a_1, \ldots, a_r}$ satisfies $\mu(\cB^c_{V}) \le c_1 \exp(-c_2 \sqrt{V})$; therefore for $V>V_0$,
\[
  \mu^B(\cB^c_{V}) \ll c_1 \exp(-c_2 \sqrt{V-V_0}).
\]
In particular, there exists $\beta(q)$ such that $\mu^B(\cB^c_{\beta(q)}) \le \frac12$, which shows that $\mu^B(\cB_{\beta(q)}) \ge \frac12$. But $\cB_{\beta(q)}$ can be covered by finitely many balls of the form $\cB_\rho(\bn)$, so one such ball must be assigned positive measure by~$\mu^B$.

The fact that~$\bn$ is contained in the hyperplane $x_1+\cdots+x_{\phi(q)}=0$ when $r=\phi(q)$ follows from the fact that $\mu^B$ is supported on this hyperplane, as shown in Remark~\ref{muA muB on hyperplane}.
\end{proof}

\section{$\mu_{k}^S$ is absolutely continuous with respect to Lesbesgue measure}  \label{AC section}

In this section we show that $\mu_{k}^S$ is absolutely continuous with respect to Lebesgue measure
of a certain subspace $\cV_k^S$. (This subspace will be defined in Definition~\ref{xyz vectors definition}; the notion of Lebesgue measure on a subspace is clarified in Definition~\ref{lambda A definition} below.) This absolute continuity is crucial to the proof of Theorem~\ref{inclusive theorem}(a) and its relatives, which assert the existence of the logarithmic densities associated with prime number races. Establishing this absolute continuity requires a bound on the characteristic function $\widehat\mu_{k}^S$, which is a product of Bessel functions each of which decays in a specific direction; showing that the product actually decays in all directions will involve the expression
 $  \sum_{\chi \ k\text{-sturdy}} |  \bv_\chi \cdot \bt |^2$. The following linear algebra argument gives a convenient lower bound for sums of this type.

\begin{lemma}  \label{quadform lemma}
Let $r$ and $m$ be positive integers. Let $\bv_1,\dots,\bv_m$ be vectors in $\C^r$, and define
\[
\cV = \spn \big\{ \Re \bv_1, \dots, \Re \bv_m, \Im \bv_1, \dots, \Im \bv_m \big\} \subset \R^r.
\]
Then $\| \bx \|^2 \ll \sum_{j=1}^m |\bv_j \cdot \bx|^2$ for all $\bx \in \cV$, where the implicit constant may depend on $\{\bv_1,\dots,\bv_m\}$.
\end{lemma}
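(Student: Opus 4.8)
The plan is to identify $\sum_{j=1}^m |\bv_j\cdot\bx|^2$ with a positive semidefinite quadratic form on $\R^r$ whose restriction to the subspace $\cV$ is positive definite, and then to invoke compactness.

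First I would unwind the notation. For $\bx\in\R^r$ the quantity $\bv_j\cdot\bx$ is the bilinear dot product $\sum_k (\bv_j)_k x_k$, so writing $\bv_j = \Re\bv_j + i\,\Im\bv_j$ gives $\bv_j\cdot\bx = (\Re\bv_j)\cdot\bx + i\,(\Im\bv_j)\cdot\bx$ with both summands real numbers. Hence
\[
\sum_{j=1}^m |\bv_j\cdot\bx|^2 = \sum_{j=1}^m \big( (\Re\bv_j)\cdot\bx \big)^2 + \sum_{j=1}^m \big( (\Im\bv_j)\cdot\bx \big)^2 = \bx^{T} \bM \bx,
\]
where $\bM = \sum_{j=1}^m (\Re\bv_j)(\Re\bv_j)^{T} + \sum_{j=1}^m (\Im\bv_j)(\Im\bv_j)^{T}$ is a real symmetric positive semidefinite matrix depending only on $\{\bv_1,\dots,\bv_m\}$.

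Next I would pin down where this form vanishes. Being a sum of squares, $\bx^{T}\bM\bx = 0$ holds precisely when $(\Re\bv_j)\cdot\bx = 0$ and $(\Im\bv_j)\cdot\bx = 0$ for every $j$, which is exactly the condition $\bx\in\cV^{\perp}$. Since $\cV\cap\cV^{\perp} = \{\bzero\}$, it follows that $\bx^{T}\bM\bx > 0$ for every nonzero $\bx\in\cV$. The map $\bx\mapsto \bx^{T}\bM\bx$ is continuous and therefore attains a minimum value $c > 0$ on the compact unit sphere $\{\bx\in\cV\colon \|\bx\| = 1\}$; by homogeneity, $\sum_{j=1}^m |\bv_j\cdot\bx|^2 = \bx^{T}\bM\bx \ge c\|\bx\|^2$ for all $\bx\in\cV$, which is the asserted inequality. (Equivalently, one may take $c$ to be the least nonzero eigenvalue of $\bM$.)

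There is no substantial obstacle here; this is elementary linear algebra. The only points requiring a little care are the convention that $\cdot$ denotes the bilinear dot product, so that its real and imaginary parts separate as above rather than combining into a Hermitian form, and the observation---already flagged in the statement---that the implicit constant genuinely depends on the vectors $\bv_j$, since it is controlled by the smallest nonzero eigenvalue of $\bM$, which can be arbitrarily small.
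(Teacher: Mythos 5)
Your proof is correct and takes essentially the same route as the paper: both use the decomposition $|\bv_j\cdot\bx|^2 = \big((\Re\bv_j)\cdot\bx\big)^2+\big((\Im\bv_j)\cdot\bx\big)^2$ and close with a finite-dimensional compactness argument. The paper phrases the final step as equivalence of norms on $\cV$ after first passing to a basis $\{\bw_1,\dots,\bw_\ell\}$, whereas you keep the full sum, view it as a quadratic form $\bx^{T}\bM\bx$ whose kernel is $\cV^{\perp}$, and minimize over the unit sphere of $\cV$ (identifying the constant as the least nonzero eigenvalue); these are the same compactness fact in two packagings.
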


\begin{proof}
Every spanning set contains a basis, so select a maximal $\R$-linearly independent subset
\[
\{ \bw_1, \dots, \bw_\ell \} \subset \big\{ \Re \bv_1, \dots, \Re \bv_m, \Im \bv_1, \dots, \Im \bv_m \big\},
\]
so that $\cV = \spn\{ \bw_1, \dots, \bw_\ell \}$.
Observe that for any real vector $\bx$,
\[
  |\bv_j \cdot \bx|^2 = \big| ( \Re\bv_j + i \Im\bv_j) \cdot \bx \big|^2
  = (\Re\bv_j \cdot \bx)^2 + (\Im\bv_j \cdot \bx)^2
\]
for each $1\le j\le m$; therefore
\begin{equation*} 
  \sum_{j=1}^m |\bv_j \cdot \bx|^2 = \sum_{j=1}^m \big( (\Re\bv_j \cdot \bx)^2 + (\Im\bv_j \cdot \bx)^2 \big) \ge \sum_{j=1}^{\ell} (\bw_j \cdot \bx)^2.
\end{equation*}
However, the square root of the right-hand side is a norm on~$\cV$, as is the restriction of $\| \bx \|$ to~$\cV$; since $\cV$ is a finite-dimensional vector space, these two norms are equivalent~\cite[Theorem~2.4-4]{Kr}, and thus in particular
\[
\| \bx \|^2 \ll \sum_{j=1}^{\ell} (\bw_j \cdot \bx)^2 \le \sum_{j=1}^m |\bv_j \cdot \bx|^2
\]
as desired.
\end{proof}

\begin{definition}  \label{xyz vectors definition}
Using the vectors $\bx_\chi$ and $\by_\chi$ from Definition~\ref{ETx def}, define the real vector space
\begin{equation}
  \label{Vdefn}
  \begin{split}
  \cV_k^S &=  \text{Span} ( \{ \bx_\chi, \by_\chi \colon \chi\mod q,\, \chi\ne\chi_0,\, \chi \text{ is $k$-sturdy} \} ),
  \end{split}
\end{equation}
which is a subspace of~$\R^r$. For example, if $\chi_1$ and $\chi_2$ are the only $k$-sturdy nonprincipal characters\mod q, then $\cV_k^S$ is spanned by the four vectors $\bx_{\chi_1}, \by_{\chi_1}, \bx_{\chi_2}, \by_{\chi_2}$.

Recall that a probability measure $\mu$ on $\R^r$ is {\it supported on} a subset $\cS$ if, for every $\bx\in\R^r\setminus\cS$, there exists $\ep>0$ such that $\mu(\cB_\ep(\bx))=0$.
It is apparent from 
equation~\eqref{muSdefn} (or, with a little thought, from equation~\eqref{muhatn0S}) that the probability measure $\mu^S_k$ is supported on~$\cV^S_k$.
\end{definition}

We now aim to show that $\mu_k^S$ is absolutely continuous with respect to $\lambda_{\cV_{k}^S}$.    A standard result in probability is that if $\mu$ is a probability measure on $\mathbb{R}^n$ and 
$\int_{\R^n} |\widehat{\mu}(\bt)| \,d \lambda_{\R^n}(\bt)$ converges, then $\mu$ is absolutely continuous
with respect to $\lambda_{\R^n}$.  We give the generalization of this statement in Proposition~\ref{abscontV} below, where we will show that if $\mu$ is supported on a subspace $\cV$ of $\R^n$ and 
\begin{equation}
  \label{integralconvergence}
\int_{ \cV} | \widehat\mu (\bt) | \,d \lambda_{\cV}(\bt) \text{ converges},
\end{equation} 
then    
$\mu$ is absolutely continuous with respect to $\lambda_{\cV}$.

Our first goal, consequently, is to establish the assertion~\eqref{integralconvergence} in the case
$\mu = \mu_k^S$ and $\cV= \cV_k^S$. Our strategy is to use equation~\eqref{muhatn0S} to obtain a pointwise bound  for $\widehat\mu^S_k(\bt)$. 

\begin{lemma} \label{muhatSbounds}
Define
\begin{equation}  \label{compact K definition}
\cK=  \{ \bt\in \cV_k^S  \colon | \bv_\chi \cdot \bt | \le 1 \text{ for every $k$-sturdy character } \chi \}.
\end{equation}
\begin{enumerate}
\item If $\bt\in \cV_k^S \setminus \cK$, then
\begin{equation}
   |\widehat\mu_{k}^S(\bt)|  \ll_{k}    \|\bt\|^{-k/2}.
\end{equation}
\item $\cK$ is bounded and contains a neighborhood of $\bzero$ in $\cV_k^S$.
\end{enumerate}
\end{lemma}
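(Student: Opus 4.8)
The two claims concern the set $\cK = \{\bt \in \cV_k^S : |\bv_\chi \cdot \bt| \le 1 \text{ for every $k$-sturdy } \chi\}$. The plan is to treat part (b) first, since it is essentially pure linear algebra, and then part (a), which uses the explicit product formula~\eqref{muhatn0S} together with decay estimates for the Bessel function $J_0$.

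\emph{Part (b).} That $\cK$ is bounded follows directly from Lemma~\ref{quadform lemma}: applying that lemma with $\{\bv_1,\dots,\bv_m\}$ taken to be $\{\bv_\chi : \chi \text{ $k$-sturdy},\, \chi\ne\chi_0\}$ (whose real and imaginary parts span exactly $\cV_k^S$ by Definition~\ref{xyz vectors definition}), we get $\|\bt\|^2 \ll \sum_\chi |\bv_\chi\cdot\bt|^2$ for all $\bt\in\cV_k^S$; if moreover $\bt\in\cK$ then each $|\bv_\chi\cdot\bt|\le 1$, so $\|\bt\|^2 \ll m \ll_q 1$, giving boundedness. That $\cK$ contains a neighborhood of $\bzero$ in $\cV_k^S$ is immediate: the finitely many linear functionals $\bt\mapsto \bv_\chi\cdot\bt$ are continuous, so the preimage of the open set $\{|z|<1\}^m$ under the map $\bt\mapsto(\bv_\chi\cdot\bt)_\chi$ is an open subset of $\cV_k^S$ containing $\bzero$ (where every functional vanishes), and this open set is contained in $\cK$.

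\emph{Part (a).} Here I would start from the exact product formula~\eqref{muhatn0S}:
\[
|\widehat\mu_k^S(\bt)| = \prod_{\substack{\chi \ne \chi_0 \\ \chi \ k\text{-sturdy}}} \prod_{\gamma\in\Gamma^S(\chi)} \Bigl| J_0\Bigl( \tfrac{2|\bt\cdot\bv_\chi|}{\sqrt{1/4+\gamma^2}} \Bigr) \Bigr|.
\]
Since $|J_0(u)|\le 1$ for all real $u$, each factor is at most $1$, so it suffices to extract decay from a well-chosen subset of the factors. The standard asymptotic $J_0(u) \ll u^{-1/2}$ for $u\ge 1$ (or the cruder uniform bound $|J_0(u)|\le C u^{-1/2}$ for $u$ bounded away from $0$, say $u\ge u_0$) gives the decay engine. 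For $\bt\in\cV_k^S\setminus\cK$, by definition there is at least one $k$-sturdy character $\chi_1$ with $|\bv_{\chi_1}\cdot\bt| > 1$. Since $\chi_1$ is $k$-sturdy, it has at least $k$ self-sufficient ordinates $\gamma_1,\dots,\gamma_k\in\Gamma^S(\chi_1)$, and for each such $\gamma_j$ the argument of the Bessel function is $\frac{2|\bt\cdot\bv_{\chi_1}|}{\sqrt{1/4+\gamma_j^2}}$. To make this $\gg \|\bt\|^{-0}$... — more precisely, I want each of these $k$ arguments to be $\gg \|\bt\|$ up to constants depending only on $k,q$, so that the product of $k$ factors contributes $\|\bt\|^{-k/2}$. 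For this I would argue: among all $k$-sturdy characters $\chi$, the quantity $\max_\chi |\bv_\chi\cdot\bt|$ is $\gg \|\bt\|$ by Lemma~\ref{quadform lemma} again (the max of finitely many $|\bv_\chi\cdot\bt|$ dominates a constant times the $\ell^2$-norm of that vector, which by the lemma dominates $\|\bt\|$). So fix a $k$-sturdy $\chi^*$ achieving (up to constants) the maximum, with $|\bv_{\chi^*}\cdot\bt| \gg \|\bt\|$. Pick the $k$ smallest ordinates $\gamma_1 \le \cdots \le \gamma_k$ in $\Gamma^S(\chi^*)$ (these exist and are fixed numbers depending on $q$ and $k$). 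Then for each $j$, the Bessel argument is $\gg \|\bt\|/\sqrt{1/4+\gamma_k^2} \gg_{k,q} \|\bt\|$. Now there is a threshold issue: the bound $|J_0(u)|\ll u^{-1/2}$ is only useful once $u$ is bounded below. But on $\cV_k^S\setminus\cK$ we only know $|\bv_{\chi^*}\cdot\bt|\gg\|\bt\|$; if $\|\bt\|$ is itself small this is not yet $\ge u_0$. This is handled by splitting into two ranges: for $\|\bt\|$ bounded (say $\|\bt\| \le C_0$ for a constant to be chosen) the claimed bound $|\widehat\mu_k^S(\bt)|\ll_k \|\bt\|^{-k/2}$ is trivial because the left side is $\le 1$ and the right side is $\gg 1$; for $\|\bt\| > C_0$ with $C_0$ chosen large enough that each of the $k$ Bessel arguments exceeds $u_0$, multiply the $k$ bounds $|J_0(\cdot)|\ll (\|\bt\|)^{-1/2}$ together (discarding all other factors, which are $\le 1$) to obtain $|\widehat\mu_k^S(\bt)|\ll_k \|\bt\|^{-k/2}$.

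\emph{Main obstacle.} The genuinely substantive point is guaranteeing that the Bessel arguments are simultaneously large — i.e., that some $k$-sturdy character's vector $\bv_\chi$ is not nearly orthogonal to $\bt$, quantitatively in terms of $\|\bt\|$. This is exactly what Lemma~\ref{quadform lemma} delivers (the equivalence of norms on the finite-dimensional space $\cV_k^S$), so the "obstacle" is really just correctly invoking that lemma and then being careful with the small-$\|\bt\|$ regime via the trivial bound. Everything else — the $J_0(u)\ll u^{-1/2}$ asymptotic, bounding the number of characters by $\phi(q)$, absorbing constants into $\ll_k$ — is routine. One minor bookkeeping point: the constant in $\|\bt\|^2 \ll \sum|\bv_\chi\cdot\bt|^2$ depends on the character set and hence on $q$; since $q$ is fixed throughout the paper this is harmless and is absorbed into the $\ll_k$ notation.
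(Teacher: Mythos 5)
Your proof is correct and follows essentially the same approach as the paper: both extract $\|\bt\|^{-k/2}$ decay from $k$ Bessel factors attached to a $k$-sturdy character with $|\bv_\chi\cdot\bt|$ large, and both invoke Lemma~\ref{quadform lemma} to convert $\max_\chi|\bv_\chi\cdot\bt|$ (or $\sum_\chi|\bv_\chi\cdot\bt|^2$) into $\|\bt\|$. The only cosmetic differences are that the paper uses the uniform bound $|J_0(x)|\le\min\{1,\sqrt{2/(\pi|x|)}\}$ valid for all $x>0$, which avoids your case split on $\|\bt\|$, and retains all characters with $|\bv_{\chi_j}\cdot\bt|>1$ rather than just a single maximizer, tidying the bookkeeping with the elementary inequality $\prod_j x_j \ge \tfrac1b\sum_j x_j$ for $x_j>1$.
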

\begin{proof}
Fix $\bt\in \cV_k^S \setminus \cK$. Let $\chi_1,\dots,\chi_b$ be the $k$-sturdy characters such that $|\bv_\chi \cdot \bt| > 1$; note that $b\ge1$ since $\bt\notin\cK$.
We apply the Bessel function bound  
\begin{equation}
  \label{J0bound}
  |J_0(x)| \le \min \Big\{ 1, \sqrt{\tfrac{2}{\pi |x|}} \Big\}
\end{equation}
(see \cite[Theorem~7.31.2]{Sz}) to equation~\eqref{muhatn0S}. 
If $\Gamma^S_k(\chi)$ denotes a fixed set of $k$ self-sufficient ordinates of $\chi$, then
by \eqref{muhatn0S}
\begin{equation}  \label{applied Bessel bound}
\begin{split}
  |\widehat\mu_{k}^S(\bt)| &\le \prod_{j=1}^b \prod_{\gamma \in \Gamma^S_k(\chi_j)} \bigg| J_0 \bigg( \frac{ 2 |  \bv_{\chi_j} \cdot \bt | }{ \sqrt{\frac{1}{4}+\gamma^2}} \bigg) \bigg| \le \prod_{j=1}^b \prod_{\gamma \in \Gamma^S_k(\chi_j)} \frac{(\frac14+\gamma^2)^{1/4}}{\sqrt{\pi|\bv_{\chi_j}\cdot\bt|}} \\
  &\ll_{b,k} \prod_{j=1}^b |\bv_{\chi_j} \cdot \bt|^{-k/2}
  \ll_{b,k} \bigg( \sum_{j=1}^b |\bv_{\chi_j} \cdot \bt|^2 \bigg)^{-k/4},
\end{split}
\end{equation}
where the last inequality uses the elementary bound 
$\prod_{j=1}^b x_j \ge \frac{1}{b} \sum_{j=1}^b x_j$ for real numbers $x_j > 1$.
We further have
\[
   \frac1b \sum_{j=1}^b |\bv_{\chi_j} \cdot \bt|^2 \ge \frac1{\#\{\chi\mod q,\, \chi\ne\chi_0,\, \chi \text{ $k$-sturdy}\}} \sum_{\substack{\chi\mod q \\ \chi\ne\chi_0 \\ \chi \text{ $k$-sturdy}}} |\bv_\chi \cdot \bt|^2;
\]
since the average of the $b$ largest elements of $|\bv_\chi \cdot \bt|^2$ is greater than or equal to the average of all the numbers $|\bv_\chi \cdot \bt|^2$. As our $\ll$-constants may depend on $q$ (and thus are uniform in integers such as $b$ that must lie between $1$ and $\phi(q)$), we conclude that
\begin{equation*}
     |\widehat\mu_{k}^S(\bt)|  \ll_{k}  \bigg( \sum_{\substack{\chi\mod q \\ \chi\ne\chi_0 \\ \chi \text{ $k$-sturdy}}} |\bv_\chi \cdot \bt|^2 \bigg)^{-k/4}.
\end{equation*}
Finally, applying Lemma~\ref{quadform lemma} with $\cV = \cV_k^S$ and $\bv_1,\dots,\bv_m$ equalling the $\bv_\chi$ corresponding to the nonprincipal $k$-sturdy characters modulo $q$, we conclude that
\[
      |\widehat\mu_{k}^S(\bt)|  \ll_{k}    \|\bt\|^{-k/2}
      \text{ for } 
      \bt\in \cV_k^S,
\]
and the proof of part (a) is complete.

On the other hand, if $\bt\in\cK$, so that $|\bv_{\chi} \cdot \bt| \le 1$ for all nonprincipal $k$-sturdy characters modulo~$q$, then the same application of Lemma~\ref{quadform lemma} immediately shows that $\|\bt\|^2 \ll \sum_{\chi} |\bv_\chi\cdot\bt|^2 \ll 1$, which shows that $\cK$ is bounded. Moreover, $\cK$ contains the set
\[
\{ \bt\in \cV_k^S  \colon | \bv_\chi \cdot \bt | < 1 \text{ for every $k$-sturdy character } \chi \} = \bigcap_{\chi \text{ $k$-sturdy}} \{ \bt\in \cV_k^S  \colon | \bv_\chi \cdot \bt | < 1 \};
\]
each set in the intersection is the inverse image of the open interval $(-1,1)$ under the continuous map $\bt \mapsto \bv_\chi \cdot \bt$, and therefore the intersection is itself an open set, which clearly contains~$\bzero$. Thus the proof of part (b) is also complete.
\end{proof}

\begin{definition}  \label{lambda A definition}
Let $\cV$ be an $\ell$-dimensional subspace of $\R^n$. We define {\em $\ell$-dimensional Lebesgue measure on $\cV$}, denoted $\lambda_\cV$, as follows. Let $\{\bu_1,\dots,\bu_\ell\}$ be an orthonormal basis for~$\cV$, and let $\Phi \colon \R^\ell \to \cV$ be defined by $\Phi(x_1, \ldots, x_\ell) = \sum_{i=1}^\ell x_i \bu_i$. Then we define $\lambda_\cV$ to be the pushforward (see Definition~\ref{pushforward def}) of Lebesgue measure on $\R^\ell$ (which we denote by $\lambda_{\R^\ell}$).
The fact that $\lambda_\cV$ is independent of the choice of orthonormal basis is a consequence of the fact that $\lambda_{\R^\ell}$ is invariant under rigid motions of $\R^\ell$. It is also the case that $\lambda_\cV$ is translation-invariant inside $\cV$, and so it is the same as Haar measure on~$\cV$.

We note that every proper subspace of $\cV$ has $\lambda_\cV$-measure $0$, since every proper subspace of $\R^\ell$ has measure $0$ under the usual Lebesgue measure. We also note that the map $\Phi$ defined above preserves inner products, that is, $\Phi(\bv_1)\cdot \Phi(\bv_2) = \bv_1\cdot\bv_2$ (where the first dot denotes the standard inner product in $\R^\ell$, while the second dot denotes the standard inner product in $\R^n$). Finally, we note that the change of variables formula~\eqref{pushforward change of variables} becomes, in this case,
\begin{equation}
  \label{changeofvar}
  \int_{\cV} g(\bt) \,d\lambda_\cV(\bt) = \int_{\R^\ell} g(\Phi(\bx)) \,d\lambda_{\R^\ell}(\bx) = \int_{-\infty}^\infty \cdots \int_{-\infty}^\infty g \bigg(\sum_{j=1}^{\ell} x_j \bu_j \bigg) \,dx_1 \cdots dx_\ell.
\end{equation}
\end{definition}

We now use  Lemma~\ref{muhatSbounds} to establish the convergence of 
 $\int_{\cV_k^S} \big| \widehat\mu_{k}^S(\bt) \big| \,d \lambda_{\cV_k^S}(\bt)$.

\begin{lemma} \label{absconvintegral}
Let $k$ be a positive integer, and let $\ell$ be the dimension of $\cV_k^S$. If $k > 2\ell$, then $\int_{\cV_k^S} \big| \widehat\mu_{k}^S(\bt) \big| \,d \lambda_{\cV_k^S}(\bt)$ converges.
\end{lemma}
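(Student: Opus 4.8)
The plan is to split the integral over $\cV_k^S$ according to whether $\bt$ lies in the compact set $\cK$ from Lemma~\ref{muhatSbounds} or in its complement. On $\cK$ we simply use the trivial bound $|\widehat\mu_k^S(\bt)| \le 1$, which holds because $\widehat\mu_k^S$ is the characteristic function of a probability measure (or directly because each Bessel factor in~\eqref{muhatn0S} has absolute value at most $1$). Since $\cK$ is bounded by Lemma~\ref{muhatSbounds}(b), it has finite $\lambda_{\cV_k^S}$-measure, so $\int_\cK |\widehat\mu_k^S(\bt)| \,d\lambda_{\cV_k^S}(\bt) \le \lambda_{\cV_k^S}(\cK) < \infty$.

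For the complementary region, I would invoke Lemma~\ref{muhatSbounds}(a), which gives $|\widehat\mu_k^S(\bt)| \ll_k \|\bt\|^{-k/2}$ for $\bt \in \cV_k^S \setminus \cK$. Using the change of variables~\eqref{changeofvar} to an orthonormal basis of $\cV_k^S$, this reduces the question to the convergence of $\int_{\R^\ell \setminus B} \|\bx\|^{-k/2} \,d\lambda_{\R^\ell}(\bx)$ for a suitable bounded set $B$ (the preimage of~$\cK$, which contains a neighborhood of the origin by Lemma~\ref{muhatSbounds}(b), so that the integration region avoids the origin). Passing to polar coordinates in $\R^\ell$, the integral is a constant times $\int_{c}^{\infty} \rho^{-k/2} \cdot \rho^{\ell-1} \,d\rho = \int_c^\infty \rho^{\ell-1-k/2}\,d\rho$ for some $c>0$, which converges precisely when $\ell - 1 - k/2 < -1$, i.e.\ when $k > 2\ell$ — exactly the hypothesis of the lemma. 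Combining the two pieces gives the convergence of $\int_{\cV_k^S} |\widehat\mu_k^S(\bt)|\,d\lambda_{\cV_k^S}(\bt)$.

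There is no serious obstacle here: the content has already been isolated in Lemma~\ref{muhatSbounds}, whose part~(a) supplies the genuine decay estimate and whose part~(b) supplies the boundedness of $\cK$ and the fact that $\cK$ contains a neighborhood of the origin (so that the power $\|\bt\|^{-k/2}$ is only integrated away from its singularity). The only mildly delicate point worth stating carefully is the reduction via~\eqref{changeofvar} to an honest integral over Euclidean space of the appropriate dimension $\ell = \dim \cV_k^S$, after which the polar-coordinate computation is routine and the exponent bookkeeping $\ell - 1 - k/2 < -1 \iff k > 2\ell$ makes transparent why the hypothesis $k > 2\ell$ is the right one.
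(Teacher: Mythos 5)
Your proof is correct and follows essentially the same route as the paper: split the integral at the compact set $\cK$ from Lemma~\ref{muhatSbounds}, control the bounded piece trivially, and transport the unbounded piece to $\R^\ell$ via $\Phi$ and the decay bound from Lemma~\ref{muhatSbounds}(a). The only cosmetic difference is that you evaluate $\int_{\R^\ell\setminus B}\|\bx\|^{-k/2}\,d\lambda_{\R^\ell}$ by polar coordinates while the paper uses a dyadic-shell decomposition; both yield the exponent condition $k>2\ell$.
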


\begin{proof}
As in Definition~\ref{lambda A definition}, let $\{\bu_1,\dots,\bu_\ell\}$ be an orthonormal basis for~$\cV_k^S$, and let $\Phi \colon \R^\ell \to \cV_k^S$ be defined by $\Phi(x_1, \ldots, x_\ell) = \sum_{i=1}^\ell x_i \bu_i$, so that
\[
  \int_{\cV_k^S} \big| \widehat\mu_{k}^S(\bt) \big| \,d\lambda_\cV(\bt) = \int_{\R^\ell} \big| \widehat\mu_{k}^S(\Phi(\bx)) \big| \,d\lambda_{\R^\ell}(\bx)
\]
by equation~\eqref{changeofvar}. If we define the set $\cK$ as in equation~\eqref{compact K definition}, it follows that
\begin{align}
  \int_{\cV_k^S} \big| \widehat\mu_{k}^S(\bt) \big| \,d\lambda_\cV(\bt) &= \int_{\cK} \big| \widehat\mu_{k}^S(\bt) \big| \,d\lambda_\cV(\bt)  + \int_{\cV_k^S \setminus \cK} \big| \widehat\mu_{k}^S(\bt) \big| \,d\lambda_\cV(\bt) \notag \\
  &= \int_{\Phi^{-1}(\cK)} \big|\widehat\mu_{k}^S ( \Phi(\bx) ) \big| \,d\lambda_{\R^\ell}(\bx) + \int_{\R^{\ell} \setminus \Phi^{-1}(\cK)} \big|\widehat\mu_{k}^S (\Phi(\bx)) \big| \,d\lambda_{\R^\ell}(\bx).  \label{I1 and I2}
\end{align}
The set  $\cK$ is compact by Lemma~\ref{muhatSbounds}(b), and hence $\Phi^{-1}(\cK)$ is compact since $\Phi$ is a homeomorphism. Thus the first integral on the right-hand side of equation~\eqref{I1 and I2} is finite, since the integrand is continuous. It therefore suffices to show that the latter integral in equation~\eqref{I1 and I2} is finite.

By Lemma~\ref{muhatSbounds}(a),
\[
   \int_{\R^{\ell} \setminus \Phi^{-1}(\cK)} \big|\widehat\mu_{k}^S (\Phi(\bx)) \big| \,d\lambda_{\R^\ell}(\bx) \ll \int_{\R^{\ell} \setminus \Phi^{-1}(\cK)} \|\Phi(\bx) \|^{-k/2} \,d\lambda_{\R^\ell}(\bx) = \int_{\R^{\ell} \setminus \Phi^{-1}(\cK)} \|\bx\|^{-k/2} \,d\lambda_{\R^\ell}(\bx),
\]
since $\Phi$ preserves inner products. By Lemma~\ref{muhatSbounds}(b), $\cK$ contains a neighborhood of $\bzero$, and hence so does $\Phi^{-1}(\cK)$ since $\Phi$ is continuous; therefore there exists $\ep>0$ such that
\[
  \int_{\R^{\ell} \setminus \Phi^{-1}(\cK)} \big|\widehat\mu_{k}^S (\Phi(\bx)) \big| \,d\lambda_{\R^\ell}(\bx) \ll \int_{\R^{\ell} \setminus \cB_\ep(\bzero)} \|\bx\|^{-k/2} \,d\lambda_{\R^\ell}(\bx).
\]
This integral converges under the assumption $k>2\ell$, as we show by a standard argument using a dyadic version of this integral: for any $R>0$,
\begin{equation*}
    \int_{\cB_{2R}(\bzero) \setminus \cB_R(\bzero)} \|\bx\|^{-k/2} \,d\lambda_{\R^\ell}(\bx) \ll R^{-k/2} \int_{\cB_{2R}(\bzero)} \,d\lambda_{\R^\ell}(\bx) = R^{-k/2} \lambda_{\R^\ell}\big( \cB_{2R}(\bzero) \big) \ll R^{-k/2+\ell}. 
\end{equation*}
It follows that 
\begin{align*}
  \int_{\R^{\ell} \setminus \cB_\ep(\bzero)} \|\bx\|^{-k/2} \,d\lambda_{\R^\ell}(\bx) &= \sum_{j=0}^\infty \int_{\cB_{2^{j+1}\ep}(\bzero) \setminus \cB_{2^j\ep}(\bzero)} \|\bx\|^{-k/2} \,d\lambda_{\R^\ell}(\bx) \\
  &\ll  \sum_{j=0}^{\infty} (2^{j+1}\ep)^{-k/2+\ell} = (2\ep)^{-k/2+\ell}  \sum_{j=0}^{\infty} \big( 2^{-k/2+\ell} \big)^j \ll 1,
\end{align*}
since $-k/2+\ell<0$.
\end{proof}
We now derive a sufficient condition for the absolute continuity of a measure supported on a subspace~$\cV$; this result is standard for measures on $\R^\ell$, and it is not difficult to translate the result to the case of a proper subspace.

\begin{prop}
  \label{abscontV}
Let $\cV$ be a subspace of $\R^r$ with associated Lebesgue measure $\lambda_{\cV}$, and let $\mu$ be a probability measure on $\R^r$.
If $\mu$ is supported on $\cV$ and $\int_{\cV} |\widehat\mu(\bt)| \,d\lambda_{\cV}(\bt)$ converges,
then $\mu$ is absolutely continuous with respect to $\lambda_{\cV}$.  
\end{prop}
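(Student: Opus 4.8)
The plan is to reduce the assertion to the classical fact on $\R^\ell$ — that an integrable characteristic function forces absolute continuity, which is Lemma~\ref{density} — by transporting $\mu$ to a measure on $\R^\ell$ through the coordinate isometry from Definition~\ref{lambda A definition}. Let $\ell=\dim\cV$, fix an orthonormal basis $\{\bu_1,\dots,\bu_\ell\}$ of~$\cV$, and let $\Phi\colon\R^\ell\to\cV$ be the resulting linear map, which is an isometric bijection onto~$\cV$ and hence a homeomorphism onto~$\cV$ (with the subspace topology), in particular a Borel isomorphism. Since $\cV$ is closed in $\R^r$ and $\mu$ is supported on~$\cV$ in the sense recalled in Definition~\ref{xyz vectors definition}, covering the open set $\R^r\setminus\cV$ by countably many balls of zero $\mu$-measure gives $\mu(\R^r\setminus\cV)=0$. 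Consequently the set function $\nu(A):=\mu(\Phi(A))$ is a well-defined Borel probability measure on~$\R^\ell$; equivalently, $\nu$ is the pushforward of~$\mu$ under~$\Phi^{-1}$ (Definition~\ref{pushforward def}), and conversely $\mu=\Phi_*\nu$, while $\lambda_\cV=\Phi_*\lambda_{\R^\ell}$ by Definition~\ref{lambda A definition}.

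First I would identify the characteristic function of~$\nu$. For $\bs\in\R^\ell$, the change-of-variables formula for pushforwards gives $\widehat\nu(\bs)=\int_{\R^\ell}e^{i\bs\cdot\bx}\,d\nu(\bx)=\int_{\cV}e^{i\bs\cdot\Phi^{-1}(\bu)}\,d\mu(\bu)$; since $\Phi$ preserves inner products we have $\bs\cdot\Phi^{-1}(\bu)=\Phi(\bs)\cdot\Phi(\Phi^{-1}(\bu))=\Phi(\bs)\cdot\bu$, and since $\mu(\R^r\setminus\cV)=0$ the integral extends to all of~$\R^r$, yielding $\widehat\nu(\bs)=\widehat\mu(\Phi(\bs))$. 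Then the change-of-variables formula~\eqref{changeofvar} gives
\[
\int_{\R^\ell}|\widehat\nu(\bs)|\,d\lambda_{\R^\ell}(\bs)=\int_{\R^\ell}|\widehat\mu(\Phi(\bs))|\,d\lambda_{\R^\ell}(\bs)=\int_{\cV}|\widehat\mu(\bt)|\,d\lambda_\cV(\bt),
\]
which converges by hypothesis.

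Next I would apply Lemma~\ref{density} to~$\nu$ on~$\R^\ell$: since $\widehat\nu\in L^1(\R^\ell)$, the measure~$\nu$ is absolutely continuous with respect to~$\lambda_{\R^\ell}$, with density~$g$. Finally I would transport this conclusion back: for any Borel $A\subseteq\R^r$ we have $\mu(A)=\nu(\Phi^{-1}(A))=\int_{\Phi^{-1}(A)}g\,d\lambda_{\R^\ell}$, and applying the change of variables $\lambda_\cV=\Phi_*\lambda_{\R^\ell}$ shows this equals $\int_A(g\circ\Phi^{-1})\,d\lambda_\cV$, where $g\circ\Phi^{-1}$ is extended by~$0$ off~$\cV$. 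In particular $\lambda_\cV(A)=0$ implies $\mu(A)=0$, so $\mu$ is absolutely continuous with respect to~$\lambda_\cV$, with density $g\circ\Phi^{-1}$. The only place demanding care is the measure-theoretic bookkeeping in the first paragraph — that $\nu$ is a genuine Borel probability measure and that pushforward along the isometry~$\Phi$ commutes with the formation of characteristic functions, with the integrals in question, and with absolute continuity; once $\Phi$ is recognized as a bi-measurable, inner-product- and Lebesgue-measure-preserving bijection between~$\R^\ell$ and~$\cV$, each of these points is routine, and there is no analytic difficulty beyond the classical $\R^\ell$ statement of Lemma~\ref{density}.
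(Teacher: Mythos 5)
Your proposal is correct and follows essentially the same path as the paper's proof: transport $\mu$ to $\R^\ell$ via the isometric coordinate map $\Phi^{-1}$, identify $\widehat\nu(\bs)=\widehat\mu(\Phi(\bs))$ using inner-product preservation and the support hypothesis, invoke Lemma~\ref{density}(b), and pull the absolute continuity back through $\Phi$. The only difference is that you spell out the measure-theoretic bookkeeping (e.g.\ $\mu(\R^r\setminus\cV)=0$) slightly more explicitly; the paper leaves these points implicit.
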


\begin{proof}
As in Definition~\ref{lambda A definition}, let $\{\bu_1,\dots,\bu_\ell\}$ be an orthonormal basis for~$\cV$, and let $\Phi \colon \R^\ell \to \cV$ be defined by $\Phi(x_1, \ldots, x_\ell) = \sum_{i=1}^\ell x_i \bu_i$, so that $\Phi^{-1}$ is a homeomorphism from $\cV$ to~$\R^\ell$ (namely, the coordinate map). Define $\nu$ to be the pushforward of $\mu$ to $\R^\ell$ under~$\Phi^{-1}$.

Now for $\bx\in\R^r$, since $\mu$ is supported on $\cV$,
\begin{align*}
\widehat\mu(\Phi(\bx)) = \int_{\R^r} e^{i\Phi(\bx)\cdot\by} \,d\mu(\by) &= \int_\cV e^{i\Phi(\bx)\cdot\by} \,d\mu(\by).
\end{align*}
Therefore, by the change of variables formula~\eqref{pushforward change of variables},
\[
\widehat\mu(\Phi(\bx)) = \int_{\R^\ell} e^{i\Phi(\bx) \cdot \Phi(\by)} \,d\nu(\by) = \int_{\R^\ell} e^{i\bx \cdot \by} \,d\nu(\by) = \widehat\nu(\bx),
\]
since $\Phi$ preserves inner products.
From this we conclude from equation~\eqref{changeofvar} that
\[
\int_{\R^\ell} |\widehat\nu(\bx)| \,d\lambda_{\R^\ell}(\bx) = \int_{\R^\ell} |\widehat\mu(\Phi(\bx))| \,d\lambda_{\R^\ell}(\bx) = \int_{\cV} |\widehat\mu(\bt)| \,d \lambda_{\cV}(\bt) < \infty,
\]
and therefore $\nu$ is absolutely continuous with respect to $\lambda_{\R^\ell}$ by Lemma~\ref{density}(b). Since $\mu$ and $\lambda_\cV$ are the pushforwards of $\nu$ and $\lambda_{\R^r}$, respectively, it follows immediately that $\mu$ is absolutely continuous with respect to $\lambda_\cV$.
\end{proof}

Since $\mu_{k}^S$ is supported on $\cV_{k}^S$, we conclude from Lemma~\ref{absconvintegral} and Proposition~\ref{abscontV}:

\begin{cor}  \label{abscontcor}
If $k>2r$ is an integer, then $\mu_{k}^S$ is absolutely continuous with respect to $\lambda_{\cV_{k}^S}$.
\end{cor}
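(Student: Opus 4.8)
The plan is to assemble the corollary directly from three ingredients already established: the support statement for $\mu_k^S$ recorded at the end of Definition~\ref{xyz vectors definition}, the integrability bound of Lemma~\ref{absconvintegral}, and the general absolute-continuity criterion of Proposition~\ref{abscontV}. There is no substantive new argument to make; the only thing to check carefully is that the hypotheses of those results are met.

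First I would note that $\cV_k^S$ is by construction a subspace of $\R^r$, so its dimension $\ell = \dim \cV_k^S$ satisfies $\ell \le r$. Consequently the hypothesis $k > 2r$ forces $k > 2\ell$, which is precisely the inequality required to apply Lemma~\ref{absconvintegral}; that lemma then gives that $\int_{\cV_k^S} \big| \widehat\mu_{k}^S(\bt) \big| \,d \lambda_{\cV_k^S}(\bt)$ converges. I would point out explicitly why we phrase the hypothesis in terms of $r$ rather than the sharper threshold $2\ell$: the dimension $\ell$ is the rank over $\R$ of the family $\{\bx_\chi,\by_\chi\}$ ranging over $k$-sturdy nonprincipal characters, which is not something we control a priori, whereas $r$ is fixed and $k>2r$ is an easily verifiable condition.

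Next I would recall that $\mu_{k}^S$ is supported on $\cV_k^S$; this is exactly the observation made in Definition~\ref{xyz vectors definition}, visible either from the random-variable representation~\eqref{muSdefn} (each summand lies in $\cV_k^S$) or from the product formula~\eqref{muhatn0S}. Having verified both hypotheses of Proposition~\ref{abscontV}---namely that $\mu_k^S$ is supported on the subspace $\cV = \cV_k^S$, and that $\int_{\cV_k^S} |\widehat\mu_k^S(\bt)|\,d\lambda_{\cV_k^S}(\bt)$ converges---I would invoke that proposition to conclude immediately that $\mu_{k}^S$ is absolutely continuous with respect to $\lambda_{\cV_{k}^S}$.

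I do not expect any real obstacle: the corollary is a bookkeeping consequence of the preceding section. The one point worth flagging is the dimension comparison $\ell \le r$, which is what allows the clean uniform hypothesis $k > 2r$ to subsume the case-dependent requirement $k > 2\ell$ coming from Lemma~\ref{absconvintegral}.
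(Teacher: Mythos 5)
Your proposal is correct and matches the paper's argument exactly: the paper derives the corollary in a single sentence by combining the support fact from Definition~\ref{xyz vectors definition}, Lemma~\ref{absconvintegral}, and Proposition~\ref{abscontV}, and your observation that $\ell = \dim\cV_k^S \le r$ makes $k>2r$ subsume $k>2\ell$ is precisely the implicit bookkeeping step. No gaps.
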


We remark that Devin~\cite{D} establishes several regularity results (of which absolute continuity is one example) for limiting distributions of explicit formulas derived from any functions from a general class of ``analytic $L$-functions'' analogous to the Selberg class, also for the purpose of analyzing races between counting functions relevant to those $L$-functions.

\section{Deducing that logarithmic densities exist}  \label{log densities section}

We begin this section by deriving, from the fact that $\mu_{k}^S$ is absolutely continuous with respect to $\lambda_{\cV_{k}^S}$, the conclusion that equation~\eqref{mu defined here} holds not just for continuous functions but also for the indicator function of the wedge $\cS$ defined in equation~\eqref{wedge definition}; in particular, this will establish Theorem~\ref{inclusive theorem}(a). Part of this deduction requires showing that the hyperplanes bounding this wedge are not assigned mass by the measure $\mu$ defined in equation~\eqref{mu defined here}; the following lemma suffices for this purpose.

\begin{lemma}  \label{conv vanish subsp}
Let $\cV$ and $\cW$ be subspaces of $\R^r$, and assume that $\cV$ is not contained in $\cW$. 
Let $\mu_1$ be a measure supported on $\cV$ that is absolutely continuous with respect to $\lambda_\cV$,
Lebesgue measure on $\cV$, and let $\mu_2$ be any measure on $\R^r$.  Then $(\mu_1*\mu_2)(\cW) = 0$.
\end{lemma}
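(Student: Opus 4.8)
The plan is to disintegrate the convolution over cosets of $\cW$ and show that $\mu_1$ charges none of them. Starting from the definition of convolution in equation~\eqref{convolution} with $\cZ=\cW$, and using $\one_\cW(\bx_1+\bx_2)=\one_{\cW-\bx_2}(\bx_1)$, I would rewrite
\[
(\mu_1*\mu_2)(\cW) = \int_{\R^r} \mu_1(\cW-\bx_2)\,d\mu_2(\bx_2),
\]
so that the whole claim reduces to showing the inner quantity $\mu_1(\cW-\bx_2)$ vanishes for every fixed $\bx_2\in\R^r$. (The map $\bx_2\mapsto\mu_1(\cW-\bx_2)$ is measurable by a standard argument, but this will not even matter once we know it is identically zero.)

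Next I would fix $\bx_2$ and reduce everything to a statement inside $\cV$. Because $\mu_1$ is supported on $\cV$---equivalently, because $\mu_1\ll\lambda_{\cV}$ while $\lambda_{\cV}(\R^r\setminus\cV)=0$ by the construction in Definition~\ref{lambda A definition}---we have $\mu_1(\cW-\bx_2)=\mu_1\big((\cW-\bx_2)\cap\cV\big)$. If this intersection is empty we are done; otherwise, choosing any point $\bx_0\in(\cW-\bx_2)\cap\cV$, a one-line check using that $\cV$ and $\cW$ are linear subspaces gives $(\cW-\bx_2)\cap\cV=\bx_0+(\cV\cap\cW)$. This is a translate, lying inside $\cV$, of the linear subspace $\cV\cap\cW$; and since $\cV\not\subseteq\cW$, that subspace is a \emph{proper} subspace of $\cV$. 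Invoking the translation-invariance of $\lambda_{\cV}$ inside $\cV$ together with the fact that every proper subspace of $\cV$ is $\lambda_{\cV}$-null (both recorded in Definition~\ref{lambda A definition}), I conclude $\lambda_{\cV}\big((\cW-\bx_2)\cap\cV\big)=0$. Absolute continuity of $\mu_1$ with respect to $\lambda_{\cV}$ then forces $\mu_1\big((\cW-\bx_2)\cap\cV\big)=0$, hence $\mu_1(\cW-\bx_2)=0$. Plugging this back into the displayed integral yields $(\mu_1*\mu_2)(\cW)=0$.

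I do not expect a serious obstacle here: the argument is a routine measure-theoretic slicing. The one point that wants a little care is the identification $(\cW-\bx_2)\cap\cV=\bx_0+(\cV\cap\cW)$, since this coset generally does not pass through the origin, so one must genuinely use translation-invariance of $\lambda_{\cV}$ (rather than merely the bare statement ``proper subspaces have measure zero'') to conclude that it is $\lambda_{\cV}$-null.
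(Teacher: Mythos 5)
Your proof is correct and follows essentially the same route as the paper: apply Fubini to reduce to showing that the slice $\mu_1(\cW-\bx_2)$ vanishes for every fixed $\bx_2$, then use the support assumption to restrict to $\cV$, identify the remaining set as a coset of the proper subspace $\cV\cap\cW$ inside $\cV$, and invoke translation-invariance of $\lambda_\cV$ plus absolute continuity. Your explicit coset identification $(\cW-\bx_2)\cap\cV=\bx_0+(\cV\cap\cW)$ handles the cases $\bx_2\in\cV$ and $\bx_2\notin\cV$ uniformly, which is a slightly cleaner packaging than the paper's (the paper first asserts $\int \one_\cW(\bx+\by)\,d\mu_1(\by)=\mu_1(\cY-\bx)$ with $\cY=\cV\cap\cW$ and then treats $\bx\notin\cV$ by a separate remark), but the underlying idea is the same.
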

\begin{proof}
Let $\cY = \cV \cap \cW$.  Since $\cV$ is not contained in $\cW$, we see that $\cY$ is a proper subspace of $\cV$. 
By definition and the Fubini--Tonelli theorem,
\[
   (\mu_1*\mu_2)(\cW) = \int_{\bx \in \R^r} \bigg( \int_{\by \in \R^r} 1_\cW(\bx+\by) \,d\mu_1(\by) \bigg) \,d\mu_2(\bx) .
\]
Since $\mu_1$ is supported on $\cV$, for any fixed $\bx$, we have
\begin{equation}
     \int_{\by \in \R^r} 1_\cW(\bx+\by) \,d\mu_1(\by) = \int_{\by \in \R^r} 1_\cY(\bx+\by) \,d\mu_1(\by) = \int_{\by \in \R^r } 1_{\cY-\bx}(\by) \,d\mu_1(\by) = \mu_1(\cY-\bx).
\end{equation}
Note that if $\bx \notin \cV$, then $(\cY-\bx) \cap \cV = \emptyset$ and thus $\mu_1(\cY-\bx)=0$ as $\mu_1$ is supported on $\cV$.  
Since $\cY$ is a proper subspace of $\cV$, it follows that $\lambda_\cV(\cY)=0$.  Furthermore, $\lambda_\cV(\cY-\bx)=0$ for any $\bx \in \cV$. 
Therefore since $\mu_1$ is absolutely continuous with respect to $\lambda_\cV$, it follows that $\mu_1(\cY-\bx)=0$  
for all $\bx \in \R^r$. We conclude that
\begin{equation}
\begin{split}
   (\mu_1*\mu_2)(\cW) 
& = \int_{\bx \in \R^r} 
  \mu_1(\cY-\bx)  \,d\mu_2(\bx) = \int_{\bx \in \R^r} 0  \, d\mu_2(\bx) = 0
\end{split}
\end{equation} 
as desired.
\end{proof}

We will require a version of this last lemma in which the subspace~$\cW$ is replaced by a sphere (the boundary of a ball). A nearly identical proof, using as its key input the fact that spheres have Lebesgue measure~$0$ in any subspace of positive dimension, yields the following result.

\begin{lemma}  \label{conv vanish sphere}
Let $\cV\ne\{\bzero\}$ be a subspace of $\R^r$, and let $\cW$ be a sphere in $\R^r$.
Let $\mu_1$ be a measure supported on $\cV$ that is absolutely continuous with respect to $\lambda_\cV$,
Lebesgue measure on $\cV$, and let $\mu_2$ be any measure on $\R^r$.  Then $(\mu_1*\mu_2)(\cW) = 0$.
\end{lemma}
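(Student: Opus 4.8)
The plan is to mimic the proof of Lemma~\ref{conv vanish subsp} almost verbatim, replacing the single ingredient ``a proper subspace of $\cV$ has $\lambda_\cV$-measure zero'' by the ingredient ``every translate of a sphere meets $\cV$ in a $\lambda_\cV$-null set''. First I would unfold the convolution by the Fubini--Tonelli theorem, writing
\[
(\mu_1*\mu_2)(\cW) = \int_{\bx\in\R^r} \bigg( \int_{\by\in\R^r} \one_\cW(\bx+\by)\,d\mu_1(\by) \bigg) d\mu_2(\bx) = \int_{\bx\in\R^r} \mu_1(\cW-\bx)\,d\mu_2(\bx),
\]
so that it suffices to prove $\mu_1(\cW-\bx)=0$ for every fixed $\bx\in\R^r$.

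The one genuinely new point, compared to Lemma~\ref{conv vanish subsp}, is the claim that $(\cW-\bx)\cap\cV$ is a $\lambda_\cV$-null set for each $\bx$. To see this, write $\cW=\{\bz\in\R^r\colon \|\bz-\bc\|=\rho\}$ for some center $\bc$ and radius $\rho>0$, so that $\cW-\bx=\{\bz\colon \|\bz-(\bc-\bx)\|=\rho\}$, and decompose $\bc-\bx=\bp+\bp'$ with $\bp\in\cV$ and $\bp'\perp\cV$. Then for $\bz\in\cV$ the Pythagorean identity gives $\|\bz-(\bc-\bx)\|^2 = \|\bz-\bp\|^2 + \|\bp'\|^2$, so $(\cW-\bx)\cap\cV$ is either empty, a single point, or a sphere in $\cV$ centered at $\bp$ of radius $\sqrt{\rho^2-\|\bp'\|^2}$. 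Since $\cV\ne\{\bzero\}$ has dimension $\ell\ge1$, any such set is contained in the boundary of a ball in $\cV\cong\R^\ell$ and hence has $\lambda_\cV$-measure zero (for $\ell=1$ it is at most two points; for $\ell\ge2$ it is a lower-dimensional sphere). Because $\mu_1$ is supported on $\cV$ we have $\mu_1(\cW-\bx)=\mu_1\big((\cW-\bx)\cap\cV\big)$, and since $\mu_1$ is absolutely continuous with respect to $\lambda_\cV$ this equals $0$. Substituting back into the displayed integral gives $(\mu_1*\mu_2)(\cW)=\int_{\bx\in\R^r}0\,d\mu_2(\bx)=0$.

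The main (and essentially only) obstacle is the measure-zero computation in the middle step: an arbitrary translation by $\bx\notin\cV$ pushes the center of the sphere out of $\cV$, so one cannot directly cite ``a sphere in $\cV$ is null''; the orthogonal decomposition together with Pythagoras is exactly what reduces the situation to the intrinsic fact that spheres are Lebesgue-null in any positive-dimensional Euclidean space. Everything else is identical Fubini--Tonelli bookkeeping to Lemma~\ref{conv vanish subsp}, and the hypothesis $\cV\ne\{\bzero\}$ is used precisely to guarantee $\ell\ge1$ so that this intrinsic fact applies.
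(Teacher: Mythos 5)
Your proof is correct, and it is exactly the construction the paper has in mind when it says the argument is ``nearly identical'' to that of Lemma~\ref{conv vanish subsp} with the ``key input that spheres have Lebesgue measure~$0$ in any subspace of positive dimension'': you unfold the convolution by Fubini--Tonelli, reduce to $\mu_1(\cW-\bx)=0$ for each fixed $\bx$, and then use absolute continuity against $\lambda_\cV$. The orthogonal decomposition $\bc-\bx=\bp+\bp'$ and the Pythagoras step are precisely the small computation needed to see that $(\cW-\bx)\cap\cV$ is (empty, a point, or) a genuine sphere inside $\cV\cong\R^\ell$, so that the intrinsic null-set fact applies; the paper leaves this unstated. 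If anything, your handling of the translate is a little more careful than the paper's own proof of Lemma~\ref{conv vanish subsp}, which works with $\cY-\bx$ where $\cY=\cV\cap\cW$ and the first equality in its displayed chain requires some care when $\bx\notin\cV$; your version, which intersects the translated set with $\cV$ directly before invoking the support hypothesis, sidesteps that entirely.
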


\begin{proof}[Proof of Theorem~\ref{inclusive theorem}(a)]
Recall that $\cS$ is the wedge defined in equation~\eqref{wedge definition}. Since
\[
   \mu(\cS) = \int_{\R^r} \one_{\cS}(\bx) \, d \mu(\bx),
\]
we will construct bounded continuous majorants and minorants for the function~$\one_{\cS}$.
Given $\varepsilon >0$, let $g_{\varepsilon}(x)$ be a bounded continuous minorant of $\one_{[0,\infty)}$ such that $g_{\varepsilon}(x)=0$
for $x \le 0$, $g_{\varepsilon}(x)=1$ for $x \in [\varepsilon, \infty)$, and $0 < g_{\varepsilon}(x) < 1$ for $x \in (0,\varepsilon)$. 
It follows that 
\[
   h_{\varepsilon}^{-}(\bx) := g_{\varepsilon}( \min( x_1-x_2, x_2-x_3, \ldots, x_{r-1}-x_r)) 
\]
and 
\[
   h_{\varepsilon}^{+}(\bx) := g_{\varepsilon}(\varepsilon+ \min( x_1-x_2, x_2-x_3, \ldots, x_{r-1}-x_r)) 
\]
are a bounded continuous minorant and bounded continuous majorant, respectively, of $\one_{\cS}(\bx)$. 
Consequently, equation~\eqref{mu defined here}
implies
\begin{equation}
  \label{limdish}
  \lim_{y \to \infty} \frac1y  \int_0^y h_{\varepsilon}^{\pm} (E(e^t)) \,dt 
  = \int_{\R^r} h_{\varepsilon}^{\pm}(\bx) \,d \mu(\bx)
\end{equation}
(where $E$ was defined in equation~\eqref{Ey}).
Also, for $y >0$,
\begin{equation}
    \frac1y  \int_0^y h_{\varepsilon}^{-} (E(e^t)) \,dt 
    \le 
     \frac1y  \int_0^y \one_{\cS}(E(e^t)) \,dt 
    \le   \frac1y  \int_0^y h_{\varepsilon}^{+} (E(e^t)) \,dt. 
\end{equation}
Letting $y \to \infty$ and using equation~\eqref{limdish}, 
\begin{equation}
    \int_{\R^r} h_{\varepsilon}^{-}(\bx) \,d \mu(\bx) 
    \le  \liminf_{y\to\infty} \frac1y  \int_0^y \one_{\cS}(E(e^t)) \,dt 
    \le \limsup_{y\to\infty} \frac1y  \int_0^y \one_{\cS}(E(e^t)) \,dt \le
     \int_{\R^r} h_{\varepsilon}^{+}(\bx) \,d \mu(\bx).  \label{sup and inf separate for now}
\end{equation}
Thus
\begin{multline}
  0 \le   \limsup_{y\to\infty} \frac1y  \int_0^y \one_{\cS}(E(e^t)) \,dt - \liminf_{y\to\infty} \frac1y  \int_0^y \one_{\cS}(E(e^t)) \,dt \\
  \le   \int_{\R^r} (h_{\varepsilon}^{+}(\bx)- h_{\varepsilon}^{-}(\bx)) \,d \mu(\bx)
   \le \int_{\cX_{\varepsilon}} \,d \mu(\bx)  \label{limsup and liminf are close}
\end{multline}
where 
$\cX_{\varepsilon} =  \bigcup_{j=1}^{r-1} \{ x \in \R^r \colon \text{dist}(x,\cW_j) < \varepsilon \}$
and $\cW_j = \{ x \in \R^r \colon x_j = x_{j+1} \}$.
It follows from the dominated convergence theorem that 
\[
  \lim_{\varepsilon \to 0+}  \int_{\cX_{\varepsilon}} \,d \mu(\bx) = \int_{\cX} \,d \mu(\bx) = \mu(\cX)
\]
where $\cX = \bigcup_{j=1}^{r-1} \cW_j$. Therefore taking the limit of equation~\eqref{limsup and liminf are close} as ${\varepsilon \to 0+}$ yields
\begin{equation}
  0 \le   \limsup_{y\to\infty} \frac1y  \int_0^y \one_{\cS}(E(e^t)) \,dt - \liminf_{y\to\infty} \frac1y  \int_0^y \one_{\cS}(E(e^t)) \,dt
  \le \int_{\cX} \,d \mu(\bx) = \mu(\cX).  \label{does X have measure 0}
\end{equation}

Set $k=2r+1$.  By the assumption of Theorem~\ref{inclusive theorem}(a) there is at least one character $\chi$ 
which is $k$-sturdy.  Thus, by Proposition~\ref{muconvolutionsturdy}, we know that $\mu=\mu_{k}^S*\mu^N_k$, where $\mu_{k}^S$
is supported on $\cV_{k}^S$ as per Definition~\ref{xyz vectors definition}.
By assumption, $\cV^S_k$ is either $\R^r$ or a dimension-$(r-1)$ subspace of $\R^r$ not containing $(1,\ldots, 1)$.
Notice that $\cV^S_k$ is not contained in any boundary hyperplane $\cW_j$: this is obvious if $\cV^S_k=\R^r$, while otherwise $\cV^S_k$ is a dimension-$(r-1)$ subspace of $\R^r$ not containing $(1,\ldots, 1)$, whereas $\cW_j$ is a dimension-$(r-1)$ subspace of 
$\R^r$ containing $(1,\dots,1)$. Therefore we may apply Lemma~\ref{conv vanish subsp} with $\cV=\cV^S_k$ and $\cW=\cW_j$, and $\mu_1=\mu^S_k$ (which we know is absolutely continuous with respect to $\lambda_\cV$ by Corollary~\ref{abscontcor}) and $\mu_2=\mu^N_k$, to see that every $\mu(\cW_j)$ equals~$0$. Consequently $\mu(\cX)=0$ as well, and we conclude from equation~\eqref{does X have measure 0} that
$\lim_{y \to \infty} \frac1y \int_0^y \one_{\cS}(E(e^t)) \,dt$ exists.

Thus we may rewrite equation~\eqref{sup and inf separate for now} as
\begin{equation}
   \int_{\R^r} h_{\varepsilon}^{-}(\bx) \,d \mu(\bx) 
    \le  \lim_{y \to \infty} \frac1y  \int_0^y \one_{\cS}(E(e^t)) \,dt
     \le
     \int_{\R^r} h_{\varepsilon}^{+}(\bx) \,d \mu(\bx). 
     \label{one limit left to take}
\end{equation}
Again the dominated convergence theorem shows that 
\[
  \lim_{\varepsilon \to 0+}   \int_{\R^r} h_{\varepsilon}^{\pm}(\bx) \,d \mu(\bx) 
  = \int_{\R^r} \one_{\cS}(\bx) \,d \mu(\bx) ,
\]
and hence taking limits in equation~\eqref{one limit left to take} as $\varepsilon\to0+$ yields
\begin{equation} \label{log density equality}
    \lim_{y \to \infty} \frac1y  \int_0^y \one_{\cS}(E(e^t)) \,dt  
     = \int_{\R^r} \one_{\cS}(\bx) \,d \mu(\bx) = \mu(\cS)
\end{equation}
as desired. In particular, in light of equation~\eqref{relevant delta}, the logarithmic density $\delta\big( \{t\ge1\colon E(t)\in\cS\} \big)$ exists (and equals $\mu(\cS)$).
\end{proof}

A careful examination of the proof reveals that we can actually prove a statement that is somewhat stronger than Theorem~\ref{inclusive theorem}(a), but with a more technical hypothesis.

\begin{theorem}
Assume GRH. Let $a_1,\dots,a_r$ be distinct reduced residues\mod q. Suppose that there exists a positive integer $k$ such that
\begin{itemize}
\item $\cV_k^S$ is not contained in any ``diagonal hyperplane'' $\cW_{i,j} = \{ \bx\in\R^r\colon x_i=x_j \}$ with $1\le i<j\le r$; and
\item $k > 2\dim(\cV_k^S)$.
\end{itemize}
Then the prime number race among $a_1,\dots,a_r$\mod q is weakly inclusive.
\end{theorem}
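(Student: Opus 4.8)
The plan is to run the proof of Theorem~\ref{inclusive theorem}(a) essentially verbatim, with two cosmetic adjustments: replace the crude inequality $k>2r$ used there by the sharp requirement $k>2\dim(\cV_k^S)$, and replace the consecutive boundary hyperplanes $\cW_j=\cW_{j,j+1}$ by the full family of diagonal hyperplanes $\cW_{i,j}$ with $1\le i<j\le r$. The two hypotheses are exactly what the two nontrivial inputs consume: the inequality $k>2\dim(\cV_k^S)$ is precisely the hypothesis of Lemma~\ref{absconvintegral}, which together with Proposition~\ref{abscontV} (applicable because $\mu_k^S$ is supported on $\cV_k^S$ by Definition~\ref{xyz vectors definition}) shows that $\mu_k^S$ is absolutely continuous with respect to $\lambda_{\cV_k^S}$; and the transversality condition ``$\cV_k^S\not\subseteq\cW_{i,j}$'' is precisely the hypothesis of Lemma~\ref{conv vanish subsp} that forces $\mu$ to vanish on each diagonal hyperplane.

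First I would note that the first bullet forces at least one nonprincipal character\mod q to be $k$-sturdy---otherwise $\cV_k^S=\{\bzero\}$, which is contained in every $\cW_{i,j}$. Hence Proposition~\ref{muconvolutionsturdy} provides a genuine decomposition $\mu=\mu_k^S*\mu_k^N$ with $\mu_k^S$ supported on $\cV_k^S$, and by the previous paragraph $\mu_k^S$ is absolutely continuous with respect to $\lambda_{\cV_k^S}$. Now, for each pair $1\le i<j\le r$, apply Lemma~\ref{conv vanish subsp} with $\cV=\cV_k^S$, $\cW=\cW_{i,j}$, $\mu_1=\mu_k^S$, and $\mu_2=\mu_k^N$ to conclude $\mu(\cW_{i,j})=(\mu_k^S*\mu_k^N)(\cW_{i,j})=0$; taking the union over the finitely many pairs, $\mu\big(\bigcup_{1\le i<j\le r}\cW_{i,j}\big)=0$.

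Finally I would carry out the majorant/minorant squeeze from the proof of Theorem~\ref{inclusive theorem}(a), but now for an arbitrary permutation $\tau$ of $\{1,\dots,r\}$ and the corresponding wedge $\cS_\tau=\{\bx\in\R^r\colon x_{\tau(1)}>\cdots>x_{\tau(r)}\}$: sandwich $\one_{\cS_\tau}$ between the continuous functions obtained by feeding $\min(x_{\tau(1)}-x_{\tau(2)},\dots,x_{\tau(r-1)}-x_{\tau(r)})$, and its translate by $\varepsilon$, into the ramp function $g_\varepsilon$ of that proof. As there, equation~\eqref{mu defined here} applied to these continuous functions bounds the gap between $\limsup_{y\to\infty}\frac1y\int_0^y\one_{\cS_\tau}(E(e^t))\,dt$ and the corresponding $\liminf$ by $\mu(\cX_\varepsilon)$, where $\cX_\varepsilon$ is the $\varepsilon$-neighborhood of $\bigcup_{m=1}^{r-1}\cW_{\tau(m),\tau(m+1)}$; letting $\varepsilon\to0^+$ and invoking dominated convergence, this gap is at most $\mu\big(\bigcup_{m=1}^{r-1}\cW_{\tau(m),\tau(m+1)}\big)=0$ by the previous step. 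Hence the limit exists and equals $\mu(\cS_\tau)$, so the logarithmic density $\delta(\{t\ge1\colon E(t)\in\cS_\tau\})$ exists; since $\tau$ was arbitrary and the argument never used the identities of $a_1,\dots,a_r$, the race is weakly inclusive (and, as the same computation gives $\mu$ no mass on any $\cW_{i,j}$, ties have density zero).

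There is no real obstacle here beyond bookkeeping: one must verify that $k>2\dim(\cV_k^S)$---not merely $k>2r$---is what Lemma~\ref{absconvintegral} actually uses, and observe that a single $\mu$-null set $\bigcup_{i<j}\cW_{i,j}$ simultaneously controls the boundaries of all $r!$ permuted wedges, so that one squeeze argument settles every permutation at once. All of the genuinely analytic content---the convolution decomposition, the Bessel-product formula for $\widehat\mu_k^S$, and the integrability estimate---has already been established upstream, so this final theorem is a clean repackaging of those ingredients under a hypothesis stated directly in terms of $\cV_k^S$.
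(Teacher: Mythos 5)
Your proposal is correct and matches the paper's intent exactly: the paper itself offers no separate proof for this statement, remarking only that ``a careful examination of the proof'' of Theorem~\ref{inclusive theorem}(a) yields it, and you have supplied precisely that examination --- you correctly isolate $k>2\dim(\cV_k^S)$ as the true hypothesis consumed by Lemma~\ref{absconvintegral}, and the non-inclusion $\cV_k^S\not\subseteq\cW_{i,j}$ as the hypothesis consumed by Lemma~\ref{conv vanish subsp}, after first observing that the transversality condition rules out $\cV_k^S=\{\bzero\}$ so that Proposition~\ref{muconvolutionsturdy} gives a nontrivial decomposition. The only cosmetic difference is that you make the uniformity over permutations $\tau$ explicit (running the squeeze against each wedge $\cS_\tau$ and noting that all diagonal hyperplanes $\cW_{i,j}$ already have $\mu$-measure zero), whereas the paper proves it for the single wedge $\cS$ and appeals to its standing convention that the argument ``never depends upon the identities of the $a_j$''; this is a harmless and arguably clearer repackaging of the same argument.
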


Note that Theorem~\ref{inclusive theorem}(a) implies the following for two-way races: if there exists a single character $\chi\mod q$ (satisfying $\chi(a)\ne\chi(b)$) with five self-sufficient zeros, then the logarithmic density of the race between $\pi(x;q,a)$ and $\pi(x;q,b)$ exists. Theorem~\ref{two-way theorem}(a) improves upon this in two ways: it lowers the number of self-sufficient zeros required from five to three, and it allows those self-sufficient zeros to belong to different characters $\chi\mod q$ (which will, by definition, be $1$-sturdy at the very least).

In this situation, the definition~\eqref{muSdefn} becomes the $\R^2$-valued random vector
\begin{equation*}
X_1^S
=  2 \Re \sum_{\substack{\chi\mod q \\ \chi\ne\chi_0 \\ \chi \text{ $1$-sturdy}}} \big( \overline{\chi}(a), \overline{\chi}(b) \big) \sum_{\gamma \in \Gamma^S(\chi)} \frac{Z_\gamma}{\sqrt{\frac14+\gamma^2}}
\end{equation*}
whose distribution $\mu_1^S$, as per Proposition~\ref{muconvolutionsturdy}, has characteristic function
\begin{equation*}
  \widehat\mu_1^S(\bt)  
  = \prod_{\substack{\chi\mod q \\ \chi\ne\chi_0 \\ \chi \text{ $1$-sturdy}}}  \prod_{\gamma \in \Gamma^S(\chi)} J_0 \Bigg( 
  \frac{ 2 |  \chi(a)t_1 + \chi(b)t_2 | }{ \sqrt{\frac{1}{4}+\gamma^2}}
  \Bigg). 
\end{equation*}
However, since we are mainly interested in the difference $\pi(x;q,a)-\pi(x;q,b)$ rather than the absolute sizes of $\pi(x;q,a)$ and $\pi(x;q,b)$ separately, we can modify this approach by postcomposing with the function $(t_1,t_2)\mapsto t_1-t_2$ from $\R^2$ to~$\R$, thereby considering the $\R$-valued random variable
\begin{equation*}
\underline X_1^S
=  2 \Re \sum_{\substack{\chi\mod q \\ \chi\ne\chi_0 \\ \chi \text{ $1$-sturdy}}} \big( \overline{\chi}(a) - \overline{\chi}(b) \big) \sum_{\gamma \in \Gamma^S(\chi)} \frac{Z_\gamma}{\sqrt{\frac14+\gamma^2}}
\end{equation*}
whose distribution $\underline\mu_1^S$ has characteristic function
\begin{equation}  \label{underline mu one hat}
  \widehat{\underline\mu}_1^S(t)  
  = \prod_{\substack{\chi\mod q \\ \chi\ne\chi_0 \\ \chi \text{ $1$-sturdy}}}  \prod_{\gamma \in \Gamma^S(\chi)} J_0 \Bigg( 
  \frac{ 2 \big| \big( \chi(a)-\chi(b) \big)t \big| }{ \sqrt{\frac{1}{4}+\gamma^2}}
  \Bigg). 
\end{equation}
(As a reality check, notice that under the assumption of LI, so that all characters are $k$-sturdy and all zeros are represented in $\Gamma^S(\chi)$, this formula is the same as the one derived by Fiorilli and the first author---see \cite[Definitions 2.4 and 2.11 and Propositions 2.6 and~2.13]{FM}.)

\begin{proof}[Proof of Theorem~\ref{two-way theorem}(a)]
By the inequality~\eqref{J0bound}, each factor in the product in equation~\eqref{underline mu one hat} is at most $1$ in absolute value; so if we retain only the factors corresponding to the three hypothesized self-sufficient zeros, we obtain (by analogy with equation~\eqref{applied Bessel bound})
\[
  | \widehat{\underline\mu}_1^S(t)  |
  \ll \min\bigg\{ 1, \frac1{|t|^{3/2}} \bigg\}
\]
where the implicit constant depends on the specific self-sufficient zeros and associated character values (note here that it is crucial that $|\chi(a)-\chi(b)|\ne0$). This upper bound is sufficient to show that $\widehat{\underline\mu}_1^S(\xi)$ is absolutely integrable; hence by Lemma~\ref{density}, $\underline\mu_1^S$ is absolutely continuous with respect to Lebesgue measure on $\R$, and so (by comparison with the proof of Theorem~\ref{inclusive theorem}(a)),
\[
     \lim_{X \to \infty} \frac{1}{\log X}  \int_1^{X} 
     \one_{(0,\infty)} \Big( \frac{\log x}{\sqrt{x}} (E(t;q,a)-E(t;q,b))
     \Big) \, \frac{dx}{x}  
     = \int_0^\infty d \underline\mu_1(x) ,
\]
where $E(t;q,a)$ was defined in equation~\eqref{Exqa}.
In particular, in light of equation~\eqref{relevant delta}, the logarithmic density $\delta\big( \{t\ge1\colon E(t;q,a)>E(t;q,b)\} \big)$ exists (and equals $\underline\mu_1\big( (0,\infty) \big)$).
\end{proof}

We may consider the entire set of primes (the ``$q=1$ case''), whose counting function is of course $\pi(x)$. Since there are no longer multiple residue classes, however, we must create a second contestant; using the logarithmic integral $\li(x)$ as this second contestant recovers the classical question of investigating when $\pi(x)>\li(x)$.

\begin{proof}[Proof of Theorem~\ref{pili theorem}(a)]
As the proof is nearly identical to the proof of Theorem~\ref{two-way theorem}(a), we only indicate the differences briefly.  
Let $\mu_\pi$ denote the limiting logarithmic density of $\frac{\log x}{\sqrt{x}} (\pi(x)-\text{li}(x))$. Using the explicit formula
\begin{equation*}
  \frac{\log x}{\sqrt{x}} \big( \pi(x) - \li(x) \big) = -1+\sum_{\substack{\gamma \in \mathbb{R} \\ \zeta(\frac{1}{2}+i \gamma,\chi)=0} } \frac{x^{i \gamma}}{\frac{1}{2}+i \gamma} + O \bigg( \frac{1}{\log x} \bigg) = -1+2 \Re \sum_{\gamma\in\Gamma(\chi_0)} \frac{x^{i \gamma}}{\frac{1}{2}+i \gamma} + O \bigg( \frac{1}{\log x} \bigg),
\end{equation*}
we can show, analogously to Proposition~\ref{existence of muN prop}, that
$\mu_\pi =\mu_\pi^{R} * \mu_\pi^{N}$ where $\mu_\pi^R$ is the probability measure associated to the random variable
\begin{equation}  \label{Xpi definition}
  X_\pi^R = 2 \Re  \sum_{\substack{\gamma \in \Gamma^S(\chi_0) }} \frac{Z_\gamma}{\sqrt{\frac14+\gamma^2}},
\end{equation}
whose characteristic function is
\[
    \widehat{\mu}_\pi^R(t) = \prod_{\gamma \in \Gamma^S(\chi_0)} J_0 \Bigg( 
  \frac{ 2 t }{ \sqrt{\frac{1}{4}+\gamma^2}}
  \Bigg). 
\]
If there are three self-sufficient zeros then, analogously to Lemma~\ref{absconvintegral} and its proof, 
$| \widehat{\mu}_\pi^R(t)| \ll \min\{ 1, |t|^{-3/2} \}$.   This decay rate ensures that 
$\widehat{\mu}_\pi^R(t)$ is absolutely integrable, so that $\mu_\pi^R$ is absolutely continuous with respect to Lebesgue measure on $\R$ by Lemma~\ref{density}. Hence by the method of proof of Theorem~\ref{inclusive theorem}(a), we can conclude that
\[
     \lim_{X \to \infty} \frac{1}{\log X}  \int_1^X \one_{(0,\infty)} \bigg(\frac{\log x}{\sqrt{x}} 
     (\pi(x)-\li(x)) \bigg) \, \frac{dx}{x}
     = \int_0^\infty d \mu_\pi (x).
\]
In particular, in light of equation~\eqref{relevant delta}, the logarithmic density $\delta\big( \{t\ge1\colon \pi(x) > \li(x) \} \big)$ exists (and equals $\mu_\pi\big( (0,\infty) \big)$).
\end{proof}

\section{$\mu_W^R$ is present in cylinders and balls}  \label{cylinder section}

The goal of this section is to prove that various prime number races are inclusive or strongly inclusive (in particular, to establish Theorem~\ref{inclusive theorem}(b)--(c)), by showing the positivity of various logarithmic densities or, equivalently, of the measures of various sets under probability measures such as~$\mu$. The key to this goal is to show, under suitable assumptions, that
$\mu^R_W$ assigns strictly positive measure to every infinite cylinder parallel to $(1,1, \ldots, 1)$ (Proposition~\ref{omnipresent}) or to every ball (Proposition~\ref{omnipresent, strong}).
Our method of proof is to convert this question to one about specific random variables taking values in $\R^r$, which the first two lemmas of this section will help us understand. After giving the proof of Theorem~\ref{inclusive theorem}(b) (and showing that its spanning hypothesis is as general as could be hoped) and Theorem~\ref{inclusive theorem}(c), we also prove Theorem~\ref{pili theorem}(b)--(c) and state two variants for primes in arithmetic progressions (Theorem~\ref{pili theorem for APs}).

We begin with three elementary lemmas.

\begin{lemma}
Let $\lambda_1, \dots, \lambda_N$ be positive real numbers such that $\sum_{n=1}^N \lambda_n > 2\max\{\lambda_1,\dots,\lambda_N\}$. Then for any complex number $z$ with $|z| < \sum_{n=1}^N \lambda_n$, there exist unimodular numbers $e^{i\theta_1}, \dots e^{i\theta_N}$ such that $\sum_{n=1}^N \lambda_n e^{i\theta_n} = z$.
\label{robot arm lemma}
\end{lemma}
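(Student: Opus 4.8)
The plan is to establish the stronger and cleaner fact that pins down the image set exactly. After relabeling so that $\lambda_1 \ge \lambda_2 \ge \cdots \ge \lambda_N > 0$, write $\Sigma = \lambda_1 + \cdots + \lambda_N$; the goal is to show
\[
   \Big\{ \sum_{n=1}^N \lambda_n e^{i\theta_n} \colon \theta_1,\dots,\theta_N \in \R \Big\} = \big\{ z \in \C \colon \max(0,\, 2\lambda_1 - \Sigma) \le |z| \le \Sigma \big\},
\]
that is, that this set of attainable values is an annulus centered at the origin (which collapses to a disk, or to a circle, in extreme cases). The lemma then follows immediately: the hypothesis $\Sigma > 2\max\{\lambda_1,\dots,\lambda_N\} = 2\lambda_1$ forces the inner radius $\max(0, 2\lambda_1 - \Sigma)$ to vanish, so the attainable set is the whole closed disk of radius $\Sigma$, which contains every $z$ with $|z| < \Sigma$.

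I would prove the displayed identity by induction on~$N$, taking care to carry the exact value of the inner radius through the induction; this strengthening is what makes the argument work, since one cannot simply delete a single $\lambda_n$ and preserve the hypothesis $\Sigma > 2\max$ (for instance $(3,2,1,1)$ satisfies it while $(3,2,1)$ does not). The case $N=1$ is just the circle $\{\lambda_1 e^{i\theta}\}$, which matches the formula since $\max(0, 2\lambda_1 - \lambda_1) = \lambda_1 = \Sigma$. For the step from $N$ to $N+1$, I would write the attainable set $S$ for $\lambda_1,\dots,\lambda_{N+1}$ as the Minkowski sum $S = S' + C$, where $S'$ is the attainable set for $\lambda_1,\dots,\lambda_N$ (still sorted, so the inductive hypothesis gives $S' = \{z \colon m' \le |z| \le \Sigma'\}$ with $\Sigma' = \lambda_1 + \cdots + \lambda_N$ and $m' = \max(0, 2\lambda_1 - \Sigma')$) and $C$ is the circle of radius $\lambda_{N+1}$ about the origin.

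The heart of the step is an elementary planar observation: a point $z$ with $|z| = \rho$ lies in $S' + C$ precisely when the circle of radius $\lambda_{N+1}$ centered at $z$ meets the annulus $S'$, and the moduli of the points on that circle fill exactly the interval $\big[\, |\rho - \lambda_{N+1}|,\ \rho + \lambda_{N+1} \,\big]$. Hence $z \in S$ unless this interval lies entirely below $m'$ or entirely above $\Sigma'$. A short case check --- using that the sorting gives $\lambda_{N+1} \le \lambda_1 \le \Sigma'$, which prevents the interval from dropping below~$0$ --- shows the first alternative can occur only when $m' > 0$ and then amounts to $\rho < 2\lambda_1 - (\Sigma' + \lambda_{N+1})$, while the second amounts to $\rho > \Sigma' + \lambda_{N+1}$. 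Since $\Sigma' + \lambda_{N+1}$ is the new total sum, this yields $S = \{z \colon \max(0, 2\lambda_1 - \Sigma_{N+1}) \le |z| \le \Sigma_{N+1}\}$ and closes the induction. I do not expect any genuine obstacle here: the only things to watch are the bookkeeping of the inner radius and the degenerate cases in which the annulus collapses to a disk or a circle.
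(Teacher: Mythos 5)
Your argument is correct, but it takes a genuinely different route from the paper's. The paper dismisses the lemma in one line by observing that the hypotheses imply no element of $\{\lambda_1,\dots,\lambda_N,|z|\}$ exceeds the sum of the others, and then invoking the existence of a (possibly degenerate) closed polygon in the plane with those side lengths --- reading off the $e^{i\theta_n}$ as the unit directions of the sides once the $|z|$-side is placed from $0$ to $z$. You instead prove from scratch, by induction on $N$, the sharper statement that the attainable set is exactly the closed annulus $\{\,z:\max(0,2\lambda_{\max}-\Sigma)\le|z|\le\Sigma\,\}$; your induction step, via the Minkowski sum with a circle of radius $\lambda_{N+1}$ and the observation that the moduli on a circle of radius $\lambda_{N+1}$ about a point of modulus $\rho$ sweep out exactly $[\,|\rho-\lambda_{N+1}|,\ \rho+\lambda_{N+1}\,]$, is sound, and the sorting $\lambda_1\ge\cdots\ge\lambda_{N+1}$ correctly keeps $\lambda_1$ the relevant maximum at each stage. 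The paper's approach is quicker because it offloads the geometry onto a classical fact; yours is longer but entirely self-contained, avoids the polygon-existence black box, and yields more (the precise image set, including the inner radius when the race is unbalanced), which makes it a good backup reference if one wants the lemma without citation.
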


\noindent Since the hypotheses imply that no element of $\{ \lambda_1, \dots, \lambda_N, |z| \}$ exceeds the sum of the other elements, the lemma follows easily from the existence of a convex polygon in the plane with side lengths $\{ \lambda_1, \dots, \lambda_N, |z| \}$.


\begin{lemma}  \label{robot variable lemma}
Let $\Gamma$ be a finite set of real numbers, and let $\{ Z_\gamma\colon \gamma\in\Gamma\}$ be a collection of independent random variables each of which is uniformly distributed on the unit circle in~$\C$. Let $z$ be a complex number, and let $\{\lambda_\gamma\colon \gamma\in\Gamma\}$ be positive real numbers satisfying $\max \{ \lambda_\gamma \colon \gamma\in\Gamma \} \le 4$ and $\sum_{\gamma\in\Gamma} \lambda_\gamma > \max\{8,|z|\}$. Then for any $\varepsilon>0$, there is a positive probability that
\begin{equation}  \label{lambdagamma ep}
\bigg |\sum_{\gamma\in\Gamma} \lambda_\gamma Z_\gamma - z \bigg| < \varepsilon.
\end{equation}
\end{lemma}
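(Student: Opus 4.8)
The plan is to reduce the probabilistic statement to the deterministic geometric fact of Lemma~\ref{robot arm lemma}, and then use independence and the fact that each $Z_\gamma$ has full support on the unit circle to conclude that a small neighbourhood of any ``target configuration'' of the $Z_\gamma$ is hit with positive probability. First I would invoke Lemma~\ref{robot arm lemma}: since $\max\{\lambda_\gamma\} \le 4$ and $\sum_{\gamma\in\Gamma}\lambda_\gamma > 8 \ge 2\max\{\lambda_\gamma\}$, and since also $\sum_{\gamma\in\Gamma}\lambda_\gamma > |z|$, the hypotheses of Lemma~\ref{robot arm lemma} are satisfied (with the $\lambda_n$ there being our $\lambda_\gamma$), so there exist unimodular numbers $\{w_\gamma = e^{i\theta_\gamma} \colon \gamma\in\Gamma\}$ with $\sum_{\gamma\in\Gamma}\lambda_\gamma w_\gamma = z$ exactly.

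Next I would pass from this exact solution to an open neighbourhood. The map $F \colon (\T)^{\Gamma} \to \C$ given by $F\big( (u_\gamma)_\gamma \big) = \sum_{\gamma\in\Gamma}\lambda_\gamma u_\gamma$ is continuous, and $F\big( (w_\gamma)_\gamma \big) = z$. Hence there is an open product neighbourhood $\prod_{\gamma\in\Gamma} U_\gamma$ of $(w_\gamma)_\gamma$ in $(\T)^\Gamma$ (each $U_\gamma$ an open arc of the unit circle containing $w_\gamma$) such that $\big| F(u) - z \big| < \varepsilon$ for all $u \in \prod_{\gamma\in\Gamma} U_\gamma$. Therefore the event in~\eqref{lambdagamma ep} contains the event $\{Z_\gamma \in U_\gamma \text{ for all } \gamma\in\Gamma\}$, whose probability is $\prod_{\gamma\in\Gamma} \Pr(Z_\gamma \in U_\gamma)$ by independence of the $\{Z_\gamma\}$. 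Since each $Z_\gamma$ is uniformly distributed on the unit circle and each $U_\gamma$ is a nonempty open arc, each factor $\Pr(Z_\gamma \in U_\gamma)$ is the normalized arc length of $U_\gamma$, which is strictly positive; as $\Gamma$ is finite, the product is strictly positive, and the lemma follows.

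The only step requiring any care is the application of Lemma~\ref{robot arm lemma}, namely checking that the two numerical hypotheses there are genuinely implied by ours: the condition $\sum \lambda_\gamma > 2\max\{\lambda_\gamma\}$ follows from $\sum\lambda_\gamma > 8$ together with $\max\{\lambda_\gamma\}\le 4$, and the condition $|z| < \sum\lambda_\gamma$ is part of our hypothesis $\sum\lambda_\gamma > \max\{8,|z|\}$. (One subtlety: if $|z| = \sum\lambda_\gamma$ the polygon degenerates, but our strict inequality $\sum\lambda_\gamma > |z|$ avoids this, and in any case the error-term $\varepsilon$ would absorb it.) Everything after that is soft topology plus independence, so I expect no real obstacle; the ``hard part'' is simply recognizing that Lemma~\ref{robot arm lemma} already does all the geometric work, and that the passage to a neighbourhood is exactly where the positivity of the probability comes from.
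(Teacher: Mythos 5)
Your proposal is correct and takes essentially the same route as the paper: apply Lemma~\ref{robot arm lemma} to find an exact unimodular configuration hitting $z$, use continuity to fatten it to an open neighbourhood on which the sum lands within $\varepsilon$ of $z$, and conclude positive probability because Haar measure on the torus (equivalently, by independence and full support of each $Z_\gamma$, the product of arc-length measures) assigns positive mass to open sets. The only cosmetic difference is that the paper phrases the last step via the pushforward of Haar measure on $\T^N$ under $f$, while you factor through a product neighbourhood and independence; these are the same observation.
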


\begin{proof}
Set $N=\#\Gamma$, and consider the map $f\colon \T^N\to\C$ defined by $f(\{\theta_\gamma\colon \gamma\in\Gamma\}) = \sum_{\gamma\in\Gamma} \lambda_\gamma e^{i\theta_\gamma}$. The hypotheses $\max \{ \lambda_\gamma \colon \gamma\in\Gamma \} \le 4$ and $\sum_{\gamma\in\Gamma} \lambda_\gamma > \max\{8,|z|\}$ allow us to apply Lemma~\ref{robot arm lemma} and conclude that $z$ is in the image of~$f$. By the continuity of $f$, there exists an open subset $\cU$ of $\T^N$ on which the value of $f$ is within $\varepsilon$ of~$z$. On the other hand, the probability distribution of $\sum_{\gamma\in\Gamma} \lambda_\gamma Z_\gamma$ is the same as the pushforward of Haar measure on the torus $\T^N$ under the function~$f$. In particular, the probability that the inequality~\eqref{lambdagamma ep} holds is at least the measure of $\cU$, and the measure of any open subset of $\T^N$ is positive.
\end{proof}

We use $\|{\cdot}\|_\infty$ to denote the maximum absolute value of the entries of a vector or matrix.

\begin{lemma} \label{matrixlemma}
Let $\bt \in \R^r$ and let $\bA$ be an $r \times m$ real matrix. If  $\bt$ is in the column space of~$\bA$,
then there exists $\by \in \R^m$ such that $\bA\by=\bt$ and 
\[
   \| \by \|_{\infty} \ll_{\bA} \| \bt \|_{\infty}. 
\]
\end{lemma}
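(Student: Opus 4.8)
The plan is to exhibit $\by$ as the image of $\bt$ under a fixed linear map that depends only on $\bA$; since every linear map between finite-dimensional normed spaces is bounded, the inequality $\|\by\|_\infty \ll_{\bA} \|\bt\|_\infty$ will then be automatic. The one point requiring care is that the implied constant must not depend on $\bt$, and this is guaranteed precisely because $\by$ will be a fixed linear function of~$\bt$.

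First I would choose a maximal linearly independent subset of the columns of $\bA$, indexed by a set $S \subseteq \{1,\dots,m\}$, and let $\bA_S$ be the $r \times \#S$ submatrix consisting of those columns. Then $\bA_S$ has full column rank, so $\bA_S^{\top}\bA_S$ is invertible, and the matrix $\bM = (\bA_S^{\top}\bA_S)^{-1}\bA_S^{\top}$ is a left inverse of $\bA_S$; moreover the column space of $\bA_S$ equals the column space of $\bA$, which by hypothesis contains $\bt$. Consequently, if I set $\by_S = \bM\bt$, then $\bA_S\by_S = \bt$ (for any vector in the column space of a full-column-rank matrix, applying $\bM$ recovers its unique preimage). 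Defining $\by\in\R^m$ to equal $\by_S$ in the coordinates indexed by $S$ and $0$ elsewhere, I get $\bA\by = \bA_S\by_S = \bt$.

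It remains to bound $\|\by\|_\infty = \|\by_S\|_\infty = \|\bM\bt\|_\infty$. Using $\|\bw\|_\infty \le \|\bw\|_2$ and $\|\bw\|_2 \le \sqrt r\,\|\bw\|_\infty$ together with the Euclidean operator norm $\|\bM\|_{\mathrm{op}}$, I obtain $\|\by\|_\infty \le \|\bM\bt\|_2 \le \|\bM\|_{\mathrm{op}}\|\bt\|_2 \le \sqrt r\,\|\bM\|_{\mathrm{op}}\|\bt\|_\infty$. Since $\bM$ (and hence $\sqrt r\,\|\bM\|_{\mathrm{op}}$) is determined entirely by $\bA$, this is the asserted bound. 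There is essentially no obstacle here; an even quicker alternative is to take $\by = \bA^{+}\bt$ with $\bA^{+}$ the Moore--Penrose pseudoinverse, which satisfies $\bA\by = \bt$ because $\bt$ lies in the column space of $\bA$, and then $\|\by\|_\infty \le \sqrt r\,\|\bA^{+}\|_{\mathrm{op}}\|\bt\|_\infty$.
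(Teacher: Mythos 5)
Your proof is correct, and the underlying idea is the same as the paper's: construct $\by$ as a fixed linear function of $\bt$ (a function depending only on $\bA$), at which point the bound $\|\by\|_\infty \ll_\bA \|\bt\|_\infty$ is automatic from the boundedness of linear maps between finite-dimensional normed spaces. The realizations differ: the paper passes to the reduced row echelon form $\bE = \bU\bA$, expresses $\bU\bt$ in terms of the pivot rows, and places those entries into the pivot columns of $\by$ (zero-padding the rest), whereas you select a maximal linearly independent set of columns and solve via the normal-equations left inverse $(\bA_S^\top\bA_S)^{-1}\bA_S^\top$ (or, even more tersely, the pseudoinverse $\bA^+$). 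Both amount to choosing a right inverse for $\bA$ restricted to its column space; the paper gets its norm bound from $\|\bU\bt\|_\infty \le r\|\bU\|_\infty\|\bt\|_\infty$, you from the operator norm of $\bM$. Your pseudoinverse one-liner is arguably the cleanest formulation, while the RREF argument is self-contained at the level of basic linear algebra with no appeal to Gram-matrix invertibility or operator norms.
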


\begin{proof} 
Let $\bE$ denote the reduced row echelon form of $\bA$, which has pivots in the first~$k$ rows where $k=\mathop{\rm rank}\bA$. There exists an invertible $r\times r$ matrix~$\bU$ such that $\bE =\bU\bA$.
Because~$\bt$ is in the column space of~$\bA$ by assumption, we know that~$\bU\bt$ is in the column space of~$\bE$; in particular, we can write the column vector~$\bU\bt$ as $(u_1\ u_2\ \cdots\ u_k\ 0\ \cdots\ 0)^T$.
Define a column vector $\by = (y_1\ y_2\ \cdots\ y_m)^T$ by setting $y_j=u_i$ if~$\bE$ has a pivot in its $i$th row and $j$th column, and $y_j=0$ otherwise. By construction, $\bE\by=\bU\bt$ exactly, and so $\bA\by = \bU^{-1}\bE\by=\bU^{-1}\bU\bt = \bt$. Furthermore,
\[
   \| \by \|_{\infty} \le \| \bU \bt \|_\infty  \le r \| \bU \|_\infty \| \bt \|_{\infty}
   \ll_{\bA}  \| \bt \|_{\infty}
\]
as desired. 
\end{proof}

We are now ready to establish sufficient conditions for the probability measure $\mu^R_W$ defined in Proposition~\ref{muconvolutionrobust} to assign positive mass to cylinders and balls in~$\R^r$.

\begin{definition} \label{cylinder def}
Let $\cC_\rho(\bx)$ denote the (infinite, open) cylinder in $\mathbb{R}^r$ with radius $\rho$, center $\bx$, and axis parallel to $(1,\dots,1)$, defined as
\begin{equation*}
\cC_\rho(\bx) = \bigcup_{u\in\R} \cB_\rho\big( \bx+u(1,\dots,1) \big)
\end{equation*}
(using the notation for open balls from Definition~\ref{ball def}).
Of course, the ``center'' of a cylinder is not uniquely defined: any point $\bx+u(1,\dots,1)$ on the central axis could be used.  
\end{definition}

\begin{prop}  \label{omnipresent temp try}
For every nonempty set of nonprincipal characters $\cQ\mod q$ and every real number $V>0$, there exists a constant $W(\cQ,V)$ with the following property:
Suppose that $\bt \in \R^{r}$ satisfies $\|\bt\| \le V$ and is in the span of the set of vectors
\begin{equation} \label{set of vectors}
\big\{ (1,\dots,1) \big\} \cup \big\{ \bx_\chi \colon \chi\in\cQ \big\} \cup \big\{ \by_\chi \colon \chi\in\cQ \big\},
\end{equation}
with $\bx_\chi$ and $\by_\chi$ from Definition~\ref{ETx def}.
If $W \ge W(\cQ,V)$ is a real number such that every $\chi\in\cQ$ is $W$-robust, then $\mu^R_W(\cC_\rho(\bt)) > 0$ for every $\rho >0$.
\end{prop}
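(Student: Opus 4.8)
The plan is to use the concrete model of $\mu^R_W$ as the distribution of the random vector $X^R_W$ from Proposition~\ref{muconvolutionrobust}, and to show that the event $\{X^R_W\in\cC_\rho(\bt)\}$ has positive probability. Let $\cQ'$ denote the finite set of all nonprincipal $W$-robust characters\mod q; by hypothesis $\cQ\subseteq\cQ'$, and $X^R_W=\sum_{\chi\in\cQ'}X_\chi$ where $X_\chi = 2\Re\big(\overline\bv_\chi\sum_{\gamma\in\Gamma^S(\chi)}Z_\gamma/\sqrt{\tfrac14+\gamma^2}\big)$. Writing $S_\chi=\sum_{\gamma\in\Gamma^S(\chi)}Z_\gamma/\sqrt{\tfrac14+\gamma^2}\in\C$ and using $\Re\overline\bv_\chi=\bx_\chi$ and $\Im\overline\bv_\chi=-\by_\chi$, one checks that $X_\chi=L_\chi(2S_\chi)$, where $L_\chi\colon\C\to\R^r$ is the fixed real-linear map $z\mapsto(\Re z)\bx_\chi+(\Im z)\by_\chi$, whose operator norm is $O_q(1)$. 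Since each element of $\Gamma^S(\chi)$ is self-sufficient, the sets $\Gamma^S(\chi)$ are pairwise disjoint (the character in Definition~\ref{G def} being unique), so the vectors $\{X_\chi\colon\chi\in\cQ'\}$ are mutually independent.

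First I would decompose the target. Because $\bt$ lies in the span of $\{(1,\dots,1)\}\cup\{\bx_\chi,\by_\chi\colon\chi\in\cQ\}$, Lemma~\ref{matrixlemma}, applied to the matrix whose columns are these vectors, yields real numbers $c_0$ and $a_\chi,b_\chi$ ($\chi\in\cQ$) with $\bt=c_0(1,\dots,1)+\sum_{\chi\in\cQ}\big(a_\chi\bx_\chi+b_\chi\by_\chi\big)$ and $\max\{|c_0|,\,|a_\chi|,\,|b_\chi|\}\ll_{\cQ}\|\bt\|\le V$. Put $z_\chi=a_\chi+ib_\chi$, so that $L_\chi(z_\chi)=a_\chi\bx_\chi+b_\chi\by_\chi$ and $|z_\chi|\ll_\cQ V$; observe that $\bt-c_0(1,\dots,1)=\sum_{\chi\in\cQ}L_\chi(z_\chi)$ lies on the central axis of the cylinder $\cC_\rho(\bt)$.

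Next I would apply Lemma~\ref{robot variable lemma} one character at a time. Set $\lambda_\gamma=2/\sqrt{\tfrac14+\gamma^2}$, so $2S_\chi=\sum_{\gamma\in\Gamma^S(\chi)}\lambda_\gamma Z_\gamma$; note $\lambda_\gamma\le4$ for every $\gamma$, while $\lambda_\gamma\gg_q 1/\gamma$, so $\sum_{\gamma\in\Gamma^S(\chi)}\lambda_\gamma\gg_q W$ whenever $\chi$ is $W$-robust. Fix $\delta>0$ small enough that $\|L_\chi\|_{\mathrm{op}}\,\delta<\rho/(2\phi(q))$ for every $\chi$. If $W(\cQ,V)$ is chosen large enough (in terms of the implicit constants just mentioned) that $\sum_{\gamma\in\Gamma^S(\chi)}\lambda_\gamma>\max\{8,|z_\chi|\}$ for each $\chi\in\cQ$ and $\sum_{\gamma\in\Gamma^S(\chi)}\lambda_\gamma>8$ for each $\chi\in\cQ'$, then Lemma~\ref{robot variable lemma} gives: for $\chi\in\cQ$, a positive probability that $|2S_\chi-z_\chi|<\delta$; and for $\chi\in\cQ'\setminus\cQ$ (using the degenerate case $z=0$), a positive probability that $|2S_\chi|<\delta$. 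By the independence noted above, all of these events occur simultaneously with positive probability. On that event, $\|X_\chi-L_\chi(z_\chi)\|=\|L_\chi(2S_\chi-z_\chi)\|<\rho/(2\phi(q))$ for $\chi\in\cQ$ and $\|X_\chi\|<\rho/(2\phi(q))$ for $\chi\in\cQ'\setminus\cQ$, so summing the at most $\phi(q)$ contributions gives
\[
\Big\|X^R_W-\big(\bt-c_0(1,\dots,1)\big)\Big\|<\tfrac\rho2<\rho.
\]
Hence $X^R_W\in\cB_\rho\big(\bt-c_0(1,\dots,1)\big)\subseteq\cC_\rho(\bt)$ with positive probability, that is, $\mu^R_W(\cC_\rho(\bt))>0$.

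The only point that needs real care is the uniformity of the threshold $W(\cQ,V)$ over all admissible $\bt$, and this is exactly what the two ingredients above deliver: Lemma~\ref{matrixlemma} bounds the decomposition coefficients by $O_\cQ(V)$, and the elementary comparison $\lambda_\gamma\gg_q 1/\gamma$ turns $W$-robustness into the lower bound $\sum_\gamma\lambda_\gamma\gg_q W$ needed to meet the hypotheses of Lemma~\ref{robot variable lemma}. It is also worth noting why the spanning set in the statement has precisely this shape: the vector $(1,\dots,1)$ may be included harmlessly because $\cC_\rho(\bt)$ is translation-invariant along $(1,\dots,1)$, so the coefficient $c_0$ never has to be produced by the random walk; and the extra robust characters in $\cQ'\setminus\cQ$ cause no difficulty because, being robust, each can be made to contribute an arbitrarily small vector via the $z=0$ case of Lemma~\ref{robot variable lemma}.
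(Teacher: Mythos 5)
There is a genuine gap in the proposal: you apply Lemma~\ref{robot variable lemma} directly to the (potentially infinite) set $\Gamma^S(\chi)$, but that lemma is stated, and proved, only for a \emph{finite} set $\Gamma$. Its proof constructs a continuous map $f\colon\T^N\to\C$ and invokes the fact that nonempty open subsets of $\T^N$ have positive Haar measure; neither step survives intact on $\T^\infty$. Indeed, when $\chi$ is robust the series $\sum_{\gamma\in\Gamma^S(\chi)}\lambda_\gamma$ \emph{diverges}, so the proposed map $\{\theta_\gamma\}\mapsto\sum_\gamma\lambda_\gamma e^{i\theta_\gamma}$ is not defined on all of $\T^\infty$ (only almost surely, thanks to $\sum\lambda_\gamma^2<\infty$), and it is certainly not continuous in the product topology. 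Your statement ``Lemma~\ref{robot variable lemma} gives: for $\chi\in\cQ$, a positive probability that $|2S_\chi-z_\chi|<\delta$'' is therefore an unjustified extension of the lemma.

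The repair is exactly what the paper does: truncate the sum at a finite height $T$. Split $\Gamma^S(\chi)$ into $\{\gamma\le T\}$ and $\{\gamma>T\}$, giving $2S_\chi=U_\chi^S+V_\chi^S$. Choose $T$ large enough that the partial sum $\sum_{\gamma\le T}2/\sqrt{\tfrac14+\gamma^2}$ exceeds $\max\{8,|z_\chi|\}$ for every relevant $\chi$ (this is where $W$-robustness enters, via the elementary comparison $2/\sqrt{\tfrac14+\gamma^2}>2/\gamma-1/(4\gamma^3)$, and is the reason for the correction term in the paper's definition of $W(\cQ,V)$), and simultaneously large enough that $\sum_{\gamma>T}1/(\tfrac14+\gamma^2)$ is small. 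Then Lemma~\ref{robot variable lemma} legitimately applies to the finite truncation $U_\chi^S$, while Chebyshev's inequality shows the tail $V_\chi^S$ is small with positive probability. By independence all these events occur simultaneously, and the triangle inequality concludes. Your decomposition of $\bt$, your use of Lemma~\ref{matrixlemma} for the uniform bound on the coefficients, and the observation that $(1,\dots,1)$ is harmless because $\cC_\rho(\bt)$ is translation-invariant along it, are all correct and exactly match the paper; the missing ingredient is the finite/infinite split of each $\Gamma^S(\chi)$.
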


\begin{proof}
By assumption,~$\bt^T$ is in the column space of the matrix~$\bA=\bA(\cQ)$ whose columns are the transposes of the vectors in equation~\eqref{set of vectors}. By Lemma~\ref{matrixlemma}, there exists a positive constant~$c_\bA$ and real numbers $u$ and $\{ v_\chi \colon \chi\in\cQ \}$ and $\{ w_\chi \colon \chi\in\cQ \}$ such that
\begin{equation} \label{spanning hypothesis}
\bt = u (1,\dots,1) + \sum_{\chi\in\cQ} ( 2v_\chi \bx_\chi + 2w_\chi \by_\chi )
\end{equation}
(where the factors of~$2$ are for later convenience) and
\begin{equation} \label{vchiwchibd}
\max\{|v_\chi|, |w_{\chi}|\} \le c_\bA V \quad\text{for each }\chi\in\cQ.
\end{equation}
We now choose
\begin{equation}
  \label{Wdefn}
 W(\cQ,V) = \max\{8,c_\bA V\sqrt2\} + \frac14\sum_{\gamma\in\Gamma^S(\chi)} \frac{1}{\gamma^3}
\end{equation}

Assume that $W \ge W(\cQ,V)$ and that every $\chi \in \cQ$ is $W$-robust.
For any $W$-robust nonprincipal character~$\chi$ that is not in~$\cQ$, we define $v_\chi=w_\chi=0$, so that we have now written
\begin{equation*}
\bt = u (1,\dots,1) + \sum_{\substack{\chi\mod q \\ \chi\ne\chi_0 \\ \chi \text{ $W$-robust}}} ( 2v_\chi \bx_\chi + 2w_\chi \by_\chi ).
\end{equation*}
By definition, for every $W$-robust character~$\chi$, when~$T$ is sufficiently large we have
 \begin{equation}
   \label{zerosumtruncated}
 \sum_{\substack{\gamma\in\Gamma^S(\chi) \\ \gamma \le T} }
   \frac{1}{\gamma} \ge \frac{W}{2}.
\end{equation}
The elementary inequality ${2}/{\sqrt{1/4+x^2}} > {2/x} - 1/4x^3$ for $x>0$ implies that for any $T>0$,
\begin{equation} \label{zerosuminequality}
   \sum_{\substack{\gamma\in\Gamma^S(\chi) \\ \gamma\le T}} \frac2{\sqrt{\frac14+\gamma^2}} 
   > 2   \sum_{\substack{\gamma\in\Gamma^S(\chi) \\ \gamma\le T}} \frac{1}{\gamma}-
   \frac{1}{4}
   \sum_{\gamma\in\Gamma^S(\chi) }
   \frac{1}{\gamma^3}.
\end{equation}
from which it follows, using equations~\eqref{zerosumtruncated}, \eqref{Wdefn}, and~\eqref{vchiwchibd}, that when~$T$ is sufficiently large,
\begin{align}
\sum_{\substack{\gamma\in\Gamma^S(\chi) \\ \gamma\le T}} \frac2{\sqrt{\frac14+\gamma^2}} &\ge W - \frac14 \sum_{\gamma\in\Gamma^S(\chi)} \frac{1}{\gamma^3} \notag \\
&\ge W(\cQ,V) - \frac14 \sum_{\gamma\in\Gamma^S(\chi)} \frac{1}{\gamma^3} \notag \\
&= \max\{8,c_\bA V\sqrt2\} \ge \max\big\{8,\max\big\{ \sqrt{v_\chi^2+w_\chi^2}\colon \chi\in\cQ \big\} \big\}.
\label{inequality2}
\end{align}

Let $\rho>0$ be given; our goal is to show that $\mu^R_W(\cC_\rho(\bt) )$ is positive.
By equation~\eqref{pre yes it converges}, the series $\sum_{\gamma\in\Gamma^S(\chi)} 1/(\frac14+\gamma^2)$
converges for any character~$\chi$, and so when~$T$ is sufficiently large,
\begin{equation} \label{inequality1}
\sum_{\substack{\gamma\in\Gamma^S(\chi) \\ \gamma>T}} \frac1{\frac14+\gamma^2} \le \sum_{\substack{\gamma\in\Gamma^S(\chi) \\ \gamma>T}} \frac1{\frac14+\gamma^2} < \frac{\rho^2}{64 r \phi(q)^2} .
\end{equation}
By Proposition~\ref{muconvolutionrobust},
the probability measure $\mu_W^R$ is the distribution of the random vector $X_W^R$ defined in equation~\eqref{XR def}, which we write as
\begin{equation} \label{Zss1 definition}
X_W^R =  2\Re \sum_{\substack{\chi\mod q \\ \chi \ne \chi_0 \\ \chi \text{ $W$-robust}}} (\bx_\chi - i\by_\chi) \sum_{\gamma\in\Gamma^S(\chi)} \frac{Z_\gamma}{\sqrt{\frac14+\gamma^2}}.
\end{equation}
Therefore we need to show that $P( X^R \in \cC_\rho(\bt))$ is positive.

Fix~$T$ large enough that both inequalities~\eqref{inequality2} and~\eqref{inequality1} are satisfied for every nonprincipal $W$-robust character~$\chi$. For each such character we define random variables $U_\chi^S$ and $V_\chi^S$ by
\[
U_\chi^S = 2 \sum_{\substack{\gamma\in\Gamma^S(\chi) \\ \gamma\le T}} \frac{Z_{\gamma}}{\sqrt{\frac14+\gamma^2}}
\quad\text{and}\quad
V_\chi^S = 2 \sum_{\substack{\gamma\in\Gamma^S(\chi) \\ \gamma > T}} \frac{Z_{\gamma}}{\sqrt{\frac14+\gamma^2}}.
\]
We now claim that
\begin{align}
  \label{PApositive2}
  P( X_W^R \in \cC_\rho(\bt)) &\ge P\big( X_W^R \in \cB_\rho\big(\bt-u(1,\dots,1) \big) \big) \\
  &\ge \prod_{\substack{\chi\mod q \\ \chi\ne\chi_0 \\ \chi\text{ $W$-robust}}} P \bigg( |U_\chi^S - (v_\chi-iw_\chi)| < \frac{\rho}{4 \sqrt{r} \phi(q)} \bigg) \prod_{\substack{\chi\mod q \\ \chi\ne\chi_0 \\ \chi\text{ $W$-robust}}} P \bigg( |V_\chi^S| < \frac{\rho}{4 \sqrt{r} \phi(q)} \bigg), \notag
\end{align}
where the first inequality is obvious from Definition~\ref{cylinder def}. The proposition will be established once we justify this inequality and show that each factor on the right-hand side is strictly positive.

To establish the inequality~\eqref{PApositive2}, we essentially use the inequality~\eqref{convolution lower bound} for the distributions of these random variables, but with more than two summands.
First we relate $X^R_W$ to the random variables~$U_\chi^S$ and~$V_\chi^S$.
From equation~\eqref{Zss1 definition}, 
\begin{align*}
X_W^R &= 2\Re \sum_{\substack{\chi\mod q \\ \chi\ne\chi_0 \\ \chi\text{ $W$-robust}}} (\bx_\chi - i\by_\chi) (U_\chi^S + V_\chi^S) \\
&= 2\sum_{\substack{\chi\mod q \\ \chi\ne\chi_0 \\ \chi\text{ $W$-robust}}} (\bx_\chi \Re U_\chi^S + \by_\chi \Im U_\chi^S) + 2\sum_{\substack{\chi\mod q \\ \chi\ne\chi_0 \\ \chi\text{ $W$-robust}}} (\bx_\chi \Re V_\chi^S + \by_\chi \Im V_\chi^S),
\end{align*}
and thus
\begin{align*}
  X_W^R - & \big( \bt - u(1,\ldots, 1) \big) \\
  & = 2\sum_{\substack{\chi\mod q \\ \chi\ne\chi_0 \\ \chi\text{ $W$-robust}}} \big( \bx_\chi (\Re U_\chi^S - v_\chi) + \by_\chi (\Im U_\chi^S - w_\chi) \big)
  + 2\sum_{\substack{\chi\mod q \\ \chi\ne\chi_0 \\ \chi\text{ $W$-robust}}} (\bx_\chi \Re V_\chi^S + \by_\chi \Im V_\chi^S) \\
  & =   2\sum_{\substack{\chi\mod q \\ \chi\ne\chi_0 \\ \chi\text{ $W$-robust}}}
  \Re \big( (\bx_\chi-i\by_\chi)(U_\chi^S-(v_\chi+iw_\chi) ) \big)
  + 2\sum_{\substack{\chi\mod q \\ \chi\ne\chi_0 \\ \chi\text{ $W$-robust}}} \Re \big((\bx_\chi-i\by_\chi) V_\chi^S  \big).
\end{align*}
By the triangle inequality,
\begin{align}
  \big| X_W^R - \big( \bt - u(1,\ldots, 1) \big) \big| 
  &  \le 2\sum_{\substack{\chi\mod q \\ \chi\ne\chi_0 \\ \chi\text{ $W$-robust}}} |\bx_\chi-i\by_\chi| \big|U_\chi^S-(v_\chi+iw_\chi) \big| + 2\sum_{\substack{\chi\mod q \\ \chi\ne\chi_0 \\ \chi\text{ $W$-robust}}} |\bx_\chi-i\by_\chi| |V_\chi^S|  \notag \\ 
  & \le 2\sqrt r \sum_{\substack{\chi\mod q \\ \chi\ne\chi_0 \\ \chi\text{ $W$-robust}}} \big|U_\chi^S-(v_\chi+iw_\chi) \big| + 2\sqrt r \sum_{\substack{\chi\mod q \\ \chi\ne\chi_0 \\ \chi\text{ $W$-robust}}} |V_\chi^S|,
\label{keyinequality}
\end{align}
since $ |\bx_\chi-i\by_\chi|=|(\overline\chi(a_1),\ldots,\overline\chi(a_r))| = \sqrt{r}$.

Note that 
$\{ U_\chi^S, V_\chi^S \colon \chi \ne \chi_0 \}$
is a set of mutually independent random variables, since all of the~$Z_\gamma$ are mutually independent; so the right-hand side of equation~\eqref{PApositive2} is the probability that each $U_\chi^S$ is within ${\rho}/{4 \sqrt{r} \phi(q)}$ of $v_\chi+iw_\chi$ and, simultaneously, each $V_\chi^S$ is within ${\rho}/{4 \sqrt{r} \phi(q)}$ of~$0$.
In the event where each of the $U_{\chi}^S$ and $V_{\chi}^{S}$ satisfy these inequalities, it follows from equation~\eqref{keyinequality} that 
\begin{align*}
  \big| X_W^R - \big( \bt - u(1,\ldots, 1) \big) \big| 
  &\le 2\sqrt r \sum_{\substack{\chi\mod q \\ \chi\ne\chi_0 \\ \chi\text{ $W$-robust}}} \frac{\rho}{4\sqrt r\phi (q)} + 2\sqrt r \sum_{\substack{\chi\mod q \\ \chi\ne\chi_0 \\ \chi\text{ $W$-robust}}} \frac{\rho}{4\sqrt r\phi (q)} \\
  &= \frac12(\phi(q)-1) \frac\rho{\phi(q)} + \frac12(\phi(q)-1) \frac\rho{\phi(q)} < \rho,
\end{align*}
which shows that $X_W^R \in \cB_\rho\big(\bt-u(1,\dots,1) \big)\subset \cC_\rho(\bt)$, implying equation~\eqref{PApositive2}.

We have reduced the proposition to showing that each factor on the right-hand side of equation~\eqref{PApositive2} is positive.
Doing so for the first set of factors is easy: for each robust character $\chi\mod q$, thanks to equation~\eqref{inequality2}, we may apply Lemma~\ref{robot variable lemma} with $z=v_\chi-iw_\chi$, and with $\Gamma = \Gamma^S(\chi) \cap (0,T]$ and $\lambda_\gamma = 2/\sqrt{1/4+\gamma^2}$, to immediately conclude that $P \big(|  U_\chi^S - (v_\chi+iw_\chi) | < \varepsilon \big)$ is positive for any $\varepsilon>0$.
On the other hand, the positivity of each factor in the second product on the right-hand side of equation~\eqref{PApositive2} is an easy application of Chebyshev's inequality~\cite[page~47]{Sh} to the complex-valued random variables $V_\chi$: for each robust character $\chi\mod q$, the variance $\sigma^2(V_\chi^S)$ satisfies
\[
\sigma^2(V_\chi^S) = 4 \sum_{\substack{\gamma\in\Gamma^S(\chi) \\ \gamma > T}} \frac{\sigma^2(Z_\gamma)}{{\frac14+\gamma^2}} = 4 \sum_{\substack{\gamma\in\Gamma^S(\chi) \\ \gamma > T}} \frac1{{\frac14+\gamma^2}}
\]
(since the $Z_\gamma$ are mutually independent), and thus
\begin{equation}  \label{apply Chebyshev}
   P \bigg(  |V_\chi^S| \ge \frac{\rho}{4 \sqrt{r} \phi(q)} \bigg)
    \le \bigg(\frac{16r\phi(q)^2}{\rho^2} \bigg) \sigma^2 (V_\chi^S) = 
    \bigg(\frac{16r\phi(q)^2}{\rho^2} \bigg) \cdot 4
    \sum_{\substack{\gamma\in\Gamma^S(\chi) \\ \gamma>T}} \frac1{\frac14+\gamma^2} < 1,
\end{equation}
where the last inequality follows from equation~\eqref{inequality1}.
In other words, $P\big(  |V_\chi^S| <  {\rho}/{4 \sqrt{r} \phi(q)} \big)$ is indeed positive.
\end{proof}

\begin{cor}  \label{omnipresent}
For every integer $q\ge3$ and every real number $V >0$, there exists a constant $W(q,V)$ with the following property: Choose $W \ge W(\cQ,V)$, and suppose that the set of vectors
\[
\big\{ (1,\dots,1) \big\} \cup \big\{ \bx_\chi \colon \chi\mod q \text{ is $W$-robust} \big\} \cup \big\{ \by_\chi \colon \chi\mod q \text{ is $W$-robust} \big\}
\]
spans $\R^r$, where~$\bx_\chi$ and~$\by_\chi$ are as defined in Definition~\ref{ETx def}.
Then for every $\bt \in \R^{r}$ such that $\|\bt\| \le V$, we have
$\mu^R_W(\cC_\rho(\bt)) > 0$, where $\mu^R_W$ is the measure defined in Proposition~\ref{muconvolutionrobust}.
\end{cor}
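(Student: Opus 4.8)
The plan is to obtain Corollary~\ref{omnipresent} as an immediate consequence of Proposition~\ref{omnipresent temp try} together with a finiteness observation. The point is that the set of nonprincipal Dirichlet characters\mod q is finite (it has $\phi(q)-1$ elements), so there are only finitely many candidates~$\cQ$ for the set appearing in Proposition~\ref{omnipresent temp try}, namely the $2^{\phi(q)-1}-1$ nonempty subsets of it. Each such~$\cQ$ yields, via Proposition~\ref{omnipresent temp try}, a constant $W(\cQ,V)$ depending only on~$\cQ$ and~$V$ (together with the globally fixed data $q,a_1,\dots,a_r$); this is visible from the explicit formula~\eqref{Wdefn}, in which $c_\bA$ depends only on the matrix $\bA(\cQ)$ and the tail sum $\sum_\gamma \gamma^{-3}$ converges. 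I would therefore define $W(q,V)$ to be the maximum of $W(\cQ,V)$ over all nonempty sets~$\cQ$ of nonprincipal characters\mod q.

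Now fix $W\ge W(q,V)$ and assume the spanning hypothesis, i.e.\ that $\{(1,\dots,1)\}\cup\{\bx_\chi\colon\chi\mod q\text{ is $W$-robust}\}\cup\{\by_\chi\colon\chi\mod q\text{ is $W$-robust}\}$ spans~$\R^r$. Let $\cQ_0$ denote the set of nonprincipal characters that are $W$-robust. Since $r\ge2$, the single vector $(1,\dots,1)$ cannot span~$\R^r$, so the spanning hypothesis forces $\cQ_0\ne\emptyset$; hence $W\ge W(q,V)\ge W(\cQ_0,V)$, and of course every character in~$\cQ_0$ is $W$-robust by construction. Given any $\bt\in\R^r$ with $\|\bt\|\le V$, the spanning hypothesis in particular places~$\bt$ in the span of $\{(1,\dots,1)\}\cup\{\bx_\chi\colon\chi\in\cQ_0\}\cup\{\by_\chi\colon\chi\in\cQ_0\}$, so Proposition~\ref{omnipresent temp try} applied with $\cQ=\cQ_0$ delivers $\mu^R_W(\cC_\rho(\bt))>0$ for every $\rho>0$, which is precisely the assertion of the corollary.

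There is no real obstacle in this deduction; the only thing to verify with any care is that $W(\cQ,V)$ truly depends on nothing beyond~$\cQ$ and~$V$ (and the fixed data), so that the finite maximum~$W(q,V)$ is well-defined and the argument is not circular — and, as noted, this is transparent from~\eqref{Wdefn}. It is also worth observing that one could equally well replace~$\cQ_0$ by any nonempty subset of the $W$-robust characters whose associated vectors already span~$\R^r$; taking all of them is simply the most convenient choice, and it is what makes the hypothesis ``every $\chi\in\cQ$ is $W$-robust'' in Proposition~\ref{omnipresent temp try} automatic.
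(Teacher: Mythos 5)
Your proof is correct and follows essentially the same route as the paper's: define $W(q,V)$ as the maximum of $W(\cQ,V)$ over the finitely many nonempty subsets $\cQ$ of nonprincipal characters, then invoke Proposition~\ref{omnipresent temp try} with $\cQ$ taken to be the set of robust nonprincipal characters. Your version is in fact a little more scrupulous than the paper's one-line proof in two small ways — you explicitly verify that the set $\cQ_0$ of $W$-robust characters is nonempty (using $r\ge2$), and you take $\cQ_0$ to consist of $W$-robust rather than $W(q,V)$-robust characters so that the hypothesis ``every $\chi\in\cQ$ is $W$-robust'' holds verbatim — but these are refinements, not a different argument.
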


\begin{proof}
Define $W(q,V) = \max_{\cQ} W(\cQ,V)$ in the notation of Proposition~\ref{omnipresent temp try}, where the maximum is taken over all nonempty subsets $\cQ$ of nonprincipal characters\mod q. Then apply Proposition~\ref{omnipresent temp try} with $\cQ = \{\chi\mod q\colon \chi$ is $W(q,V)$-robust$\}$ and any $\|\bt\| \le V$ (which is valid by the assumption of this proposition).
\end{proof}

\begin{prop}  \label{omnipresent temp try, strong}
For every nonempty set of nonprincipal characters $\cQ\mod q$ and every real number $V>0$, there exists a constant $W(\cQ,V)$ with the following property:
Suppose that $\bt \in \R^{r}$ satisfies $\|\bt\| \le V$ and is in the span of the set of vectors
\begin{equation*}
\big\{ \bx_\chi \colon \chi\in\cQ \big\} \cup \big\{ \by_\chi \colon \chi\in\cQ \big\}.
\end{equation*}
If $W \ge W(\cQ,V)$ is a real number such that every $\chi\in\cQ$ is $W$-robust, then $\mu^R_W(\cB_\rho(\bt)) > 0$ for every $\rho >0$, where $\mu^R_W$ is the measure defined in Proposition~\ref{muconvolutionrobust}.
\end{prop}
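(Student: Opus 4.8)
The plan is to run the proof of Proposition~\ref{omnipresent temp try} essentially verbatim, simplified by the fact that the vector $(1,\dots,1)$ no longer appears in the spanning set: because $\bt$ is now assumed to lie in the span of the $\bx_\chi$ and $\by_\chi$ alone, we can place the random vector $X^R_W$ within distance~$\rho$ of $\bt$ itself, rather than only of some translate $\bt-u(1,\dots,1)$, and so we obtain a ball $\cB_\rho(\bt)$ in place of a cylinder. Concretely, since $\bt^T$ lies in the column space of the matrix $\bA=\bA(\cQ)$ whose columns are the transposes of $\{\bx_\chi,\by_\chi\colon\chi\in\cQ\}$, Lemma~\ref{matrixlemma} produces real numbers $\{v_\chi,w_\chi\colon\chi\in\cQ\}$ with
\[
\bt = \sum_{\chi\in\cQ}(2v_\chi\bx_\chi + 2w_\chi\by_\chi)
\qquad\text{and}\qquad
\max\{|v_\chi|,|w_\chi|\}\le c_\bA V,
\]
where $c_\bA$ depends only on~$\cQ$ (hence only on~$q$). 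I would then take $W(\cQ,V)$ to be given by exactly the formula~\eqref{Wdefn}, assume $W\ge W(\cQ,V)$ with each $\chi\in\cQ$ being $W$-robust, and set $v_\chi=w_\chi=0$ for all other $W$-robust nonprincipal characters, so that $\bt=\sum_{\chi\ne\chi_0,\ \chi\text{ $W$-robust}}(2v_\chi\bx_\chi+2w_\chi\by_\chi)$.

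Given $\rho>0$, I would fix~$T$ large enough that the analogues of~\eqref{inequality2} and~\eqref{inequality1} hold for every nonprincipal $W$-robust character~$\chi$; the derivations are word-for-word those in Proposition~\ref{omnipresent temp try}. Writing $\mu^R_W$ as the distribution of $X^R_W$ from~\eqref{Zss1 definition} and splitting each inner sum at height~$T$ into the independent random variables $U^S_\chi=2\sum_{\gamma\le T}Z_\gamma/\sqrt{\tfrac14+\gamma^2}$ and $V^S_\chi=2\sum_{\gamma>T}Z_\gamma/\sqrt{\tfrac14+\gamma^2}$, the triangle inequality together with $|\bx_\chi-i\by_\chi|=\sqrt r$ gives
\[
\big|X^R_W-\bt\big| \le 2\sqrt r\sum_{\substack{\chi\ne\chi_0\\\chi\text{ $W$-robust}}}\big|U^S_\chi-(v_\chi+iw_\chi)\big| + 2\sqrt r\sum_{\substack{\chi\ne\chi_0\\\chi\text{ $W$-robust}}}|V^S_\chi|,
\]
the only change from~\eqref{keyinequality} being the absence of the $u(1,\dots,1)$ term. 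Exactly as in~\eqref{PApositive2}, independence of $\{U^S_\chi,V^S_\chi\}$ then yields
\[
P\big(X^R_W\in\cB_\rho(\bt)\big) \ge \prod_{\substack{\chi\ne\chi_0\\\chi\text{ $W$-robust}}}P\Big(\big|U^S_\chi-(v_\chi+iw_\chi)\big|<\tfrac{\rho}{4\sqrt r\phi(q)}\Big)\prod_{\substack{\chi\ne\chi_0\\\chi\text{ $W$-robust}}}P\Big(|V^S_\chi|<\tfrac{\rho}{4\sqrt r\phi(q)}\Big),
\]
since on the event that all these inequalities hold one has $|X^R_W-\bt|<\rho$. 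Each factor of the first product is positive by Lemma~\ref{robot variable lemma} (with $z=v_\chi+iw_\chi$, $\Gamma=\Gamma^S(\chi)\cap(0,T]$, $\lambda_\gamma=2/\sqrt{\tfrac14+\gamma^2}$, using the analogue of~\eqref{inequality2}), and each factor of the second product is positive by Chebyshev's inequality applied to $V^S_\chi$ exactly as in~\eqref{apply Chebyshev} (using the analogue of~\eqref{inequality1}).

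Honestly, I do not anticipate any real obstacle: all of the substantive work was already done for Proposition~\ref{omnipresent temp try}, and the present statement is strictly easier, since one no longer needs to exploit invariance along $(1,\dots,1)$ to absorb an unwanted component of~$\bt$. The only point requiring a moment's care is that deleting $(1,\dots,1)$ from the spanning set changes the matrix~$\bA$, and hence the implied constant~$c_\bA$ in Lemma~\ref{matrixlemma}; but $c_\bA$ still depends only on~$\cQ$, so the definition of $W(\cQ,V)$ in~\eqref{Wdefn} goes through unchanged.
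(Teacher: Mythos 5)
Your proposal is correct and follows exactly the same route as the paper's own proof: the paper simply observes that the spanning hypothesis yields equation~\eqref{spanning hypothesis} with $u=0$, and then invokes the proof of Proposition~\ref{omnipresent temp try} verbatim to obtain the lower bound~\eqref{PApositive2} for $P\big(X_W^R\in\cB_\rho(\bt)\big)$. Your writeup merely spells out the intermediate steps (Lemma~\ref{matrixlemma}, the choice of $W(\cQ,V)$, the split at height~$T$, Lemma~\ref{robot variable lemma} and Chebyshev) rather than citing them by reference, but the argument is the same.
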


\begin{proof}
By the spanning hypothesis, there exist real numbers $\{ v_\chi \colon \chi\in\cQ \}$ and $\{ w_\chi \colon \chi\in\cQ \}$ such that
\begin{equation*}
\bt = \sum_{ \chi\in\cQ} ( 2v_\chi \bx_\chi + 2w_\chi \by_\chi ),
\end{equation*}
which is exactly the same as equation~\eqref{spanning hypothesis} with $u=0$. The entire proof of Proposition~\ref{omnipresent temp try} goes through unchanged, resulting in the lower bound~\eqref{PApositive2} for $P\big( X_W^R \in \cB_\rho\big(\bt-u(1,\dots,1) \big) \big) = P( X_W^R \in \cB_\rho(\bt))$ which leads immediately the desired conclusion.
\end{proof}

\begin{cor}  \label{omnipresent, strong}
For every integer $q\ge3$ and every real number $V >0$, there exists a constant $W(q,V)$ with the following property:
Choose $W \ge W(\cQ,V)$, and suppose that $\bt \in \R^{r}$ satisfies $\|\bt\| \le V$ and is in the span of the set of vectors
\[
\big\{ \bx_\chi \colon \chi\mod q \text{ is $W$-robust} \big\} \cup \big\{ \by_\chi \colon \chi\mod q \text{ is $W$-robust} \big\}.
\]
Then $\mu^R_W(\cB_\rho(\bt)) > 0$.
\end{cor}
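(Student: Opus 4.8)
The plan is to deduce this corollary directly from Proposition~\ref{omnipresent temp try, strong}, in exactly the manner that Corollary~\ref{omnipresent} was deduced from Proposition~\ref{omnipresent temp try}. All of the analytic content---the reduction via Lemma~\ref{matrixlemma}, the robot-arm and Bessel-function estimates (Lemmas~\ref{robot arm lemma} and~\ref{robot variable lemma}) controlling the head $U_\chi^S$, and the Chebyshev bound on the tail $V_\chi^S$---has already been carried out inside the proof of Proposition~\ref{omnipresent temp try} and transplanted to Proposition~\ref{omnipresent temp try, strong}, so the only work that remains is to choose the constant $W(q,V)$ uniformly over all possible sets of robust characters and to verify that the hypotheses fit together.

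Concretely, I would first set $W(q,V) = \max_{\cQ} W(\cQ,V)$, where the maximum runs over the finitely many nonempty subsets $\cQ$ of the nonprincipal characters modulo~$q$ and $W(\cQ,V)$ is the constant supplied by Proposition~\ref{omnipresent temp try, strong}; this is a finite maximum. Now fix $W \ge W(q,V)$ and a vector $\bt\in\R^r$ with $\|\bt\|\le V$ lying in the span of $\{ \bx_\chi \colon \chi \text{ is $W$-robust} \} \cup \{ \by_\chi \colon \chi \text{ is $W$-robust} \}$. If no nonprincipal character modulo~$q$ is $W$-robust, then this spanning set is empty, so $\bt=\bzero$, and by equation~\eqref{XR def} the random vector $X^R_W$ is identically $\bzero$; hence $\mu^R_W(\cB_\rho(\bzero)) = 1 > 0$ and we are done. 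Otherwise, let $\cQ$ be the nonempty set of all $W$-robust nonprincipal characters modulo~$q$. Then $\bt$ lies in the span of $\{ \bx_\chi, \by_\chi \colon \chi\in\cQ \}$, every $\chi\in\cQ$ is $W$-robust, and $W \ge W(q,V) \ge W(\cQ,V)$; so Proposition~\ref{omnipresent temp try, strong} applies verbatim and yields $\mu^R_W(\cB_\rho(\bt)) > 0$ for every $\rho>0$, as required.

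I do not anticipate a genuine obstacle here, since the substantive inequality~\eqref{PApositive2} and the positivity of each of its factors have already been established. The only points that call for a brief remark are (i) ensuring that $\cQ$ may be taken nonempty---which is precisely what prompts the degenerate-case argument above (and, when $r=\phi(q)$, one may also note there that the vectors $\bx_\chi,\by_\chi$ for nonprincipal~$\chi$, and therefore $\bt$ itself, automatically lie in the hyperplane $x_1+\cdots+x_{\phi(q)}=0$, consistently with Definition~\ref{inclusive def}(d)), and (ii) confirming that the finite maximum defining $W(q,V)$ dominates $W(\cQ,V)$ for the particular $\cQ$ that arises, which is immediate.
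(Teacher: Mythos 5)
Your proof is correct and follows essentially the same route as the paper's: define $W(q,V)=\max_{\cQ}W(\cQ,V)$ over nonempty subsets $\cQ$ of nonprincipal characters, take $\cQ$ to be the set of $W$-robust characters, and invoke Proposition~\ref{omnipresent temp try, strong}. The one small addition you make---dispatching the degenerate case where no character is $W$-robust, so that $\bt=\bzero$ and $\mu^R_W$ is a point mass at the origin---is a reasonable extra care (the proposition assumes $\cQ\neq\emptyset$) that the paper elides.
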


\begin{proof}
Define $W(q,V) = \max_{\cQ} W(\cQ,V)$ in the notation of Proposition~\ref{omnipresent temp try, strong}, where the maximum is taken over all nonempty subsets $\cQ$ of nonprincipal characters\mod q. Then apply Proposition~\ref{omnipresent temp try, strong} with $\cQ = \{\chi\mod q\colon \chi$ is $W(q,V)$-robust$\}$ and any $\|\bt\| \le V$ (which is valid by the assumption of this proposition).
\end{proof}

We are now prepared to establish the second part of our main theorem. For every permutation $\sigma$ of $\{1,2,\dots,r\}$, define
\begin{equation} \label{S sigma def}
\cS_\sigma = \big\{ (x_1,\dots,x_r) \in \R^r \colon x_{\sigma(1)} < x_{\sigma(2)} < \cdots < x_{\sigma(r)} \big\}.
\end{equation}

\begin{proof}[Proof of Theorem~\ref{inclusive theorem}(b)]
Let $\sigma$ be any permutation of $\{1,\dots,r\}$.
Choose $0<\rho<1$ and define $\bt = \bt(\sigma) = {-}(\sigma^{-1}1, \dots, \sigma^{-1}r)$; we note for later use that $\|\bt\| \le r^2 \le \phi(q)^2$. We begin by showing that $\mu\big( \cC_\rho(\bt) \big) > 0$.

Let $W>0$ be a parameter to be chosen later.
By Proposition~\ref{existence of muN prop} there exist probability measures $\mu^R_W$ and $\mu^N_W$ such that
$\mu= \mu^R_W* \mu^N_W$.  
By Proposition~\ref{has mass somewhere small}, there exist $\beta(q)>0$ and $\bn\in\R^r$ such that $\|\bn\| \le \beta(q)$ and
\begin{equation}
  \label{muNBpos}
\mu^N_W(\cB_{\rho/2}(\bn)) > 0.
\end{equation}
Set $V = \beta(q) + \phi(q)^2$ and $\by=\bt-\bn$, and note that $\|\by\| \le \|\bt\|+\|\bn\| \le V$.

If $\bu\in \cC_{\rho/2}(\by)$ and $\bv\in \cB_{\rho/2}(\bn)$, then $\bu+\bv \in \cC_\rho(\bt)$ by the triangle inequality.
In other words, $\cC_{\rho/2}(\by) + \cB_{\rho/2}(\bn) \subset \cC_\rho(\bt)$ as sets, and thus from equation~\eqref{convolution lower bound} we obtain
\begin{equation}
  \label{muClowerbound}
\mu(\cC_\rho(\bt)) \ge \mu^R_W(\cC_{\rho/2}(\by)) \mu^N_W(\cB_{\rho/2}(\bn)).
\end{equation}

We are assuming that
\[
\big\{ (1,\dots,1) \big\} \cup \big\{ \bx_\chi \colon \chi\mod q \text{ is $W$-robust} \big\} \cup \big\{ \by_\chi \colon \chi\mod q \text{ is $W$-robust} \big\}
\]
spans $\R^r$.
If we also assume that $W\ge W(q,V)$, then since $\|\by\| \le V$, by Corollary~\ref{omnipresent} we thus have $\mu^R_W(\cC_{\rho/2}(\by)) > 0$. Since also $\mu^N_W(\cB_{\rho/2}(\bn)) > 0$ by equation~\eqref{muNBpos}, it follows that $\mu(\cC_\rho(\bt)) >0$ as desired.

If we further assume that $W \ge (2\phi(q)+1)/\min \Gamma(q)$, we deduce that each $W$-robust character is $(2r+1)$-sturdy. In particular, the set of vectors 
\begin{multline*}
\big\{ (1,\dots,1) \big\} \cup \big\{ \big( \Re\chi(a_1), \dots, \Re\chi(a_r) \big) \colon \chi\mod q \text{ is $(2r+1)$-sturdy} \big\} \\
\cup \big\{ \big( \Im\chi(a_1), \dots, \Im\chi(a_r) \big) \colon \chi\mod q \text{ is $(2r+1)$-sturdy} \big\}
\end{multline*}
spans the vector space~$\R^r$; therefore by the proof of Theorem~\ref{inclusive theorem}(a), the logarithmic density of the set of~$x$ satisfying the inequalities~\eqref{chain of inequalities} exists and equals $\mu(\cS_\sigma)$. On the other hand, by our choice of $\bt$ and $\rho$, we have $\cC_\rho(\bt) \subset \cS_\sigma$, and therefore $\mu(\cC_\rho(\bt)) >0$ implies that $\mu\big( \cS_\sigma \big) > 0$.

In summary, if we set $W(q) = \max\{ (2\phi(q)+1)/\min \Gamma(q), W(q,V) \}$, then under the assumptions of the theorem, the logarithmic density of the set of~$x$ satisfying the inequalities~\eqref{chain of inequalities} exists and is positive, as desired.
\end{proof}

In Theorem~\ref{inclusive theorem}(b), we assumed that the set of vectors
\begin{multline*}
\big\{ (1,\dots,1) \big\} \cup \big\{ \big( \Re\chi(a_1), \dots, \Re\chi(a_r) \big) \colon \chi\mod q \text{ is $W$-robust} \big\} \\
\cup \big\{ \big( \Im\chi(a_1), \dots, \Im\chi(a_r) \big) \colon \chi\mod q \text{ is $W$-robust} \big\}
\end{multline*}
spans the vector space~$\R^r$. On the other hand, our proof requires only a seemingly weaker statement, namely that the subspace of $\R^r$ spanned by
\begin{multline*}
\big\{ \big( \Re\chi(a_1), \dots, \Re\chi(a_r) \big) \colon \chi\mod q \text{ is $W$-robust} \big\} \\
\cup \big\{ \big( \Im\chi(a_1), \dots, \Im\chi(a_r) \big) \colon \chi\mod q \text{ is $W$-robust} \big\}
\end{multline*}
intersects every wedge $\cS_\sigma$, defined in equation~\eqref{S sigma def},
as $\sigma$ runs over all permutations of the set $\{1,2,\dots,r\}$. However, it follows from the next proposition that these two statements are actually equivalent.

\begin{prop}  \label{need all these directions prop}
Let $r\ge2$, and let $\cV$ be a subspace of $\R^r$ not containing $(1,\dots,1)$ that intersects every wedge $\cS_\sigma$. Then $\dim \cV = r-1$. In particular, $\cV\cup(1,\dots,1)$ spans $\R^r$.
\end{prop}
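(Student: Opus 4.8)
The plan is to prove the contrapositive in disguise: assuming $\dim\cV\le r-2$, I will produce a single wedge $\cS_\sigma$ (in the notation of equation~\eqref{S sigma def}) entirely disjoint from $\cV$, contradicting the hypothesis. First I note that, since $(1,\dots,1)\notin\cV$, the subspace $\cV$ is already proper, so $\dim\cV\le r-1$; thus only the reverse inequality needs proof. So suppose $\dim\cV\le r-2$, and set $L=\R\cdot(1,\dots,1)$. Because $(1,\dots,1)\notin\cV$ we have $\cV\cap L=\{\bzero\}$, hence $\dim(\cV+L)=\dim\cV+1\le r-1$, so $\cV+L$ is contained in some hyperplane $H\subset\R^r$. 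The key point is that $H\supseteq L$, so $H=\ker\ell$ for a linear functional $\ell(\bx)=\sum_{i=1}^r c_ix_i$ with $\bc\ne\bzero$ and $\sum_i c_i=0$.

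The heart of the argument is an elementary lemma I would isolate: \emph{any nonzero linear functional $\ell(\bx)=\sum_i c_ix_i$ with $\sum_i c_i=0$ is strictly positive throughout $\cS_\sigma$ whenever $\sigma$ is chosen so that $c_{\sigma(1)}\le c_{\sigma(2)}\le\cdots\le c_{\sigma(r)}$.} To prove it I would parametrize a generic point of $\cS_\sigma$ as $x_{\sigma(k)}=t+s_1+\cdots+s_{k-1}$ with $t\in\R$ and $s_1,\dots,s_{r-1}>0$; substituting and using $\sum_i c_i=0$ to cancel the $t$-term expresses $\ell(\bx)$ as $\sum_{j=1}^{r-1}s_j\bigl(-(c_{\sigma(1)}+\cdots+c_{\sigma(j)})\bigr)$. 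Since $c_{\sigma(1)}+\cdots+c_{\sigma(j)}$ is the sum of the $j$ smallest among the $c_i$, its value is at most $\tfrac jr\sum_i c_i=0$, so every coefficient $-(c_{\sigma(1)}+\cdots+c_{\sigma(j)})$ is nonnegative; and these coefficients cannot all vanish, for that would force $c_{\sigma(1)}=c_{\sigma(2)}=\cdots=c_{\sigma(r)}=0$, contrary to $\bc\ne\bzero$. Hence $\ell(\bx)$ is a combination of the positive quantities $s_1,\dots,s_{r-1}$ with nonnegative coefficients, not all zero, so $\ell(\bx)>0$ for every $\bx\in\cS_\sigma$.

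With the lemma in hand the proof closes immediately: the wedge $\cS_\sigma$ furnished by the lemma satisfies $\cS_\sigma\cap H=\emptyset$, hence $\cS_\sigma\cap\cV=\emptyset$ since $\cV\subseteq H$, contradicting the assumption that $\cV$ meets every wedge. Therefore $\dim\cV=r-1$, and combined with $(1,\dots,1)\notin\cV$ this gives $\dim\bigl(\cV+\R(1,\dots,1)\bigr)=r$, i.e.\ $\cV$ together with $(1,\dots,1)$ spans $\R^r$. I do not anticipate a serious obstacle: the only step needing genuine care is the choice of $\sigma$ in the lemma — conceptually, recognizing that the Weyl chamber indexed by the permutation that sorts the normal vector of a diagonal-containing hyperplane lies strictly to one side of that hyperplane — but once that observation is made the verification is just the standard fact that the sum of the $j$ smallest of numbers averaging zero is nonpositive, together with the nondegeneracy coming from $\bc\ne\bzero$.
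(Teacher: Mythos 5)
Your argument is correct and follows essentially the same route as the paper: both select a nonzero vector with zero coordinate sum orthogonal to $\cV$, sort its coordinates to pick out the wedge $\cS_\sigma$, and show the associated linear functional is strictly positive (hence nonvanishing) on that wedge. The only difference is the final verification of positivity, where the paper works directly with the indices $k,\ell$ at which the sorted coordinates change sign, while you parametrize the wedge as an affine cone $x_{\sigma(k)}=t+s_1+\cdots+s_{k-1}$ with $s_j>0$ and invoke the fact that partial sums of sorted zero-mean numbers are nonpositive.
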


\begin{proof}
We prove the following equivalent statement: if $\cV$ is a subspace of $\R^r$ not containing $(1,\dots,1)$ and $\dim \cV \le r-2$, then $\cV$ does not intersect some wedge $\cS_\sigma$. The orthogonal complement (under the standard inner product) of the subspace generated by $\cV$ and $(1,\dots,1)$ has dimension $r-(\dim \cV+1)\ge1$; therefore we may choose a nonzero vector $\by=(y_1,\dots,y_r) \in \big(\cV\cup(1,\dots,1)\big)^\perp$, so that in particular, $y_1+\cdots+y_r=0$. By permuting the coordinates of $\R^r$, we may assume that $y_1\le y_2\le \cdots \le y_r$; note that the assumption that $\by$ is nonzero implies that $y_1<0$ and $y_r>0$. We claim that $\cV$ does not intersect $\cS = \big\{ (x_1,\dots,x_r) \in \R^r \colon x_1 < x_2 < \cdots < x_r \big\}$.

Suppose for the sake of contradiction that there was some $(z_1,\dots,z_r) \in \cS \cap \cV$. Then $z_1<z_2<\cdots<z_r$, and $y_1z_1 + y_2z_2 + \cdots + y_rz_r=0$ since $(z_1,\dots,z_r) \in \cV$ and $(y_1,\dots,y_r) \in \cV^\perp$. Suppose that $k$ and $\ell$ are chosen so that $y_k$ is the largest negative value, and $y_\ell$ the smallest positive value, among $\{y_1,\dots,y_r\}$; in other words, $y_k < 0 = y_{k+1} = \cdots = y_{\ell-1} < y_\ell$, where it is possible that $\ell=k+1$. Then, thanks to the known ordering $z_1<z_2<\cdots<z_r$, we have
\begin{align*}
0 = y_1z_1 + y_2z_2 + \cdots + y_rz_r &\ge (y_1+\cdots+y_k)z_k + (y_\ell+\cdots+y_r)z_\ell \\
&= (z_\ell-z_k)(y_\ell+\cdots+y_r) > 0,
\end{align*}
where the middle equality follows from $y_1+\cdots+y_r=0$. This contradiction establishes the proposition.
\end{proof}

We proceed to establish the last part of our main theorem, using a method very similar to that used in the proof of part~(b). Recall the vector-valued normalized error term~$E(x)$ is defined in equation~\eqref{Ey}.

\begin{proof}[Proof of Theorem~\ref{inclusive theorem}(c)]
Choose $\rho>0$ and $\bx \in \R^r$; if $r=\phi(q)$, then 
we assume that $\bx$ is an element of the hyperplane $x_{1} + \cdots + x_{\phi(q)} =0$. 
We need to show that $\mu\big(B_{\rho}(\bx)\big)$ exists and is positive; we begin by establishing the existence.
Let $W>0$ be a parameter to be chosen later.
If we assume that $W\ge (2\phi(q)+1)/\min \Gamma(q)$, we deduce that each $W$-robust character is $(2r+1)$-sturdy. In particular, the set of vectors 
\begin{multline*}
  \big\{ \big( \Re\chi(a_1), \dots, \Re\chi(a_r) \big) \colon \chi\mod q \text{ is $(2r+1)$-sturdy} \big\} \\
\cup \big\{ \big( \Im\chi(a_1), \dots, \Im\chi(a_r) \big) \colon \chi\mod q \text{ is $(2r+1)$-sturdy} \big\}
\end{multline*}
spans~$\R^r$ if $r < \phi(q)$, or spans $\big\{x_1 + \cdots + x_{\phi(q)}=0 \big\}$ if $r=\phi(q)$.
We can then show that the logarithmic density $\delta\big( \{ t\ge1\colon E(y) \in \cB_\rho(\bx) \} \big)$ exists and equals $\mu(\cB_{\rho}(\bx))$ using a slight variant of the proof of Theorem~\ref{inclusive theorem}(a) that invokes Lemma~\ref{conv vanish sphere}
instead of Lemma~\ref{conv vanish subsp}.

We now establish that  $\mu(B_{\rho}(\bx)) >0$.
As before, by Proposition~\ref{existence of muN prop} there exist probability measures $\mu^R_W$ and $\mu^N_W$ such that $\mu= \mu^R_W* \mu^N_W$.
By Lemma~~\ref{has mass somewhere small}, we may select 
$\bn \in \R^r$ with $\|\bn\| \le \beta(q)$, which also is an element of the subspace 
$x_1+\cdots +x_{\phi(q)}=0$ in the case $r=\phi(q)$, such that
\begin{equation}
   \mu^N_W(\cB_{\rho/2}(\bn)) > 0.
\end{equation} 
Let $\by = \bx-\bn$, and set $V = \|\bx\| + \beta(q)$, so that $\|\by\| \le V$.  
If we further assume that $W \ge W(q,V)$, where $W(q,V)$ is given by 
Corollary~\ref{omnipresent, strong}, then it follows from that corollary that $\mu^R_W(\cB_{\rho/2}(\by)) >0$. 
Therefore by equation \eqref{convolution lower bound} we deduce that  $\mu(\cB_{\rho}(\bx) ) > 0$ since $  \cB_{\rho/2}(\by) +\cB_{\rho/2}(\bn) \subset \cB_{\rho}(\bx)$.

In summary, if we set $W(q) = \max\{ (2\phi(q)+1)/\min \Gamma(q), W(q,V) \}$, then under the assumptions of the theorem, the logarithmic density $\delta\big( \{ t\ge1\colon E(y) \in \cB_\rho(\bx) \} \big)$ exists and is positive, as desired.
\end{proof}

Finally, we provide the analogous proofs for Theorem~\ref{pili theorem}(b)--(c).

\begin{proof}[Proof of Theorem~\ref{pili theorem}(b)]
Let $\mu_{\pi}= \mu^R_{\pi}* \mu^N_{\pi}$ be the probability measures defined in the proof of Theorem~\ref{pili theorem}(a); we begin by showing that $\mu_\pi\big( (0,\infty) \big)>0$.
By a variant of Proposition~\ref{has mass somewhere small}, there exists $n\in\R$ such that
\begin{equation} \label{W0}
\mu^N_{\pi}\big( (n-1,n+1) \big) > 0.
\end{equation}
Since $u+v>0$ whenever $u>1-n$ and $v>n-1$, we see from equation~\eqref{convolution lower bound} that
\begin{equation}
  \label{muClowerbound pi}
\mu_\pi\big( (0,\infty) \big) \ge \mu_\pi^R\big( (1-n,\infty) \big) \mu_\pi^N\big( (n-1,n+1) \big) \gg \mu_\pi^R\big( (1-n,\infty) \big)
\end{equation}
by equation~\eqref{W0}.
Set 
\begin{equation} \label{W1}
W(1) = \max\{8,|2-n|\} + \frac{1}{4} \sum_{ \gamma\in\Gamma^S(\chi_0) } \frac{1}{\gamma^3}
\end{equation}
so that we are assuming that $\sum_{\gamma\in\Gamma^S(\chi_0)} \frac1\gamma > W(1)$, and therefore that
\[
\sum_{\substack{\gamma\in\Gamma^S(\chi_0) \\ \gamma\le T}} \frac1\gamma > \frac{W(1)}2
\]
when~$T$ is sufficiently large.
As in the proof of Proposition~\ref{omnipresent temp try},
\begin{equation} \label{zs1}
\sum_{\substack{\gamma\in\Gamma^S(\chi) \\ \gamma\le T}} \frac2{\sqrt{\frac14+\gamma^2}} \ge \sum_{\substack{\gamma\in\Gamma^S(\chi) \\ \gamma\le T}} \frac2\gamma - \frac14 \sum_{\substack{\gamma\in\Gamma^S(\chi) \\ \gamma\le T}} \frac{1}{\gamma^3} > W(1) - \frac14 \sum_{\gamma\in\Gamma^S(\chi)} \frac{1}{\gamma^3} = \max\{8,|2-n|\}.
\end{equation}
Furthermore, when~$T$ is sufficiently large, we also have
\begin{equation} \label{zs2}
  \sum_{\substack{\gamma\in\Gamma^S(\chi_0) \\ \gamma>T}} \frac1{\frac14+\gamma^2} < \frac1{16}.
\end{equation}
Using such a value of~$T$, we write the random variable~$X_{\pi}^R$ from equation~\eqref{Xpi definition} as 
$X_{\pi}^R = U_{\pi}^R + V_{\pi}^R$, where
\begin{equation}
 \label{UpiRVpiR}
  U_{\pi}^R = 2 \Re  \sum_{\substack{\gamma \in \Gamma^S(\chi_0) \\ \gamma\le T}} \frac{Z_\gamma}{\sqrt{\frac14+\gamma^2}}
  \quad\text{and}\quad
  V_{\pi}^R = 2 \Re  \sum_{\substack{\gamma \in \Gamma^S(\chi_0) \\ \gamma > T}} \frac{Z_\gamma}{\sqrt{\frac14+\gamma^2}}.
\end{equation}
By the inequality~\eqref{zs1}, Lemma~\ref{robot variable lemma} implies that the event $|U_{\pi}^R+n-2| < \frac12$ occurs with positive probability. Furthermore, by the inequality~\eqref{zs2}, an application of Chebyshev's inequality analogous to equation~\eqref{apply Chebyshev} shows that there is a positive probability that $|V_{\pi}^R| < \frac12$. Consequently, by the independence of the random variables 
$U_{\pi}^R$ and $V_{\pi}^R$, 
 there is a positive probability that $|X_{\pi}^R +n-2| < \frac12+\frac12=1$, which shows that $\mu_{\pi}^R\big( (1-n,\infty) \big) \ge \mu_{\pi}^R\big( (1-n,3-n) \big) > 0$. Given the lower bound~\eqref{muClowerbound pi}, we conclude that $\mu_{\pi}\big( (0,\infty) \big) >0$ as desired.

Since  $\sum_{\gamma\in\Gamma^S(\chi_0)} \frac1\gamma \ge 8 
$, and the least element of $\Gamma^S(\chi_0)$ is $14.134\dots$, it follows that 
there exist more than three self-sufficient ordinates. Therefore, following the argument 
in Theorem~\ref{pili theorem}(a), we obtain the existence of the limit defining the density
\[
  \delta(\{ x \ge 2 \colon \pi(x) > \li(x) \} =  \mu_{\pi}\big( (0,\infty) \big) > 0.
\]

An almost identical argument shows that $\mu_\pi\big( (-\infty,0) \big)>0$, and therefore the race between $\pi(x)$ and $\li(x)$ is inclusive.
\end{proof}

\begin{proof}[Proof of Theorem~\ref{pili theorem}(c)]
We amalgamate the methods used in the proofs of Theorem~\ref{inclusive theorem}(c) and Theorem~\ref{pili theorem}(b).
Let $y \in \R$ and $\rho >0$, so that we need to show that $\mu_{\pi}\big( (y-\rho,y+\rho) \big)>0$; the existence of this interval measure is justified exactly as at the end of the proof of Theorem~\ref{pili theorem}(c).
As in the previous proof, $\mu_{\pi}= \mu_{\pi}^R*\mu_{\pi}^N$ and there exists $n \in\R$ such that $\mu^N_{\pi}\big( (n-\frac\rho2,n+\frac\rho2) \big) > 0$.
Since $\sum_{\gamma\in\Gamma^S(\chi_0)} \frac1\gamma$
diverges it follows that there exists $T$ such that 
\begin{equation}
   \label{zs1b}
     \sum_{\substack{\gamma\in\Gamma^S(\chi_0) \\ \gamma\le T}} \frac2{\sqrt{\frac14+\gamma^2}} >  \max\{8, |y| + |n|\}
  \text{ and }
  \sum_{\substack{\gamma\in\Gamma^S(\chi_0) \\ \gamma>T}} \frac1{\frac14+\gamma^2} < \frac{\rho^2}{16}.
\end{equation}
As before, $\mu_\pi^R$ is the probability measure associated to the random variable $X_\pi^{R} = U_{\pi}^R + V_{\pi}^R$ as in equation~\eqref{UpiRVpiR}.  Since $|y-n| \le |y| + |n|$, the first inequality in equation~\eqref{zs1b} and Lemma~\ref{robot variable lemma} imply that there is a positive probability that $|U_{\pi}^R - (y-n)| < \frac{\rho}{4}$, while the second inequality in~\eqref{zs1b} and Chebyshev's inequality imply that there is a positive probability that $|V_{\pi}^R| < \frac{\rho}4$.  Consequently, by the independence of the random variables 
$U_{\pi}^R$ and $V_{\pi}^R$, 
the event $|X_{\pi}^R - (y-n)| < \frac{\rho}4+\frac{\rho}4=\frac{\rho}{2}$ has positive probability and thus $
\mu^R_{\pi}\big( (y-n-\frac\rho2,y-n+\frac\rho2) \big) > 0$.
It then follows from equation~\eqref{convolution lower bound} that 
\[
\mu_\pi\big( (y-\rho,y+\rho) \big) \ge \mu^R_{\pi}\big( (y-n-\tfrac\rho2,y-n+\tfrac\rho2) \big) \mu^N_{\pi}\big( (n-\tfrac\rho2,n+\tfrac\rho2) \big) >0,
\]
completing the proof. 
\end{proof}

The methods in our article extend to a wide class of prime number races. 
The main feature we require is an ``explicit formula" relating the functions under 
consideration to the zeros of some $L$-functions (not necessarily degree-$1$ $L$-functions, in general). 
For instance, we can establish the following result regarding the race between $\pi(x;q,a)$ and $\li(x)/\phi(q)$ (for which the principal character~$\chi_0$ sheds its irrelevance)
and the race between $\pi(x;q,a)$ and $\pi(x)/\phi(q)$:

\begin{theorem}
\label{pili theorem for APs}
Assume GRH. Let $a$ be a reduced residue modulo~$q$.
\begin{enumerate}
\item If $\Gamma^S(q) \cup \Gamma^S(\chi_0)$ has at least three elements, then the race between $\pi(x;q,a)$ and $\li(x)/\phi(q)$ is weakly inclusive.
\item There exists a constant $W(q)$ such that if $\sum_{\gamma\in\Gamma^S(q) \cup \Gamma^S(\chi_0)} \frac1\gamma > W(q)$, then the race between $\pi(x;q,a)$ and $\li(x)/\phi(q)$ is inclusive.
\item If the sum $\sum_{\gamma\in\Gamma^S(q) \cup \Gamma^S(\chi_0)} \frac1\gamma$ diverges, then the race between $\pi(x;q,a)$ and $\li(x)/\phi(q)$ is strongly inclusive.
\item If $\Gamma^S(q)$ has at least three elements, then the race between $\pi(x;q,a)$ and $\pi(x)/\phi(q)$ is weakly inclusive.
\item There exists a constant $W(q)$ such that if $\sum_{\gamma\in\Gamma^S(q)} \frac1\gamma > W(q)$, then the race between $\pi(x;q,a)$ and $\pi(x)/\phi(q)$ is inclusive.
\item If the sum $\sum_{\gamma\in\Gamma^S(q)} \frac1\gamma$ diverges, then the race between $\pi(x;q,a)$ and $\pi(x)/\phi(q)$ is strongly inclusive.
\end{enumerate}
\end{theorem}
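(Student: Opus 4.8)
The plan is to reduce each of the six assertions to the one‑dimensional ($r=1$) version of the machinery of Sections~\ref{explicit formula section}--\ref{cylinder section}, applied to the relevant scalar error term, once an explicit formula has identified the set of contributing zeros.

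For parts (d)--(f) the relevant error term is literally $E(x;q,a)$ from equation~\eqref{Exqa}, since $\frac{\log x}{\sqrt x}\big(\phi(q)\pi(x;q,a)-\pi(x)\big)=E(x;q,a)$; its explicit formula~\eqref{bringing in explicit formula} is a sum over $\chi\ne\chi_0$ with the unimodular weights $\overline\chi(a)$, so exactly the ordinates in $\Gamma(q)$ appear. One runs the arguments of Sections~\ref{explicit formula section}--\ref{cylinder section} with $r=1$: for any set $\cG$ of self‑sufficient ordinates in $\Gamma(q)$, Lemmas~\ref{nuTconvolution} and~\ref{nuSTweakconv} and Proposition~\ref{existence of muN prop} give a decomposition $\mu_\pi=\mu^A*\mu^B$ of the limiting logarithmic distribution $\mu_\pi$ of $E(x;q,a)$, with $\widehat{\mu^A}(t)=\prod_{\gamma\in\cG}J_0\big(2|t|/\sqrt{\tfrac14+\gamma^2}\big)$ (using $|\overline\chi_\gamma(a)|=1$). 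The circumstance that several characters contribute to the same scalar is harmless: each $\overline\chi_\gamma(a)Z_\gamma$ is still uniform on the unit circle and the collection stays independent, so the relevant random variable has the same distribution as the one built with all weights equal to~$1$. Then, for (d), taking $\#\cG=3$ yields $|\widehat{\mu^A}(t)|\ll\min\{1,|t|^{-3/2}\}$ by the Bessel bound~\eqref{J0bound}, so $\widehat{\mu_\pi}=\widehat{\mu^A}\,\widehat{\mu^B}$ is absolutely integrable, whence $\mu_\pi$ is absolutely continuous with respect to Lebesgue measure on $\R$ by Lemma~\ref{density}; as in the proof of Theorem~\ref{inclusive theorem}(a) this gives $\mu_\pi(\{0\})=0$ and hence the existence of the two logarithmic densities. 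For (e) one takes $\cG$ with $\sum_{\gamma\in\cG}1/\gamma$ exceeding a suitable constant $W(q)$, sets $\mu_\pi^R=\mu^A$, invokes the analogue of Proposition~\ref{has mass somewhere small} to place positive $\mu^B$‑mass on a bounded interval, and then copies the proof of Theorem~\ref{pili theorem}(b): the elementary bound $2/\sqrt{\tfrac14+x^2}>2/x-1/4x^3$ turns the lower bound on $\sum1/\gamma$ into a lower bound on $\sum_{\gamma\in\cG}\lambda_\gamma$ with $\lambda_\gamma=2/\sqrt{\tfrac14+\gamma^2}\le4$, Lemma~\ref{robot variable lemma} places positive mass of $\mu_\pi^R$ near the needed location, a high tail is controlled by Chebyshev's inequality, and~\eqref{convolution lower bound} concludes. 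Part (f) is identical, using divergence of $\sum1/\gamma$ to reach an arbitrary interval (divergence also forces $\#\cG\ge3$, so the argument for (d) supplies the absolute continuity needed for the limiting interval densities to exist).

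For parts (a)--(c) the relevant error term is $\frac{\log x}{\sqrt x}\big(\phi(q)\pi(x;q,a)-\li(x)\big)$. Writing $\phi(q)\pi(x;q,a)=\pi(x,\chi_0)+\sum_{\chi\ne\chi_0}\overline\chi(a)\pi(x,\chi)$ and using $\pi(x,\chi_0)=\pi(x)+O(1)$, one combines the explicit formula~\eqref{bringing in explicit formula} for the nonprincipal part (which coincides with $E(x;q,a)$ up to $o(1)$) with the explicit formula for $\pi(x)-\li(x)$ used in the proof of Theorem~\ref{pili theorem}(a), obtaining a formula whose oscillating part is a sum over the ordinates in $\Gamma(q)\cup\Gamma(\chi_0)$, each zero of $\zeta(s)$ now entering with the nonzero weight $\overline{\chi_0}(a)=1$. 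Thus the only structural change is that the ambient multiset of ordinates is enlarged from $\Gamma(q)$ to $\Gamma(q)\cup\Gamma(\chi_0)$; we correspondingly interpret ``self‑sufficient'' — and hence the symbols $\Gamma^S(q)$ and $\Gamma^S(\chi_0)$ occurring in the statement — relative to this enlarged multiset (equivalently, as the coloops of the matroid it represents). The decomposition machinery of Section~\ref{explicit formula section} applies verbatim to this set of ordinates, so $\mu_\pi=\mu^A*\mu^B$ and the three arguments of the previous paragraph go through unchanged, now drawing $\cG$ from $\Gamma^S(q)\cup\Gamma^S(\chi_0)$: part (a) follows once this set has at least three elements, and parts (b)--(c) follow from a lower bound on, respectively divergence of, $\sum_{\gamma\in\Gamma^S(q)\cup\Gamma^S(\chi_0)}1/\gamma$.

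The only genuinely new ingredient is the explicit formula for $\phi(q)\pi(x;q,a)-\li(x)$, and in particular the bookkeeping confirming that the principal character contributes precisely the zeros of $\zeta(s)$ with unimodular weight; I expect this, together with the routine tracking of the harmless constant term and $O(1/\log x)$ errors (which, as observed in Section~\ref{explicit formulae section}, do not affect any limiting distribution), to be the only step requiring any care. Everything else is a transcription of arguments already carried out for $\pi(x;q,a)-\pi(x;q,b)$ and for $\pi(x)-\li(x)$.
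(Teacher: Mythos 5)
Your proposal is correct and follows essentially the same route the paper has in mind: the paper gives no detailed proof of Theorem~\ref{pili theorem for APs}, asserting only that it is a transcription of the arguments behind Theorem~\ref{pili theorem} once the quoted explicit formulae are in hand (so that $\Gamma(q)$, respectively $\Gamma(q)\cup\Gamma(\chi_0)$, plays the role of $\Gamma(\chi_0)$). Your reading of $\Gamma^S(q)\cup\Gamma^S(\chi_0)$ in parts (a)--(c) as the coloops of the enlarged ground set $\Gamma(q)\cup\Gamma(\chi_0)$ is exactly what the convolution-via-relative-independence machinery (Remark~\ref{ss and li and ri} together with Lemma~\ref{kwconvolution}) requires, and it is the implicit interpretation in the paper's statement — a subtlety worth noting since the paper's Definition~\ref{self-sufficient def} only formally defines self-sufficiency relative to $\Gamma(q)$.
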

Since the proof of this theorem is very similar to the proofs of Theorem~\ref{pili theorem},
we will not include the proof.  We just note that 
parts (a)  and (d) follow from the arguments of Section~\ref{log densities section}
and the other parts follow from the arguments of  Section~\ref{cylinder section}, using the explicit
formulae
\begin{align*}
  \frac{\log x}{\sqrt{x}} \bigg( \pi(x;q,a) - \frac{\li(x)}{\phi(q)} \bigg) &= 
  -\frac{c(q,a)}{\phi(q)} -\frac{1}{\phi(q)}
    \sum_{\chi\mod q} 
     \overline{\chi}(a) \sum_{\substack{\gamma \in \mathbb{R} \\ L(\frac{1}{2}+i \gamma,\chi)=0} }
    \frac{x^{i \gamma}}{\frac{1}{2}+i \gamma} + O \bigg( \frac{1}{\log x} \bigg),
\\
    \frac{\log x}{\sqrt{x}} \bigg( \pi(x;q,a) - \frac{\pi(x)}{\phi(q)}  \bigg) &= -\frac{c(q,a)}{\phi(q)} -\frac{1}{\phi(q)}
   \sum_{\substack{\chi\mod q \\ \chi \ne \chi_0}} 
    \overline{\chi}(a) \sum_{\substack{\gamma \in \mathbb{R} \\ L(\frac{1}{2}+i \gamma,\chi)=0} }
    \frac{x^{i \gamma}}{\frac{1}{2}+i \gamma} + O \bigg( \frac{1}{\log x} \bigg).
\end{align*}

\setcounter{section}{0}
\renewcommand{\thesection}{\Alph{section}}

\section{Appendix: Some probability}  \label{probability appendix}

In this section we record some facts from probability that are required in this article. 

\begin{definition}  \label{pushforward def}
Let $\Omega_1$ and $\Omega_2$ be probability spaces, and let $\Psi\colon \Omega_1\to\Omega_2$ be a measurable function. For any probability measure $\mu$ on $\Omega_1$, there exists a probability measure $\Psi_*\mu$ on $\Omega_2$, called the {\em pushforward of $\mu$ under $\Psi$}, defined by
\[
\Psi_*\mu(\cB) = \mu(\Psi^{-1}(\cB))
\]
for all measurable subsets $\cB$ of $\Omega_2$,
and with the property that
\begin{equation}  \label{pushforward change of variables}
\int_{\Omega_2} g(y) \,d\Psi_*\mu(y) = \int_{\Omega_1} g(\Psi(x)) \,d\mu(x)
\end{equation}
for any measurable function $g(y)$ defined on $\Omega_2$ (see \cite[Section 6, Theorem~7]{Sh} or \cite[Theorem~16.13]{Bi}).
\end{definition}

\noindent In this article, we deal with probability measures that are convolutions.

\begin{lemma} \label{weakconv}
Let $(\mu_n)$ and $(\nu_n)$ be sequences of probability measures on $\R^r$ which converge weakly to probability measures $\mu$ and $\nu$ on $\R^r$, respectively. 
Then $(\mu_n*\nu_n)$ converges weakly to $\mu*\nu$. 
\end{lemma}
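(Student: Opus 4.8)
The plan is to prove Lemma~\ref{weakconv} using characteristic functions (Fourier transforms) and L\'evy's continuity theorem, which the paper already invokes freely. First I would recall that for probability measures on $\R^r$, the characteristic function of a convolution factors: $\widehat{\mu_n*\nu_n}(\bt) = \widehat{\mu_n}(\bt)\,\widehat{\nu_n}(\bt)$ for all $\bt\in\R^r$, which follows immediately from the definition~\eqref{charfcn} of $\widehat{\cdot}$ together with the definition~\eqref{convolution} of convolution and the Fubini--Tonelli theorem. Next, since $\mu_n\to\mu$ weakly and $\nu_n\to\nu$ weakly, L\'evy's continuity theorem (as quoted from~\cite[p.~383]{Bi}) gives pointwise convergence $\widehat{\mu_n}(\bt)\to\widehat{\mu}(\bt)$ and $\widehat{\nu_n}(\bt)\to\widehat{\nu}(\bt)$ for every $\bt\in\R^r$.

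From there I would multiply: for each fixed $\bt\in\R^r$,
\[
\widehat{\mu_n*\nu_n}(\bt) = \widehat{\mu_n}(\bt)\,\widehat{\nu_n}(\bt) \longrightarrow \widehat{\mu}(\bt)\,\widehat{\nu}(\bt) = \widehat{\mu*\nu}(\bt)
\]
as $n\to\infty$, using the factorization once more in the last equality. The limiting function $\bt\mapsto \widehat{\mu}(\bt)\widehat{\nu}(\bt) = \widehat{\mu*\nu}(\bt)$ is the characteristic function of the probability measure $\mu*\nu$ on $\R^r$, and in particular it is continuous at $\bt=\bzero$ (indeed it equals $1$ there). Therefore the converse direction of L\'evy's continuity theorem applies: pointwise convergence of the characteristic functions to a function that is the characteristic function of a probability measure (equivalently, is continuous at the origin) implies weak convergence $\mu_n*\nu_n \to \mu*\nu$. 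This completes the argument.

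There is really no serious obstacle here; the only point requiring a small amount of care is the application of L\'evy's theorem in the direction that concludes weak convergence, where one must know \emph{a priori} that the pointwise limit of the characteristic functions is itself the characteristic function of a bona fide probability measure. In our situation this is automatic because we have explicitly identified the limit as $\widehat{\mu*\nu}$, so no separate tightness argument is needed. (Alternatively, one could avoid characteristic functions entirely and argue directly: couple independent random vectors $X_n\sim\mu_n$, $Y_n\sim\nu_n$, $X\sim\mu$, $Y\sim\nu$ on a common space with $X_n+Y_n$ distributed as $\mu_n*\nu_n$ and $X+Y$ as $\mu*\nu$, then use the Skorokhod representation theorem to upgrade weak convergence to almost-sure convergence of $X_n\to X$ and $Y_n\to Y$, whence $X_n+Y_n\to X+Y$ almost surely and thus in distribution by the continuity of addition. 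But the characteristic-function proof is shorter and uses only results already cited in the paper.)
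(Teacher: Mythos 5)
Your proof is correct and follows essentially the same route as the paper's: factor $\widehat{\mu_n*\nu_n}=\widehat{\mu_n}\,\widehat{\nu_n}$, use L\'evy's theorem to get pointwise convergence of the factors from the hypothesized weak convergence, multiply, and apply L\'evy's theorem again in the other direction. Your extra remark about why the converse direction of L\'evy applies (the limit is already identified as the characteristic function of $\mu*\nu$) is a useful clarification, and the coupling/Skorokhod alternative is a valid aside, but the main argument coincides with the paper's.
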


\begin{proof}[Proof]
Since $\mu_n \to \mu$ weakly and $\nu_n \to \nu$ weakly, it follows from 
Levy's theorem \cite[p. 383]{Bi}, 
 a criterion for weak convergence in terms of characteristic functions, that
$\widehat\mu_n(\bt) \to   \widehat\mu(\bt) \text{ and }   \widehat\nu_n(\bt) \to   \widehat\nu(\bt)$ for all $\bt \in \R^r$.
By the definition of convolution it follows that 
$\widehat{\mu_n*\nu_n}(\bt) = \widehat\mu_n(t) \widehat\nu_n(\bt)$ 
and thus $\widehat{\mu_n*\nu_n}(\bt)  \to  \widehat\mu(\bt) \widehat\nu(\bt) = \widehat{\mu*\nu}(\bt)$ as $n \to \infty$, for all $\bt \in \R^r$.  
Hence, again by Levy's theorem, we see that $\mu_n*\nu_n \to \mu*\nu$ weakly. 
\end{proof} 

Another way to establish weak convergence is via tightness.
\begin{definition}  \label{tightdefn}
A sequence $(\mu_n)$ of probability measures on $\R^r$ 
is {\it tight} if, for every $\varepsilon >0$, there is a compact set $\cK \subset \R^r$ such that the measure of its complement is uniformly small, that is, if $\mu_n(\cK^c) \le \varepsilon$ for all $n\ge1$. 
\end{definition}

The following result explains the significance of a tight sequence of probability measures. For a proof, see~\cite[Theorem~11.3]{Br} or~\cite[Theorem~1, pages~318--320]{Sh}.
 
\begin{theorem}  \label{tightsubsequence}
Let $(\mu_n)$ be a tight sequence of probability measures on $\R^r$.   Then there exists 
a probability measure $\mu$ on $\R^r$ and a subsequence $(\mu_{n_k})$
such that $\mu_{n_k}$ converges weakly to~$\mu$. 
\end{theorem}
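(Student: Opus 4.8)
The statement is the (easy half of the) Prokhorov theorem: tightness implies relative sequential compactness for the topology of weak convergence. The plan is to embed everything into a compact space, extract a convergent subsequence there by a soft functional-analytic argument, and then exploit tightness twice --- once to show the limit does not leak mass ``to infinity'', and once to upgrade convergence against all of $C(\widehat{\R^r})$ to weak convergence on $\R^r$. Concretely, I would pass to the one-point compactification $\widehat{\R^r}=\R^r\cup\{\infty\}$, a compact metrizable space, and regard each $\mu_n$ as a Borel probability measure $\widehat\mu_n$ on $\widehat{\R^r}$ supported on $\R^r$. Since $C(\widehat{\R^r})$ is separable and (by the Riesz representation theorem) the space of finite signed Borel measures on $\widehat{\R^r}$ is its dual, the closed unit ball of that dual is sequentially weak-$*$ compact; hence there is a subsequence $(\widehat\mu_{n_k})$ and a nonnegative measure $\widehat\mu$ of total mass at most $1$ with $\int g\,d\widehat\mu_{n_k}\to\int g\,d\widehat\mu$ for every $g\in C(\widehat{\R^r})$. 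Testing against the constant function $1$ (which lies in $C(\widehat{\R^r})$ because $\widehat{\R^r}$ is compact) gives $\widehat\mu(\widehat{\R^r})=1$, so $\widehat\mu$ is a probability measure on the compactification.

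Next I would show $\widehat\mu(\{\infty\})=0$, so that $\mu:=\widehat\mu|_{\R^r}$ is a probability measure on $\R^r$. Given $\ep>0$, tightness furnishes a compact $\cK\subset\R^r$ with $\mu_n(\cK^c)\le\ep$ for all $n$; since $\cK$ is closed in $\widehat{\R^r}$, the portmanteau inequality for weak limits of probability measures on a compact metric space gives $\widehat\mu(\cK)\ge\limsup_k\widehat\mu_{n_k}(\cK)\ge 1-\ep$, so $\widehat\mu(\{\infty\})\le\widehat\mu(\widehat{\R^r}\setminus\cK)\le\ep$. Letting $\ep\to0$ proves the claim.

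It remains to verify $\mu_{n_k}\to\mu$ weakly in the sense of Definition~\ref{log dist def}, i.e.\ $\int f\,d\mu_{n_k}\to\int f\,d\mu$ for every bounded continuous $f$ on $\R^r$. The subtlety is that such an $f$ need not extend continuously to $\widehat{\R^r}$, so I would approximate. Given $\ep>0$, pick a compact $\cK\subset\R^r$ with $\mu_n(\cK^c)\le\ep$ for all $n$ and also $\mu(\cK^c)\le\ep$ (the latter is possible since $\mu$ has no mass at infinity), enlarge $\cK$ to lie inside a relatively compact open subset of $\R^r$, and use Urysohn's lemma to produce $\widetilde f\in C(\widehat{\R^r})$ with $\widetilde f=f$ on $\cK$ and $\|\widetilde f\|_\infty\le\|f\|_\infty$. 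Then $\bigl|\int f\,d\mu_{n_k}-\int\widetilde f\,d\widehat\mu_{n_k}\bigr|\le 2\|f\|_\infty\ep$ and likewise with $\mu$ in place of $\mu_{n_k}$; since $\int\widetilde f\,d\widehat\mu_{n_k}\to\int\widetilde f\,d\widehat\mu$ by the weak-$*$ convergence, we get $\limsup_k\bigl|\int f\,d\mu_{n_k}-\int f\,d\mu\bigr|\le 4\|f\|_\infty\ep$, and letting $\ep\to0$ completes the proof.

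The main obstacle is not the soft extraction of a weak-$*$ limit, which is routine, but the interplay between that limit and the original sequence: the ``no escape of mass'' step and the final approximation both require using the uniform tail bound in the definition of tightness quantitatively, and the last step additionally needs to know the candidate limit $\mu$ is itself tight on $\R^r$ in order to control $\int f\,d\mu$ off a compact set. An alternative, more hands-on route avoids the compactification entirely: apply Helly's selection theorem along a diagonal subsequence to the $r$-dimensional distribution functions, obtaining a candidate distribution function whose associated measure is a genuine probability measure precisely because tightness forbids mass escaping to infinity; this is essentially the argument in the references cited immediately after the statement.
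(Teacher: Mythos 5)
The paper gives no proof of its own for this statement, instead citing Breiman's and Shiryaev's textbooks; so there is no ``paper route'' to compare against directly. Your compactification argument is correct and complete: the Banach--Alaoglu/Riesz extraction of a weak-$*$ limit $\widehat\mu$ on $\widehat{\R^r}$ is sound, testing against the constant function $1$ correctly normalizes the mass, the portmanteau inequality on the compact set $\cK$ correctly shows $\widehat\mu(\{\infty\})=0$, and the final Urysohn-cutoff approximation uses tightness of both the $\mu_{n_k}$ and the limit $\mu$ exactly where it is needed. This is one of the two standard proofs of the sequential Prokhorov theorem; the other, which you also sketch, runs Helly selection on the multidimensional distribution functions and then uses tightness to see the limiting distribution function has total variation~$1$ (this is, for instance, Shiryaev's route). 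Either would serve the paper, which uses the theorem only as a black box in Proposition~\ref{existence of muN prop} to produce $\mu^B$ as a subsequential weak limit of the tight family $(\mu^B_T)$.
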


We now present a criterion for showing that a sequence of probability measures is tight:

\begin{theorem} \label{tightnesscriterion}
Let $(\mu_n)$ be a sequence of probability measures on $\R^r$,
and let $(\widehat\mu_n)$ denote the corresponding characteristic functions.
Assume that there exists a neighbourhood $\cN$ of the origin such that
\begin{itemize}
 \item[(i)]  $h(\bt) := \lim_{n \to \infty} \widehat\mu_n(\bt)$ exists for every $\bt \in \cN$;
 \item[(ii)]  $h(\bt)$ is continuous at the origin. 
\end{itemize}
Then $(\mu_n)$ is tight. 
\end{theorem}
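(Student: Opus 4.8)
The plan is to reduce the multidimensional tightness statement to $r$ one-dimensional estimates, one per coordinate projection, and then to invoke the classical bound relating the tail mass of a probability measure to the average of $1-\Re\widehat\mu$ over a short interval about the origin. The localization in the hypotheses (the limit is assumed only for $\bt$ near $\bzero$, and continuity only at $\bzero$) will cause no difficulty, since the argument uses the $\widehat\mu_n$ only in an arbitrarily small neighbourhood of the origin, which can be taken inside $\cN$.

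First I would record the scalar tail estimate: for any probability measure $\nu$ on $\R$ and any $\delta>0$,
\[
\nu\big(\{x\in\R\colon |x|\ge 2/\delta\}\big)\ \le\ \frac1\delta\int_{-\delta}^{\delta}\big(1-\Re\widehat\nu(t)\big)\,dt,
\]
which follows by writing $1-\Re\widehat\nu(t)=\int_\R(1-\cos tx)\,d\nu(x)$, integrating in $t$ to obtain $\frac1\delta\int_{-\delta}^{\delta}(1-\Re\widehat\nu(t))\,dt=\int_\R\big(2-\tfrac{2\sin(\delta x)}{\delta x}\big)\,d\nu(x)$, and using that $2-\tfrac{2\sin u}{u}\ge 1$ whenever $|u|\ge 2$. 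Applying this to the $j$-th coordinate pushforward of $\mu_n$ — whose characteristic function is $t\mapsto\widehat\mu_n(t\,\mathbf{e}_j)$ — yields, for each $j$ and each $n$,
\[
\mu_n\big(\{\bx\colon |x_j|\ge 2/\delta\}\big)\ \le\ \frac1\delta\int_{-\delta}^{\delta}\big(1-\Re\widehat\mu_n(t\,\mathbf{e}_j)\big)\,dt.
\]

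Next I would pass to the limit in $n$. Fix $\varepsilon>0$. Since $h(\bzero)=\lim_n\widehat\mu_n(\bzero)=1$ and $h$ is continuous at the origin, I can choose $\delta>0$ small enough that $\{t\,\mathbf{e}_j\colon |t|\le\delta\}\subset\cN$ for each $j$ and
\[
\frac1\delta\int_{-\delta}^{\delta}\big(1-\Re h(t\,\mathbf{e}_j)\big)\,dt\ <\ \frac{\varepsilon}{2r}\qquad(1\le j\le r).
\]
Because $|\widehat\mu_n|\le 1$ throughout the fixed bounded interval $[-\delta,\delta]$ and $\widehat\mu_n(t\,\mathbf{e}_j)\to h(t\,\mathbf{e}_j)$ pointwise there, the bounded convergence theorem gives $\frac1\delta\int_{-\delta}^{\delta}(1-\Re\widehat\mu_n(t\,\mathbf{e}_j))\,dt\to\frac1\delta\int_{-\delta}^{\delta}(1-\Re h(t\,\mathbf{e}_j))\,dt$. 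Hence there is an $N$ with $\mu_n(\{|x_j|\ge 2/\delta\})<\varepsilon/r$ for all $n\ge N$ and all $j$.

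Finally I would absorb the finitely many remaining measures $\mu_1,\dots,\mu_{N-1}$: each is a single probability measure on $\R^r$, hence satisfies $\mu_n(\{|x_j|\ge M\})\to 0$ as $M\to\infty$, so I can enlarge $2/\delta$ to some $M\ge 2/\delta$ for which $\mu_n(\{|x_j|\ge M\})<\varepsilon/r$ holds for every $n<N$ and every $j$ as well. Taking the compact box $\cK=\{\bx\in\R^r\colon |x_j|\le M\text{ for }1\le j\le r\}$ and using $\cK^c\subset\bigcup_{j=1}^r\{|x_j|>M\}$, I conclude $\mu_n(\cK^c)\le\sum_{j=1}^r\mu_n(\{|x_j|>M\})<\varepsilon$ for every $n$, which is exactly tightness. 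The technical heart is the scalar tail estimate; in the passage to several variables and to the limit the only points requiring care are the use of $\Re\widehat\mu_n$ (rather than $\widehat\mu_n$) so that the integrands are nonnegative, and the interchange of limit and integral, both of which are routine, so I do not anticipate a genuine obstacle.
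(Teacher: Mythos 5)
Your proof is correct, and it follows essentially the same route as the paper, which points to Breiman's Theorem 11.6 and remarks that the argument reduces tightness on $\R^r$ to tightness in each coordinate separately --- exactly the reduction you carry out via the scalar tail estimate $\nu(\{|x|\ge 2/\delta\})\le\frac1\delta\int_{-\delta}^\delta(1-\Re\widehat\nu(t))\,dt$ applied to each coordinate marginal, combined with bounded convergence and absorption of the finitely many initial terms.
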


\noindent
This criterion is a minor variant of the result~\cite[Theorem~11.6, p. 236]{Br}, except that our condition~(i) assumes only that $\lim_{n \to \infty} \widehat\mu_n(\bt)$ exists in a neighbourhood of the origin rather than on all of~$\R^r$. The proof under this weaker hypothesis is essentially the same; we remark that one must repair a minor error in the proof in~\cite{Br}, by interpreting the complement $\cK(u)^c$ correctly as $\{x\in\R^r\colon \max_{1\le j\le r} |x_j| > u^{-1}\}$ rather than as $\{x\in\R^r\colon \min_{1\le j\le r} |x_j| > u^{-1}\}$ (indeed, the repaired proof demonstrates the philosophy that tightness of a sequence of probability measures on $\R^r$ reduces to tightness in each coordinate separately).

To finish this section, we give a criterion for when a probability measure is absolutely continuous with respect to Lesbesgue
measure.  

\begin{lemma}  \label{density}
Let $\mu$ be a probability measure on $\R^n$ with characteristic function $\widehat\mu$.   \\
\begin{enumerate}
\item Let $(\ba,\bb]$ be an interval of continuity of $\mu$.   Then 
\begin{equation}
 \label{muab}
  \mu((\ba,\bb]) = \lim_{c \to \infty} \frac{1}{(2 \pi)^n} \int_{-c}^{c}
  \prod_{j=1}^{n} \frac{e^{-ita_j}-e^{-itb_j} }{it_j}
  \widehat\mu(t_1,\ldots, t_n) \,dt_1 \cdots \,dt_n. 
\end{equation}
\item If $\int_{\R^n} |\widehat\mu(\bt)| \,d\bt < \infty$,
then $\mu$ possesses a Lebesgue-integrable density $g$, so that
\[
  \mu(\cB) = \int_{\cB} g(\bx) \,d\bx
\]
for any Borel subset~$\cB$ of~$\R^n$, where~$d\bx$ is Lebesgue measure on~$\R^n$. In particular,~$\mu$ is absolutely continuous with respect to Lebesgue measure on~$\R^n$. 
\end{enumerate}
\end{lemma}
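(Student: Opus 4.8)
The plan is to recognize part~(a) as the $n$-dimensional L\'evy inversion formula and to derive part~(b) from it by Fourier inversion together with a uniqueness argument for measures. For part~(a), I would begin with the right-hand side of~\eqref{muab}, substitute $\widehat\mu(t_1,\dots,t_n)=\int_{\R^n}e^{i\bt\cdot\bx}\,d\mu(\bx)$, and interchange the integral over the cube $[-c,c]^n$ with the integral over~$\R^n$; this is legitimate by Fubini--Tonelli since each factor satisfies $\bigl|\tfrac{e^{-it_ja_j}-e^{-it_jb_j}}{it_j}\bigr|=\bigl|\int_{a_j}^{b_j}e^{-it_js}\,ds\bigr|\le b_j-a_j$, so the full integrand is bounded by $\prod_j(b_j-a_j)$ on $[-c,c]^n\times\R^n$, a set of finite measure with respect to $\lambda\times\mu$. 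After the interchange the inner $\bt$-integral factors over~$j$ into one-dimensional integrals $\tfrac1{2\pi}\int_{-c}^c\tfrac{e^{it(x_j-a_j)}-e^{it(x_j-b_j)}}{it}\,dt$; the odd part integrates to zero, so each equals $\tfrac1\pi\bigl(\int_0^c\tfrac{\sin t(x_j-a_j)}{t}\,dt-\int_0^c\tfrac{\sin t(x_j-b_j)}{t}\,dt\bigr)$, which tends as $c\to\infty$ to $\tfrac12\bigl(\operatorname{sgn}(x_j-a_j)-\operatorname{sgn}(x_j-b_j)\bigr)$, equal to $\one_{(a_j,b_j)}(x_j)$ except at the two endpoints. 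Since the truncated Dirichlet integrals are uniformly bounded in~$c$ and in their argument, dominated convergence lets me pass $c\to\infty$ through the $\mu$-integral, leaving $\int_{\R^n}\prod_j\tfrac12\bigl(\operatorname{sgn}(x_j-a_j)-\operatorname{sgn}(x_j-b_j)\bigr)\,d\mu(\bx)$; because $(\ba,\bb]$ is an interval of continuity, $\mu$ assigns no mass to the bounding hyperplanes and this integral equals $\mu((\ba,\bb])$, establishing~\eqref{muab}. (Since this fact, and the one-dimensional inversion formula underlying it, are entirely standard, in the final write-up I would instead state part~(a) with a citation to \cite{Sh} or \cite{Bi}.)

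For part~(b), assume $\int_{\R^n}|\widehat\mu(\bt)|\,d\bt<\infty$ and set $g(\bx)=\tfrac1{(2\pi)^n}\int_{\R^n}e^{-i\bx\cdot\bt}\widehat\mu(\bt)\,d\bt$, which is bounded and continuous by dominated convergence. In~\eqref{muab} the factor $\prod_j\tfrac{e^{-it_ja_j}-e^{-it_jb_j}}{it_j}$ is bounded by $\prod_j(b_j-a_j)$, so integrability of~$\widehat\mu$ permits taking the limit $c\to\infty$ inside the integral. Writing $\tfrac{e^{-it_ja_j}-e^{-it_jb_j}}{it_j}=\int_{a_j}^{b_j}e^{-it_js_j}\,ds_j$ and applying Fubini once more --- valid because $\int_{(\ba,\bb]}\int_{\R^n}|\widehat\mu(\bt)|\,d\bt\,d\bs=\bigl(\prod_j(b_j-a_j)\bigr)\int_{\R^n}|\widehat\mu|<\infty$ --- I obtain $\mu((\ba,\bb])=\int_{(\ba,\bb]}g(\bs)\,d\bs$ for every box $(\ba,\bb]$ that is an interval of continuity of~$\mu$. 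At most countably many hyperplanes $\{\bx\colon x_j=c\}$ can carry positive $\mu$-mass, so such boxes form a $\pi$-system generating the Borel $\sigma$-algebra of~$\R^n$; the uniqueness theorem for measures (the $\pi$--$\lambda$ theorem) then gives $\mu(\cB)=\int_\cB g(\bs)\,d\bs$ for all Borel sets~$\cB$. Taking $\cB=\{\bx\colon g(\bx)<0\}$ forces $g\ge0$ almost everywhere, and $\cB=\R^n$ gives $\int_{\R^n}g=1$; hence $\mu$ is absolutely continuous with respect to Lebesgue measure on~$\R^n$, with Lebesgue-integrable density~$g$.

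The only genuinely delicate point is the dominated convergence step in part~(a): it relies on the classical uniform bound $\sup_{c>0,\,u\in\R}\bigl|\int_0^c\tfrac{\sin tu}{t}\,dt\bigr|=\int_0^\pi\tfrac{\sin t}{t}\,dt<\infty$ (the substitution $t\mapsto t/|u|$ reduces all $u\ne0$ to the case $u=1$, and the supremum over~$c$ of $\int_0^c\tfrac{\sin t}{t}\,dt$ is attained at $c=\pi$; the case $u=0$ is trivial). Everything else --- the two Fubini interchanges and the concluding uniqueness argument --- is routine.
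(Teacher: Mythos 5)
Your proof is mathematically correct, but the paper itself does not prove Lemma~\ref{density}: it is treated as a textbook fact, with part~(a) attributed to Shiryaev (Theorem~3(a), p.~283, and the exercise on p.~297 for general $n$) and to Billingsley and Cram\'er, and part~(b) to Lukacs (Theorem~3.2.2) with the remark that the multidimensional case is ``proven similarly.'' You have instead written out the standard one-page argument in full: part~(a) is the $n$-dimensional L\'evy inversion theorem proved by Fubini plus the uniform bound on truncated Dirichlet integrals, and part~(b) defines the Fourier inverse $g$, passes the limit $c\to\infty$ through the $\bt$-integral using $\int|\widehat\mu|<\infty$, applies Fubini once more to identify $\mu((\ba,\bb])=\int_{(\ba,\bb]}g$, and then invokes the $\pi$--$\lambda$ theorem to extend from boxes of continuity to all Borel sets. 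This matches the classical proof in the cited sources; your aside about the supremum of $\int_0^c\sin(tu)/t\,dt$ being attained at $c=\pi|u|^{-1}$ (the Wilbraham--Gibbs constant) is the correct justification for dominated convergence. The only thing your write-up adds over the paper is doing the work rather than citing it, which is a defensible choice; as you yourself note, in a final version one would typically compress this to the citation, as the paper does.
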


A proof of the $n=1$ case of part~(a) can be found in Shiryaev~\cite[Theorem~3(a), page~283]{Sh}; the general case is relegated to an exercise~\cite[page~297]{Sh}, although the negative signs in the exponents were unintentionally omitted in its statement. 
This statement can also be found in~\cite[equation~(29.3)]{Bi} and~\cite[equation 10.6.2]{Cr}. The $n=1$ case of part~(b)
can be found in Lukacs~\cite[Theorem~3.2.2]{Lu}, and the general case can be proven similarly.

\section{Appendix: The Kronecker--Weyl equidistribution theorem}  \label{KW appendix}

The Kronecker--Weyl equidistribution theorem (see \cite[pages~12--13]{Hlawka}) is a classical theorem in Diophantine approximation.  It asserts that if $k$ real numbers  $\{ \xi_1, \ldots, \xi_k \}$ are linearly independent
over $\Q$, then the one-parameter 
subgroup $\{ t(\xi_1, \ldots, \xi_k ) \colon t \in \R \}$ is equidistributed in $\T^k$.
The theorem can be generalized to the case that $\{ \xi_1, \ldots, \xi_k \}$ is linearly dependent over~$\Q$. 
In this case, it turns out that the one-parameter 
subgroup $\{ t(\xi_1, \ldots, \xi_k ) \colon t \in \R \}$ becomes equidistributed in a subtorus of~$\T^k$.
Note that this generalization, a proof of which can be found in~\cite[Theorem~4.2]{D}, is a special case of Ratner's famous equidistribution theorem~\cite[Theorem~(1.3.4)]{M1}.

\begin{lemma} \label{kw}
Let $\xi_1, \ldots, \xi_k$ be real numbers.
There exists a subtorus $\cA$ of $\T^k$ such that for all continuous functions~$f$ defined on~$\T^k$,
\[
   \lim_{y \to \infty} \frac1y \int_0^y f( t \xi_1, \ldots, t\xi_k) \,dt = \int_{\cA} f(\ba) \,d\ba,
\]
where $d\ba$ is Haar measure on $\cA$.  
\end{lemma}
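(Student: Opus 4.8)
\textbf{Proof proposal for Lemma~\ref{kw}.}

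The plan is to deduce the statement from the classical Kronecker--Weyl theorem by passing to the subgroup of $\T^k$ that is the closure of the one-parameter orbit $\{t(\xi_1,\dots,\xi_k)\colon t\in\R\}$. First I would let $\cA$ denote the topological closure in $\T^k$ of this one-parameter subgroup; since the subgroup is connected and $\T^k$ is a compact abelian Lie group, $\cA$ is a closed connected subgroup of $\T^k$, hence itself a subtorus. The task is then to show that the orbit is equidistributed in $\cA$ with respect to the normalized Haar measure $d\ba$ on $\cA$.

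The key step is a Weyl-type criterion: a probability measure on a compact abelian group is Haar measure if and only if all of its nontrivial characters integrate to zero. I would first establish that the time averages $\frac1y\int_0^y f(t\xi_1,\dots,t\xi_k)\,dt$ converge for every continuous $f$ on $\T^k$; by Stone--Weierstrass it suffices to check this for the characters $\chi_{\bn}(\theta_1,\dots,\theta_k)=e^{2\pi i(n_1\theta_1+\cdots+n_k\theta_k)}$ with $\bn\in\Z^k$, since finite linear combinations of characters are uniformly dense in $C(\T^k)$ and the averaging operators are uniformly bounded (by $\|f\|_\infty$). For a character $\chi_{\bn}$, one computes directly
\[
\frac1y\int_0^y e^{2\pi i t(n_1\xi_1+\cdots+n_k\xi_k)}\,dt =
\begin{cases} 1, & n_1\xi_1+\cdots+n_k\xi_k=0,\\[1mm] \dfrac{e^{2\pi i y(n_1\xi_1+\cdots+n_k\xi_k)}-1}{2\pi i y(n_1\xi_1+\cdots+n_k\xi_k)}, & \text{otherwise},\end{cases}
\]
so the limit as $y\to\infty$ equals $1$ when $\bn\cdot\bm\xi=0$ and $0$ otherwise, where $\bm\xi=(\xi_1,\dots,\xi_k)$. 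This shows the limit exists for all continuous $f$ and defines a probability measure $\mu$ on $\T^k$; moreover $\mu$ is supported on $\cA$ because any character trivial on $\cA$ is in particular trivial on the orbit, hence satisfies $\bn\cdot\bm\xi=0$ and integrates to $1$ against $\mu$, while any character nontrivial on $\cA$ restricts nontrivially to the orbit, forcing $\bn\cdot\bm\xi\ne0$ and hence $\int\chi_{\bn}\,d\mu=0$. By the Weyl criterion applied on the group $\cA$ (whose character group is the quotient of $\Z^k$ by the annihilator of $\cA$), $\mu$ coincides with Haar measure $d\ba$ on $\cA$, which is exactly the claimed identity.

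The main obstacle is bookkeeping rather than conceptual: one must verify carefully that $\cA$, defined as the orbit closure, really is a subtorus (i.e.\ a closed connected subgroup of $\T^k$, which for tori means a subtorus), and that the annihilator description of its character group matches the set $\{\bn\in\Z^k\colon \bn\cdot\bm\xi=0\}$; this is where the structure theory of compact abelian Lie groups enters. Alternatively, one can bypass this by simply citing the generalized Kronecker--Weyl theorem as stated in~\cite[Theorem~4.2]{D} (or as a special case of Ratner's theorem~\cite[Theorem~(1.3.4)]{M1}), which already packages exactly this conclusion; in that case the ``proof'' reduces to identifying the subtorus in the cited result with our~$\cA$ and invoking the uniform density of trigonometric polynomials to extend from the exponential test functions appearing there to arbitrary continuous~$f$.
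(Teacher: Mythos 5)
Your proposal is correct, but it takes a different route from the paper: the paper does not prove Lemma~\ref{kw} at all, instead simply citing~\cite[Theorem~4.2]{D} (and noting the result is also a special case of Ratner's theorem~\cite[Theorem~(1.3.4)]{M1}). You supply a self-contained argument via the Weyl criterion: compute time averages of characters $\chi_{\bn}$, observe they are $1$ or $0$ according to whether $\bn\cdot\bm\xi=0$, identify the orbit closure $\cA$ as a closed connected subgroup (hence subtorus) whose annihilator in $\Z^k$ is exactly $\{\bn \colon \bn\cdot\bm\xi=0\}$, and conclude the limiting functional is Haar measure on $\cA$. This is sound; the one place you could tighten the bookkeeping is the claim that ``$\mu$ is supported on $\cA$'' --- this is most cleanly obtained not as a separate verification but as a consequence of matching all character moments (both measures, $\mu$ and Haar on $\cA$, integrate $\chi_\bn$ to $1$ when $\bn\cdot\bm\xi=0$ and to $0$ otherwise, so by Stone--Weierstrass they coincide as measures on $\T^k$). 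Your second paragraph noting the alternative of deferring to the cited references is in fact what the paper opts for; the paper then only adds the remark that one reads off from the cited proof that $\cA$ is the closure of the ray $\{(t\xi_1,\dots,t\xi_k)\colon t\in\R\}$, which you take as your definition of $\cA$ from the outset. What your approach buys is a self-contained treatment; what the paper's approach buys is brevity by outsourcing the equidistribution argument to the standard literature.
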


It can be seen from the proof that the ray $\{(t\xi_1,\dots,t\xi_k)\colon t\in\R\}$ is contained in~$\cA$; taking~$f$ to be supported on a small ball around any point of $\cA$, we see that there are points in this ray arbitrarily close to any point of~$\cA$. Therefore the subtorus $\cA$ is the closure of the ray $\{(t\xi_1,\dots,t\xi_k)\colon t\in\R\}$ inside $\T^k$.

We now record a simple corollary to Lemma~\ref{kw}.

\begin{cor} \label{kwcorollary}
Let $k$ and $r$ be positive integers, let $\xi_1, \ldots, \xi_k$ be real numbers, and let $\bz_1,\dots,\bz_k\in\C^r$. 
Let $\Psi\colon \T^k \to \R^r$ be defined by 
\[ 
  \Psi(\zeta_1, \ldots, \zeta_k)  = 2 \Re \bigg(  \sum_{j=1}^{k} \bz_j e^{2 \pi i \zeta_j} \bigg).
\]
Then the function $\eta(t)=  \Psi(t \xi_1, \ldots, t\xi_k)$
possesses a limiting distribution. More precisely:
\begin{enumerate}
\item there exists a subtorus $\cA$ of $\T^k$ such that 
\[
   \lim_{y \to \infty} \frac1y \int_0^y f ( \eta(t) ) \,dt = \int_{\cA} (f \circ \Psi)(\ba) \,d\ba
\]
for all bounded continuous functions $f \colon \R^r \to \R$, where $d\ba$ is Haar measure on $\cA$;
\item there exists a probability measure $\nu$ on $\R^r$ such that 
\[
   \lim_{y \to \infty} \frac1y \int_0^y f ( \eta(t) ) \,dt = \int_{\R^r} f(\bx) \,d \nu(\bx)
\]
for all bounded continuous functions $f \colon \R^r \to \R$.
\end{enumerate}
\end{cor}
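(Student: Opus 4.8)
The plan is to deduce both assertions directly from Lemma~\ref{kw}, applied not to a function on $\R^r$ but to the pullback $f\circ\Psi$, which is a legitimate test function on $\T^k$. The first thing to check is that $\Psi\colon\T^k\to\R^r$ is continuous: each coordinate $\zeta_j$ enters only through $e^{2\pi i\zeta_j}$, a well-defined continuous function on $\T=\R/\Z$, so the finite combination $2\Re\big(\sum_{j=1}^k\bz_j e^{2\pi i\zeta_j}\big)$ is continuous, and in particular bounded with compact image $\Psi(\T^k)\subset\R^r$ since $\T^k$ is compact. Hence for any bounded continuous $f\colon\R^r\to\R$, the composition $f\circ\Psi$ is continuous (hence bounded) on $\T^k$.

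For part~(a), I would simply invoke Lemma~\ref{kw} with $f\circ\Psi$ in place of its ``$f$''. The lemma supplies a subtorus $\cA\subseteq\T^k$, depending only on $\xi_1,\dots,\xi_k$, such that $\lim_{y\to\infty}\frac1y\int_0^y(f\circ\Psi)(t\xi_1,\dots,t\xi_k)\,dt=\int_{\cA}(f\circ\Psi)(\ba)\,d\ba$, where $d\ba$ is Haar measure on $\cA$; and since $(f\circ\Psi)(t\xi_1,\dots,t\xi_k)=f\big(\Psi(t\xi_1,\dots,t\xi_k)\big)=f(\eta(t))$ by the definition of $\eta$, this is precisely part~(a). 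For part~(b), I would define $\nu$ to be the pushforward (Definition~\ref{pushforward def}) of Haar measure on $\cA$ under the restriction $\Psi|_\cA$; this is a Borel probability measure on $\R^r$ supported on the compact set $\Psi(\cA)$, and it does not depend on $f$ because $\cA$ does not. The change-of-variables identity~\eqref{pushforward change of variables} then gives $\int_{\R^r}f(\bx)\,d\nu(\bx)=\int_{\cA}f\big(\Psi(\ba)\big)\,d\ba=\int_{\cA}(f\circ\Psi)(\ba)\,d\ba$ for every bounded continuous $f$, and combining this with part~(a) yields part~(b).

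There is no real obstacle in this argument; the only points deserving (routine) care are the continuity of $\Psi$, which legitimizes $f\circ\Psi$ as an input to Lemma~\ref{kw}, and the remark that the subtorus $\cA$---and therefore the measure $\nu$---is one and the same for all bounded continuous $f$, which is exactly what turns a family of limiting values into a genuine limiting distribution.
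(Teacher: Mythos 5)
Your argument is exactly the paper's: apply Lemma~\ref{kw} to the test function $f\circ\Psi$ to get part~(a), then define $\nu$ as the pushforward of Haar measure on $\cA$ under $\Psi$ and invoke the change-of-variables identity~\eqref{pushforward change of variables} for part~(b). The extra remarks you make (continuity of $\Psi$, and that $\cA$ and hence $\nu$ are independent of $f$) are correct and are left implicit in the paper's version.
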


\begin{proof}
Let $f \colon \R^r \to \R$ be a bounded continuous function.
Applying Lemma~\ref{kw} with $f \circ \Psi$ in place of~$f$,
\begin{equation}
\begin{split}
  \label{ld1}
   \lim_{y \to \infty} \frac1y \int_0^y f ( \eta(t) ) \,dt = \lim_{y \to \infty} \frac1y \int_0^y f(\Psi(t\xi_1, \ldots, t\xi_k)) \,dt & = \int_{A} (f \circ \Psi) (\ba) \,d\ba
\end{split}
\end{equation}
where $\cA$ is the appropriate torus and $d\ba$ is its Haar measure; this identity establishes part~(a).
Let $\nu$ be the pushforward, under $\Psi$, of Haar measure on $\cA$ to a measure on $\R^r$. By the change of variables formula~\eqref{pushforward change of variables},
\begin{equation*}
     \int_{A} (f \circ \Psi) (\ba) \,d\ba =
    \int_{\R^r} f(\bx) \,d \nu(\bx);
\end{equation*}
in combination with equation~\eqref{ld1}, this identity establishes part~(b).
\end{proof}

The next lemma, which is~\cite[Proposition~5.1]{D}, provides a consequence of the above result in the case that $\{\xi_1, \ldots, \xi_k \}$ can be 
divided into two relatively independent sets (recall Definition~\ref{relind} in Section~\ref{convolution section}), namely the decomposition of the limiting torus into the direct product of two subtori. 

\begin{lemma}  \label{relatively independent lemma Devin}
Let $k_1$ and $k_2$ be positive integers and set $k=k_1+k_2$.
Let $\{ \xi_1, \ldots, \xi_k \}$ be real numbers, and suppose that $\{ \xi_1, \dots, \xi_{k_1} \}$ and $\{\xi_{k_1+1}, \dots, \xi_k\}$ are relatively independent. Define the following subtori which are the closures of certain rays:
\begin{align}
\cA_1 &\text{ is the closure of } \{ (t\xi_1, \dots, t\xi_{k_1}) \colon t\in\R \} \text{ in } \T^{k_1}; \notag \\
\cA_2 &\text{ is the closure of } \{ (t\xi_{k_1+1}, \dots, t\xi_k) \colon t\in\R \} \text{ in } \T^{k_2}; \label{three closures} \\
\cA &\text{ is the closure of } \{ (t\xi_1, \dots, t\xi_{k}) \colon t\in\R \} \text{ in } \T^{k}. \notag
\end{align}
Then $\cA = \cA_1 \times \cA_2$,
and in particular, the normalized Haar measure~$d \ba$ on~$\cA$ equals the product $d \ba = d \ba_1 \,d \ba_2$ of the normalized Haar measures on the subtori~$\cA_1$ and~$\cA_2$.
\end{lemma}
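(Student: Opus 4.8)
The plan is to identify each of the three subtori with the annihilator of a lattice of integer linear relations among the relevant $\xi_j$, and then to recognize that relative independence is precisely the statement that the relation lattice of the full collection splits as a direct sum. Write
\[
L = \Bigl\{ (n_1,\dots,n_k) \in \Z^k \colon \textstyle\sum_{j=1}^k n_j\xi_j = 0 \Bigr\}
\]
for the lattice of integer relations among $\{\xi_1,\dots,\xi_k\}$, and let $L_1\subseteq\Z^{k_1}$ and $L_2\subseteq\Z^{k_2}$ be the analogous relation lattices of $\{\xi_1,\dots,\xi_{k_1}\}$ and $\{\xi_{k_1+1},\dots,\xi_k\}$. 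By the standard description of the closure of a one-parameter subgroup of a torus---equivalently, by Pontryagin duality for the pairing between $\T^m$ and its character group $\Z^m$, together with the facts that the annihilator of a dense subgroup equals the annihilator of the whole group and that $H\mapsto H^\perp$ is an inclusion-reversing bijection between closed subgroups of $\T^m$ and subgroups of $\Z^m$ with $(H^\perp)^\perp=H$---the subtorus $\cA$ is exactly $L^\perp=\{\bx\in\T^k\colon \bn\cdot\bx\in\Z\text{ for all }\bn\in L\}$, and likewise $\cA_i=L_i^\perp$ inside $\T^{k_i}$. Viewing $\cA_1\times\cA_2$ inside $\T^{k_1}\times\T^{k_2}=\T^k$, it therefore equals the annihilator of $L_1\oplus L_2\subseteq\Z^{k_1}\times\Z^{k_2}=\Z^k$.

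It thus suffices to show that $L=L_1\oplus L_2$, since taking annihilators then gives $\cA=(L_1\oplus L_2)^\perp=\cA_1\times\cA_2$. The inclusion $L_1\oplus L_2\subseteq L$ is trivial: concatenating a relation among the first $k_1$ numbers with a relation among the last $k_2$ produces a relation among all $k$. The reverse inclusion is where the hypothesis is used, and it is immediate: any $\bn\in L$ has integer (hence rational) coordinates with $\sum_j n_j\xi_j=0$, so Definition~\ref{relind} forces $\sum_{j\le k_1}n_j\xi_j=0$ and $\sum_{j>k_1}n_j\xi_j=0$ separately, i.e.\ $\bn\in L_1\oplus L_2$. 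Hence $L=L_1\oplus L_2$ and $\cA=\cA_1\times\cA_2$. For the measure statement one invokes uniqueness of normalized Haar measure: the product $d\ba_1\,d\ba_2$ on $\cA_1\times\cA_2$ is a translation-invariant probability measure, so it coincides with the normalized Haar measure $d\ba$ on $\cA=\cA_1\times\cA_2$.

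A more hands-on alternative, closer to Lemma~\ref{kw} itself, would be to note first that the two coordinate projections $\T^k\to\T^{k_1}$ and $\T^k\to\T^{k_2}$ carry the ray generating $\cA$ onto the rays generating $\cA_1$ and $\cA_2$, so $\cA\subseteq\cA_1\times\cA_2$; then, using that the dimension of the closure of the one-parameter subgroup of $\T^m$ through $(\xi_1,\dots,\xi_m)$ equals the $\Q$-dimension of the span of $\{\xi_1,\dots,\xi_m\}$, together with the additivity of that dimension under relative independence (Remark~\ref{stronger remark}), conclude that $\dim\cA=\dim\cA_1+\dim\cA_2=\dim(\cA_1\times\cA_2)$; since a connected closed subgroup of a torus of full dimension is the whole torus, equality follows.

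The forward inclusion $\cA\subseteq\cA_1\times\cA_2$, the Haar-measure factorization, and the duality (or dimension) facts quoted are all routine; the one substantive point is the reverse inclusion $\cA\supseteq\cA_1\times\cA_2$. The main work---really the only ``obstacle''---is setting up the annihilator dictionary correctly, after which that inclusion is nothing more than an unwinding of the definition of relative independence applied to an integer relation.
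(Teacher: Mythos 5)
Your argument is correct; in fact it supplies something the paper deliberately omits, since the paper does not prove this lemma at all but simply quotes it as Proposition~5.1 of Devin~\cite{D}. There is therefore no ``paper's own proof'' to compare against, only the external citation. Your annihilator dictionary is set up correctly: the closure $\cA$ of the one-parameter subgroup is the Pontryagin annihilator $L^\perp$ of the integer-relation lattice $L=\{\bn\in\Z^k\colon\sum_j n_j\xi_j=0\}$ (the annihilator of the ray equals that of its closure by continuity of characters, and $(L^\perp)^\perp$ recovers $\cA$ by the bijection between subgroups of $\Z^k$ and closed subgroups of $\T^k$), and similarly $\cA_i=L_i^\perp$, with $\cA_1\times\cA_2=(L_1\oplus L_2)^\perp$. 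The reduction of the lemma to $L=L_1\oplus L_2$ is then exactly right: $L_1\oplus L_2\subseteq L$ is trivial concatenation, while $L\subseteq L_1\oplus L_2$ is a verbatim application of Definition~\ref{relind} to an integer (hence rational) relation $\sum_j n_j\xi_j=0$. The Haar-measure statement follows by uniqueness as you say. Your alternative dimension-counting argument is also sound: the containment $\cA\subseteq\cA_1\times\cA_2$ is clear from coordinate projections, $\dim\cA_i$ equals the $\Q$-dimension of the span of the corresponding $\xi_j$'s, the additivity of those dimensions under relative independence is precisely the observation in Remark~\ref{stronger remark}, and a closed connected subgroup of a connected torus having full dimension must be the whole torus. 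Either version of your proof would serve as a complete, self-contained replacement for the citation to Devin.
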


\noindent A key point of this lemma is that the relative independence of the two sets is necessary; the lemma would be false if, for example, $k_1=k_2=1$ and $\xi_1=\xi_2$.

Using the results stated so far in this appendix, we can establish a convolution relationship among the limiting distributions of various functions defined by trigonometric polynomials.

\begin{lemma}  \label{kwconvolution}
Let $k_1$ and $k_2$ be positive integers and set $k=k_1+k_2$.
Let $\{ \xi_1, \ldots, \xi_k \}$ be real numbers, and suppose that $\{ \xi_1, \dots, \xi_{k_1} \}$ and $\{\xi_{k_1+1}, \dots, \xi_k\}$ are relatively independent.
Let $\{ \bz_1, \ldots, \bz_k \} \in \C^r$, and define functions $\Psi_1\colon \T^{k_1} \to \R^r$ and $\Psi_2\colon \T^{k_2} \to \R^r$ and $\Psi\colon \T^{k} \to \R^r$ by
\begin{align*}
\Psi_1(\zeta_1, \ldots, \zeta_{k_1}) &= 2 \Re \bigg(  \sum_{j=1}^{k_1} \bz_j e^{2 \pi i \zeta_j} \bigg), \\
\Psi_2(\zeta_1, \ldots, \zeta_{k_2}) &= 2 \Re \bigg(  \sum_{j=1}^{k_2} \bz_{k_1+j} e^{2 \pi i \zeta_j} \bigg), \\
\Psi(\zeta_1, \ldots, \zeta_k) &= 2 \Re \bigg(  \sum_{j=1}^k \bz_j e^{2 \pi i \zeta_j} \bigg).
\end{align*}
Define the corresponding functions from $\R$ to $\R^r$,
\begin{align*}
  \eta_1(t) &= \Psi_1(t \xi_1, \ldots, t\xi_{k_1}), \\
  \eta_2(t) &= \Psi_2(t \xi_{k_1+1}, \ldots, t\xi_k), \\
  \eta(t) &= \Psi(t \xi_1, \ldots, t\xi_k),
\end{align*}
and let $\nu_1$, $\nu_2$, and $\nu$ be their limiting distributions (as guaranteed by Corollary~\ref{kwcorollary}). Then $\nu = \nu_1 * \nu_2$.
\end{lemma}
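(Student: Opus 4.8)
The statement follows by combining the two main facts already available in this appendix: Corollary~\ref{kwcorollary}, which identifies each limiting distribution as a pushforward of Haar measure on the appropriate limiting subtorus, and Lemma~\ref{relatively independent lemma Devin}, which tells us that the limiting subtorus $\cA$ of the combined ray factors as $\cA = \cA_1\times\cA_2$ with $d\ba = d\ba_1\,d\ba_2$. The only additional ingredient is the trivial observation that $\Psi$ is ``additive'' with respect to this factorization: under the identification $\T^k = \T^{k_1}\times\T^{k_2}$ (writing $\zeta = (\zeta^{(1)},\zeta^{(2)})$ with $\zeta^{(1)}\in\T^{k_1}$ and $\zeta^{(2)}\in\T^{k_2}$), the definitions give
\[
\Psi(\zeta) = 2\Re\bigg(\sum_{j=1}^{k_1}\bz_j e^{2\pi i\zeta_j}\bigg) + 2\Re\bigg(\sum_{j=1}^{k_2}\bz_{k_1+j}e^{2\pi i\zeta_{k_1+j}}\bigg) = \Psi_1(\zeta^{(1)}) + \Psi_2(\zeta^{(2)}).
\]

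\textbf{Steps.} First, apply Corollary~\ref{kwcorollary} three times: there is a subtorus $\cA\subset\T^k$ such that $\nu$ is the pushforward of normalized Haar measure on $\cA$ under $\Psi$, and likewise $\nu_1$ and $\nu_2$ are the pushforwards of Haar measure on $\cA_1$ and $\cA_2$ (the closures of the respective rays, as in equation~\eqref{three closures}) under $\Psi_1$ and $\Psi_2$. Second, invoke Lemma~\ref{relatively independent lemma Devin}, whose hypothesis of relative independence is exactly what we are assuming, to conclude $\cA = \cA_1\times\cA_2$ and $d\ba = d\ba_1\,d\ba_2$. Third, let $f\colon\R^r\to\R$ be any bounded continuous function and compute, using the change of variables formula~\eqref{pushforward change of variables}, the factorization of $\cA$ and its Haar measure, Fubini's theorem, and the additivity of $\Psi$:
\[
\int_{\R^r} f(\bx)\,d\nu(\bx) = \int_{\cA}(f\circ\Psi)(\ba)\,d\ba = \int_{\cA_1}\int_{\cA_2} f\big(\Psi_1(\ba_1)+\Psi_2(\ba_2)\big)\,d\ba_1\,d\ba_2.
\]
Applying the change of variables formula again in each inner integral turns the right-hand side into $\int_{\R^r}\int_{\R^r} f(\bx_1+\bx_2)\,d\nu_1(\bx_1)\,d\nu_2(\bx_2)$, which by the definition~\eqref{convolution} of convolution equals $\int_{\R^r} f(\bx)\,d(\nu_1*\nu_2)(\bx)$. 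Finally, since $\int f\,d\nu = \int f\,d(\nu_1*\nu_2)$ for every bounded continuous $f$, we conclude $\nu = \nu_1*\nu_2$ by \cite[Theorem~1.2]{Bi2}.

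\textbf{Main obstacle.} There is essentially no obstacle: the content has been pushed into Lemma~\ref{relatively independent lemma Devin} (the torus decomposition) and Corollary~\ref{kwcorollary} (the pushforward description). The one point requiring a modicum of care is to make the bookkeeping of the identification $\T^k\cong\T^{k_1}\times\T^{k_2}$ and of the three change-of-variables applications clean, and to make sure Fubini applies — which it does, since $\cA_1$ and $\cA_2$ carry probability measures and $f\circ(\Psi_1\oplus\Psi_2)$ is bounded and measurable.
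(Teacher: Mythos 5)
Your proof is correct and takes essentially the same approach as the paper's: both invoke Corollary~\ref{kwcorollary} to realize $\nu,\nu_1,\nu_2$ as pushforwards of Haar measure on the limiting subtori, use Lemma~\ref{relatively independent lemma Devin} for the product decomposition $\cA=\cA_1\times\cA_2$, and finish with Fubini and the additivity of $\Psi$. The only (cosmetic) difference is direction: you transform $\int f\,d\nu$ into $\int f\,d(\nu_1*\nu_2)$, while the paper unwinds the convolution first and recognizes the result as $\int_\cA (f\circ\Psi)\,d\ba$ at the end; your version has the minor merit of making the identity $\Psi=\Psi_1\oplus\Psi_2$ explicit rather than implicit.
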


\begin{proof}
Let $f(\bx)$ be a bounded, continuous function on $\R^r$;
we need to show that $\int_{\R^r} f(\bx) \,d\nu(\bx) = \int_{\R^r} f(\bx) \,d(\nu_1*\nu_2)(\bx)$.
Let $\cA_1$, $\cA_2$, and $\cA$ be the subtori defined as the closures of certain rays:
\begin{align}
\cA_1 &\text{ is the closure of } \{ (t\xi_1, \dots, t\xi_{k_1}) \colon t\in\R \} \text{ in } \T^{k_1}; \notag \\
\cA_2 &\text{ is the closure of } \{ (t\xi_{k_1+1}, \dots, t\xi_k) \colon t\in\R \} \text{ in } \T^{k_2}; \label{three closures} \\
\cA &\text{ is the closure of } \{ (t\xi_1, \dots, t\xi_{k}) \colon t\in\R \} \text{ in } \T^{k}. \notag
\end{align}
We let $d \ba_1$, $d \ba_2$, and  $d \ba$  denote Haar measure on $\cA_1$, $\cA_2$, and $\cA$, respectively.
By Corollary \ref{kwcorollary} we have 
\begin{equation}
   \label{changeofvarA}
   \int_{\mathbb{R}^r} f(x) \, d\nu(\bx) =  \int_{\cA} (f \circ \Psi)(\ba) \,d\ba
\end{equation}
and, for $j=1$ and $j=2$,
\begin{equation}
  \label{changeofvarAj}
    \int_{\mathbb{R}^r} f(x) \, d\nu_j(\bx) =  \int_{\cA_j} (f \circ \Psi_j)(\ba_j) \,d\ba_j.
\end{equation}
On the other hand, by the definition of convolution and the Fubini--Tonelli theorem,
\begin{align*}
  \int_{\R^r}  f(x) \,d (\nu_1*\nu_2)(\bx) &= \int_{\R^r}
  \Big( 
   \int_{\R^r} f(\bx_1+\bx_2)  \,d\nu_1(\bx_1) \, \Big) d\nu_2(\bx_2)  
   = \int_{\R^r} g(\bx_2) \,d\nu_2(\bx_2) \\
   & = \int_{\cA_2} (g \circ \Psi_2)(\ba_2) \, d \ba_2
\end{align*}
by equation~\eqref{changeofvarAj}, since 
 $g(\bx_2) = \int_{\R^r} f(\bx_1 +\bx_2) \,d \nu_1(\bx_1)$ is bounded and continuous by~\cite[Theorem~16.8]{Bi}.
Now observe that equation~\eqref{changeofvarAj} implies that
\begin{equation}
   (g \circ \Psi_2)(\ba_2)
   = \int_{\R^r} f(x_1 +  \Psi_2(\ba_2) ) \, d \nu_1(x) 
   = \int_{\cA_1}  f(\Psi_1(\ba_1) +  \Psi_2(\ba_2) ) \, d \ba_1,
\end{equation}
and thus by Fubini's theorem,
\begin{align*}
  \int_{\R^r}  f(x) \,d (\nu_1*\nu_2)(\bx)  &=
    \int_{\cA_2}\Big(  \int_{\cA_1}  f(\Psi_1(\ba_1) +  \Psi_2(\ba_2) ) \, d \ba_1  \Big)  \, d \ba_2 \\
    &= \int_{\cA_1 \times \cA_2} 
     f(\Psi_1(\ba_1) +  \Psi_2(\ba_2) ) d (\ba_1 \times \ba_2). 
\end{align*}
By Lemma~\ref{relatively independent lemma Devin} we have $\cA= \cA_1 \times \cA_2$,
and thus this integral equals the integral in equation~\eqref{changeofvarA}. 
\end{proof}

\section*{Acknowledgments}

Research for this article was  supported by NSERC Discovery grants.
The authors thank the Banff International Research Station for their hospitality in May 2009 for the Research in Teams meeting
{\it Prime number races and zeros of Dirichlet L-functions}; we greatly appreciated the excellent working conditions and the beautiful scenery of the Canadian Rockies. 
We also thank Lucile Devin, Peter Humphries, Dave Morris, and Lior Silberman for discussions relating to the proof of
the Kronecker--Weyl theorem 
and Ed Perkins for discussions related to probability.

\end{document}